\newcommand{\arxiv}[1]{\href{http://arxiv.org/abs/#1}{\tt arXiv:\nolinkurl{#1}}}
\newcommand{\arXiv}[1]{\href{http://arxiv.org/abs/#1}{\tt arXiv:\nolinkurl{#1}}}
\newcommand{\googlebooks}[1]{(preview at \href{http://books.google.com/books?id=#1}{google books})}
\definecolor{dark-red}{rgb}{0.7,0.25,0.25}
\definecolor{dark-blue}{rgb}{0.15,0.15,0.55}
\definecolor{medium-blue}{rgb}{0,0,.8}
\definecolor{DarkGreen}{RGB}{0,150,0}
\definecolor{rho}{named}{red}
\numberwithin{equation}{section}
\theoremstyle{plain}
\newtheorem{thm}[equation]{Theorem}
\newtheorem*{thm*}{Theorem}
\newtheorem{thmalpha}{Theorem}
\newtheorem*{cor*}{Corollary}
\newtheorem*{conj*}{Conjecture}
\newtheorem{lem}[equation]{Lemma}
\newtheorem{prop}[equation]{Proposition}
\newtheorem*{quest*}{Question}
\newtheorem*{claim*}{Claim}
\theoremstyle{definition}
\newtheorem{defn}[equation]{Definition}
\newtheorem{fact}[equation]{Fact}
\newtheorem{construction}[equation]{Construction}
\newtheorem{ex}[equation]{Example}
\newtheorem{sub-ex}[equation]{Sub-Example}
\newtheorem{rem}[equation]{Remark}
\newtheorem*{rem*}{Remark}
\DeclareMathOperator{\Ad}{Ad}
\DeclareMathOperator{\coev}{coev}
\DeclareMathOperator{\ev}{ev}
\DeclareMathOperator{\Hom}{Hom}
\DeclareMathOperator{\id}{id}
\DeclareMathOperator{\mate}{mate}
\DeclareMathOperator{\Forget}{Forget}
\newcommand{\comment}[1]{}
\newcommand{\be}{\begin{enumerate}[label=(\arabic*)]}
\newcommand{\ee}{\end{enumerate}}
\def\semicolon{;}
\def\applytolist#1{
    \expandafter\def\csname multi#1\endcsname##1{
        \def\multiack{##1}\ifx\multiack\semicolon
            \def\next{\relax}
        \else
            \csname #1\endcsname{##1}
            \def\next{\csname multi#1\endcsname}
        \fi
        \next}
    \csname multi#1\endcsname}
\def\calc#1{\expandafter\def\csname c#1\endcsname{{\mathcal #1}}}
\def\bbc#1{\expandafter\def\csname bb#1\endcsname{{\mathbb #1}}}
\def\bfc#1{\expandafter\def\csname bf#1\endcsname{{\mathbf #1}}}
\def\sfc#1{\expandafter\def\csname s#1\endcsname{{\sf #1}}}
\def\fc#1{\expandafter\def\csname f#1\endcsname{{\mathfrak #1}}}
\newcommand{\Hilb}{\mathsf{Hilb}}
\newcommand{\Vect}{\mathsf{Vect}}
\newcommand{\sVect}{\mathsf{sVect}}
\newcommand{\vcat}{\mathsf{VCat}}
\newcommand{\vmod}{\mathsf{VMod}}
\newcommand{\vmoncat}{\mathsf{VMonCat}}
\newcommand{\vmodtens}{\mathsf{VModMon}}
\newcommand{\dagucat}{\mathsf{UCat_\dagger}}
\newcommand{\dagumod}{\mathsf{UMod_\dagger}}
\newcommand{\dagumoncat}{\mathsf{UMonCat_\dagger}}
\newcommand{\dagumodmon}{\mathsf{UModMon_\dagger}}
\newcommand{\dagvcat}{\mathsf{VCat_\dagger}}
\newcommand{\dagvmod}{\mathsf{VMod_\dagger}}
\newcommand{\dagvmoncat}{\mathsf{VMonCat_\dagger}}
\newcommand{\dagvmodmon}{\mathsf{VModMon_\dagger}}
\newcommand{\noshow}[1]{}
\renewcommand{\MR}[1]{}
\tikzstyle{knot}=[preaction={super thick, white, draw}]
\tikzset{smallstring/.style={thick,scale=0.75,every node/.style={transform shape}}}
\tikzset{vertex/.style = {shape=circle,draw,fill=black,inner sep=0pt,minimum size=5pt}}
\tikzset{edge/.style = {->,> = latex', bend right}}
\tikzset{
	super thick/.style={line width=3pt}
}
\tikzset{
    quadruple/.style args={[#1] in [#2] in [#3] in [#4]}{
        #1,preaction={preaction={preaction={draw,#4},draw,#3}, draw,#2}
    }
}
\tikzstyle{shaded}=[fill=red!10!blue!20!gray!30!white]
\tikzstyle{unshaded}=[fill=white]
\tikzstyle{empty box}=[circle, draw, thick, fill=white, opaque, inner sep=2mm]
\tikzstyle{annular}=[scale=.7, inner sep=1mm, baseline]
\tikzstyle{rectangular}=[scale=.75, inner sep=1mm, baseline=-.1cm]
\tikzstyle{mid>}=[decoration={markings, mark=at position 0.5 with {\arrow{>}}}, postaction={decorate}]
\tikzstyle{mid<}=[decoration={markings, mark=at position 0.5 with {\arrow{<}}}, postaction={decorate}]
\tikzstyle{over}=[double, draw=white, super thick, double=]
\tikzstyle{box} = [rectangle,draw,rounded corners=5pt,very thick]
\newcommand{\roundNbox}[6]{
	\draw[rounded corners=5pt, very thick, #1] ($#2+(-#3,-#3)+(-#4,0)$) rectangle ($#2+(#3,#3)+(#5,0)$);
	\coordinate (ZZa) at ($#2+(-#4,0)$);
	\coordinate (ZZb) at ($#2+(#5,0)$);
	\node at ($1/2*(ZZa)+1/2*(ZZb)$) {#6};
}
  \newcommand{\tikzmath}[2][]
     {\vcenter{\hbox{\begin{tikzpicture}[#1]#2
                     \end{tikzpicture}}}
     }
\newcommand{\modulebox}[4]{%bottom left (x,y), width, height, label
  \draw (#1) --
  ++(0,#3) {[rounded corners=3] --
    ++(#2,0) --
    ++(0,-#3)} --
  cycle {};
  \node at ($(#1)+(#2/2, #3/2)$) {#4};
  }
\begin{document}
\title{Unitary braided-enriched monoidal categories}
\author{Zachary Dell, Peter Huston, and David Penneys}
\date{\today}
\begin{abstract}
Braided-enriched monoidal categories were introduced in work of Morrison-Penneys, where they were characterized using braided central functors.
Recent work of Kong-Yuan-Zhang-Zheng and Dell extended this characterization to an equivalence of 2-categories.
Since their introduction, braided-enriched fusion categories have been used to describe certain phenomena in topologically ordered systems in theoretical condensed matter physics.
While these systems are unitary, there was previously no general notion of unitarity for enriched categories in the literature.
We supply the notion of unitarity for enriched categories and braided-enriched monoidal categories and extend the above 2-equivalence to the unitary setting.
%This is the submitted version of \arxiv{???}.
\end{abstract}
\maketitle

%%%%%%%%%%%%%%%%%%%%%%%%%%%%%%%%%%%%%%%%%%%%%%%%%%%%%%%%%%%%%%%
%%%%%%%%%%%%%%%%%%%%%%%%%%%%%%%%%%%%%%%%%%%%%%%%%%%%%%%%%%%%%%%
%%%%%%%%%%%%%%%%%%%%%%%%%%%%%%%%%%%%%%%%%%%%%%%%%%%%%%%%%%%%%%%
\section{Introduction}

The article \cite{MR3961709} introduced the notion of braided-enriched monoidal category and showed they are equivalent to rigid module monoidal categories $\sV \to \sZ(\sA)$ in the spirit of \cite{MR3578212}.
This characterization was recently extended to an equivalence of 2-categories by \cite{2104.03121,2104.07747}.
The article \cite{2104.03121} extends this 2-equivalence even further, allowing for changes in the enriching braided monoidal category $\sV$.

Braided-enriched monoidal categories are part of a larger story of \emph{enriched quantum symmetry}.
Here, by \emph{quantum symmetry}, we mean that while classical symmetry is described by the notion of a group, quantum mathematical objects naturally live in higher categories, and so their symmetries are better described by tensor categories.
The notion of enriched quantum symmetry describes tensor category symmetries of 
fermionic systems (enriched over $\sVect$) \cite{MR3623246,1709.01941,2109.10911,PhysRevB.105.155126,2109.11039},
and tensor category symmetries of anomalous quantum systems where the anomaly is described by the braided-enriched category $\sV$.
Indeed, 
%monoidal supercategories have been studied in \cite{},
%and 
the recent article \cite{2208.14018} uses braided-enriched unitary fusion categories to study generalized symmetries (domain walls) of anomalous (2+1)D topological orders.

Other uses of braided-enriched monoidal categories include a unified treatment of gapped and gapless topological domain walls between topologically ordered phases of matter \cite{1905.04924,1912.01760}, and an exploration of certain unitary fusion categories arising from subfactor theory, namely the $\Ad(E_8)$ and $\Ad(4442)$ fusion categories \cite{MR3314808,MR4079744}.
Other notions related to braided-enriched monoidal categories include planar para algebras \cite{MR3623255} and  anchored planar algebras \cite{1607.06041}.

While many of these applications are unitary, especially those to theoretical condensed matter physics, the notion of \emph{unitarity} for enriched quantum symmetries is overlooked in many of these applications, with the exception of \cite{MR3623255}.
In this article, we define the notion of unitarity for braided-enriched monoidal categories and we prove a characterization theorem in the spirit of \cite{MR3961709,2104.07747}.
We leave out an investigation of the change of enrichment as it would take us too far afield.

As in \cite{1809.09782}, we begin one categorical level down with the notion of dagger category $\cA$ enriched in an \emph{involutive} monoidal category $(\sV,\overline{\,\cdot\,})$ \cite{MR2861112}.
We are inspired by $\rm H^*$-categories and 2-Hilbert spaces from \cite{MR1448713}, and we define an \emph{enriched dagger}\footnote{After completing this manuscript, we became aware of a seminar talk of Egger also defining an enriched dagger structure on an involutive $\sV$-category; his convention is inverse to ours \cite[42:40]{EggerEnrichedDagger}.
}
$$
\kappa_{a\to b} : \overline{\cA(b\to a)} \to \cA(a\to b)
$$
satisfying certain properties analogous to those in an $\rm H^*$-category (see \S\ref{sec:EnrichedDaggerCats} below).

To prove a characterization theorem, we focus on the case that $\sV=\sU$ is a \emph{unitary monoidal category}, i.e., a rigid dagger monoidal category equipped with a \emph{unitary dual functor} in the sense of \cite{MR4133163}.
In this setting, $\sU$ comes equipped with a canonical \emph{bi-involutive structure} $(\dag,\overline{\,\cdot\,}, r, \nu,\varphi)$ \cite{MR3663592};
$\sU$ is equipped with three involutions $\dag,*, \overline{\,\cdot\,}$ which all commute, and composing two in either order produces the third.

\begin{thmalpha}
  \label{thm:EnrichedDaggerEquivalence}
  Let $\sU$ be a unitary monoidal category.
  There is a strict locally isomorphic 2-equivalence of strict 2-categories
  \[
  \left\{\,
  \parbox{4.9cm}{\rm Tensored dagger $\sU$-categories}
  \,\right\}
  \,\,\cong\,\,
  \left\{\,
  \parbox{6.3cm}{\rm Tensored dagger $\sU$-module categories}
  \,\right\}.
  \]
\end{thmalpha}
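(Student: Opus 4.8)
The plan is to lift the existing non-dagger 2-equivalence to the dagger setting rather than construct everything from scratch. After forgetting the dagger and enriched-dagger data, there is already a strict locally isomorphic 2-equivalence $\Phi$ between tensored $\sU$-categories and tensored $\sU$-module categories (in the spirit of \cite{MR3961709,2104.07747}), sending an enriched category $\cA$ to a module category $\cM=\Phi(\cA)$ on the same objects whose module internal hom $\underline{\Hom}(a\to b)$ recovers the enriched hom object $\cA(a\to b)$, with inverse $\Psi$. I would take this forgetful 2-equivalence as given and argue that the daggers on the two sides are in bijective, functorial correspondence; since the underlying 2-functors are already strict and induce isomorphisms of hom-categories, it then suffices to match the extra structure on the nose.

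First I would set up the object-level dictionary. Given a tensored dagger $\sU$-module category $\cM$ --- an ordinary dagger category with a dagger-compatible $\sU$-action and module internal homs --- I would produce an enriched dagger on $\cA=\Psi(\cM)$ by transporting the ordinary dagger $\dag\colon\cM(a\to b)\to\cM(b\to a)$ across the adjunction defining $\underline{\Hom}$, using the canonical bi-involutive structure $(\dag,\overline{\,\cdot\,},r,\nu,\varphi)$ of $\sU$ to identify $\overline{\underline{\Hom}(b\to a)}$ with $\underline{\Hom}(a\to b)$. The resulting map
\[
\kappa_{a\to b}\colon \overline{\cA(b\to a)}\to\cA(a\to b)
\]
is the candidate enriched dagger, and conversely an enriched dagger $\kappa$ determines a dagger on the module internal homs in the same way. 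The heart of this step is verifying that the enriched-dagger axioms (anti-multiplicativity, involutivity, and compatibility with the coherence and unit data) are exactly equivalent to the assertion that $\dag$ is an involutive contravariant functor compatible with the module action, once the bi-involutive coherences $r,\nu,\varphi$ are accounted for.

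Next I would promote this dictionary to $1$- and $2$-morphisms. A dagger $\sU$-functor should correspond under $\Phi$ to a $\sU$-module dagger functor, and the unitarity of enriched natural transformations should correspond to ordinary dagger naturality; because the underlying 2-equivalence already identifies the non-dagger hom-categories strictly, I only need to check that ``is a dagger morphism'' on one side is carried precisely to ``is a dagger morphism'' on the other, which follows formally from the object-level dictionary applied to the (co)evaluation and composition morphisms. Assembling these yields mutually inverse strict 2-functors that are isomorphisms on each hom-category, i.e., a strict locally isomorphic 2-equivalence.

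The main obstacle I anticipate is the bookkeeping in the object-level step: correctly threading the three commuting involutions $\dag,*,\overline{\,\cdot\,}$ and the bi-involutive coherence isomorphisms through the adjunction so that the enriched-dagger axioms translate term-by-term into the dagger-module axioms. In particular, matching the compatibility of $\kappa$ with the tensoring (copowers) against the compatibility of $\dag$ with the module action --- and checking that the relevant diagrams commute with the correct insertions of $r,\nu,\varphi$ --- is where the real work lies; the $1$- and $2$-morphism levels should then be comparatively formal.
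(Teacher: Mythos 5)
Your overall architecture---reuse the known non-dagger equivalence and verify that the dagger data on the two sides corresponds at each cell level---is indeed how the paper proceeds, and your candidate $\kappa$ (the ordinary dagger transported through the internal-hom adjunction, cf.\ the mate formula \eqref{eq:MateOfKappa} built from $\epsilon^\dagger$ and the conjugate structure) is the right construction. But the two steps you declare formal are exactly where the proof is not formal, and as written there is a genuine gap. A dagger structure cannot be transported along an equivalence (it violates the principle of equivalence, Remark \ref{rem:principle of equivalence}), so ``matching the extra structure on the nose'' across the 2-equivalence does not dispose of essential surjectivity on 0-cells: the round trip on a dagger module category $\sM$ produces a category with a \emph{different} action $\blacktriangleleft$, and the comparison functors are isomorphisms of underlying categories only when equipped with \emph{non-identity} modulators. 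Proving those modulators are unitary is the real content of this step (Proposition \ref{prop:equivalence-of-dagger-U-mods}, Lemma \ref{lem:modulator unitary}, and Appendix \ref{app:modulators}); indeed, the claim in the prior literature that they can be taken to be identities is an error that this paper corrects, so nothing ``on the nose'' is available here.

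Second, your object-level dictionary is incomplete: the weak enriched-dagger axioms \ref{kappa:1}--\ref{kappa:2} together with the bi-involutive coherences $(r,\nu,\varphi)$ are \emph{not} equivalent to the dagger-module axioms. One must additionally impose \ref{kappa:3} (unitarity of $\rho_a\in\sA(a\lhd 1\to a)$) and \ref{kappa:4} (the enriched left adjoints $a\lhd-$ are dagger with respect to the dagger self-enrichment $\cU$), and these extra axioms---absent from your list---are what drive the three nontrivial verifications on the module side: that $a\lhd u$ is determined up to canonical \emph{unitary} isomorphism (Proposition \ref{prop:daglhd1}), that $-\lhd-$ is a dagger functor (Proposition \ref{prop:daglhd2}), and that the coherators $\alpha$ are unitary (Proposition \ref{prop:daglhd3}), all via the key identity of Lemma \ref{lem:eta-kappa-inv} (which itself uses \ref{kappa:3} in an essential way, as the paper remarks). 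Likewise the 1-cell step you call ``comparatively formal'' is a substantial computation: unitarity of the induced modulators (Proposition \ref{prop:modulator-from-V-functor}) and daggerness of the functor constructed from a dagger module functor (via Lemma \ref{lem:kappa-mate}). Until you isolate \ref{kappa:3}--\ref{kappa:4} as part of the definition and supply the unitary-modulator lemmas, the asserted ``bijective, functorial correspondence'' of daggers does not go through.
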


Here, a $\sU$-category $\cA$ is called \emph{tensored} if every $\sU$-representable functor $\cA(a\to -): \cA\to \cU$ admits a left $\sU$-adjoint \cite[\S1.11]{MR2177301}, where $\cU$ is the \emph{self-enrichment} of $\sU$ (see Examples \ref{ex:InvolutiveSelfEnrichment} and \ref{ex:DaggerSelfEnrichment}).
A right (strong) $\sU$-module category $\sM$ is called \emph{tensored} if each functor $m\lhd - : \sU \to \sM$ admits a right adjoint.

This 2-equivalence is called \emph{locally isomorphic} \cite{2104.03121} as the hom functors on hom categories are all \emph{isomorphisms} of categories.
Since the 2-category of tensored dagger $\sU$-module categories is naturally a dagger 2-category, this allows us to transport a dagger structure to the 2-category of tensored dagger $\sU$-categories along the strict locally isomorphic 2-equivalence.

%When $\sU$ is a unitary tensor category (a semisimple rigid $\rm C^*$ tensor category with simple unit object), the unitary $\sU$-enriched categories are those whose underlying categories on the right hand side are tensored $\sU$-module $\rm C^*/W^*$-categories.

After establishing this theorem, we move to the braided-enriched setting, where our enriched dagger $\kappa$ on $\cA$ must also be compatible with the $\sV$-tensor product $-\otimes_\cA-$ (see \S\ref{sec:BraidedEnrichedDaggerStuff} below).
We prove our main characterization theorem when $\sV=\sU$ is a braided unitary monoidal category, i.e., a unitary monoidal category equipped with a unitary braiding.

\begin{thmalpha}
  \label{thm:EnrichedMonoidalDaggerEquivalence}
  Let $\sU$ be a braided unitary monoidal category.
  There is a strict locally isomorphic 2-equivalence of strict 2-categories
  \[
  \left\{\,
  \parbox{3.8cm}{\rm Tensored rigid dagger $\sU$-monoidal categories}
  \,\right\}
  \,\,\cong\,\,
  \left\{\,
  \parbox{4.8cm}{\rm Tensored rigid dagger $\sU$-module monoidal categories }
  \,\right\}.
  \]
\end{thmalpha}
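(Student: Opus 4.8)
The plan is to bootstrap from Theorem~\ref{thm:EnrichedDaggerEquivalence}. Write $\Phi$ for the 2-functor underlying that 2-equivalence. Its crucial feature is that it is \emph{strict} and \emph{locally isomorphic}: the induced functors on hom categories are isomorphisms of categories, so any further algebraic structure whose data and axioms live in these hom categories transports uniquely and unambiguously across $\Phi$. Now a tensored rigid dagger $\sU$-monoidal category is a tensored rigid dagger $\sU$-category $\cA$ equipped with a $\sU$-enriched monoidal product $-\otimes_\cA-$, a unit, and associativity and unit constraints satisfying the pentagon and triangle axioms, all required to be compatible with the enriched dagger $\kappa$. Dually, a tensored rigid dagger $\sU$-module monoidal category is a tensored dagger $\sU$-module category carrying a monoidal structure compatible with both the $\sU$-action and the dagger. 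I would organize the proof by exhibiting each side as a \emph{pseudomonoid} internal to the appropriate strict 2-category from Theorem~\ref{thm:EnrichedDaggerEquivalence}, and then showing that $\Phi$ carries one notion of pseudomonoid to the other.

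Concretely, the steps are as follows. First, I would equip each of the two 2-categories appearing in Theorem~\ref{thm:EnrichedDaggerEquivalence} with a monoidal (enriched-product) structure whose pseudomonoids are exactly the tensored rigid dagger $\sU$-monoidal categories, respectively the tensored rigid dagger $\sU$-module monoidal categories. Second, I would verify that $\Phi$ is strong monoidal for these structures; because $\Phi$ is locally an isomorphism, this reduces to matching the product on objects and checking that the coherence comparison cells are canonical. Third, a strong monoidal 2-functor automatically sends pseudomonoids to pseudomonoids, sends monoidal 1-morphisms and 2-morphisms across, and---since $\Phi$ is locally isomorphic---induces isomorphisms on the hom categories of pseudomonoids. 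This yields the desired strict locally isomorphic 2-equivalence. Rigidity is preserved on the nose, since duals are detected by the evaluation and coevaluation morphisms living in the hom categories, which $\Phi$ preserves.

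The dagger bookkeeping is threaded through every step. On the enriched side the new condition is that $\kappa$ intertwine appropriately with $-\otimes_\cA-$; using the canonical bi-involutive structure $(\dag,\overline{\,\cdot\,},r,\nu,\varphi)$ on $\sU$, I would check that under $\Phi$ this condition corresponds exactly to the statement that the transported module monoidal structure is a genuine \emph{dagger} module monoidal structure. In particular, the monoidal structure on the module side is governed by a braided central functor $\sU\to\sZ(\sM)$ in the spirit of \cite{MR3961709,2104.07747}, and I would show that this functor is moreover a dagger functor.

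I expect the main obstacle to be the interaction of the braiding of $\sU$ with the dagger. The half-braidings assembling the central functor $\sU\to\sZ(\sM)$ are built from the braiding of $\sU$, and the delicate point is that the enriched-dagger compatibility of $\kappa$ with $-\otimes_\cA-$ matches precisely the requirement that these half-braidings be \emph{unitary}. Establishing this coherence---rather than the formal transport of pseudomonoid structure, which follows the non-unitary template of \cite{2104.07747,2104.03121}---is where the real work lies, and it is exactly here that the hypothesis that $\sU$ is a \emph{braided} unitary monoidal category, with braiding unitary for $\dag$, is used.
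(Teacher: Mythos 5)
There is a genuine gap: the monoidal 2-category structures your transport argument presupposes are not available at the paper's level of generality, and supplying them would be harder than the theorem itself. On the enriched side, a tensor product $\cA\boxtimes\cB$ of $\sU$-categories (hom objects $\cA(a\to b)\,\cB(c\to d)$, composition via the braiding) does exist, but it does not preserve the \emph{tensored} condition: there is no reason the functor $(b,d)\mapsto \cA(a\to b)\,\cB(c\to d)$ admits a left $\sU$-adjoint just because each factor does, so the 0-cells of Theorem \ref{thm:EnrichedDaggerEquivalence} do not form a monoidal 2-category in the way you need, and conditions \ref{kappa:3}--\ref{kappa:4} cannot even be stated for $\cA\boxtimes\cA$. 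On the module side the situation is worse: the identification of $\sU$-module monoidal categories $(\sA,\sF^Z\colon\sU\to Z(\sA))$ with pseudomonoids requires the \emph{relative} tensor product $\boxtimes_\sU$ of module categories, which needs linearity/cocompleteness hypotheses absent here ($\sU$ is an arbitrary braided unitary monoidal category and the module categories are merely tensored); the plain product with the diagonal action is a monoidal structure on $\sU$-modules, but its pseudomonoids encode a modulator $(m\otimes n)\lhd v \cong (m\lhd v)\otimes(n\lhd v)$, which is not the structure of a central functor. Note also that even the non-dagger equivalence in this generality was established in \cite{MR3961709,2104.07747} by direct construction rather than pseudomonoid transport, and that the paper's $\sU$-monoidal categories are strict on objects with a distinguished morphism $-\otimes_\cA-$ satisfying the braided interchange \eqref{eq:BraidedInterchange}, so matching them to pseudomonoids would itself require a strictification argument you have not supplied.

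That said, you correctly located the crux. The paper proceeds by extending the explicit constructions: it proves \ref{Z:DaggerFunctor}, that the left adjoint $\sF$ is a dagger monoidal functor (immediate from Theorem \ref{thm:EnrichedDaggerEquivalence}'s ingredients, since $\mu_{u,v}=\alpha_{1_\cA,u,v}$ and $\alpha$ is unitary by \ref{eq:daglhd3}), and \ref{Z:HalfBraidingsUnitary}, that the half-braidings $e_{a,\sF(v)}$ are unitary --- and the latter is exactly your ``main obstacle.'' The paper's mechanism is worth internalizing: for each $a$ one defines the $\sU$-functor $\cG^a(b)=ba$, whose functoriality is the braided interchange and whose dagger property is precisely \ref{kappa:5}; then $e_{a,\sF(v)}$ is the modulator of $\cG^a$, so unitarity follows from Proposition \ref{prop:modulator-from-V-functor}, with rigidity used essentially (invertibility of half-braidings, lax module functors being strong, cf.\ \cite[Lem.~2.10]{MR3934626}) rather than being ``preserved on the nose.'' The converse direction --- that the $\sU$-category built from a dagger module monoidal category, with $-\otimes_\cA-$ defined via the mate of a diagram involving $\mu^{\sF}$ and the half-braidings, satisfies \ref{kappa:5} --- is a substantive computation (Lemma \ref{lem:nu-delta-mu} relating $\nu$, $\delta$, and $\mu$, together with the braided-involutive compatibility \eqref{eqn:braided involutive} and unitarity of the braiding) that no formal transport argument produces for free. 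To repair your proof you would either need to add hypotheses strong enough to build both monoidal 2-categories and prove the two pseudomonoid recognition theorems, or abandon the transport scaffolding and carry out the direct verifications, which is what the paper does.
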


Here, a rigid $\sU$-monoidal category $\cA$ is called \emph{tensored} if the underlying $\sU$-category of $\cA$ is tensored, equivalently, the $\sU$-representable functor $\cA(1_\cA\to -): \cA\to \cU$ admits a left $\sU$-adjoint.
A $\sU$-module monoidal category \cite{MR3578212} is a pair $(\sA,\sF^Z)$ consisting of a rigid dagger monoidal category $\sA$ together with a braided dagger module functor $\sF^Z: \sA\to Z^\dag(\sA)$, the dagger Drinfeld center of $\sA$ \cite[Def.~6.1]{MR1966525}.
A rigid dagger $\sU$-module monoidal category $(\sA,\sF^Z)$ is called \emph{tensored} if $\sF:=\sF^Z\circ \Forget_Z$ admits a right adjoint, where $\Forget_Z: Z(\sA)\to \sA$ is the forgetful functor.

In future applications, we hope to use the notion of unitary enrichment to study notions of unitarity for higher categories and a unitary version of higher idempotent completion \cite{1812.11933,1905.09566} building on \cite{MR4419534,MR4369356,2108.09872}.
We also plan on investigating unitarity for anchored planar algebras building on \cite{MR3578212,1607.06041}.

\subsection*{Acknowledgments}
The authors would like to thank Andr\'e Henriques, David Reutter, and Jan Steinebrunner for helpful conversations.
The authors were supported by NSF DMS grants 1654159 and 2154389.

%%%%%%%%%%%%%%%%%%%%%%%%%%%%%%%%%%%%%%%%%%%%%%%
%%%%%%%%%%%%%%%%%%%%%%%%%%%%%%%%%%%%%%%%%%%%%%%
%%%%%%%%%%%%%%%%%%%%%%%%%%%%%%%%%%%%%%%%%%%%%%%
\section{Background}

In this article, composition of maps is written in the reverse order; i.e., if $f: a\to b$ and $g: b\to c$, then we write $f\circ g: a\to c$.

%%%%%%%%%%%%%%%%%%%%%%%%%%%%%%%%%%%%%%%%%%%%%%%
\subsection{Enriched categories}
Suppose $\sV$ is a closed monoidal category.
We suppress tensor symbols, associators, and unitors whenever possible.

\begin{defn}[\cite{MR2177301}]
  A $\sV$-category $\cA$ consists of a collection of objects, a hom object $\cA(a \to b)$ in $\sV$ for each pair of objects $a, b$ in $\cA$, and a distinguished identity morphism $j_a \in \sV(1_\sV \to \cA(a \to a))$.
  Further, there is a distinguished composition morphism $- \circ_\cA - \in \sV (\cA(a \to b) \cA(b\to c)\to \cA(a\to c))$ for each triple of objects $a, b, c$ in $\cA$.
  This data must satisfy the following identity and associativity axioms:
  \begin{align*}
    \begin{tikzpicture}[smallstring, baseline=30]
      \node (top) at (1,3) {$\cA ( a \to b )$};
      \node (bot) at (2,0) {$\cA ( a \to b )$};
      \node[box] (j) at (0,1) {$j_a$};
      \node[box] (circ) at (1,2) {$- \circ_{\cA} -$};
      \draw (j) to[in=-90,out=90] (circ.225);
      \draw (bot) to[in=-90,out=90] (circ.-45);
      \draw (circ) to[in=-90,out=90] (top);
    \end{tikzpicture}
    =
    \begin{tikzpicture}[smallstring, baseline=30]
      \node(bot) at (0,0) {$\cA ( a \to b )$};
      \node (top) at (0,3) {$\cA ( a \to b )$};
      \draw (top) to (bot);
    \end{tikzpicture}
    =
    \begin{tikzpicture}[smallstring, baseline=30]
      \node (top) at (1,3) {$\cA ( a \to b )$};
      \node (bot) at (0,0) {$\cA ( a \to b )$};
      \node[box] (j) at (2,1) {$j_b$};
      \node[box] (circ) at (1,2) {$- \circ_{\cA} -$};
      \draw (j) to[in=-90,out=90] (circ.-45);
      \draw (bot) to[in=-90,out=90] (circ.225);
      \draw (circ) to[in=-90,out=90] (top);
    \end{tikzpicture}
  \end{align*}
  and
  \begin{align*}
    \begin{tikzpicture}[smallstring, baseline=30]
      \node (ab) at (0,0) {$\cA ( a \to b )$};
      \node (bc) at (2,0) {$\cA ( b \to c )$};
      \node (cd) at (4,0) {$\cA ( c \to d )$};
      \node (top) at (2,3) {$\cA ( a \to d )$};
      \node[box] (circ1) at (1,1) {$- \circ_{\cA} -$};
      \node[box] (circ2) at (2,2) {$- \circ_{\cA} -$};
      \draw (ab) to[in=-90,out=90] (circ1.225);
      \draw (bc) to[in=-90,out=90] (circ1.-45);
      \draw (circ1) to[in=-90,out=90] (circ2.225);
      \draw (cd) to[in=-90,out=90] (circ2.-45);
      \draw (circ2) to[in=-90,out=90] (top);
    \end{tikzpicture}
    =
    \begin{tikzpicture}[smallstring, baseline=30]
      \node (ab) at (0,0) {$\cA ( a \to b )$};
      \node (bc) at (2,0) {$\cA ( b \to c )$};
      \node (cd) at (4,0) {$\cA ( c \to d )$};
      \node (top) at (2,3) {$\cA ( a \to d )$};
      \node[box] (circ1) at (3,1) {$- \circ_{\cA} -$};
      \node[box] (circ2) at (2,2) {$- \circ_{\cA} -$};
      \draw (ab) to[in=-90,out=90] (circ2.225);
      \draw (bc) to[in=-90,out=90] (circ1.225);
      \draw (circ1) to[in=-90,out=90] (circ2.-45);
      \draw (cd) to[in=-90,out=90] (circ1.-45);
      \draw (circ2) to[in=-90,out=90] (top);
    \end{tikzpicture}.
  \end{align*}
\end{defn}

\begin{ex}
  \label{ex:SelfEnrichment}
  When $\sV$ is a rigid monoidal category (or more generally closed monoidal), we can form the self-enrichment $\cV$ whose hom objects are $\cV(u\to v):= u^* v$ and composition and unit morphisms are given by
  $$
  -\circ_\cV-
  :=
  \tikzmath{
    \draw[thick] (-.3,-.6) node[below, yshift=-.05cm]{$\scriptstyle v$} arc (180:0:.3cm) node[below, yshift=.05cm]{$\scriptstyle v^*$}; 
    \draw[thick] (-.6,-.6) node[below, yshift=.05cm]{$\scriptstyle u^*$} to[in=-90,out=90] (-.3,.3) node[above]{$\scriptstyle u^*$};
    \draw[thick] (.6,-.6) node[below, yshift=-.05cm]{$\scriptstyle w$} to[in=-90,out=90] (.3,.3) node[above]{$\scriptstyle w$};
  }
  =
  \id_{u^*} \ev_v \id_w
  \qquad\qquad\text{and}\qquad\qquad
  j_v:= 
  \tikzmath{
    \draw[thick] (-.3,0) node[above]{$\scriptstyle v^*$} arc (-180:0:.3cm) node[above]{$\scriptstyle v$}; 
  }
  =
  \coev_v.
  $$
  Implicit here are our conventions for duals in $\sV$.
  We make this explicit in Definition \ref{defn:Rigid} below.
\end{ex}

\begin{defn}
  A $\sV$-functor $\cF$ between $\sV$-categories $\cA$ and $\cB$ is a function on objects, and an assignment of a morphism $\cF_{a \to b} \in \sV ( \cA (a \to b) \to \cB (\cF(a) \to \cF(b)) )$ satisfying functoriality and unitality axioms:
  \begin{align*}
    \begin{tikzpicture}[smallstring, baseline=30]
      \node (ab) at (0,0) {$\cA ( a \to b )$};
      \node (bc) at (2,0) {$\cA ( b \to c )$};
      \node (top) at (1,3) {$\cB ( \cF ( a ) \to \cF ( c ) )$};
      \node[box] (circ) at (1,1) {$- \circ_{\cA} -$};
      \node[box] (r) at (1,2) {$\cF_{a \to c}$};
      \draw (ab) to[in=-90,out=90] (circ.225);
      \draw (bc) to[in=-90,out=90] (circ.-45);
      \draw (circ) to[in=-90,out=90] (r);
      \draw (r) to[in=-90,out=90] (top);
    \end{tikzpicture}
    =
    \begin{tikzpicture}[smallstring, baseline=30]
      \node (ab) at (0,0) {$\cA ( a \to b )$};
      \node (bc) at (2,0) {$\cA ( b \to c )$};
      \node (top) at (1,3) {$\cB ( \cF ( a ) \to \cF ( b ) )$};
      \node[box] (rab) at (0,1) {$\cF_{a \to b}$};
      \node[box] (rbc) at (2,1) {$\cF_{b \to c}$};
      \node[box] (circ) at (1,2) {$- \circ_{\cB} -$};
      \draw (ab) to[in=-90,out=90] (rab);
      \draw (bc) to[in=-90,out=90] (rbc);
      \draw (rab) to[in=-90,out=90] (circ.225);
      \draw (rbc) to[in=-90,out=90] (circ.-45);
      \draw (circ) to[in=-90,out=90] (top);
    \end{tikzpicture}
    \quad \text{and} \quad
    \begin{tikzpicture}[smallstring, baseline=15]
      \node (top) at (0,2) {$\cB ( \cF ( a ) \to \cF ( a ) )$};
      \node[box] (j) at (0,0) {$j_a^{\cA}$};
      \node[box] (r) at (0,1) {$\cF_{a \to a}$};
      \draw (j) to (r);
      \draw (r) to (top);
    \end{tikzpicture}
    =
    \begin{tikzpicture}[smallstring, baseline=5]
      \node (top) at (0,1) {$\cB ( \cF ( a ) \to \cF ( a ) )$};
      \node[box] (j) at (0,0) {$j_{\cF ( a )}^{\cB}$};
      \draw (j) to (top);
    \end{tikzpicture}
  \end{align*}
\end{defn}
\begin{defn}
  Given objects $a, b$ in a $\sV$-category $\cA$, a \emph{$1_\sV$-graded morphism} from $a$ to $b$ is a morphism in $\sV ( 1_\sV \to \cA(a \to b))$. 
  A \emph{$\sV$-natural transformation} between $\sV$-functors $\cF, \cG : \cA \to \cB$ is a collection of $1_\sV$-graded morphisms $\lambda_a \in \sV(1_\sV \to \cB(\cF(a) \to \cG(a)))$ that is natural, i.e.,
  \begin{equation}
    \begin{tikzpicture}[smallstring, baseline=35]
      \node (ab) at (2,0) {$\cA ( a \to b )$};
      \node (top) at (1,3) {$\cB ( \cF ( a ) \to \cG ( b ) )$};
      \node[box] (lambda) at (0,1) {$\lambda_a$};
      \node[box] (g) at (2,1) {$\cG_{a \to b}$};
      \node[box] (circ) at (1,2) {$- \circ_{\cB} -$};
      \draw (ab) to[in=-90,out=90] (g);
      \draw (lambda) to[in=-90,out=90] (circ.225);
      \draw (g) to[in=-90,out=90] (circ.-45);
      \draw (circ) to[in=-90,out=90] (top);
    \end{tikzpicture}
    =
    \begin{tikzpicture}[smallstring, baseline=35]
      \node (ab) at (0,0) {$\cA ( a \to b )$};
      \node (top) at (1,3) {$\cB ( \cF ( a ) \to \cG ( b ) )$};
      \node[box] (lambda) at (2,1) {$\lambda_b$};
      \node[box] (f) at (0,1) {$\cF_{a \to b}$};
      \node[box] (circ) at (1,2) {$- \circ_{\cB} -$};
      \draw (ab) to[in=-90,out=90] (f);
      \draw (f) to[in=-90,out=90] (circ.225);
      \draw (lambda) to[in=-90,out=90] (circ.-45);
      \draw (circ) to[in=-90,out=90] (top);
    \end{tikzpicture}.
  \end{equation}
\end{defn}

%%%%%%%%%%%%%%%%%%%%%%%%%%%%%%%%%%%%%%%%%%%%%%%
\subsection{Mates, the underlying category, and enriched adjunctions}

\begin{defn}
  Suppose $\sA,\sB$ are categories and $\sL: \sA \to \sB, \sR: \sB \to \sA$ are functors such that $\sL\dashv \sR$ via the adjunction
  \begin{equation}
    \label{eq:IntroAdjunction}
    \sB(\sL(a) \to b) \cong \sA(a\to \sR(b)).
  \end{equation}
  We say that $f\in \sB(\sL(a)\to b)$ and $g\in \sA(a\to \sR(b))$ are \emph{mates} if they map to each other under the isomorphism \eqref{eq:IntroAdjunction}.
  We denote this by $f=\mate(g)$ and $g=\mate(f)$.
  We have the following two important identities for mates, which hold by naturality of the adjunction \eqref{eq:IntroAdjunction}:
  \begin{enumerate}[label=(mate\arabic*)]
  \item
    \label{mate:L-composite}
    Given $f_1 : a\to \sR(b_1)$ and $f_2: \sR(b_1)\to \sR(b_2)$, 
    $\mate(f_1\circ f_2) = \sL(f_1)\circ \mate(f_2)$.
  \item
    \label{mate:R-composite}
    Given $g_1: \sL(a_1)\to \sL(a_2)$ and $g_2: \sL(a_2)\to b$,
    $\mate(g_1\circ g_2)=\mate(g_1)\circ \sR(g_2)$.
  \end{enumerate}
  The \emph{unit} of the adjunction is the natural isomorphism $\eta: \id_{\sA}\Rightarrow \sL \circ \sR$ given by $\eta_a:=\mate(\id_{\sL(a)})$.
  The \emph{counit} of the adjunction is the natural isomorphism $\epsilon: \id_{\sB}\Rightarrow \sR\circ\sL$ given by $\epsilon_b := \mate(\id_{\sR(b)})$.
  Observe that \ref{mate:L-composite} and \ref{mate:R-composite} give the identities
  $\sL(\eta_a)\circ \epsilon_{\sL(a)} = \id_{\sL(a)}$
  and
  $\eta_{\sR(b)} \circ \sR(\epsilon_b) = \id_{\sR(b)}$
  respectively.
\end{defn}

\begin{defn}
  Given a $\sV$-category $\cA$, the \emph{underlying category} $\cA^\sV$ has the same objects as $\cA$, and $\cA^\sV(a\to b) := \sV(1\to \cA(a\to b))$.
  The identity of $\cA^\sV(a\to a)$ is the identity element $j_a$, and composition of morphisms is given by $f\circ_{\cA^\sV} g:= (f g)\circ (-\circ_\cA -)$.
  Whenever possible, we use the the sans-serif font $\sA$ as an abbreviation for $\cA^\sV$ for notational simplicity.

  Given a $\sV$-functor $\cF: \cA\to \cB$, the underlying functor $\cF^\sV: \cA^\sV \to \cB^\sV$ is given on $f\in \cA^\sV(a\to b)=\sV(1\to \cA(a\to b))$ by $\cF^\sV(f) := f\circ \cF_{a\to b}$.
  Again, we use the sans-serif font $\sF$ to denote $\cF^\sV$ whenever possible.
\end{defn}

\begin{defn}
\label{defn:v-adjoint}
  Suppose $\sV$ is a monoidal category, $\cA,\cB$ are $\sV$-categories, and $\cL: \cA \to \cB$ is a $\sV$-functor.
  A \emph{right $\cV$-adjoint} $\cL\dashv_\sV \cR$ consists of a $\sV$-functor $\cR: \cB \to \cA$ 
  equipped with a family of isomorphisms $\Psi_{a, d} \in\sV(\cB(\cL(a)\to d)\to \cA(a\to \cR(d)))$ for $a\in \cA$ and $d\in \cB$,
  such that for all $a,b\in \cA$ and all $c,d\in \cB$, the following two diagrams commute:
  \begin{equation}
    \label{eq:V-Adjoint1}
    \begin{tikzcd}%[column sep=tiny]
      \cA(a\to b) \cA(b \to \cR(d))
      \ar[d, "\cL_{a\to b}\Psi_{b, d}^{-1}"]
      \ar[rr, "-\circ_\cA-"]
      &&
      \cA(a\to \cR(d))
      \ar[d, "\Psi_{a, d}^{-1}"]
      \\
      \cB(\cL(a)\to \cL(b))\cB(\cL(b) \to d)
      \ar[rr, "-\circ_\cB-"]
      &&
      \cB(\cL(a)\to d)
    \end{tikzcd}
  \end{equation}
  \begin{equation}
    \label{eq:V-Adjoint2}
    \begin{tikzcd}%[column sep=tiny]
      \cB(\cL(a) \to c)\cB(c\to d)
      \ar[d, "\Psi_{a, c}\cR_{c\to d}"]
      \ar[rr, "-\circ_\cB-"]
      &&
      \cB(\cL(a) \to d)
      \ar[d, "\Psi_{a, d}"]
      \\
      \cA(a \to \cR(c))\cA(\cR(c)\to \cR(d))
      \ar[rr, "-\circ_\cA-"]
      &&
      \cA(a\to \cR(d))
    \end{tikzcd}
  \end{equation}
  By \cite[\S1.11]{MR2177301}, the underlying functor $\sR:\sB \to \sA$ is an ordinary right adjoint to $\sL: \sA \to \sB$.
\end{defn}

\begin{rem}[{\cite[Rem.~2.17]{1809.09782}}]
  Without a braiding on $\sV$, we are not able to form product or opposite $\sV$-categories.
  Thus Definition \ref{defn:v-adjoint} is slightly different from the definition given in \cite[\S1.11]{MR2177301};
  these structures are needed to talk about a $\sV$-natural isomorphism $\cB(\cL(a)\to d) \cong \cA(a\to \cR(d))$.
  A consequence is that standard results for enriched adjunctions, including the enriched Yoneda Lemma, may not necessarily hold under our definitions.
  However, we use only the ordinary Yoneda Lemma and no results from the theory of enriched adjunctions;
  \cite{1809.09782} uses the definition directly and proves the conditions of the definition directly, without appealing to other results.
\end{rem}

\begin{defn}
  Suppose $\cA$ is a $\sV$-category.
  For each $a\in \sA$, we get the representable functor $\sA(a\to -): \sA \to \sV$ given by $b\mapsto \cA(a\to b)$ and for $f: b\to c$,
  $$
  \sA(a\to f):=
  \tikzmath[smallstring]{
    \node (ab) at (0,0) {$\cA (a \to b)$};
    \node (top) at (.5,3) {$\cA (a\to c)$};
    \node[box] (f) at (1,1) {$f$};
    \node[box] (circ) at (.5,2) {$- \circ_{\cA} -$};
    \draw (ab) to[in=-90,out=90] (circ.-135);
    \draw (f) to[in=-90,out=90] node[right]{$\scriptstyle \cA(b\to c)$} (circ.-45);
    \draw (circ) to[in=-90,out=90] (top);
  }.
  $$
  When $\sV$ is rigid monoidal (or more generally closed monoidal) so that we may define the self-enrichment $\cV$, this functor can be promoted to a $\sV$-functor $\cA(a\to -): \cA\to \cV$ by 
  $$
  \cA(a\to -)_{b\to c} :=
  \tikzmath[smallstring]{
    \node (ab) at (-1.5,3) {$\cA (a \to b)^*$};
    \node (ac) at (.5,3) {$\cA (a\to c)$};
    \node (bc) at (0.75,.5) {$\cA(b\to c)$};
    \node[box] (circ) at (.5,2) {$- \circ_{\cA} -$};
    \draw (bc) to (bc |- circ.-90);
    \draw (ab) to  (ab |- circ.-90);
    \draw (ab |- circ.-90) to[in=-90,out=-90]  (0.25,0 |- circ.-90);
    \draw (circ) to[in=-90,out=90] (ac);
  }.
  $$
\end{defn}

\begin{defn}
  A $\sV$-category $\cA$ is called \emph{weakly tensored} if every representable functor $\sA(a\to -): \sA\to \sV$ admits a left adjoint denoted $a\lhd - : \sV \to \sA$.
  We call $\cA$ \emph{tensored} if every $\sV$-representable functor $\cA(a\to -): \cA \to \cV$ admits a left $\sV$-adjoint, which we still denote $a\lhd - : \cV \to \cA$.
\end{defn}

%%%%%%%%%%%%%%%%%%%%%%%%%%%%%%%%%%%%%%%%%%%%%%%
\subsection{Module categories and characterization of tensored enriched categories}

Suppose $\sV$ is a monoidal category.
As before, we suppress tensor products, associators, and unitors whenever possible.

\begin{defn}
  A (right) $\sV$-module is a category $\sM$ equipped with a functor $-\lhd - : \sM\times\sV\to \sM$ 
  together with families of natural isomorphisms 
  $\{\alpha_{m,u,v}: m\lhd u v \to m\lhd u\lhd v\}_{m\in \sM, u,v\in \sV}$
  and
  $\{\rho_m : m\lhd 1\to m\}_{m\in \sM}$
  which satisfy an obvious pentagon associativity axiom and triangle unitality axiom.
  We refer the reader to \cite[\S7.1]{MR3242743} for the diagrams.
  If the $\alpha$ are not necessarily isomorphisms but still satisfy the coherences, we call $\sM$ a \emph{strongly unital oplax} $\sV$-module.

  A $\sV$-module functor $(\sF,\mu): \sM\to \sN$ consists of a functor $\sF: \sM \to \sN$ and \emph{modulator} natural isomorphisms $\{\omega_{m,v} : \sF(m)\lhd v \to \sF(m\lhd v)\}_{m\in\sM, v\in \sV}$ which satisfy the obvious associativity and unitality conditions.
  Again, we refer the reader to \cite[\S7.2]{MR3242743} for the diagrams.

  A $\sV$-module natural transformation $\theta: \sF\Rightarrow \sG$ is a natural transformation such that for each $m\in \sM$ and $v\in \sV$, the following diagram commutes:
  $$
  \begin{tikzcd}
    \sF(m)\lhd v
    \arrow[d, swap, "\theta_m\lhd \id_v"]
    \arrow[r, "\omega_{m,v}^\sF"]
    &
    \sF(m\lhd v)
    \arrow[d, "\theta_{m\lhd v}"]
    \\
    \sG(m)\lhd v
    \arrow[r, "\omega_{m,v}^\sG"]
    &
    \sG(m\lhd v).
  \end{tikzcd}
  $$
\end{defn}

\begin{construction}[$\sV$-category to $\sV$-module]
  \label{const:CatToMod}
  Starting with a weakly tensored $\sV$-category $\cA$,
  the category $\sA$ has a canonical strongly unital $\sV$-module structure
  $-\lhd - : \sV \times \sA \to \sA$ where
  $\id_a \lhd g$ is the functor $a\lhd -$ applied to $g:u \to v$.
  Taking mates under the adjunction $\sA(a\lhd u \to a\lhd v) \cong \sV(u \to \cA(a\to a\lhd v))$, naturality of $\eta$ gives the identity
  \begin{equation}
    \label{eq:MateOf1lhdg}
    \begin{tikzpicture}[smallstring, baseline=25]
      \node (bot) at (0,0) {$u$};
      \node[box] (g) at (2,1) {$\id_a \lhd g$};
      \node[box] (eta) at (0,1) {$\eta_{a, u}$};
      \node[box] (circ) at (1,2) {$- \circ -$};
      \node (top) at (1,3) {$\cA( a \to a \lhd v )$};
      \draw (bot) to (eta);
      \draw (g) to[in=-90,out=90] (circ.-45);
      \draw (eta) to[in=-90,out=90] (circ.225);
      \draw (circ) to (top);
    \end{tikzpicture}
    =
    \begin{tikzpicture}[smallstring, baseline=25]
      \node (bot) at (0,0) {$u$};
      \node[box] (g) at (0,1) {$g$};
      \node[box] (eta) at (0,2) {$\eta_{a, v}$};
      \node (top) at (0,3) {$\cA( a \to a \lhd v )$};
      \draw (bot) to (g);
      \draw (g) to[in=-90,out=90] (eta);
      \draw (eta) to (top);
    \end{tikzpicture}
    \,.
  \end{equation}
  The map $f\lhd \id_v$ is defined as the mate of 
  $$
  u 
  \xrightarrow{f\eta}
  \cA(a\to b)\cA(b\to b\lhd u)
  \xrightarrow{-\circ_\cA -}
  \cA(a\to b\lhd u)
  $$
  under the adjunction $\sA(a\lhd u \to b\lhd u) \cong \sV(u \to \cA(a\to b\lhd u))$.
  This yields the identity
  \begin{equation}
    \label{eq:DefineLhd}
    \begin{tikzpicture}[smallstring, baseline=25]
      \node (bot) at (0,0) {$u$};
      \node[box] (f) at (2,1) {$f \lhd \id_u$};
      \node[box] (eta) at (0,1) {$\eta_{a, u}$};
      \node[box] (circ) at (1,2) {$- \circ -$};
      \node (top) at (1,3) {$\cA( a \to b \lhd u )$};
      \draw (bot) to (eta);
      \draw (f) to[in=-90,out=90] (circ.-45);
      \draw (eta) to[in=-90,out=90] (circ.225);
      \draw (circ) to (top);
    \end{tikzpicture}
    =
    \begin{tikzpicture}[smallstring, baseline=25]
      \node (bot) at (2,0) {$u$};
      \node[box] (f) at (0,1) {$f$};
      \node[box] (eta) at (2,1) {$\eta_{b, u}$};
      \node[box] (circ) at (1,2) {$- \circ -$};
      \node (top) at (1,3) {$\cA( a \to b \lhd u )$};
      \draw (bot) to (eta);
      \draw (f) to[in=-90,out=90] (circ.225);
      \draw (eta) to[in=-90,out=90] (circ.-45);
      \draw (circ) to (top);
    \end{tikzpicture}
    \,.
  \end{equation}
  The exchange relation for $- \lhd -$ is now visibly apparent combining \eqref{eq:MateOf1lhdg} and \eqref{eq:DefineLhd}.
  The functor $-\lhd -$ comes equipped with a strongly unital \emph{oplaxitor} $\alpha_{a, u,v} \in \sA(a \lhd u v \to a \lhd u \lhd v)$ given by the mate of the map
  $$
  uv 
  \xrightarrow{\eta_{a,u}\eta_{a \lhd u, v}} 
  \cA(a\to a \lhd u) \cA(a \lhd u \to a \lhd u \lhd v)
  \xrightarrow{-\circ_\cA-}
  \cA(a\to a \lhd u \lhd v)
  $$
  under the adjunction
  $
  \sA(a \lhd u v \to a \lhd u \lhd v)
  \cong 
  \sV(uv \to \cA(a\to a \lhd u \lhd v))$.
  The isomorphism $\rho_a\in \sA(a\lhd 1_\sV \to a)$ is the mate of $j_a$ under the adjunction
  $\cA(a\lhd 1_\sV \to a) \cong \sV(1_\sV \to \cA(a\to a))$
  with inverse
  $\eta_{a,1_\sV}\in \sV(1_\sV \to \cA(a\to a\lhd 1_\sV))=\sA(a\to a\lhd 1_\sV)$.
  By \cite{MR649797} or \cite[\S4]{1809.09782}, the map $\alpha_{u,v,a}$ is an isomorphism if and only if the left adjoint $a \lhd -$ of $\sA(a\to -)$ can be promoted to a left $\cV$-adjoint of $\cA(a\to -)$.
\end{construction}

\begin{rem}[{\cite{MR3961709}}]
  \label{rem:mate of circ}
  The mate of the composition map $- \circ_\cA - \in \sV(\cA(a \to b) \cA(b \to c) \to \cA(a \to c))$ under Adjunction \ref{eq:CatToModAdjunction} is
  \begin{equation}
    \begin{tikzpicture}[smallstring, baseline=25]
      \node (a) at (0,0) {\small $a$};
      \node (bot) at (2,0) {\small $\cA(a \to b) \cA(b \to c)$};
      \node (c) at (0,4) {\small $c$};
      \draw[double] (bot) to (bot |- 0,0.5);
      \modulebox{0,0.5}{3.5}{0.75}{$\alpha_{a, \cA(a \to b), \cA(b \to c)}$}
      \modulebox{0,1.5}{1.5}{0.75}{$\epsilon_{a \to b}$}
      \modulebox{0,2.5}{3.5}{0.75}{$\epsilon_{b \to c}$}
      \draw[blue] (a) to (c);
      \draw (1,1.25) to (1,1.5);
      \draw (2,1.25) to (2,2.5);
    \end{tikzpicture}
    \,.
  \end{equation}
  The proof is omitted as it is simpler in our setting than the $\sV$-monoidal setting of \cite{MR3961709}.
\end{rem}

\begin{construction}[$\sV$-module to $\sV$-category]
  \label{const:ModToCat}
  Conversely, given a strongly unital oplax $\sV$-module $\sA$ where each $c \lhd -: \sV \to \sA$ admits a right adjoint,
  we define a $\sV$-category $\cA$ by defining the hom objects via the adjunction
  \begin{align} \label{adj:main}
    \sA(a \lhd v \to b) 
    \cong
    \sV(v \to \cA(a\to b)),
  \end{align}
  the identity element $j_a \in \sV(1\to \cA(a\to a))$ as the mate of $\rho_a\in \sA(a\lhd 1\to a)$,
  and the composition morphism $- \circ_\cA - \in \cV(\cA(a \to b) \cA(b \to c) \to \cA(a \to c))$ as the mate of
  $\alpha_{a, \cA(a \to b), \cA(b \to c)} \circ (\epsilon_{a \to b} \id_{\cA(b \to c)}) \circ \epsilon_{b \to c}$.
  Here, $\epsilon_{a\to b} : a\lhd \cA(a\to b) \to b$ is the mate of $\id_{\cA(a\to b)}$ under Adjunction \eqref{adj:main}.

  When $\sA$ is a (strong) $\sV$-module (where the associators $\alpha$ are isomorphisms), and $\sV$ is rigid monoidal (or more generally closed monoidal), 
  we can promote each functor $a\lhd - : \sV \to \sA$ to a $\sV$-functor $\cV\to \cA$ by defining $(a\lhd -)_{u\to v} \in \sV(\cV(u\to v) \to \cA(a\lhd u \to a\lhd v))$ as the mate of $\alpha^{-1}_{a,u,\cV(u\to v)} \circ (1\lhd (\ev_u \id_v)): a\lhd u\lhd a \cV(u\to v) \to a\lhd v$.
  In this case, each $a\lhd - : \cV\to \cA$ is a left $\cV$-adjoint of $\cA(a\to -)$.
\end{construction}

\begin{defn}
  A strongly unital oplax $\sV$-module $\sM$ is called:
  \begin{itemize}
  \item
  \emph{weakly tensored} if every functor $m \lhd - : \sV \to \sM$ admits a right adjoint, and
  \item
  \emph{tensored} if $\sM$ is a weakly tensored strong $\sV$-module, i.e., all oplaxitors $\alpha_{a,u,v}: a\lhd uv \to a\lhd u \lhd v$ are isomorphisms. 
  \end{itemize}
\end{defn}

%%%%%%%%%%%%%%%%%%%%%%%%%%%%%%%%%%%%%%%%%%%%%%%

Constructions \ref{const:CatToMod} and \ref{const:ModToCat} prove the following theorem, which was originally due to \cite{MR649797} (see also \cite{MR1897810}, \cite{MR1466618}, \cite[Lem.~4.7]{1110.3567}).
The details on the level of objects can be found in \cite[\S3-4]{1809.09782},\footnote{There is an error in the equivalence of objects in \cite[\S3]{1809.09782}, namely in the definition of the modulator.
  This error has propagated to \cite{2104.03121} which cites \cite{1809.09782} for the construction of the modulator.
  We fix this error in Appendix \ref{app:modulators} below; see the proof of Proposition \ref{prop:equivalence-of-dagger-U-mods} for more details.
} 
and some details for higher levels appear in \cite{2104.03121}, which are similar to the proof in the $\sV$-monoidal setting from \cite{2104.07747}.

\begin{thm}
  \label{thm:StrongVMod}
  Let $\sV$ be a closed monoidal category.
  There is an equivalence of 2-categories
  \[
  \left\{\,
  \parbox{3.7cm}{\rm Tensored $\sV$-categories}
  \,\right\}
  \,\,\cong\,\,
  \left\{\,
  \parbox{3.4cm}{\rm Tensored $\sV$-modules}
  \,\right\}.
  \]
\end{thm}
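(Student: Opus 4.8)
The plan is to promote Constructions \ref{const:CatToMod} and \ref{const:ModToCat} into a pair of mutually inverse 2-functors and then to verify the claimed equivalence one categorical level at a time. On objects, Construction \ref{const:CatToMod} sends a tensored $\sV$-category $\cA$ to its underlying category $\sA$ with the module structure $-\lhd-$ assembled from the adjunctions $a\lhd-\dashv\sA(a\to-)$; because $\cA$ is \emph{tensored}, the final sentence of Construction \ref{const:CatToMod} guarantees that the oplaxitors $\alpha_{a,u,v}$ are isomorphisms, so $\sA$ is genuinely a tensored strong $\sV$-module. Construction \ref{const:ModToCat} runs in the reverse direction, so the first task is to check that these two assignments are mutually inverse on objects.

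First I would settle the object level by a Yoneda argument. For the round trip $\cA\mapsto\sA\mapsto\cA'$, the hom objects of $\cA'$ are defined through \eqref{adj:main} by $\sA(a\lhd v\to b)\cong\sV(v\to\cA'(a\to b))$, while the module built from $\cA$ already satisfies $\sA(a\lhd v\to b)\cong\sV(v\to\cA(a\to b))$; uniqueness of representing objects then yields a natural isomorphism $\cA'(a\to b)\cong\cA(a\to b)$, and I would verify that it intertwines identities and composition, using that both are defined as mates of the same $\rho$, $\epsilon$, and $\alpha$ data (cf.\ Remark \ref{rem:mate of circ}). For the reverse round trip $\sA\mapsto\cA\mapsto\sA'$, the underlying category recovers $\sA$ via $\cA^\sV(a\to b)=\sV(1\to\cA(a\to b))\cong\sA(a\lhd 1\to b)\cong\sA(a\to b)$ using $\rho_a$, and one checks that the reconstructed $-\lhd-$ agrees with the original through \eqref{eq:DefineLhd} and the oplaxitor formula.

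Next I would extend the correspondence to 1- and 2-morphisms. A $\sV$-functor $\cF\colon\cA\to\cB$ has an underlying functor $\sF\colon\sA\to\sB$, which I would promote to a $\sV$-module functor by equipping it with a modulator $\omega_{a,v}\colon\sF(a)\lhd v\to\sF(a\lhd v)$ defined as the mate of the composite of $\eta$ with $\cF_{a\to a\lhd v}$; conversely a $\sV$-module functor determines a $\sV$-functor via \eqref{adj:main}. One then checks these assignments are mutually inverse and strictly respect composition and identities, so that the 2-functors are honest (indeed locally bijective). At the top level, a $\sV$-natural transformation corresponds to a $\sV$-module natural transformation, the module-naturality square being precisely the mate of the enriched naturality axiom.

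The hardest step is the modulator $\omega$. Its formula must be correct---the footnote above flags an error in the definition of the modulator that has propagated through the literature---and it must simultaneously be natural in $v$, satisfy the module-functor associativity and unitality coherences, be compatible with composition of $\sV$-functors, and be invertible exactly because the module is strong. Pinning down $\omega$ and discharging these coherences is where the genuine work lies; by comparison the object-level and 2-morphism-level verifications are formal manipulations with mates and the ordinary Yoneda Lemma.
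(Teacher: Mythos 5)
Your proposal follows essentially the same route as the paper: Theorem \ref{thm:StrongVMod} is proved precisely by promoting Constructions \ref{const:CatToMod} and \ref{const:ModToCat} to mutually inverse 2-functors, with the modulator $\omega_{a,v}$ on 1-cells defined as the mate of $\eta_{a,v}\circ\cF_{a\to a\lhd v}$ and the higher-cell coherence checks carried out as you describe (the paper defers these to the cited references). One small correction of emphasis: in the round trip $\sA\mapsto\cA\mapsto\sA'$ the reconstructed action does \emph{not} strictly agree with the original --- the comparison functors carry the non-identity modulators $\omega,\omega'$ (the unit and counit of the adjunction obtained by composing \eqref{adj:main} with \eqref{eq:ModToCatAdjunction}), constructed in Appendix \ref{app:modulators}; this object-level comparison, rather than the modulator of a general $\sV$-functor, is where the error flagged in the footnote lived.
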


%%%%%%%%%%%%%%%%%%%%%%%%%%%%%%%%%%%%%%%%%%%%%%%
\subsection{Involutions on monoidal categories and unitary monoidal categories}

In this section, we discuss the basic notions of dagger (monoidal) category, involutive monoidal category, and unitary monoidal categories used in this work.
General references for notions in this section include \cite{MR2767048,MR2861112,MR3663592,MR3687214,MR4133163}.

\begin{defn}
  A \emph{dagger category} is a category $\sC$ equipped with anti-linear maps
  $\dag: \sC(a\to b)\to \sC(b\to a)$ for all $a,b\in \sC$ such that $f^{\dag\dag}=f$ for all morphisms $f$ and
  $(f\circ g)^\dag = g^\dag \circ f^\dag$ for all composable morphisms $f,g$.
  When $\sC$ is linear, we require $\dagger$ is anti-linear.

  A morphism $u$ in a dagger category is called \emph{unitary} if $u$ is invertible with $u^{-1}=u^\dag$.

  A \emph{monoidal dagger category} is a dagger category $(\sV,\dag)$ equipped with a monoidal structure
  $(\otimes, 1, \alpha,\rho,\lambda)$
  such that
  $\otimes: \sV\times \sV\to \sV$ is a $\dag$-functor,
  and $\alpha,\lambda,\rho$ are all unitary.
  When $\sV$ is linear, we require $\otimes$ is linear.
\end{defn}

\begin{defn}
  A \emph{dagger functor} between dagger categories is a functor between the underlying categories $\sF: \sA\to \sB$ which preserves the dagger structure, i.e., $\sF(f^\dag)=\sF(f)^\dag$.

  A \emph{dagger monoidal functor} between monoidal dagger categories $(\sF,\mu): \sU\to \sV$ is
  a dagger functor equipped with a unitary monoidal coherence isomorphism
  $\mu: \sF(ab)\to \sF(a)\sF(b)$
  which satisfies unitality and associativity criteria.
\end{defn}

\begin{defn}[{\cite{MR2861112,MR3687214}}]
  \label{defn:involutive linear category}
  An \emph{involutive category} is a category $\sC$ equipped with a \emph{conjugation} functor
  $\overline{\,\cdot\,}:\sC\to \sC$ together with a natural isomorphism
  $\varphi: \id_\sC \to \overline{\overline{\,\cdot\,}}$
  satisfying 
  \begin{itemize}
  \item
    $\overline{\varphi_c} = \varphi_{\overline{c}}: \overline{c} \to \overline{\overline{\overline{c}}}$ for all $c\in \sC$.
  \end{itemize}
  When $\sC$ is linear, we require $\overline{\,\cdot\,}$ is anti-linear.

  An \emph{involutive monoidal category} is an involutive category $(\sV,\overline{\,\cdot\,},\varphi)$ equipped with a monoidal structure $(\otimes, 1, \alpha,\rho,\lambda)$ together with 
  an anti-monoidal coherence isomorphism 
  $$
  \nu_{u,v} : \overline{u} \otimes \overline{v} \to \overline{v\otimes u} 
  $$
  for $\overline{\,\cdot\,}$ and a \emph{real structure} isomorphism $r: 1\to \overline{1}$
  satisfying the  coherence axioms
  \begin{itemize}
  \item 
    ($r$ a real structure)
    $r \circ \overline{r} = \varphi_{1_\sV}$.
  \item 
    ($\varphi: \id_\sV \to \overline{\overline{\,\cdot\,}}$ a monoidal natural isomorphism) 
    $\varphi_{uv} = ( \varphi_u \varphi_v ) \circ \nu_{\overline{u},\overline{v}} \circ \overline{\nu_{u,v}}$
  \item (unitality)
    $(\id_{\overline{u}} r) \circ \nu_{u,1_\sV} = \id_{\overline{u}} = (r \id_{\overline{u}}) \circ \nu_{1_\sV,u}$
  \item (associativity)
    $(\nu_{u,v} \id_{\overline{w}}) \circ \nu_{vu, w} = (\id_{\overline{u}} \nu_{v,w}) \circ \nu_{u,wv}$.
  \end{itemize}
  Above, we have suppressed all associators and unitors in $\sV$ for convenience.
\end{defn}

\begin{defn}
  An \emph{involutive functor} between involutive categories $(\sF,\chi): \sA\to \sB$ is a functor
  $\sF: \sA\to \sB$
  equipped with a natural isomorphism
  $\chi_a: \sF(\overline{a}) \to \overline{\sF(a)}$ satisfying
  \begin{itemize}
  \item $\varphi_{\sF(a)} = \sF(\varphi_a) \circ \chi_{\overline{a}} \circ \overline{\chi_a}$ for all $a\in \sA$.
  \end{itemize}

  An \emph{involutive monoidal functor} 
  $(\sF, \mu, \chi) : \sU \to \sV$ between involutive monoidal categories is a monoidal functor
  $(\sF,\mu): \sU\to \sV$ such that $(\sF, \chi)$ is an involutive functor between the underlying categories,
  and the following additional coherence axioms are satisfied:
  \begin{itemize}
  \item $\sF(r_\sU) \circ \chi_{1_\sU} = r_\sV$
  \item $\mu_{\overline{v}, \overline{u}} \circ \sF(\nu_{v,u}) \circ \chi_{uv} = (\chi_v \chi_u) \circ \nu_{\sF(v), \sF(u)} \circ \overline{\mu_{u,v}}$
  \end{itemize}
  Again, we suppress all associators and unitors in $\sU,\sV$ for convenience.
\end{defn}

\begin{defn}[\cite{MR3663592}]
  A \emph{bi-involutive} category is a quadruple $(\sA,\dag,\overline{\,\cdot\,},\varphi)$
  where $(\sA,\dag)$ is a dagger category and $(\sA, \overline{\,\cdot\,},\varphi)$ is an involutive category
  such that $\overline{\,\cdot\,}$ is a dagger functor, and the natural isomorphism
  $\varphi: \id_\sA\to \overline{\overline{\,\cdot\,}}$
  is unitary.

  A \emph{bi-involutive monoidal category} is a bi-involutive category
  $(\sA,\dag,\overline{\,\cdot\,},\varphi)$
  equipped with a monoidal structure $(\otimes, 1, \alpha,\rho,\lambda)$ and coherators $(\nu,r)$
  so that $\sA$ is simultaneously a monoidal dagger category and an involutive monoidal category,
  with the extra condition that $\nu,r$ are unitary.
\end{defn}

\begin{defn}
  A \emph{bi-involutive functor} between bi-involutive categories is an involutive dagger functor whose coherence natural isomorphism $\chi$ is unitary.

  A \emph{bi-involutive monoidal functor} is a bi-involutive functor equipped with a unitary monoidal coherence isomorphism.
\end{defn}

\begin{defn}
\label{defn:Rigid}
  Suppose $\sV$ is a monoidal category.
  A \emph{dual} for $v\in \sV$ is a choice of dual object $v^*$ together with evaluation and coevaluation maps 
  $$
  \begin{tikzpicture}[baseline=-.1cm]
    \draw (0,0) node[below]{$\scriptstyle v$} arc (180:0:.3cm) node[below]{$\scriptstyle v^*$};
  \end{tikzpicture}
  =
  \ev_v: vv^*\to 1
  \qquad\qquad
  \begin{tikzpicture}[baseline=-.1cm]
    \draw (0,0) node[above]{$\scriptstyle v^*$} arc (-180:0:.3cm) node[above]{$\scriptstyle v$};
  \end{tikzpicture}
  =
  \coev_v:1\to v^*v
  $$ 
  satisfying the zig-zag/snake equations:
  $$
  \tikzmath{
    \draw (-.6,.6) -- node[left]{$\scriptstyle v^*$} (-.6,0) arc (-180:0:.3cm) node[left]{$\scriptstyle v$} arc (180:0:.3cm) -- node[right]{$\scriptstyle v^*$} (.6,-.6);
  }
  =\,\,
  \tikzmath{
    \draw (0,-.6) -- node[right]{$\scriptstyle v^*$} (0,.6);
  }
  \qquad\qquad
  \tikzmath[xscale=-1]{
    \draw (-.6,.6) -- node[right]{$\scriptstyle v$} (-.6,0) arc (-180:0:.3cm) node[left]{$\scriptstyle v^*$} arc (180:0:.3cm) -- node[left]{$\scriptstyle v$} (.6,-.6);
  }
  =\,\,
  \tikzmath{
    \draw (0,-.6) -- node[right]{$\scriptstyle v$} (0,.6);
  }.
  $$
  Any two choices of dual are canonically naturally isomorphic, so having a dual is a property, not additional structure.
  We call $v\in \sV$ \emph{dualizable} if $v$ has a dual and $v$ is isomorphic to the dual of another $v_*\in V$.
  We call $\sV$ \emph{rigid} if every $v\in \sV$ is dualizable.

  Given a dual $(v^*,\ev_v,\coev_v)$ for each $v\in \sV$, we get a canonical \emph{dual functor} $*: \sV\to \sV^{\rm mop}$, the monoidal and arrow opposite of $\sV$, given by
  $$
  f^*
  :=
  \tikzmath{
    \draw (0,.3) arc (180:0:.3cm) -- (.6,-.7) node[below]{$\scriptstyle v^*$};
    \draw (0,-.3) arc (0:-180:.3cm) -- (-.6,.7) node[above]{$\scriptstyle u^*$};
    \roundNbox{fill=white}{(0,0)}{.3}{0}{0}{$f$}
  }
  \qquad\qquad\qquad
  \forall\, f: u\to v.
  $$
  Observe $*$ comes equipped with a canonical monoidal coherator
  $$
  \nu_{u,v} := 
  \tikzmath{
    \draw (1.1,-.6) -- node[right]{$\scriptstyle v^*$} (1.1,0) arc (0:180:.575cm) arc (0:-180:.25cm) -- (-.55,.9);
    \draw (.6,-.6) -- node[right]{$\scriptstyle u^*$} (.6,0) arc (0:180:.3cm) arc (0:-180:.3cm) -- node[left]{$\scriptstyle (vu)^*$} (-.6,.9);
  }
  =
  ( \coev_{vu} \id_{u^*v^*}) \circ (\id_{(vu)^* v} \ev_{u} \id_{v^*}) \circ (\id_{(vu)^*} \ev_v).
  $$
  Again, any two choices of dual functor are canonically monoidally naturally isomorphic,
  so the existence of a dual functor is a property and not really a structure.
\end{defn}

\begin{rem}
  If $(\sV,\dag,\otimes)$ is a monoidal dagger category and $v\in \sV$ has dual $(v^*,\ev_v,\coev_v)$,
  then $v^*$ has dual $(v, \coev_v^\dag, \ev_v^\dag)$.
\end{rem}

\begin{defn}[\cite{MR4133163}]
  A \emph{unitary dual functor} on a monoidal dagger category $(\sV,\dag,\otimes)$ is a choice of dual
  $(v^*,\ev_v,\coev_v)$ for each $v\in \sV$
  such that the canonical dual functor $*$ is a dagger functor, and the canonical monoidal coherator is unitary. 
  Even though any two unitary dual functors on $\sV$ are canonically monoidally naturally isomorphic, this isomorphism need not be unitary.
  Thus a unitary dual functor on $\sV$ is a structure, and not just a property.

  By \cite[\S3.5]{MR4133163}, a unitary dual functor gives rise to a canonical unitary involutive structure $(\overline{\,\cdot\,},\varphi,\nu,r)$ such that $(\sV, \dag,\overline{\,\cdot\,})$ is a bi-involutive monoidal category.
  Indeed, we set $\overline{v}:=v^*$ and $\overline{f} := (f^\dagger)^*$ for all $f \in \sV(u \to v)$. 
  The coherence isomorphisms $r, \nu$, and $\varphi$ are given by the unitary isomorphisms
  \begin{itemize}
  \item $r := \coev_{1_\sU}$
  \item $\nu_{u,v} := ( \coev_{vu} \id_{\overline{u} \, \overline{v}}) \circ (\id_{\overline{vu} v} \ev_{u} \id_{\overline{v}}) \circ (\id_{\overline{vu}} \ev_v)$
  \item $\varphi_u := (\id_u \ev_{\overline{u}}^\dagger) \circ (\ev_u \id_{\overline{\overline{u}}}) = (\coev_{\overline{u}} \id_u) \circ (\id_{\overline{\overline{u}}} \coev_u^\dagger)$.
  \end{itemize}
  A monoidal dagger category $(\sV,\dag,\otimes)$ equipped with a unitary dual functor $*$ and its canonical involutive structure $\overline{\,\cdot\,}$ is called a \emph{unitary monoidal category}.
  In the sequel, we reserve the notation $\sU$ for a unitary monoidal category.
\end{defn}

\begin{ex}
  The most common examples of unitary monoidal categories are \emph{unitary tensor categories}, i.e., semisimple rigid $\rm C^*$ tensor categories with simple unit object, which arise naturally in subfactor theory \cite{MR3166042,1509.00038}, discrete and compact quantum groups \cite{MR1616348,MR3204665}, conformal field theory \cite{MR165864,MR1645078,MR3308880}, and topologically ordered phases of matter \cite{PhysRevB.71.045110,MR2804555,2106.15741}.
\end{ex}

\begin{defn}
  Suppose $\sV$ is a monoidal dagger category.
  A \emph{dagger right $\sV$-module category} is a dagger category $\sM$ together with a dagger functor $\lhd:\sM \times \sV \to \sM$ equipped with associator unitary natural isomorphisms $\alpha^\sM_{u,v,m}: m \lhd u \lhd v \to m \lhd u \otimes v$ and unitor unitary natural isomorphisms $\rho^\sM_{m}: m \lhd 1_\sU \to m$ which satisfy the obvious pentagon and triangle axioms. 

  A \emph{dagger $\sV$-module functor} from $\sM$ to $\sN$ consists of a dagger functor $\sF : \sM \to \sN$ and natural unitary isomorphisms $\theta_{m, u} : \sF(m) \lhd u \to F(m\lhd u)$ such that, suppressing associators in $\sM,\sN$, $(\theta_{m,u} \id_v) \circ \theta_{mu, v} = \theta_{m, uv}$.
\end{defn}

\begin{defn}
  We define a 2-category $\dagvmod$ with:
  \begin{itemize}
  \item 0-cells tensored dagger $\sV$-module categories,
  \item 1-cells dagger $\sV$-module functors with the usual composition, and
  \item 2-cells $\sV$-module natural transformations with the usual horizontal and vertical compositions.
  \end{itemize}
\end{defn}

%%%%%%%%%%%%%%%%%%%%%%%%%%%%%%%%%%%%%%%%%%%%%%%
%%%%%%%%%%%%%%%%%%%%%%%%%%%%%%%%%%%%%%%%%%%%%%%
%%%%%%%%%%%%%%%%%%%%%%%%%%%%%%%%%%%%%%%%%%%%%%%
\section{Enriched dagger categories}
\label{sec:EnrichedDaggerCats}

%%%%%%%%%%%%%%%%%%%%%%%%%%%%%%%%%%%%%%%%%%%%%%%
%%%%%%%%%%%%%%%%%%%%%%%%%%%%%%%%%%%%%%%%%%%%%%%
%%%%%%%%%%%%%%%%%%%%%%%%%%%%%%%%%%%%%%%%%%%%%%%
\subsection{Dagger \texorpdfstring{$\sV$}{V}-categories}
Throughout this subsection, let $\sV$ be an involutive monoidal category.
Recall that we cannot define the op of a $\sV$-enriched category without a braiding on $\sV$.
However, for $\sV$ an involutive tensor category with no braiding, there is a canonical involution on $\sV$-categories. 

\begin{defn}[Conjugate of a $\sV$-category]
  \label{def:ConjugateVCat}
  Given a $\sV$-category $\cA$, we can form a new $\sV$-category $\overline{\cA}$ with the same objects as $\cA$ by defining
  \begin{itemize}
  \item hom objects $\overline{\cA} (a \to b) := \overline{\cA(b \to a)}$,
  \item identities
    $j_a^{\overline{\cA}} := r \circ \overline{j_a^\cA} \in \sV(1_\sV \to \overline{\cA} (a \to a))$, and
  \item composition
    $- \circ_{\overline{\cA}} - := \nu_{\cA(b \to a), \cA(c \to b)} \circ (\overline{- \circ_\cA -}):
    \overline{\cA} (a \to b) \overline{\cA} (b \to c) \to \overline{\cA} (a \to c)$.
  \end{itemize}
  It is straightforward to verify that $\overline{\cA}$ is a $\sV$-category.
\end{defn}

\begin{rem}
  The map $\cA \to \overline{\cA}$ of $\sV$-categories can be extended to a 2-functor, contravariant on 2-cells, on the 2-category of $\sV$-categories.
  On $\sV$-functors $\cF : \cA \to \cB$, take
  $\overline{\cF}_{a \to b} := \overline{\cF_{b \to a}}$,
  and given a $\sV$-natural transformation
  $\theta: \cF \Rightarrow \cG: \cA \to \cB$,
  take
  $\overline{\theta}_a := r \circ \overline{\theta_a} \in \sV(1_\sV \to \overline{\cB}(\cG(a) \to \cF(a)))$. 
\end{rem}

\begin{defn} \label{defn:involutive U-cat}
  An \emph{weak dagger $\sV$-category} is a $\sV$-category $\cA$ equipped with
  a family of isomorphisms
  $\kappa_{a\to b} \in \sV(\overline{\cA(b\to a)} \to \cA(a \to b))$
  satisfying:
  \begin{enumerate}[label=($\kappa$\arabic*)]
  \item
    \label{kappa:1}
    $\varphi_{\cA(a\to b)}\circ \overline{\kappa_{a\to b}} \circ \kappa_{b\to a} = \id_{\cA(a\to b)}$.
  \item \label{kappa:2}
    $(\kappa_{b \to a} \kappa_{c \to b}) \circ (-\circ_\cA-)
    =
    (-\circ_{\overline{\cA}}-) \circ \kappa_{c \to a}
    \in
    \sV(\overline{\cA(b \to a)} \, \overline{\cA(c \to b)} \to \cA(a \to c))$.
  \end{enumerate}  
\end{defn}

Before continuing further, we make a couple remarks on the definition of weak dagger $\sV$-category.

\begin{rem}
  \label{rem:principle of equivalence}
  Observe that 
  $j_a = r \circ \overline{j_{a}} \circ \kappa_{a \to a}$:
  \[
  \begin{tikzpicture}[smallstring, baseline=5]
    \node (top) at (0,1) {$\cA(a \to a)$};
    \node[box] (j) at (0,0) {$j_a$};
    \draw (j) to (top);
  \end{tikzpicture}
  =
  \begin{tikzpicture}[smallstring, baseline=20]
    \node[box] (j) at (0,0) {$j_a$};
    \node[box] (kappainv) at (0,1) {$\kappa_{a \to a}^{-1}$};
    \node[box] (kappa) at (0,2) {$\kappa_{a \to a}$};
    \node (top) at (0,3) {$\cA(a \to a)$};
    \draw (j) to (kappainv);
    \draw (kappainv) to[in=-90,out=90] (kappa);
    \draw (kappa) to[in=-90,out=90] (top);
  \end{tikzpicture}
  \underset{\text{(Def.~\ref{def:ConjugateVCat})}}{=}
  \begin{tikzpicture}[smallstring, baseline=30]
    \node[box] (j) at (0,0) {$j_a$};
    \node[box] (kappainv) at (0,1) {$\kappa_{a \to a}^{-1}$};
    \node[box] (r) at (2,0) {$r$};
    \node[box] (jbar) at (2,1) {$\overline{j_a}$};
    \node[box] (circ) at (1,2) {$- \circ_{\overline{\cA}} -$};
    \node[box] (kappa) at (1,3) {$\kappa_{a \to a}$};
    \node (top) at (1,4) {$\cA(a \to a)$};
    \draw (j) to (kappainv);
    \draw (kappainv) to[in=-90,out=90] (circ.225);
    \draw (r) to (jbar);
    \draw (jbar) to[in=-90,out=90] (circ.-45);
    \draw (circ) to (kappa);
    \draw (kappa) to (top);
  \end{tikzpicture}
  \underset{\text{\ref{kappa:2}}}{=}
  \begin{tikzpicture}[smallstring, baseline=30]
    \node[box] (j) at (0,2) {$j_a$};
    \node[box] (r) at (2,0) {$r$};
    \node[box] (jbar) at (2,1) {$\overline{j_a}$};
    \node[box] (kappa) at (2,2) {$\kappa_{a \to a}$};
    \node[box] (circ) at (1,3) {$- \circ -$};
    \node (top) at (1,4) {$\cA(a \to a)$};
    \draw (j) to[in=-90,out=90] (circ.225);
    \draw (r) to (jbar);
    \draw (jbar) to (kappa);
    \draw (kappa) to[in=-90,out=90] (circ.-45);
    \draw (circ) to (top);
  \end{tikzpicture}
  =
  \begin{tikzpicture}[smallstring, baseline=20]
    \node[box] (r) at (0,0) {$r$};
    \node[box] (jbar) at (0,1) {$\overline{j_a}$};
    \node[box] (kappa) at (0,2) {$\kappa_{a \to a}$};
    \node (top) at (0,3) {$\cA(a \to a)$};
    \draw (r) to (jbar);
    \draw (jbar) to (kappa);
    \draw (kappa) to (top);
  \end{tikzpicture}\, .
  \]
  Thus, similar to how a dagger structure on a category $\sC$ can be viewed as a functor $\dag: \sC^{\rm op}\to \sC$
  which is the \emph{identity on objects} satisfying $\dag\circ \dag =\id_{\sC}$, 
  a weak dagger $\sV$-category $(\cA,\kappa)$ can be viewed as a $\sV$-category equipped with a $\sV$-functor $\kappa: \overline{\cA}\to \cA$ which is the \emph{identity on objects} satisfying \ref{kappa:1}.
  
  However, just as a dagger structure cannot be transported to an equivalent category, a weak enriched dagger structure $\kappa$ on a $\sV$-category cannot be transported to an equivalent $\sV$-category.
  In this sense, just as dagger categories are not merely categories with extra structure,
  weak dagger $\sV$-categories are not merely $\sV$-categories with extra structure, 
  as this extra structure violates the principle of equivalence \cite{nlab:principle_of_equivalence}.
  See \cite[Rem.~3.3]{MR4133163} for further discussion.
\end{rem}

\begin{rem}
  \label{rem:KappaInvertible}
  Observe that \ref{kappa:1} implies $\kappa_{a\to b}$ is invertible for all $a,b\in \cA$.
  Indeed, \ref{kappa:1} swapping $a$ and $b$ shows $\kappa_{a\to b}$ admits a left inverse
  (remember we have swapped the order of composition).
  Now since $\varphi$ is invertible, we see that \ref{kappa:1} implies
  $$
  \kappa_{a\to b}\circ \varphi_{\cA(a\to b)}\circ \overline{\kappa_{b\to a}}
  =
  \varphi_{\cA(a\to b)}\circ \overline{\varphi^{-1}_{\cA(a\to b)}}.
  $$
  Since the right hand side is invertible, we see $\kappa_{a\to b}$ admits a right inverse as well.
\end{rem}

\begin{ex}
  Consider $\sV=\Vect$ with complex conjugation.
  Observe that $\cV=\Vect$ is $\sV$-enriched, but does not admit the structure of a weak dagger $\sV$-category.

  Now consider $\cA=\Hilb$, which is also $\sV$-enriched.
  We get a weak dagger $\sV$-category structure on $\Hilb$ by defining
  $\kappa_{H \to K} \in \Hom(\overline{B(K\to H)} \to B(H\to K))$
  by $\overline{T}\mapsto T^*$ for $T\in B(K\to H)$.
\end{ex}

\begin{ex}
  \label{ex:H*Cat}
  Recall from \cite{MR1448713} or \cite[\S8.2.1]{MR3971584} that an $\rm H^*$-\emph{category} $\sH$
  is a linear dagger category such that
  $\sH(a\to b)$ is a finite dimensional Hilbert space for all $a,b\in \sH$, and 
  $$
  \langle g , f^\dag\circ h\rangle_{\sH(b\to c)}
  =
  \langle f\circ g , h\rangle_{\sH(a\to c)}
  =
  \langle f , h\circ g^\dag\rangle_{\sH(a\to b)}
  \qquad\qquad
  \forall \, f: a\to b\,, g: b\to c,\, h:a\to c.
  $$
  An $\rm H^*$-category $\sH$ is canonically a weak dagger $\Hilb$-category by defining 
  $$
  \kappa_{a\to b} :=\dag: \overline{\sH(b\to a)} \to \sH(a\to b)
  \qquad\qquad
  \overline{f}\mapsto f^\dag.
  $$
\end{ex}

\begin{rem}
  It is straightforward to verify that when $\cA$ is a weak dagger $\sV$-category, $\overline{\cA}$ is also weak dagger
  via %the $\sV$-functor $\kappa^{\overline{\cA}}$ defined as the identity on objects,
  %with 
  $\kappa_{a \to b}^{\overline{\cA}} := \overline{\kappa_{b \to a}^\cA}$.
  %It is straightforward to verify that $\overline{\cA}$ with this $\kappa^{\overline{\cA}}$ is a weak dagger $\sV$-category
%\end{rem}
%\begin{rem}
  Moreover, the coherence isomorphism $\varphi$ lifts to a canonical $\sV$-equivalence
  $\Psi : \cA \to \overline{\overline{\cA}}$
  defined as the identity on objects and
  $\Psi_{a \to b} := \varphi_{\cA(a \to b)} \in \sV(\cA(a \to b) \to \overline{\overline{\cA}} (a \to b))$.
\end{rem}

\begin{defn}
  Suppose $\cA,\cB$ are weak dagger $\sV$-categories and $\cF: \cA \to \cB$ is a $\sV$-functor.
  We say $\cF$ is \emph{dagger} if the following diagram commutes:
  $$
  \begin{tikzcd}
    \cA(a\to b) 
    \arrow[r, "\cF_{a\to b}"]
    \arrow[d, "\kappa^\cA_{a\to b}"]
    &
    \cB(\cF(a) \to \cF(b))
    \arrow[d, "\kappa^\cB_{\cF(a)\to \cF(b)}"]
    \\
    \overline{\cA(b\to a)} 
    \arrow[r, "\overline{\cF_{b\to a}}"]
    &
    \overline{\cB(\cF(b) \to \cF(a))}
  \end{tikzcd}\, .
  $$
\end{defn}

In order to achieve a correspondence with dagger $\sV$-module categories, the concept of weak dagger $\sV$-category is not enough.
We now introduce \emph{dagger $\sV$-categories}.

\begin{defn}
  A tensored weak dagger $\sV$-category $\cA$ is called a \emph{dagger $\sV$-category} if in addition:
  \begin{enumerate}[label=($\kappa$\arabic*)]
    \setcounter{enumi}{2}
  \item \label{kappa:3}
    the maps $\rho_a \in \sA(a\lhd 1 \to a)$ are unitary for all $a\in \sA$, and
  \item \label{kappa:4}
    the left $\sV$-adjoints $(a\lhd -) \dashv_{\sV} \cA(a\to -)$ are dagger for all $a\in \cA$.
  \end{enumerate}
\end{defn}

\begin{rem}
  While the condition \ref{kappa:3} feels somewhat artificial, we do not know of a way to prove Propositions \ref{prop:daglhd1} and \ref{prop:daglhd3} below without it.
  In the proof of Proposition \ref{prop:daglhd1}, we use \ref{kappa:3} twice to be able to apply unitality of the modulator,
  and in the proof of Proposition \ref{prop:daglhd3}, we cancel $\rho^{-1}$ with $(\rho^\dagger)^{-1}$ in a critical way.
\end{rem}

\begin{defn}
  Let $\sV$ be a rigid involutive monoidal category.
  We define a 2-category $\dagvcat$ with:
  \begin{itemize}
  \item 0-cells dagger $\sV$-categories,
  \item 1-cells dagger $\sV$-functors with their usual composition, and
  \item 2-cells $\sV$-natural transformations with their usual horizontal and vertical compositions.
  \end{itemize}
\end{defn}

\begin{construction}
  \label{const:DaggerFromInvolutive}
  Given a weak dagger $\sV$-category $(\cA, \kappa)$,
  we get a dagger structure on the underlying category $\sA$ as follows.
  For $f\in \sA(a\to b) = \sV(1_\sV \to \cA(a\to b))$, we define $f^\dag \in \sA(b\to a)$ as the composite
  $$
  1_\sV
  \xrightarrow{r}
  \overline{1_\sV}
  \xrightarrow{\overline{f}}
  \overline{\cA(a\to b)}
  \xrightarrow{\kappa_{b\to a}}
  \cA(b\to a).
  $$
\end{construction}

\begin{construction}
  \label{const:DaggerFromInvolutiveFunctor}
  Let $(\cA, \kappa^\cA)$ and $(\cB, \kappa^\cB)$ be weak dagger $\sV$-categories,
  and let $\cF: \cA \to \cB$ be a $\sV$-functor.
  If $\cF$ is dagger, then using the dagger structure on the underlying categories introduced
  in Construction \ref{const:DaggerFromInvolutive},
  the underlying functor $\sF : \sA \to \sB$ is a dagger functor.
  Indeed, for $f \in \cA(a \to b)$, we compute
\  $$
  (\sF(f))^\dag
  =
  \tikzmath[smallstring]{
    \node (top) at (0,3) {$\cB(\cF(b) \to \cF(a))$};
    \node[box] (f) at (0,0) {$\overline{f}$};
    \node[box] (F) at (0,1) {$\overline{\cF_{a\to b}}$};
    \node[box] (kappa) at (0,2) {$\kappa^\cB_{\cF(b) \to \cF(a)}$};
    \draw (f) to (F);
    \draw (F) to (kappa);
    \draw (kappa) to (top);
  }
  =
  \tikzmath[smallstring]{
    \node (top) at (0,3) {$\cB(\cF(b) \to \cF(a))$};
    \node[box] (f) at (0,0) {$\overline{f}$};
    \node[box] (F) at (0,2) {$\cF_{b\to a}$};
    \node[box] (kappa) at (0,1) {$\kappa^\cA_{b \to a}$};
    \draw (f) to (kappa);
    \draw (kappa) to (F);
    \draw (F) to (top);
  }
  =
  \sF(f^\dagger)  .
  $$
\end{construction}

\begin{ex}[Dagger self-enrichment] 
  \label{ex:InvolutiveSelfEnrichment}
  Starting with a unitary monoidal category $\sU$, 
  we can construct a weak enriched dagger structure on the self-enrichment $\cU$ from Example \ref{ex:SelfEnrichment}.
  First, we use the unitary dual functor of $\sU$ to construct the self-enrichment,
  which allows us to identify $u^*=\overline{u}$ for all $u\in \sU$.
  We then define 
  $$
  \kappa_{u\to v}^\cU
  :=
  \nu_{u, \overline{v}}^{-1} \circ (\id_{\overline{u}} \varphi_v^{-1})
  \in 
  \sU(\overline{\cU(v\to u)} \to \cU(u\to v))
  =
  \sU(\overline{\overline{v} u} \to \overline{u} v).
  $$
  We leave the straightforward verification to the reader.
  Later on in Example \ref{ex:DaggerSelfEnrichment}, we show that $(\cU,\kappa)$ is a dagger $\sU$-category.

  Now let $\cU^\sU$ be the underlying category of $\cU$, which is equipped with the dagger structure from Construction \ref{const:DaggerFromInvolutive}.
  The functor $\sF:\sU \to \cU^\sU$ which is the identity on objects and has
  $\sF(f:u\to v):= \coev_u\circ (\id_{\overline{u}}f)$
  is a dagger isomorphism with dagger inverse $\sG: \cU^\sU \to \sU$ given by
  $\sG(g\in \sU(1\to \overline{u}v)):= (\id_u g)\circ (\ev_u \id_v)$.
  Indeed, it is clear $\sF,\sG$ are isomorphisms of categories, so it suffices to show $\sF$ is dagger.
  Observe that
  \[
  \sF(f)^\dag
  =
  \begin{tikzpicture}[smallstring, baseline=20]
    \node[box] (f) at (0,0) {$\overline{\sF(f)}$};
    \node[box] (kappa) at (0,1) {$\kappa_{v \to u}^\cU$};
    \node (top) at (0,2) {$\overline{v} u$};
    \draw (f) to (kappa);
    \draw (kappa) to (top);
  \end{tikzpicture}
  =
  \begin{tikzpicture}[smallstring, baseline=30]
    \node[box] (f) at (0.5,0) {$\overline{\coev_u\circ (\id_{\overline{u}}f)}$};
    \node[box] (nu) at (0.5,1) {$\nu_{v, \overline{u}}^{-1}$};
    \node[box] (phi) at (1,2) {$\varphi_u$};
    \node (v) at (0,3) {$\overline{v}$};
    \node (u) at (1,3) {$u$};
    \draw[double] (f) to (nu);
    \draw (nu.115) to[in=-90,out=90] (v |- phi.south);
    \draw (v |- phi.south) to (v);
    \draw (nu.65) to[in=-90,out=90] (phi);
    \draw (phi) to (u);
  \end{tikzpicture}
  =
  \begin{tikzpicture}[smallstring, baseline=30]
    \node[box] (f) at (0,1.5) {$\overline{f}$};
    \node[box] (ev) at (0.5,0) {$\ev_{\overline{u}}^\dagger$};
    \node[box] (phi) at (1,1.5) {$\varphi_u^{-1}$};
    \node (u) at (1,3) {$u$};
    \node (v) at (0,3) {$\overline{v}$};
    \draw (ev.115) to[in=-90,out=90] (f);
    \draw (f) to (v);
    \draw (ev.65) to[in=-90,out=90] (phi);
    \draw (phi) to (u);
  \end{tikzpicture}
  =
  \begin{tikzpicture}[smallstring, baseline=20]
    \node[box] (f) at (0,1) {$\overline{f}$};
    \node (u) at (1,2) {$u$};
    \node (v) at (0,2) {$\overline{v}$};
    \draw (f) to (v);
    \draw (f) to[in=-90,out=-90] (u |- f.south);
    \draw (u |- f.south) to (u);
  \end{tikzpicture}
  =
  \begin{tikzpicture}[smallstring, baseline=20]
    \node[box] (f) at (1,1) {$\overline{f^*}$};
    \node (u) at (1,2) {$u$};
    \node (v) at (0,2) {$\overline{v}$};
    \draw (f) to (u);
    \draw (f) to[in=-90,out=-90] (v |- f.south);
    \draw (v |- f.south) to (v);
  \end{tikzpicture}
  =
  \sF(f^\dagger).
  \]
\end{ex}

%\begin{rem}
%When $\sU$ is a unitary monoidal category, and $\cU$ is the self-enrichment from Example \ref{ex:InvolutiveSelfEnrichment}, there is also another weak dagger $\sU$-category called $\widetilde{\cU}$.
%
%The $\sU$-category $\widetilde{\cU}$ has hom objects $\widetilde{\cU}(u\to v) := \cU(\overline{u} \to \overline{v}) = \overline{\overline{u}}\,\,\overline{v}$,
%the identity elements are given by 
%$$
%j_u^{\widetilde{\cU}} 
%:=
%\coev_{\overline{u}}
%\in 
%\sU(1_\sU \to \widetilde{\cU}(u\to u))
%=
%\sU(1_\sU \to  \overline{\overline{u}}\,\,\overline{u}),
%$$
%and composition is defined as
%\begin{align*}
%\id_{\overline{\overline{u}}} \ev_{\overline{v}} \id_{\overline{w}}
%\in
%\sU(
%\widetilde{\cU}(u\to v) \,\,\widetilde{\cU}(v\to w)
%\to 
%\widetilde{\cU}(u\to w)
%)
%=
%\sU(\overline{\overline{u}}\,\,\overline{v}\,\,\overline{\overline{v}}\,\,\overline{w} \to \overline{\overline{u}}\,\,\overline{w}).
%\end{align*} 
%Finally, we define
%$$
%\kappa^{\widetilde{\sU}}_{u\to v}
%:=
%(\id_{\overline{\overline{u}}} \varphi_{\overline{v}})\circ \nu_{\overline{u},\overline{\overline{v}}}
%\in
%\sU(\widetilde{\cU}(u\to v) \to \overline{\widetilde{\cU}(v\to u)})
%=
%\sU(\overline{\overline{u}}\,\,\overline{v} \to \overline{\overline{\overline{v}}\,\,\overline{u}}).
%$$
%Again, it is straightforward to verify that $\widetilde{\sU}$ is a weak dagger $\sU$-category.
%\end{rem}

%%%%%%%%%%%%%%%%%%%%%%%%%%%%%%%%%%%%%%%%%%%%%%%
\subsection{2-functor \texorpdfstring{$\dagvcat$}{DagVCat} to \texorpdfstring{$\dagvmod$}{DagVMod}}
\label{sec:first 2-functor}

Let $\sV$ be an involutive rigid monoidal category,
and let $\cV$ be the self-enrichment from Example \ref{ex:SelfEnrichment}.
We now assume that $\cV$ is equipped with a weak enriched dagger structure $\kappa$.
There is a canonical isomorphism between $\sV$ and its underlying category $\cV^\sV$ which is the identity on objects.
This isomorphism allows us to transport the dagger structure on $\cV^\sV$ from Construction \ref{const:DaggerFromInvolutive} to $\sV \cong \cV^\sV$ by the formula
$$
\left(
\tikzmath{
  \draw (0,-.7) node[right, yshift=.2cm]{$\scriptstyle u$} -- (0,.7) node[right, yshift=-.2cm]{$\scriptstyle v$};
  \roundNbox{fill=white}{(0,0)}{.3}{0}{0}{$f$}
}
\right)^\dag
:=
\begin{tikzpicture}[smallstring, baseline=35]
  \node (bot) at (0,0) {$v$};
  \node[box] (coev) at (1,1) {$\overline{\coev_u}$};
  \node[box] (f) at (1,2) {$\overline{\id_{u^*} f}$};
  \node[box] (kappa) at (1,3) {$\kappa_{v \to u}$};
  \node (top) at (kappa.45 |- 0,4) {$u$};
  \draw (bot) to (0,0 |- kappa.north);
  \draw (0,0 |- kappa.north) to[in=90,out=90] (kappa.135);
  \draw[double] (coev) to (f);
  \draw[double] (f) to (kappa);
  \draw (kappa.45) to (top);
\end{tikzpicture}
\,.
$$
We remark that this dagger structure is not necessarily compatible with the original involutive structure of $\sV$, unlike the dagger on a unitary monoidal category.
Indeed, we do not enforce any compatibility between $\kappa$ and the cups and caps of $\sV$.

\begin{rem}
  \label{rem:CanonicalIso u* -> u bar}
  Observe that this $\kappa$ on $\cV$ gives a canonical isomorphism 
  $$
  \kappa_{u\to 1} : \overline{\sV(1\to u)} = \overline{u} \to \sV(u\to 1)=u^*.
  $$
  At this time we do not know if this $\kappa_{\bullet \to 1}$ satisfies any additional coherence properties.
\end{rem}

In the remainder of this section, we construct a 2-functor $\Phi : \dagvcat \to \dagvmod$. 
% in the case that $\sV$ is a unitary monoidal category.
Recall that in \cite[\S4]{1809.09782}, one constructs a right $\sV$-module category $\sA$ from a tensored $\sV$-category $\cA$ by defining the $\sV$-action via the adjunction
\begin{equation}
  \label{eq:CatToModAdjunction}
  \sA(a\lhd u \to b)
  \cong
  \sV(u \to \cA(a\to b)).
\end{equation}
Let $(\cA, \kappa)$ be a dagger $\sV$-category,
and define $\Phi((\cA, \kappa))$ to be the $\sV$-module category $\sA$,
equipped with the dagger structure from Construction \ref{const:DaggerFromInvolutive}.
In order to show that $\sA$ is a \emph{dagger} $\sV$-module category,
we must prove the following three things:
\begin{enumerate}[label=($\dag\lhd$\arabic*)]
\item
  \label{eq:daglhd1}
  for $u\in \sV$, the object $a\lhd u \in \sA$ is determined up to canonical \emph{unitary} isomorphism rather than up to canonical isomorphism 
\item
  \label{eq:daglhd2}
  $-\lhd -$ is a dagger functor, and
\item
  \label{eq:daglhd3}
  the coherators $\alpha$ and $\rho$ are unitary.
\end{enumerate}

We begin by proving an essential identity needed to show that $\Phi((\cA, \kappa))$ satisfies \ref{eq:daglhd1}--\ref{eq:daglhd3}.
The following fact allows us to describe left $\sV$-adjoints of $\cA(a\to -)$.

\begin{fact}[{\cite[Lem.~4.9]{1809.09782}}]
  \label{fact:MateOfLa}
  Suppose $\cA$ is a tensored $\sV$-category, so that $\cA(a\to -)$ admits a left $\sV$-adjoint $a\lhd -$.
  Then the mate of $(a\lhd -)_{u\to v}$ under the adjunction
  \begin{equation}
    \label{eq:AdjunctionForLa}
    \sV(\cV(u\to v) \to \cA(a\lhd u\to a\lhd v))
    \cong
    \sA(a\lhd u \lhd \cV(u\to v) \to a\lhd v)
  \end{equation}
  is given by $\alpha_{a, u, u^*v}^{-1} \circ (\id_a \lhd \ev_u \id_v)$. 
  Observe that when $u=1$, we have
  $$
  \alpha_{a, 1, v}^{-1} \circ (\id_a \lhd \id_v)
  =
  (\rho_a \lhd \id_v) \circ (\id_{a \lhd v})
  =
  \rho_a \lhd \id_v,
  $$
  and taking mates under the adjunction, we record the identity
  \begin{equation}
    \label{eq:La - 1->v}
    (a\lhd -)_{1 \to u}
    =
    \begin{tikzpicture}[smallstring, baseline=25]
      \node (bot) at (0,0) {$u$};
      \node[box] (rho) at (2,1) {$\rho_a \lhd \id_u$};
      \node[box] (eta) at (0,1) {$\eta_{a \lhd 1, u}$};
      \node[box] (circ) at (1,2) {$- \circ -$};
      \node (top) at (1,3) {$\cA( a \lhd 1 \to a \lhd u )$};
      \draw (bot) to (eta);
      \draw (rho) to[in=-90,out=90] (circ.-45);
      \draw (eta) to[in=-90,out=90] (circ.225);
      \draw (circ) to (top);
    \end{tikzpicture}
    \underset{\eqref{eq:DefineLhd}}{=}
    \begin{tikzpicture}[smallstring, baseline=25]
      \node (bot) at (2,0) {$u$};
      \node[box] (rho) at (0,1) {$\rho_a$};
      \node[box] (eta) at (2,1) {$\eta_{a, u}$};
      \node[box] (circ) at (1,2) {$- \circ -$};
      \node (top) at (1,3) {$\cA( a \lhd 1 \to a \lhd u )$};
      \draw (bot) to (eta);
      \draw (rho) to[in=-90,out=90] (circ.225);
      \draw (eta) to[in=-90,out=90] (circ.-45);
      \draw (circ) to (top);
    \end{tikzpicture}
    \, .
  \end{equation}
\end{fact}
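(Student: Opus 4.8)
The plan is to compute the mate of $(a \lhd -)_{u \to v}$ directly from the defining data of the $\sV$-adjunction, using only mate-calculus and the identities already recorded, rather than any general theory of enriched adjoints (which, as noted in the remark after Definition~\ref{defn:v-adjoint}, is unavailable in our restricted setting). As a guide, observe that Construction~\ref{const:ModToCat} runs the reverse direction: there one \emph{defines} the $\sV$-functor structure of $a \lhd -$ by setting $(a \lhd -)_{u \to v}$ equal to $\mate\big(\alpha^{-1}_{a, u, u^* v} \circ (\id_a \lhd \ev_u \id_v)\big)$, so the content of the Fact is that the $\sV$-functor structure on the \emph{abstract} left $\sV$-adjoint coming from tensoredness agrees with this explicit one. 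Concretely, I would first extract a formula for $(a \lhd -)_{u \to v}$ from the compatibility square \eqref{eq:V-Adjoint1}: specializing the target to $d = a \lhd v$ and precomposing with the adjunction unit $\eta_{a, v}$ (which is sent to $\id_{a \lhd v}$ by $\Psi^{-1}$) isolates $(a \lhd -)_{u \to v}$ as $\Psi^{-1}_{u,\, a \lhd v}$ applied to the $-\circ_\cV-$ of $\eta_{a,v}$.

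With this formula in hand, I would take the mate under \eqref{eq:AdjunctionForLa}. By \ref{mate:L-composite} the mate of any $\phi \in \sV(\cV(u\to v) \to \cA(a\lhd u \to a\lhd v))$ is $\big((a\lhd u)\lhd \phi\big) \circ \epsilon_{(a\lhd u)\to(a\lhd v)}$, so the task reduces to simplifying this composite for $\phi = (a\lhd-)_{u\to v}$. Here the enriched adjunction isomorphism $\Psi^{-1}$ turns into the counit $\epsilon$ of the tensoring adjunction, and Remark~\ref{rem:mate of circ} computes the contribution of the self-enrichment composition $-\circ_\cV- = \id_{u^*}\,\ev_v\,\id$ as a composite of the associator $\alpha$ with counit maps. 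The evaluation $\ev_v$ coming from $-\circ_\cV-$ and the coevaluation built into $\cV(u\to v)=u^* v$, together with the unit $\eta_{a,v}$, then cancel by the zig-zag equations, and after reassociating with the pentagon and triangle axioms the whole expression collapses to $\alpha^{-1}_{a, u, u^* v} \circ (\id_a \lhd \ev_u \id_v)$.

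I expect the main obstacle to be exactly this last simplification: tracking the module associators $\alpha$ as they are pushed past the evaluation and coevaluation maps of the self-enrichment, and invoking the snake equations at precisely the right spot so that the coevaluation hidden in $u^* v$ annihilates $\ev_u$ to leave a single $\id_a \lhd \ev_u \id_v$; strong unitality (invertibility of all $\alpha$) is what permits writing the result with $\alpha^{-1}$. The special case $u = 1_\sV$ then follows as indicated in the statement, with no further adjunction theory: since $\ev_{1_\sV} = \id_{1_\sV}$, the module triangle axiom gives $\alpha^{-1}_{a, 1, v} \circ (\id_a \lhd \id_v) = \rho_a \lhd \id_v$, and taking the mate under \eqref{eq:AdjunctionForLa} via \ref{mate:L-composite} together with the identity \eqref{eq:DefineLhd} produces \eqref{eq:La - 1->v}.
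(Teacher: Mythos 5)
Since the paper states this as a \emph{Fact} quoted from \cite[Lem.~4.9]{1809.09782}, the only proof content in the paper itself is the $u=1_\sV$ computation producing \eqref{eq:La - 1->v}, which you reproduce correctly (triangle axiom, then mates via \eqref{eq:DefineLhd}). Your opening reduction is also correct and genuinely useful: inserting the $1_\sV$-graded unit $\eta_{a,v}$ into the second slot of \eqref{eq:V-Adjoint1} with $d = a\lhd v$, and using that $\Psi^{-1}$ sends $\eta_{a,v}$ to $j_{a\lhd v}$ together with unitality of $-\circ_\cA-$ and one zig-zag to absorb $-\circ_\cV-$, yields $(a\lhd -)_{u\to v} = (\id_{u^*}\,\eta_{a,v})\circ \Psi^{-1}_{u,\,a\lhd v}$. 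Moreover, \emph{granting} the formula $\mate(\Psi^{-1}_{u,d}) = \alpha^{-1}_{a,u,u^*\cA(a\to d)}\circ(\id_a\lhd \ev_u\,\id)\circ\epsilon_{a\to d}$, the finish works exactly as you sketch: naturality of $\alpha$ slides $\eta_{a,v}$ past the evaluation, and the triangle identity $(\id_a\lhd\eta_{a,v})\circ\epsilon_{a\to a\lhd v}=\id_{a\lhd v}$ (recorded in the paper's definition of mates) collapses everything to $\alpha^{-1}_{a,u,u^*v}\circ(\id_a\lhd\ev_u\,\id_v)$.

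The genuine gap is the sentence ``the enriched adjunction isomorphism $\Psi^{-1}$ turns into the counit $\epsilon$'': that identification \emph{is} the lemma, and nothing you invoke proves it. You only ever use \eqref{eq:V-Adjoint1}, but that square, under any element insertion, produces only naturality-type constraints --- for instance, inserting $j^\cV_{\cA(a\to d)}$ expresses $\Psi^{-1}_{u,d}$ as $(a\lhd -)_{u\to \cA(a\to d)}$ post-composed with the counit element, which is circular for computing the mate of $(a\lhd-)$ --- and it cannot determine $\Psi$ as a morphism of $\sV$: since $\sV(1_\sV\to -)$ is not faithful in general, knowing that $\Psi^{-1}$ carries the element $\eta_{a,v}$ to $j_{a\lhd v}$ pins down nothing beyond the underlying adjunction. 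The missing ingredient is the \emph{second} coherence square \eqref{eq:V-Adjoint2}: inserting $j_{a\lhd u}$ into its first slot identifies $\Psi_{u,d}$ explicitly as ``bend with $\coev_u$, insert $\eta_{a,u}$, then $-\circ_\cA-$,'' and only from this string formula, inverted using rigidity and the zig-zag equations, does your claimed description of $\mate(\Psi^{-1}_{u,d})$ follow. Two smaller inaccuracies: Remark \ref{rem:mate of circ} is miscited (it computes the mate of $-\circ_\cA-$ under Adjunction \eqref{eq:CatToModAdjunction}, whereas in your composite the $-\circ_\cV-$ with the inserted unit collapses to $\id_{u^*}\otimes\eta_{a,v}$ by a zig-zag \emph{before} any mate is taken), and your appeal to \ref{mate:L-composite} needs the general source-naturality of the adjunction, since the intermediate object $u^*\cA(a\to a\lhd v)$ is not of the form $\cA(a\lhd u\to b)$ required by the identity as stated.
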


\begin{ex}
  \label{ex:DaggerSelfEnrichment}
  Suppose $\sU$ is a unitary monoidal category.
  We can now show that the self-enrichment $(\cU,\kappa^\cU)$ from Example \ref{ex:InvolutiveSelfEnrichment} is a dagger $\sU$-category.
  Indeed, $\rho_u: u1\to u$ is unitary, and \ref{kappa:4} follows from the following calculation. 
  Taking the mate of $\kappa_{v \to w}^{\cU} \circ (u \lhd -)_{v \to w}$ and suppressing coherators in $\sU$, we have
  \begin{align*}
    \begin{tikzpicture}[smallstring, baseline=35]
      \node (u-bot) at (0.5,0) {$u v$};
      \node (wv) at (1.75,0) {$\overline{\cU(w \to v)}$};
      \node (u-top) at (0.4,3.5) {$u$};
      \node (w) at (1.875,3.5) {$w$};
      \draw[double] (wv) to (1.75,0.5);
      \draw[rounded corners] (1.25,0.5) rectangle (2.25,1.25) node[midway] {$\nu_{v, \overline{w}}^{-1}$};
      \draw[rounded corners] (1.5,2) rectangle (2.25,2.75) node[midway] {$\varphi_w^{-1}$};
      \draw (0.4,0 |- u-bot.north) to (u-top);
      \draw (u-bot) to (0.5,1.25);
      \draw (0.5,1.25) to[in=90,out=90] (1.5,1.25);
      \draw (1.875,1.25) to (1.875,2);
      \draw (1.875,2.75) to (w);
    \end{tikzpicture}
    =
    \begin{tikzpicture}[smallstring, baseline=20]
      \node (u-bot) at (0.4,0) {$u$};
      \node (wv) at (2.5,0) {$\overline{\cU(w \to v)}$};
      \node (u-top) at (0.4,2) {$u w$};
      \node (v) at (1.35,0) {$v$};
      \draw (u-bot) to (u-top);
      \draw[double] (wv) to (2.5,1);
      \draw (v) to (1.33,1);
      \draw[double] (2.5,1) to[in=90,out=90] (1.3,1);
      \draw (1.27,1) to[in=-90,out=-90] (0.5,1);
      \draw (0.5,1) to (0.5,0 |- u-top.south);
    \end{tikzpicture}
    =
    \begin{tikzpicture}[smallstring, baseline=30]
      \node (top) at (0,3.5) {$u w$};
      \node (uv) at (0,0) {$u v$};
      \node (wv) at (3,0) {$\overline{\cU(w \to v)}$};
      \modulebox{0,0.5}{2}{0.75}{$\varepsilon_{u w \to u v}^\dagger$};
      \draw[rounded corners] (0.25,1.5) rectangle (2.5,2.25) node[midway] {$(u \lhd -)_{w \to v}^\dagger$};
      \draw (uv) to (top);
      \draw (0.1,0 |- uv.north) to (0.1,0.5);
      \draw (0.1,1.25) to (0.1,0 |- top.south);
      \draw (1.25,1.25) to (1.25,1.5);
      \draw (1.25,2.25) to (1.25,2.5);
      \draw (wv) to (3,2.5);
      \draw (3,2.5) to[in=90,out=90] (1.25,2.5);
    \end{tikzpicture}
    =
    \begin{tikzpicture}[smallstring, baseline=20]
      \node (top) at (0,2.5) {$u w$};
      \node (uv) at (0,0) {$u v$};
      \node (wv) at (3.15,0) {$\overline{\cU(w \to v)}$};
      \draw[rounded corners] (1.9,0.5) rectangle (4.4,1.25) node[midway] {$\overline{(u \lhd -)_{w \to v}}$};
      \modulebox{0,0.5}{1.65}{0.75}{$\varepsilon_{u w \to u v}^\dagger$};
      \draw (uv) to (top);
      \draw (0.1,0 |- uv.north) to (0.1,0.5);
      \draw (0.1,1.25) to (0.1,0 |- top.south);
      \draw (wv) to (3.25,0.5);
      \draw (1,1.25) to[in=90,out=90] (3.15,1.25);
    \end{tikzpicture}
    =
    \begin{tikzpicture}[smallstring, baseline=20]
      \node (top) at (1,2.5) {$u w$};
      \node (uv) at (1.75,0) {$u v$};
      \node (wv) at (3.25,0) {$\overline{\cU(w \to v)}$};
      \draw[double] (wv) to (3.25,0.5);
      \draw[rounded corners] (2,0.5) rectangle (4.5,1.25) node[midway] {$\overline{(u \lhd -)_{w \to v}}$};
      \draw (uv) to (1.75,1.25);
      \draw (1,1.25) to (1,0 |- top.south);
      \draw (1.75,1.25) to[in=90,out=90] (3.25,1.25);
      \draw (1.65,1.25) to[in=90,out=90] (3.35,1.25);
      \draw (1,1.25) to[in=-90,out=-90] (1.65,1.25);
    \end{tikzpicture}\, .
  \end{align*}
  It is straightforward to check that this is the mate of
  $\overline{(u \lhd -)_{w \to v}} \circ \kappa_{u v \to u w}^\cA$.
\end{ex}

The following lemma is the essential ingredient in the calculations of this subsection.

\begin{lem} \label{lem:eta-kappa-inv}
  If $\cA$ is a dagger $\sV$-category, then
  $
  \begin{tikzpicture}[smallstring, baseline=30]
    \node (bot) at (0,0) {$v$};
    \node (top) at (0,3) {$\overline{\cA(a \lhd v \to a)}$};
    \node[box] (eta) at (0,1) {$\eta_{a, v}$};
    \node[box] (kappa) at (0,2) {$\kappa_{a \to a \lhd v}^{-1}$};
    \draw (bot) to (eta);
    \draw (eta) to (kappa);
    \draw (kappa) to (top);
  \end{tikzpicture}
  =
  \begin{tikzpicture}[smallstring, baseline=0]
    \node (top) at (0,3) {$\overline{\cA(a \lhd v \to a)}$};
    \node (bot) at (3,-4) {$v$};
    \node[box] (circ3) at (0,2.1) {$- \circ_{\overline{\cA}} -$};
    \node[box] (rho) at (-1,1) {$\overline{(\rho_{a}^{\dagger})^{-1}}$};
    \node[box] (circ2) at (1,1) {$- \circ_{\overline{\cA}} -$};
    \node[box] (ev) at (0,0) {$\overline{\id_a \lhd \ev_v}$};
    \node[box] (circ1) at (2,0) {$- \circ_{\overline{\cA}} -$};
    \node[box] (alpha1) at (1,-1.1) {$\overline{\alpha_{a, v, v^*}^{-1}}$};
    \node[box] (eta) at (3,-1.1) {$\overline{\eta_{a \lhd v , v^*}}$};
    \node[box] (phi) at (3,-3) {$\varphi_v$};
    \node[box] (kappa) at (3,-2) {$\overline{\kappa_{v \to 1}}$};
    \draw (circ3) to (top);
    \draw (rho) to[in=-90,out=90] (circ3.225);
    \draw (circ2) to[in=-90,out=90] (circ3.-45);
    \draw (ev) to[in=-90,out=90] (circ2.225);
    \draw (circ1) to[in=-90,out=90] (circ2.-45);
    \draw (alpha1) to[in=-90,out=90] (circ1.225);
    \draw (eta) to[in=-90,out=90] (circ1.-45);
    \draw (phi) to (kappa);
    \draw (kappa) to (eta);
    \draw (bot) to (phi);
  \end{tikzpicture}
  \,.
  $
\end{lem}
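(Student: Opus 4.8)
The plan is to decompose the unit $\eta_{a,v}$ as a two-step composite in the underlying category $\sA$, push $\kappa^{-1}$ through this composite using \ref{kappa:2} to land in $\overline{\cA}$, and then recognize each resulting factor as a block of boxes on the right-hand side. Conceptually, the elaborate expression on the right is nothing more than $\kappa^{-1}$ applied to the decomposition $\eta_{a,v} = \rho_a^{-1}\circ_\cA(a\lhd -)_{1\to v}$, with the dagger-functor axiom \ref{kappa:4} trading the $\cA$-enriched dagger for the $\cV$-enriched one.

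First I would rewrite the unit. Identity \eqref{eq:La - 1->v} expresses $(a\lhd -)_{1\to v}$ as the $\cA$-composite $\rho_a\circ_\cA\eta_{a,v}$ of $\rho_a: a\lhd 1\to a$ and $\eta_{a,v}: a\to a\lhd v$; since $\rho_a$ is unitary by \ref{kappa:3}, and in particular invertible, I may rewrite this as $\eta_{a,v} = \rho_a^{-1}\circ_\cA(a\lhd -)_{1\to v}$. Applying $\kappa^{-1}_{a\to a\lhd v}$ and invoking \ref{kappa:2} --- which says $\kappa$ intertwines $\circ_\cA$ with $\circ_{\overline{\cA}}$, so that $\kappa^{-1}$ sends an $\cA$-composite to the corresponding $\overline{\cA}$-composite --- yields
\[
\eta_{a,v}\circ\kappa^{-1}_{a\to a\lhd v}
=
(\rho_a^{-1}\circ\kappa^{-1}_{a\to a\lhd 1})
\circ_{\overline{\cA}}
((a\lhd -)_{1\to v}\circ\kappa^{-1}_{a\lhd 1\to a\lhd v}).
\]

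Next I would identify the two factors. For the first, Construction \ref{const:DaggerFromInvolutive} together with \ref{kappa:1} gives, for any $1_\sV$-graded morphism $g$, the identity $g\circ\kappa^{-1} = r\circ\overline{g^\dagger}$; taking $g = \rho_a^{-1}$ and using $(\rho_a^{-1})^\dagger = (\rho_a^\dagger)^{-1}$ recovers the box $\overline{(\rho_a^\dagger)^{-1}}$, the real structure $r$ being exactly what turns the conjugate morphism into a $1_\sV$-graded morphism of $\overline{\cA}$. For the second factor I would apply the dagger-functor property \ref{kappa:4} of the left $\sV$-adjoint $a\lhd - : \cV\to\cA$; at the pair of objects $(1,v)$ its defining square reads
\[
(a\lhd -)_{1\to v}\circ\kappa^{-1}_{a\lhd 1\to a\lhd v}
=
\kappa^{\cV,-1}_{1\to v}\circ\overline{(a\lhd -)_{v\to 1}},
\]
converting the $\cA$-side $\kappa$ into the self-enrichment $\kappa^\cV$ and the conjugated component $\overline{(a\lhd -)_{v\to 1}}$.

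It then remains to expand these last two pieces. Using \ref{kappa:1} on $\cV$, the canonical isomorphism $\kappa_{v\to 1}$ of Remark \ref{rem:CanonicalIso u* -> u bar}, and the explicit formula for $\kappa^\cV$ from Example \ref{ex:InvolutiveSelfEnrichment}, I would write $\kappa^{\cV,-1}_{1\to v} = \varphi_v\circ\overline{\kappa_{v\to 1}}$, producing the two bottom boxes feeding into $\overline{\eta_{a\lhd v, v^*}}$. For the conjugated functor component I would invoke Fact \ref{fact:MateOfLa}: the mate of $(a\lhd -)_{v\to 1}$ under \eqref{eq:AdjunctionForLa} is $\alpha^{-1}_{a,v,v^*}\circ(\id_a\lhd\ev_v)$, so un-mating reconstructs $(a\lhd -)_{v\to 1}$ as the $\cA$-composite $\eta_{a\lhd v, v^*}\circ_\cA\alpha^{-1}_{a,v,v^*}\circ_\cA(\id_a\lhd\ev_v)$; conjugating this composite, which reverses its order and is where the $\circ_{\overline{\cA}}$ structure comes from, produces exactly the chain $\overline{\id_a\lhd\ev_v}$, $\overline{\alpha^{-1}_{a,v,v^*}}$, $\overline{\eta_{a\lhd v, v^*}}$. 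Assembling all factors gives the stated identity. I expect the main obstacle to be this last expansion: correctly un-mating $(a\lhd -)_{v\to 1}$ and then tracking how conjugation, the coherator $\nu$, and the real structure $r$ reorganize the composite into the $\overline{\cA}$-composition while keeping the index conventions for $\kappa$ consistent.
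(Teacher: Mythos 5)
Your proposal is correct and is essentially the paper's proof run in the forward direction: you use the same ingredients — the decomposition $\eta_{a,v}=\rho_a^{-1}\circ_\cA(a\lhd -)_{1\to v}$ from \eqref{eq:La - 1->v}, the compatibility \ref{kappa:2} to pass to $\overline{\cA}$-composition, the dagger square \ref{kappa:4} for $a\lhd -$, the expansion of $\kappa^{\cV,-1}$ via \ref{kappa:1}, and Fact \ref{fact:MateOfLa} to un-mate $(a\lhd -)_{v\to 1}$. The only cosmetic difference is that the paper first bars the whole identity and cancels $\varphi$ via \ref{kappa:1} so as to work with $\kappa$'s rather than $\kappa^{-1}$'s (and invokes \ref{kappa:4} at $(v,1)$ instead of your $(1,v)$), whereas you push the inverses through directly, packaging the $\rho$-factor with the handy identity $g\circ\kappa^{-1}=r\circ\overline{g^\dagger}$.
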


\begin{proof}
  First, take the bar of the claimed identity and postcompose with $\varphi_{\cA(a \lhd v \to a)}^{-1}$.
  Since $\varphi$ is natural, this $\varphi_{\cA(a \lhd v \to a)}^{-1}$ cancels with the existing $\varphi_v$;
  applying \ref{kappa:1} and inverting $\rho$ yields the equivalent equality
  $$
  \begin{tikzpicture}[smallstring, baseline=35]
    \node (bot) at (0,0) {$\overline{v}$};
    \node (top) at (1,4) {$\cA(a \lhd v \to a \lhd 1)$};
    \node[box] (eta) at (0,1) {$\overline{\eta_{a, v}}$};
    \node[box] (kappa) at (0,2) {$\kappa_{a \lhd v \to a}$};
    \node[box] (rho) at (2,2) {$\rho_a^\dagger$};
    \node[box] (circ) at (1,3) {$- \circ -$};
    \draw (bot) to (eta);
    \draw (eta) to (kappa);
    \draw (kappa) to[in=-90,out=90] (circ.225);
    \draw (rho) to[in=-90,out=90] (circ.-45);
    \draw (circ) to (top);
  \end{tikzpicture}
  =
  \begin{tikzpicture}[smallstring, baseline=35]
    \node (bot) at (0,-1) {$\overline{v}$};
    \node (top) at (2,4) {$\cA(a \lhd v \to a \lhd 1)$};
    \node[box] (kappa) at (0,0) {$\kappa_{v \to 1}^{\cV}$};
    \node[box] (eta) at (0,0.9) {$\eta_{a \lhd v , {v^*}}$};
    \node[box] (alpha) at (2,0.9) {$\alpha_{a , v , {v^*}}^{-1}$};
    \node[box] (circ1) at (1,2) {$- \circ -$};
    \node[box] (ev) at (3,2) {$\id_a \lhd \ev_v$};
    \node[box] (circ2) at (2,3) {$- \circ -$};
    \draw (bot) to (kappa);
    \draw (kappa) to (eta);
    \draw (eta) to[in=-90,out=90] (circ1.225);
    \draw (alpha) to[in=-90,out=90] (circ1.-45);
    \draw (circ1) to[in=-90,out=90] (circ2.225);
    \draw (ev) to[in=-90,out=90] (circ2.-45);
    \draw (circ2) to (top);
  \end{tikzpicture}.
  $$
  To prove this equality, we apply \ref{kappa:2} to the left hand side to obtain
  \begin{align*}
    \begin{tikzpicture}[smallstring, baseline=35]
      \node (bot) at (0,0) {$\overline{v}$};
      \node (top) at (1,4) {$\cA(a \lhd v \to a \lhd 1)$};
      \node[box] (eta) at (0,1) {$\overline{\eta_{a, v}}$};
      \node[box] (rho) at (2,1) {$\overline{\rho_a}$};
      \node[box] (circ) at (1,2) {$- \circ_{\overline{\cA}} -$};
      \node[box] (kappa) at (1,3) {$\kappa_{a \lhd v \to a \lhd 1}$};
      \draw (bot) to (eta);
      \draw (eta) to[in=-90,out=90] (circ.225);
      \draw (rho) to[in=-90,out=90] (circ.-45);
      \draw (circ) to (kappa);
      \draw (kappa) to (top);
    \end{tikzpicture}
    &\underset{\eqref{eq:La - 1->v}}{=}
    \begin{tikzpicture}[smallstring, baseline=30]
      \node (bot) at (0,0) {$\overline{v}$};
      \node (top) at (0,3) {$\cA(a \lhd v \to a \lhd 1)$};
      \node[box] (La) at (0,1) {$\overline{(a\lhd -)_{1\to v}}$};
      \node[box] (kappa) at (0,2) {$\kappa_{a\lhd v \to a\lhd 1}$};
      \draw (bot) to (La);
      \draw (eta) to (kappa);
      \draw (kappa) to (top);
    \end{tikzpicture}
    \underset{\text{\ref{kappa:4}}}{=}
    \begin{tikzpicture}[smallstring, baseline=25]
      \node (bot) at (0,0) {$\overline{v}$};
      \node (top) at (0,3) {$\cA(a \lhd v \to a \lhd 1)$};
      \node[box] (l) at (0,2) {$(a\lhd -)_{v \to 1}$};
      \node[box] (kappa) at (0,1) {$\kappa_{v \to 1}^\cV$};
      \draw (bot) to (kappa);
      \draw (kappa) to (l);
      \draw (l) to (top);
    \end{tikzpicture}
    \underset{\text{(Fact~\ref{fact:MateOfLa})}}{=}
    \begin{tikzpicture}[smallstring, baseline=35]
      \node (bot) at (0,-1) {$\overline{v}$};
      \node (top) at (2,4) {$\cA(a \lhd v \to a \lhd 1)$};
      \node[box] (kappa) at (0,0) {$\kappa_{v \to 1}^{\cV}$};
      \node[box] (eta) at (0,0.9) {$\eta_{a \lhd v , {v^*}}$};
      \node[box] (alpha) at (2,0.9) {$\alpha_{a , v , {v^*}}^{-1}$};
      \node[box] (circ1) at (1,2) {$- \circ -$};
      \node[box] (ev) at (3,2) {$\id_a \lhd \ev_v$};
      \node[box] (circ2) at (2,3) {$- \circ -$};
      \draw (bot) to (kappa);
      \draw (kappa) to (eta);
      \draw (eta) to[in=-90,out=90] (circ1.225);
      \draw (alpha) to[in=-90,out=90] (circ1.-45);
      \draw (circ1) to[in=-90,out=90] (circ2.225);
      \draw (ev) to[in=-90,out=90] (circ2.-45);
      \draw (circ2) to (top);
    \end{tikzpicture}.
    \qedhere
  \end{align*}
\end{proof}

We are now in the position to prove \ref{eq:daglhd1}.
Suppose that $\cL^a$ and $a\lhd -$ are two left $\sV$-adjoints of the $\sV$-functor $\cA(a\to -): \cA\to \cV$
satisfying \ref{kappa:3} and \ref{kappa:4},
where $\cV$ is the self-enrichment from Example \ref{ex:SelfEnrichment}.
Consider the two adjunctions
\begin{align}
  \sA(\cL^a(u) \to a\lhd u)
  &\cong
  \sV(u\to \cA(a\to a\lhd u))
  \cong
  \sA(a\lhd u \to a\lhd u)
  \label{eq:GetPsi}
  \\
  \sA(\cL^a(u) \to \cL^a(u))
  &\cong
  \sV(u\to \cA(a\to \cL^a(u)))
  \cong
  \sA(a\lhd u \to \cL^a(u)).
  \label{eq:GetPsiInverse}
\end{align}
By the Yoneda Lemma, the mate of $\id_{a\lhd u}$ under Adjunction \eqref{eq:GetPsi} gives a natural isomorphism of the underlying functors $\psi^a: \sL^a \Rightarrow a\lhd -$,
and its inverse is obtained by taking the mate of $\id_{\cL^a(u)}$ under Adjunction \eqref{eq:GetPsiInverse}.
We immediately get the following identities between the units $\eta^\cL,\eta^\lhd$ of the two underlying adjunctions:
\begin{equation}
  \label{eq:TwoUnitIdentity}
  \tikzmath[smallstring]{
    \node (top) at (0,2) {$\cA(a \to a \lhd u)$};
    \node (bot) at (0,0) {$u$};
    \node[box] (eta) at (0,1) {$\eta^\lhd_{a,u}$};
    \draw (bot) to (eta);
    \draw (eta) to (top);
  }
  =
  \tikzmath[smallstring]{
    \node (top) at (.5,3) {$\cA(a \to a \lhd u)$};
    \node[box] (circ) at (.5,2) {$-\circ_\cA-$};
    \node[box] (eta) at (0,1) {$\eta^\cL_{a,u}$};
    \node[box] (psi) at (1,1) {$\psi^a_u$};
    \node (bot) at (0,0) {$u$};
    \draw (bot) to (eta);
    \draw (eta) to[in=-90,out=90] node[left]{$\scriptstyle \cA(a\to \cL^a(u))$} (circ.-135);
    \draw (psi) to[in=-90,out=90] node[right]{$\scriptstyle \cA(\cL^a(u)\to a\lhd u)$} (circ.-45);
    \draw (circ) to (top);
  }.
\end{equation}
Since the coherators $\alpha, \rho$ for the right module $\sV$-category $\sA$ were defined under Adjunction \ref{eq:CatToModAdjunction},
we get two variants $\alpha^\cL, \alpha^\lhd$ and $\rho^\cL,\rho^\lhd$ depending on which $\sV$-functor we choose.
We have the following identities between these coherators:
\begin{equation}
  \label{eq:TwoAlphaRhoIdentity}
  \begin{tikzcd}
    \cL^a(1) 
    \arrow[dr, "\rho^\cL_a"]
    \arrow[rr, "\psi^a_1"]
    && 
    a\lhd 1 
    \arrow[dl,"\rho_a^\lhd"]
    &&
    \cL^a(uv)
    \arrow[d, "\alpha^\cL_{a,u,v}"]
    \arrow[r, "\psi^a_{uv}"]
    &
    a\lhd uv
    \arrow[r, "\alpha^\lhd_{a,u,v}"]
    &
    a\lhd u \lhd v
    \\
    &
    a
    &&&
    \cL^{\cL^a(u)}(v)
    \arrow[r, "\psi^{\cL^a(u)}_{v}"]
    &
    \cL^a(u) \lhd v
    \arrow[ur, swap, "\psi^a_{u}\lhd \id_v"]
  \end{tikzcd}\, .
\end{equation}
The natural isomorphism $\psi$ actually defines a $\sV$-natural isomorphism $\cL^a\Rightarrow a\lhd -$.

\begin{lem}
  Suppose $\cA$ is a $\sV$-category and $\cL^a, a\lhd - $ are two left $\sV$-adjoints of $\cA(a\to -)$.
  Then
  $$
  \tikzmath[smallstring]{
    \node (top) at (.6,3) {$\cA(\cL^a(u) \to a \lhd v)$};
    \node[box] (circ) at (.6,2) {$-\circ_\cA-$};
    \node[box] (eta) at (0,1) {$\cL^a_{u\to v}$};
    \node[box] (psi) at (1.2,1) {$\psi^a_v$};
    \node (bot) at (0,0) {$\cV(u\to v)=u^*v$};
    \draw (bot) to (eta);
    \draw (eta) to[in=-90,out=90] node[left]{$\scriptstyle \cA(\cL^a(u)\to \cL^a(v))$} (circ.-135);
    \draw (psi) to[in=-90,out=90] node[right]{$\scriptstyle \cA(\cL^a(v)\to a\lhd v)$} (circ.-45);
    \draw (circ) to (top);
  }
  =
  \tikzmath[smallstring]{
    \node (top) at (.2,3) {$\cA(\cL^a(u) \to a \lhd v)$};
    \node[box] (circ) at (.2,2) {$-\circ_\cA-$};
    \node[box] (eta) at (1,1) {$(a\lhd -)_{u\to v}$};
    \node[box] (psi) at (-.6,1) {$\psi^a_u$};
    \node (bot) at (1,0) {$u^*v$};
    \draw (bot) to (eta);
    \draw (eta) to[in=-90,out=90] node[right]{$\scriptstyle \cA(a\lhd u\to a\lhd v)$} (circ.-45);
    \draw (psi) to[in=-90,out=90] node[left]{$\scriptstyle \cA(\cL^a(u)\to a\lhd u)$} (circ.-135);
    \draw (circ) to (top);
  }\,.
  $$
\end{lem}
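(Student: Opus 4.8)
The plan is to recognize the displayed equation as the $\sV$-naturality square for the transformation $\psi^a$, whose components are the isomorphisms $\psi^a_w : \cL^a(w) \to a \lhd w$, and to reduce it, by transporting along $\psi^a$, to the statement that two $\sV$-functor structures on the single underlying functor $a \lhd - : \cV \to \cA$ coincide. That coincidence is then forced by Fact~\ref{fact:MateOfLa}, which pins down the $\sV$-functor structure of a left $\sV$-adjoint of $\cA(a\to -)$ entirely through its mate.

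Concretely, I would first compose both sides of the claimed identity with the $\sV$-isomorphism $P := ((\psi^a_u)^{-1}\, \id) \circ (-\circ_\cA-)$, a morphism from the common target object $\cA(\cL^a(u) \to a \lhd v)$ to $\cA(a \lhd u \to a \lhd v)$, where $(\psi^a_u)^{-1}$ is the inverse graded morphism $a \lhd u \to \cL^a(u)$. Since $P$ is invertible, the identity is equivalent to its image under $P$. Using associativity and unitality of $-\circ_\cA-$ together with the fact that the underlying composite of $(\psi^a_u)^{-1}$ and $\psi^a_u$ is $j_{a \lhd u}$, the right-hand side collapses to the native structure $(a\lhd -)_{u\to v}$, while the left-hand side collapses to the $\psi^a$-conjugate $(\psi^a_u)^{-1} \cdot \cL^a_{u\to v} \cdot \psi^a_v$ of the $\cL^a$-structure. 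Thus the lemma reduces to showing that this conjugated structure agrees with $(a \lhd -)_{u \to v}$, both viewed as elements of $\sV(\cV(u\to v) \to \cA(a\lhd u \to a \lhd v))$.

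To compare them I would take mates under Adjunction~\eqref{eq:AdjunctionForLa}. By Fact~\ref{fact:MateOfLa} the mate of $(a\lhd -)_{u\to v}$ is $\alpha_{a,u,u^*v}^{-1} \circ (\id_a \lhd \ev_u \id_v)$, an expression built only from the module action and oplaxitor of $\sA$. Applying the same Fact to $\cL^a$ computes the mate of $\cL^a_{u\to v}$ in terms of the $\cL$-variant action and oplaxitor; I would then push the two flanking copies of $\psi^a$ through using the mate-composition rules \ref{mate:L-composite} and \ref{mate:R-composite}, and rewrite the $\cL$-associator and $\cL$-action using the comparison identities \eqref{eq:TwoUnitIdentity} and \eqref{eq:TwoAlphaRhoIdentity}. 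This should collapse the conjugated mate onto exactly $\alpha_{a,u,u^*v}^{-1} \circ (\id_a \lhd \ev_u \id_v)$, and equality of mates yields equality of the structures.

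I expect the main obstacle to be precisely this last bookkeeping step: the two left $\sV$-adjoints a priori carry different tensorings with different oplaxitors $\alpha^\cL$ and $\alpha$, so one must use \eqref{eq:TwoAlphaRhoIdentity} to identify the associator data and \eqref{eq:TwoUnitIdentity} (equivalently \eqref{eq:DefineLhd} together with the exchange relation) to identify the action on the morphism $\ev_u \id_v$, all while correctly tracking where the intermediate components $\psi^{\cL^a(u)}$ and $\psi^a_u \lhd \id$ appearing in \eqref{eq:TwoAlphaRhoIdentity} enter and cancel. Once the two tensorings are matched, the remaining manipulation is a routine application of the mate identities.
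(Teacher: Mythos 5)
Your proposal is correct and follows essentially the same route as the paper: both arguments reduce the identity to an equality of mates computed via Fact~\ref{fact:MateOfLa} applied to each adjoint, bridged by naturality of $\psi^a$, the comparison identities \eqref{eq:TwoUnitIdentity} and \eqref{eq:TwoAlphaRhoIdentity}, and the mate rules \ref{mate:L-composite}--\ref{mate:R-composite}. Your preliminary conjugation by $P$, reducing to the claim that the $\psi^a$-transported $\sV$-functor structure on $a\lhd-$ equals the native one, is only a cosmetic repackaging of the paper's computation, which instead takes the mate of the left-hand side directly and massages it into the mate of the right-hand side using exactly the ingredients you list.
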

\begin{proof}
  Using Fact \ref{fact:MateOfLa} for $\cL^a$, the mate of the left hand side is given by
  \begin{align*}
    (\alpha^\cL_{a , u , {u^*}v})^{-1}
    \circ
    \cL^a(\ev_u\id_v)
    \circ
    \psi_v^a
    &=
    (\alpha^\cL_{a , u , {u^*}v})^{-1}
    \circ
    \psi_{u{u^*}v}^a
    \circ
    (\id_a\lhd \ev_u\id_v)
    &&\text{($\psi$ natural)}
    \\&=
    \psi^{\cL^a(u)}_{u^*v}  
    \circ 
    (\psi^{a}_u\lhd \id_{u^*v}) 
    \circ 
    (\alpha_{a,u,{u^*}v}^{\lhd})^{-1}
    \circ
    (\id_a\lhd \ev_u\id_v)
    &&\eqref{eq:TwoAlphaRhoIdentity}
  \end{align*}
  Now taking mates again and simplifying, we have
  \begin{align*}
    \tikzmath[smallstring]{
      \node (bot) at (0,0) {$u^*v$};
      \node (top) at (4,6) {$\cA(a \lhd u \to a \lhd v)$};
      \node[box] (eta) at (0,1) {$\eta^\cL_{\cL^a(u) , u^*v}$};
      \node[box] (psi) at (2,1) {$\psi^{\cL^a(u)}_{u^*v}$};
      \node[box] (circ1) at (1,2) {$- \circ -$};
      \node[box] (psi-id) at (3,2) {$\psi^a_u \lhd \id_{u^*v}$};
      \node[box] (circ2) at (2,3) {$- \circ -$};
      \node[box] (alpha) at (4,3) {$(\alpha^\lhd_{a , u , u^*v})^{-1}$};
      \node[box] (circ3) at (3,4) {$- \circ -$};
      \node[box] (ev) at (5,4) {$\id_a \lhd \ev_u\id_v$};
      \node[box] (circ4) at (4,5) {$- \circ -$};
      \draw (bot) to (eta);
      \draw (eta) to[in=-90,out=90] (circ1.225);
      \draw (psi) to[in=-90,out=90] (circ1.-45);
      \draw (circ1) to[in=-90,out=90] (circ2.225);
      \draw (psi-id) to[in=-90,out=90] (circ2.-45);
      \draw (circ2) to[in=-90,out=90] (circ3.225);
      \draw (alpha) to[in=-90,out=90] (circ3.-45);
      \draw (circ3) to[in=-90,out=90] (circ4.225);
      \draw (ev) to[in=-90,out=90] (circ4.-45);
      \draw (circ4) to (top);
    }
    \underset{\eqref{eq:TwoUnitIdentity}}{=}
    \tikzmath[smallstring]{
      \node (bot) at (1,1) {$u^*v$};
      \node (top) at (4,6) {$\cA(a \lhd u \to a \lhd v)$};
      \node[box] (eta) at (1,2) {$\eta^\lhd_{\cL^a(u) , u^*v}$};
      \node[box] (psi-id) at (3,2) {$\psi^a_u \lhd \id_{u^*v}$};
      \node[box] (circ2) at (2,3) {$- \circ -$};
      \node[box] (alpha) at (4,3) {$(\alpha^\lhd_{a , u , u^*v})^{-1}$};
      \node[box] (circ3) at (3,4) {$- \circ -$};
      \node[box] (ev) at (5,4) {$\id_a \lhd \ev_u\id_v$};
      \node[box] (circ4) at (4,5) {$- \circ -$};
      \draw (bot) to (eta);
      \draw (eta) to[in=-90,out=90] (circ2.225);
      \draw (psi-id) to[in=-90,out=90] (circ2.-45);
      \draw (circ2) to[in=-90,out=90] (circ3.225);
      \draw (alpha) to[in=-90,out=90] (circ3.-45);
      \draw (circ3) to[in=-90,out=90] (circ4.225);
      \draw (ev) to[in=-90,out=90] (circ4.-45);
      \draw (circ4) to (top);
    }
    \underset{\eqref{eq:DefineLhd}}{=}
    \tikzmath[smallstring]{
      \node (bot) at (1,1) {$u^*v$};
      \node (top) at (2,6) {$\cA(a \lhd u \to a \lhd v)$};
      \node[box] (eta) at (1,2) {$\eta^\lhd_{a\lhd u , u^*v}$};
      \node[box] (psi) at (-1,2) {$\psi^a_u$};
      \node[box] (circ2) at (0,3) {$- \circ -$};
      \node[box] (alpha) at (2,3) {$(\alpha^\lhd_{a , u , u^*v})^{-1}$};
      \node[box] (circ3) at (1,4) {$- \circ -$};
      \node[box] (ev) at (3,4) {$\id_a \lhd \ev_u\id_v$};
      \node[box] (circ4) at (2,5) {$- \circ -$};
      \draw (bot) to (eta);
      \draw (eta) to[in=-90,out=90] (circ2.-45);
      \draw (psi) to[in=-90,out=90] (circ2.-135);
      \draw (circ2) to[in=-90,out=90] (circ3.225);
      \draw (alpha) to[in=-90,out=90] (circ3.-45);
      \draw (circ3) to[in=-90,out=90] (circ4.225);
      \draw (ev) to[in=-90,out=90] (circ4.-45);
      \draw (circ4) to (top);
    }
    \,.
  \end{align*}
  By using associativity of $-\circ_\cA-$ and Fact \ref{fact:MateOfLa} for $a\lhd -$, the result follows.
\end{proof}

\begin{prop}[\ref{eq:daglhd1}]
  \label{prop:daglhd1}
  Suppose $\cA$ is a dagger $\sV$-category and $\cL^a, a\lhd - $ are two left $\sV$-adjoints of $\cA(a\to -)$ satisfying \ref{kappa:3} and \ref{kappa:4}.
  Then the natural isomorphism $\psi: \cL^a\Rightarrow a\lhd -$ is unitary.
\end{prop}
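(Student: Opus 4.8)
The plan is to verify unitarity in the form $(\psi^a_u)^\dagger=(\psi^a_u)^{-1}$; since $\psi^a_u$ is already invertible, this is all that is needed. Both morphisms lie in $\sA(a\lhd u\to\cL^a(u))$, and by construction $(\psi^a_u)^{-1}$ is the mate of the unit $\eta^\cL_{a,u}$ under the adjunction $\sA(a\lhd u\to\cL^a(u))\cong\sV(u\to\cA(a\to\cL^a(u)))$ coming from $a\lhd-$. Since passing to mates under this adjunction is a bijection, it suffices to show that the mate of $(\psi^a_u)^\dagger$ is again $\eta^\cL_{a,u}$.

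First I would unfold the dagger via Construction \ref{const:DaggerFromInvolutive}, writing $(\psi^a_u)^\dagger=r\circ\overline{\psi^a_u}\circ\kappa_{a\lhd u\to\cL^a(u)}$, and record its mate as $\eta^\lhd_{a,u}$ post-composed (through $-\circ_\cA-$) with $(\psi^a_u)^\dagger$. The factor $\kappa_{a\lhd u\to\cL^a(u)}$ that now appears is exactly the kind of term Lemma \ref{lem:eta-kappa-inv} is designed to absorb: because $\psi^a_u$ is itself the mate of $\eta^\lhd_{a,u}$, daggering it produces a $\kappa^{-1}\circ\eta^\lhd$-type expression, which Lemma \ref{lem:eta-kappa-inv} rewrites into the normal form built from $\overline{(\rho_a^\dagger)^{-1}}$, $\id_a\lhd\ev_u$, $\alpha^{-1}$, $\eta$, and $\kappa^\cV_{u\to1}$. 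Both $\cL^a$ and $a\lhd-$ satisfy \ref{kappa:3} and \ref{kappa:4}, so I would apply the lemma once to each adjoint, rewriting the $(\psi^a_u)^\dagger$ side using the version for $a\lhd-$ and the target $\eta^\cL_{a,u}$ using the version for $\cL^a$.

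Next I would use \eqref{eq:TwoUnitIdentity} and \eqref{eq:TwoAlphaRhoIdentity} to convert all of the $\cL^a$-data ($\eta^\cL$, $\rho^\cL_a$, $\alpha^\cL$) into the corresponding $a\lhd-$-data conjugated by $\psi$, so that the $\psi^a_u$ and $\overline{\psi^a_u}$ occurrences can be collected and cancelled. At this stage \ref{kappa:3}, which makes both $\rho^\cL_a$ and $\rho^\lhd_a$ unitary, enters twice: it is precisely the unitarity of these unitors that brings the expression into the form in which the unitality axiom of the relevant modulator may be applied, eliminating the stray unitor factors produced by the two invocations of Lemma \ref{lem:eta-kappa-inv}.

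Finally I would collapse the remaining $\varphi\circ\overline{\kappa}\circ\kappa$ sandwich to the identity using \ref{kappa:1}, and discharge the leftover coherence morphisms $r$, $\nu$, and $\varphi$ by naturality of $\varphi$ together with the real-structure axiom $r\circ\overline{r}=\varphi_{1_\sV}$, leaving exactly $\eta^\cL_{a,u}$. I expect the main obstacle to be the two middle steps: keeping the coherence bookkeeping under control while translating between the two families $(\eta,\rho,\alpha)$ attached to $\cL^a$ and to $a\lhd-$, and confirming that the unitor factors genuinely cancel once \ref{kappa:3} is invoked. This $\rho$-cancellation is the single place where unitarity of $\rho$ is indispensable, and without \ref{kappa:3} the identity would fail.
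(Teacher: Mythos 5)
Your proposal is correct and follows essentially the same route as the paper's proof: reduce unitarity to showing that the mate of $(\psi^a_u)^\dagger$ under the $a\lhd-$ adjunction equals $\eta^\cL_{a,u}$ (the mate of $(\psi^a_u)^{-1}$), apply Lemma \ref{lem:eta-kappa-inv} once for each of the two left $\sV$-adjoints, translate between the $(\eta^\cL,\rho^\cL,\alpha^\cL)$ and $(\eta^\lhd,\rho^\lhd,\alpha^\lhd)$ data via \eqref{eq:TwoUnitIdentity} and \eqref{eq:TwoAlphaRhoIdentity} together with naturality of $\psi$, and invoke \ref{kappa:3} exactly twice to trade $(\rho^\lhd_a)^{\dagger-1}$ for $\rho^\lhd_a$ and $\rho^\cL_a$ for $(\rho^\cL_a)^{\dagger-1}$. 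The only cosmetic deviation is bookkeeping: in the paper \ref{kappa:1} is used at the outset when unfolding the dagger, and the $\varphi_v\circ\overline{\kappa_{v\to 1}}$ tails from the two applications of Lemma \ref{lem:eta-kappa-inv} cancel directly without needing the real-structure axiom, but this does not affect the validity of your plan.
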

\begin{proof}
  Taking the mate of $(\psi^a_{u})^\dag$ under the first step of Adjunction \eqref{eq:GetPsiInverse} and using \ref{kappa:1}, we obtain
  \begin{align*}
    \tikzmath[smallstring]{
      \node (bot) at (0,-1) {$u$};
      \node (top) at (1,4) {$\cA(a \to \cL^a(u))$};
      \node[box] (eta) at (0,0) {$\eta^\lhd_{a, u}$};
      \node[box] (kappa1) at (0,1) {$\kappa^{-1}_{a\to a\lhd u}$};
      \node[box] (psi) at (2,1) {$\overline{\psi^a_u}$};
      \node[box] (circ) at (1,2) {$- \circ_{\overline{\cA}} -$};
      \node[box] (kappa2) at (1,3) {$\kappa_{a \to \cL^a(u)}$};
      \draw (bot) to (eta);
      \draw (eta) to (kappa1);
      \draw (kappa1) to[in=-90,out=90] (circ.225);
      \draw (psi) to[in=-90,out=90] (circ.-45);
      \draw (circ) to (kappa2);
      \draw (kappa2) to (top);
    }
    \quad
    \underset{\text{(Lem.~\ref{lem:eta-kappa-inv})}}{=}
    \quad
    \tikzmath[smallstring]{
      \node (top) at (1,5) {$\cA(a \to \cL^a(u))$};
      \node (bot) at (3,-4) {$v$};
      \node[box] (kappa) at (1,4) {$\kappa_{a \to \cL^a(u)}$};
      \node[box] (circ4) at (1,3) {$- \circ_{\overline{\cA}} -$};
      \node[box] (alpha2) at (2,2) {$\overline{\psi^a_u}$};
      \node[box] (circ3) at (0,2) {$- \circ_{\overline{\cA}} -$};
      \node[box] (rho) at (-1,1) {$\overline{(\rho_{a}^{\lhd})^{\dag -1}}$};
      \node[box] (circ2) at (1,1) {$- \circ_{\overline{\cA}} -$};
      \node[box] (ev) at (0,0) {$\overline{\id_a \lhd \ev_u}$};
      \node[box] (circ1) at (2,0) {$- \circ_{\overline{\cA}} -$};
      \node[box] (alpha1) at (1,-1) {$\overline{(\alpha^\lhd_{a,u,{u^*}})^{-1}}$};
      \node[box] (eta) at (3,-1) {$\overline{\eta^\lhd_{a \lhd u, {u^*}}}$};
      \node[box] (kappa0) at (3,-2) {$\overline{\kappa_{v \to 1}}$};
      \node[box] (phi) at (3,-3) {$\varphi_v$};
      \draw (kappa) to (top);
      \draw (circ4) to (kappa);
      \draw (alpha2) to[in=-90,out=90] (circ4.-45);
      \draw (circ3) to[in=-90,out=90] (circ4.225);
      \draw (rho) to[in=-90,out=90] (circ3.225);
      \draw (circ2) to[in=-90,out=90] (circ3.-45);
      \draw (ev) to[in=-90,out=90] (circ2.225);
      \draw (circ1) to[in=-90,out=90] (circ2.-45);
      \draw (alpha1) to[in=-90,out=90] (circ1.225);
      \draw (eta) to[in=-90,out=90] (circ1.-45);
      \draw (kappa0) to (eta);
      \draw (phi) to (kappa0);
      \draw (bot) to (phi);
    }
    \underset{\eqref{eq:DefineLhd}}{=}
    \quad
    \tikzmath[smallstring]{
      \node (top) at (0,4) {$\cA(a \to \cL^a(u))$};
      \node (bot) at (4,-5) {$v$};
      \node[box] (kappa) at (0,3) {$\kappa_{a \to \cL^a(u)}$};
      \node[box, blue] (circ3) at (0,2) {$- \circ_{\overline{\cA}} -$};
      \node[box, blue] (rho) at (-1,1) {$\overline{(\rho_{a}^{\lhd})^{\dag -1}}$};
      \node[box, blue] (circ2) at (1,1) {$- \circ_{\overline{\cA}} -$};
      \node[box, blue] (ev) at (0,0) {$\overline{\id_a \lhd \ev_u}$};
      \node[box, blue] (circ1) at (2,0) {$- \circ_{\overline{\cA}} -$};
      \node[box, blue] (alpha1) at (1,-1) {$\overline{(\alpha^\lhd_{a, u, {u^*}})^{-1}}$};
      \node[box, blue] (circ) at (3,-1) {$- \circ_{\overline{\cA}} -$};
      \node[box, blue] (alpha) at (2,-2) {$\overline{\psi^a_u \lhd \id_{{u^*}}}$};
      \node[box] (eta) at (4,-2) {$\overline{\eta^\lhd_{\cL^a(u), {u^*}}}$};
      \node[box] (kappa0) at (4,-3) {$\overline{\kappa_{v \to 1}}$};
      \node[box] (phi) at (4,-4) {$\varphi_v$};
      \draw (kappa) to (top);
      \draw (circ3) to[in=-90,out=90] (kappa);
      \draw[blue] (rho) to[in=-90,out=90] (circ3.225);
      \draw[blue] (circ2) to[in=-90,out=90] (circ3.-45);
      \draw[blue] (ev) to[in=-90,out=90] (circ2.225);
      \draw[blue] (circ1) to[in=-90,out=90] (circ2.-45);
      \draw[blue] (alpha1) to[in=-90,out=90] (circ1.225);
      \draw[blue] (circ) to[in=-90,out=90] (circ1.-45);
      \draw[blue] (alpha) to[in=-90,out=90] (circ.225);
      \draw (eta) to[in=-90,out=90] (circ.-45);
      \draw (kappa0) to (eta);
      \draw (phi) to (kappa0);
      \draw (bot) to (phi);
    }\,.
  \end{align*}
  Observe that the blue part of the above diagram is the bar of a morphism in $\sA$.
  Taking the bar and working in $\sA$, we see
  \begin{align*}
    (\psi^a_u \lhd \id_{{u^*}})
    &\circ
    (\alpha^\lhd_{a, u, {u^*}})^{-1}
    \circ
    (\id_a \lhd \ev_u)
    \circ 
    (\rho^\lhd_a)^{\dag-1}
    \\&=
    (\psi^{\cL^a(u)}_{{u^*}})^{-1}
    \circ
    (\alpha^\cL_{a, u, {u^*}})^{-1}
    \circ
    \psi^a_{u{u^*}}
    \circ
    (\id_a \lhd \ev_u)
    \circ 
    (\rho^\lhd_a)^{\dag-1}
    &&
    \eqref{eq:TwoAlphaRhoIdentity}
    \\&=
    (\psi^{\cL^a(u)}_{{u^*}})^{-1}
    \circ
    (\alpha^\cL_{a, u, {u^*}})^{-1}
    \circ
    \cL^a(\ev_u)
    \circ
    \psi^a_{1}
    \circ 
    (\rho^\lhd_a)^{\dag-1}
    &&
    \text{$\psi$ natural}
    \\&=
    (\psi^{\cL^a(u)}_{{u^*}})^{-1}
    \circ
    (\alpha^\cL_{a, u, {u^*}})^{-1}
    \circ
    \cL^a(\ev_u)
    \circ
    \psi^a_{1}
    \circ 
    \rho^\lhd_a
    &&
    \text{\ref{kappa:3} for $\rho^\lhd$}
    \\&=
    (\psi^{\cL^a(u)}_{{u^*}})^{-1}
    \circ
    (\alpha^\cL_{a, u, {u^*}})^{-1}
    \circ
    \cL^a(\ev_u)
    \circ 
    \rho^\cL_a
    &&
    \eqref{eq:TwoAlphaRhoIdentity}
    \\&=
    (\psi^{\cL^a(u)}_{{u^*}})^{-1}
    \circ
    (\alpha^\cL_{a, u, {u^*}})^{-1}
    \circ
    \cL^a(\ev_u)
    \circ 
    (\rho^\cL_a)^{\dag-1}
    &&
    \text{\ref{kappa:3} for $\rho^\cL$.}
  \end{align*}  
  Substituting this last map into the blue part of the above diagram, we obtain
  $$
  \tikzmath[smallstring]{
    \node (top) at (0,4) {$\cA(a \to \cL^a(u))$};
    \node (bot) at (4,-5) {$v$};
    \node[box] (kappa) at (0,3) {$\kappa_{a \to \cL^a(u)}$};
    \node[box, blue] (circ3) at (0,2) {$- \circ_{\overline{\cA}} -$};
    \node[box, blue] (rho) at (-1,1) {$\overline{(\rho_{a}^{\cL})^{\dag -1}}$};
    \node[box, blue] (circ2) at (1,1) {$- \circ_{\overline{\cA}} -$};
    \node[box, blue] (ev) at (0,0) {$\overline{\id_a \lhd \ev_u}$};
    \node[box, blue] (circ1) at (2,0) {$- \circ_{\overline{\cA}} -$};
    \node[box, blue] (alpha1) at (1,-1) {$\overline{(\alpha^\cL_{a, u, {u^*}})^{-1}}$};
    \node[box, blue] (circ) at (3,-1) {$- \circ_{\overline{\cA}} -$};
    \node[box, blue] (alpha) at (2,-2) {$\overline{\psi^a_u \lhd \id_{{u^*}}}$};
    \node[box] (eta) at (4,-2) {$\overline{\eta^\lhd_{\cL^a(u), {u^*}}}$};
    \node[box] (kappa0) at (4,-3) {$\overline{\kappa_{v \to 1}}$};
    \node[box] (phi) at (4,-4) {$\varphi_v$};
    \draw (kappa) to (top);
    \draw (circ3) to[in=-90,out=90] (kappa);
    \draw[blue] (rho) to[in=-90,out=90] (circ3.225);
    \draw[blue] (circ2) to[in=-90,out=90] (circ3.-45);
    \draw[blue] (ev) to[in=-90,out=90] (circ2.225);
    \draw[blue] (circ1) to[in=-90,out=90] (circ2.-45);
    \draw[blue] (alpha1) to[in=-90,out=90] (circ1.225);
    \draw[blue] (circ) to[in=-90,out=90] (circ1.-45);
    \draw[blue] (alpha) to[in=-90,out=90] (circ.225);
    \draw (eta) to[in=-90,out=90] (circ.-45);
    \draw (kappa0) to (eta);
    \draw (phi) to (kappa0);
    \draw (bot) to (phi);
  }
  \underset{\eqref{eq:TwoUnitIdentity}}{=}
  \quad
  \tikzmath[smallstring]{
    \node (top) at (0,4) {$\cA(a \to \cL^a(u))$};
    \node (bot) at (3,-4) {$v$};
    \node[box] (kappa) at (0,3) {$\kappa_{a \to \cL^a(u)}$};
    \node[box] (circ3) at (0,2) {$- \circ_{\overline{\cA}} -$};
    \node[box] (rho) at (-1,1) {$\overline{(\rho_{a}^{\cL})^{\dag -1}}$};
    \node[box] (circ2) at (1,1) {$- \circ_{\overline{\cA}} -$};
    \node[box] (ev) at (0,0) {$\overline{\id_a \lhd \ev_u}$};
    \node[box] (circ1) at (2,0) {$- \circ_{\overline{\cA}} -$};
    \node[box] (alpha1) at (1,-1) {$\overline{(\alpha^\cL_{a, u, {u^*}})^{-1}}$};
    \node[box] (eta) at (3,-1) {$\overline{\eta^\cL_{\cL^a(u), {u^*}}}$};
    \node[box] (kappa0) at (3,-2) {$\overline{\kappa_{v \to 1}}$};
    \node[box] (phi) at (3,-3) {$\varphi_v$};
    \draw (kappa) to (top);
    \draw (circ3) to[in=-90,out=90] (kappa);
    \draw (rho) to[in=-90,out=90] (circ3.225);
    \draw (circ2) to[in=-90,out=90] (circ3.-45);
    \draw (ev) to[in=-90,out=90] (circ2.225);
    \draw (circ1) to[in=-90,out=90] (circ2.-45);
    \draw (alpha1) to[in=-90,out=90] (circ1.225);
    \draw (eta) to[in=-90,out=90] (circ1.-45);
    \draw (kappa0) to (eta);
    \draw (phi) to (kappa0);
    \draw (bot) to (phi);
  }
  \underset{\text{(Lem.~\ref{lem:eta-kappa-inv})}}{=}
  \quad
  \tikzmath[smallstring]{
    \node (bot) at (0,-1) {$u$};
    \node (top) at (0,3) {$\cA(a \to \cL^a(u))$};
    \node[box] (eta) at (0,0) {$\eta^\cL_{a, u}$};
    \node[box] (kappa1) at (0,1) {$\kappa^{-1}_{a\to \cL^a(u)}$};
    \node[box] (kappa2) at (0,2) {$\kappa_{a \to \cL^a(u)}$};
    \draw (bot) to (eta);
    \draw (eta) to (kappa1);
    \draw (kappa1) to (kappa2);
    \draw (kappa2) to (top);
  }
  =
  \eta^\cL_{a,u}.
  $$
  But this is exactly the mate of $\psi^{-1}_{a,u}$ under the first step of Adjunction \eqref{eq:GetPsiInverse}, and the result follows.
\end{proof}

\begin{prop}[\ref{eq:daglhd2}]
  \label{prop:daglhd2}
  If $\cA$ is a dagger $\sV$-category, then $-\lhd - : \sA \times \sV \to \sA$ is a dagger functor.
\end{prop}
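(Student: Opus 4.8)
The plan is to prove the stronger pointwise statement $(f\lhd g)^\dag=f^\dag\lhd g^\dag$ for every $f\in\sA(a\to b)$ and $g\in\sV(u\to v)$, which is exactly dagger functoriality once we recall that $\sA\times\sV$ carries the product dagger structure $(f,g)^\dag=(f^\dag,g^\dag)$. First I would reduce to two pure cases: using $f\lhd g=(f\lhd\id_u)\circ(\id_b\lhd g)$ together with the fact that $\dag$ reverses composition, it suffices to show that each $-\lhd\id_u\colon\sA\to\sA$ and each $a\lhd-\colon\sV\to\sA$ is a dagger functor. Indeed, granting both, $(f\lhd g)^\dag=(\id_b\lhd g)^\dag\circ(f\lhd\id_u)^\dag=(\id_b\lhd g^\dag)\circ(f^\dag\lhd\id_u)$, and the exchange relation for $-\lhd-$ (the combination of \eqref{eq:MateOf1lhdg} and \eqref{eq:DefineLhd}) recombines this into $f^\dag\lhd g^\dag$.

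The case $a\lhd-$ is essentially free: condition \ref{kappa:4} asserts that the $\sV$-functor $a\lhd-\colon\cV\to\cA$ is dagger, so Construction \ref{const:DaggerFromInvolutiveFunctor} shows its underlying functor $\cV^\sV\to\cA^\sV$ is a dagger functor. Through the identity-on-objects isomorphism $\sV\cong\cV^\sV$ by which $\sV$ acquired its dagger structure, this underlying functor is precisely the module action $a\lhd-\colon\sV\to\sA$, which is therefore dagger. (Here $\id_b\lhd g$ is $b\lhd-$ applied to $g$, so $(\id_b\lhd g)^\dag=\id_b\lhd g^\dag$.)

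The substantive case is $-\lhd\id_u$, where I must verify $(f\lhd\id_u)^\dag=f^\dag\lhd\id_u$ as morphisms $b\lhd u\to a\lhd u$. I would take mates under the adjunction $\sA(b\lhd u\to a\lhd u)\cong\sV(u\to\cA(b\to a\lhd u))$. By \eqref{eq:DefineLhd} (with $f$ replaced by $f^\dag$) the mate of $f^\dag\lhd\id_u$ is $(f^\dag\,\eta_{a,u})\circ(-\circ_\cA-)$, so the goal is to show that the mate of $(f\lhd\id_u)^\dag$ equals this. Computing that mate begins with the general formula $\mate(k)=(\eta_{b,u}\,k)\circ(-\circ_\cA-)$ (the left-hand side of \eqref{eq:DefineLhd}), applied to $k=(f\lhd\id_u)^\dag$; I then unfold the dagger via Construction \ref{const:DaggerFromInvolutive}, exposing $\kappa_{b\lhd u\to a\lhd u}$ and $\overline{f\lhd\id_u}$. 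To merge the unit $\eta_{b,u}$ with this $\kappa$, I rewrite $\eta_{b,u}=(\eta_{b,u}\circ\kappa_{b\to b\lhd u}^{-1})\circ\kappa_{b\to b\lhd u}$ and expand the first factor using Lemma \ref{lem:eta-kappa-inv}. Now both inputs to $-\circ_\cA-$ pass through a $\kappa$, so \ref{kappa:2} converts the composition into one in the conjugate category $\overline{\cA}$ followed by a single $\kappa_{b\to a\lhd u}$, pushing the entire computation into $\overline{\cA}$ exactly as in the proof of Proposition \ref{prop:daglhd1}. Collapsing the resulting $\overline{\cA}$-expression with \ref{kappa:1}, \ref{kappa:3}, the naturality identity \eqref{eq:MateOf1lhdg}, and the defining composite of $f^\dag$ should leave precisely $(f^\dag\,\eta_{a,u})\circ(-\circ_\cA-)$.

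The main obstacle lies entirely in this last step: the conjugate-category bookkeeping. One must carry the coherence isomorphisms $\varphi$, $\nu$, $r$ and the various $\kappa$'s through the composition and invoke Lemma \ref{lem:eta-kappa-inv} at exactly the right place so that all the auxiliary isomorphisms cancel in pairs, mirroring the delicate cancellations (including the use of \ref{kappa:3} to cancel $\rho^{-1}$ against $(\rho^\dag)^{-1}$) seen in Proposition \ref{prop:daglhd1}. Everything else is formal.
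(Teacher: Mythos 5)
Your proposal is correct and takes essentially the same route as the paper: the paper likewise treats $\id_a\lhd g$ via \ref{kappa:4} and Construction \ref{const:DaggerFromInvolutiveFunctor}, and proves $(f\lhd\id_u)^\dagger=f^\dagger\lhd\id_u$ by taking mates, rewriting $\eta_{b,u}$ through $\kappa^{-1}\circ\kappa$, pushing into $\overline{\cA}$ with \ref{kappa:2}, and expanding with Lemma \ref{lem:eta-kappa-inv}. One small correction to your final ``collapse'' step: the paper there uses the bar of \eqref{eq:DefineLhd}, naturality (of $\alpha$, $\ev$, and $\rho^\dagger$) to extract a standalone $\overline{f}$, and then a second, \emph{backward} application of Lemma \ref{lem:eta-kappa-inv} before recognizing the mate of $f^\dagger\lhd\id_u$ --- neither \ref{kappa:3} nor \eqref{eq:MateOf1lhdg} is invoked directly in this proposition (unlike in Propositions \ref{prop:daglhd1} and \ref{prop:daglhd3}), though \ref{kappa:3} is of course available since $\cA$ is a dagger $\sV$-category.
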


\begin{proof}
  First, for $g \in \sV(u \to v)$, the statement that
  $(\id_a \lhd g)^\dagger = \id_a \lhd g^\dagger$
  follows immediately from \ref{kappa:4} and Construction \ref{const:DaggerFromInvolutiveFunctor}.
  Then for $f \in \cA(a \to b)$,
  the mate of $(f \lhd \id_u)^\dagger$ under Adjunction \ref{eq:CatToModAdjunction} is given by the following:
  \begin{align*}
    \begin{tikzpicture}[smallstring, baseline=50]
      \node (top) at (1,5) {$\cA(b \to a \lhd u)$};
      \node (bot) at (0,0) {$u$};
      \node[box] (eta) at (0,1) {$\eta_{b, u}$};
      \node[box] (kappainv) at (0,2) {$\kappa_{b \to b \lhd u}^{-1}$};
      \node[box] (f) at (2,2) {$\overline{f \lhd \id_u}$};
      \node[box] (circ) at (1,3) {$- \circ_{\overline{\cA}} -$};
      \node[box] (kappa) at (1,4) {$\kappa_{b \to a \lhd u}$};
      \draw (bot) to (eta);
      \draw (eta) to (kappainv);
      \draw (kappainv) to[in=-90,out=90] (circ.225);
      \draw (f) to[in=-90,out=90] (circ.-45);
      \draw (circ) to (kappa);
      \draw (kappa) to (top);
    \end{tikzpicture}
    \quad&\underset{\text{(Lem.~\ref{lem:eta-kappa-inv})}}{=}\quad
    \begin{tikzpicture}[smallstring, baseline=50]
      \node (top) at (0,6) {$\cA(b \to a \lhd u)$};
      \node (bot) at (2,-3) {$u$};
      \node[box] (phi) at (2,-2) {$\varphi_u$};
      \node[box] (kappa0) at (2,-1.1) {$\overline{\kappa_{u \to 1}}$};
      \node[box] (eta) at (2,-0.2) {$\eta_{b \lhd u, {u^*}}$};
      \node[box] (alpha) at (0,-0.2) {$\overline{\alpha_{b,u,{u^*}}^{-1}}$};
      \node[box] (ev) at (-1,1) {$\overline{\id_b \lhd \ev_u}$};
      \node[box] (rho) at (-2,2) {$\overline{(\rho_{b}^\dagger)^{-1}}$};
      \node[box] (f) at (1,3) {$\overline{f \lhd \id_u}$};
      \node[box] (kappa) at (0,5) {$\kappa_{b \to a \lhd u}$};
      \node[box] (circ1) at (1,1) {$- \circ_{\overline{\cA}} -$};
      \node[box] (circ2) at (0,2) {$- \circ_{\overline{\cA}} -$};
      \node[box] (circ3) at (-1,3) {$- \circ_{\overline{\cA}} -$};
      \node[box] (circ4) at (0,4) {$- \circ_{\overline{\cA}} -$};
      \draw (bot) to (phi);
      \draw (phi) to (kappa0);
      \draw (kappa0) to (eta);
      \draw (eta) to[in=-90,out=90] (circ1.-45);
      \draw (alpha) to[in=-90,out=90] (circ1.225);
      \draw (circ1) to[in=-90,out=90] (circ2.-45);
      \draw (ev) to[in=-90,out=90] (circ2.225);
      \draw (circ2) to[in=-90,out=90] (circ3.-45);
      \draw (rho) to[in=-90,out=90] (circ3.225);
      \draw (circ3) to[in=-90,out=90] (circ4.225);
      \draw (f) to[in=-90,out=90] (circ4.-45);
      \draw (circ4) to (kappa);
      \draw (kappa) to (top);
    \end{tikzpicture}
    \underset{\eqref{eq:DefineLhd}}{=}
    \quad
    \begin{tikzpicture}[smallstring, baseline=30]
      \node (top) at (-1,5) {$\cA(b \to a \lhd u)$};
      \node (bot) at (3.1,-4) {$u$};
      \node[box] (phi) at (3.1,-3) {$\varphi_u$};
      \node[box] (kappa0) at (3.1,-2.05) {$\overline{\kappa_{u \to 1}}$};
      \node[box] (eta) at (3.1,-1.1) {$\overline{\eta_{b \lhd u, {u^*}}}$};
      \node[box] (f) at (0.9,-1.1) {$\overline{f \lhd \id_u \lhd \id_{{u^*}}}$};
      \node[box] (alpha) at (0,0) {$\overline{\alpha_{b, u, {u^*}}^{-1}}$};
      \node[box] (ev) at (-1,1) {$\overline{\id_b \lhd \ev_u}$};
      \node[box] (rho) at (-2,2) {$\overline{(\rho_{b}^\dagger)^{-1}}$};
      \node[box] (kappa) at (-1,4) {$\kappa_{b \to a \lhd u}$};
      \node[box] (circ0) at (2,0) {$- \circ_{\overline{\cA}} -$};
      \node[box] (circ1) at (1,1) {$- \circ_{\overline{\cA}} -$};
      \node[box] (circ2) at (0,2) {$- \circ_{\overline{\cA}} -$};
      \node[box] (circ3) at (-1,3) {$- \circ_{\overline{\cA}} -$};
      \draw (bot) to (phi);
      \draw (phi) to (kappa0);
      \draw (kappa0) to (eta);
      \draw (eta) to[in=-90,out=90] (circ0.-45);
      \draw (f) to[in=-90,out=90] (circ0.225);
      \draw (circ0) to[in=-90,out=90] (circ1.-45);
      \draw (alpha) to[in=-90,out=90] (circ1.225);
      \draw (circ1) to[in=-90,out=90] (circ2.-45);
      \draw (ev) to[in=-90,out=90] (circ2.225);
      \draw (circ2) to[in=-90,out=90] (circ3.-45);
      \draw (rho) to[in=-90,out=90] (circ3.225);
      \draw (circ3) to[in=-90,out=90] (kappa);
      \draw (kappa) to (top);
    \end{tikzpicture}
    \displaybreak[1]\\
    &=
    \begin{tikzpicture}[smallstring, baseline=15]
      \node (top) at (-0.8,5) {$\cA(b \to a \lhd u)$};
      \node (bot) at (2.9,-4) {$u$};
      \node[box] (phi) at (2.9,-3) {$\varphi_u$};
      \node[box] (kappa0) at (2.9,-2.1) {$\overline{\kappa_{u \to 1}}$};
      \node[box] (eta) at (2.9,-1.2) {$\overline{\eta_{a \lhd u, {u^*}}}$};
      \node[box] (alpha) at (1.1,-1.2) {$\overline{\alpha_{a, u, {u^*}}^{-1}}$};
      \node[box] (ev) at (0,0) {$\overline{\id_a \lhd \ev_u}$};
      \node[box] (rho) at (-0.8,1) {$\overline{(\rho_a^\dagger)^{-1}}$};
      \node[box] (f) at (-1.7,2) {$\overline{f}$};
      \node[box] (kappa) at (-0.8,4) {$\kappa_{b \to a \lhd u}$};
      \node[box] (circ0) at (2,0) {$- \circ_{\overline{\cA}} -$};
      \node[box] (circ1) at (1,1) {$- \circ_{\overline{\cA}} -$};
      \node[box] (circ2) at (0.1,2) {$- \circ_{\overline{\cA}} -$};
      \node[box] (circ3) at (-0.8,3) {$- \circ_{\overline{\cA}} -$};
      \draw (bot) to (phi);
      \draw (phi) to (kappa0);
      \draw (kappa0) to (eta);
      \draw (eta) to[in=-90,out=90] (circ0.-45);
      \draw (f) to[in=-90,out=90] (circ3.225);
      \draw (circ0) to[in=-90,out=90] (circ1.-45);
      \draw (alpha) to[in=-90,out=90] (circ0.225);
      \draw (circ1) to[in=-90,out=90] (circ2.-45);
      \draw (ev) to[in=-90,out=90] (circ1.225);
      \draw (circ2) to[in=-90,out=90] (circ3.-45);
      \draw (rho) to[in=-90,out=90] (circ2.225);
      \draw (circ3) to[in=-90,out=90] (kappa);
      \draw (kappa) to (top);
    \end{tikzpicture}
    \underset{\text{(Lem.~\ref{lem:eta-kappa-inv})}}{=}
    \quad
    \begin{tikzpicture}[smallstring, baseline=40]
      \node (top) at (1,5) {$\cA(b \to a \lhd u)$};
      \node (bot) at (1.8,0) {$u$};
      \node[box] (eta) at (1.8,1) {$\eta_{a, u}$};
      \node[box] (kappainv) at (1.8,2) {$\kappa_{a \to a \lhd u}^{-1}$};
      \node[box] (f) at (0.2,2) {$\overline{f}$};
      \node[box] (kappa) at (1,4) {$\kappa_{b \to a \lhd u}$};
      \node[box] (circ) at (1,3) {$- \circ_{\overline{\cA}} -$};
      \draw (bot) to (eta);
      \draw (eta) to (kappainv);
      \draw (kappainv) to[in=-90,out=90] (circ.-45);
      \draw (f) to[in=-90,out=90] (circ.225);
      \draw (circ) to (kappa);
      \draw (kappa) to (top);
    \end{tikzpicture}
    \quad=\quad
    \begin{tikzpicture}[smallstring, baseline=30]
      \node (top) at (0.5,3) {$\cA(b \to a \lhd u)$};
      \node (bot) at (1,0) {$u$};
      \node[box] (eta) at (1,1) {$\eta_{a, u}$};
      \node[box] (f) at (0,1) {$f^\dagger$};
      \node[box] (circ) at (0.5,2) {$- \circ -$};
      \draw (bot) to (eta);
      \draw (eta) to[in=-90,out=90] (circ.-45);
      \draw (f) to[in=-90,out=90] (circ.225);
      \draw (circ) to (top);
    \end{tikzpicture}
  \end{align*}
  which is exactly the mate of $f^\dagger \lhd \id_u$.
\end{proof}

\begin{prop}[\ref{eq:daglhd3}]
  \label{prop:daglhd3}
  Suppose now that $(\cV,\kappa^\cV)$ is a dagger $\sV$-category,
  i.e., $\kappa^\cV$ satisfies \ref{kappa:3} and \ref{kappa:4}.
  If $\cA$ is a dagger $\sV$-category, then the coherator $\alpha$ is unitary.
\end{prop}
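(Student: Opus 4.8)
The plan is to prove that the coherator $\alpha$ is unitary by showing directly that $\alpha^\dagger = \alpha^{-1}$, following the template already established in Propositions \ref{prop:daglhd1} and \ref{prop:daglhd2}. Recall that $\alpha_{a,u,v}\in \sA(a\lhd uv\to a\lhd u\lhd v)$ is already an isomorphism, since $\cA$ is tensored and hence the underlying module is strong; thus it suffices to identify $\alpha^\dagger$ with $\alpha^{-1}$. I will compare the two as morphisms $a\lhd u\lhd v\to a\lhd uv$ by taking their mates under the adjunction $\sA((a\lhd u)\lhd v\to a\lhd uv)\cong \sV(v\to \cA(a\lhd u\to a\lhd uv))$ coming from Adjunction \eqref{eq:CatToModAdjunction}, and showing that these mates agree.

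First I would unwind $\alpha^\dagger$. By Construction \ref{const:CatToMod}, $\alpha=\alpha_{a,u,v}$ is the mate of $(\eta_{a,u}\eta_{a\lhd u,v})\circ(-\circ_\cA-)$ under $\sA(a\lhd uv\to a\lhd u\lhd v)\cong \sV(uv\to \cA(a\to a\lhd u\lhd v))$. Applying the dagger formula of Construction \ref{const:DaggerFromInvolutive} and then taking the mate under the adjunction above, the $\eta$--$\kappa^{-1}$ combination appearing on the left-hand side of Lemma \ref{lem:eta-kappa-inv} is produced verbatim. Substituting the right-hand side of that lemma replaces the opaque $\eta,\kappa^{-1}$ data by an explicit string built from $\overline{(\rho^\dagger)^{-1}}$, $\overline{\id_a\lhd \ev}$, $\overline{\alpha^{-1}}$, $\overline{\eta}$, $\overline{\kappa_{v\to 1}}$, and $\varphi_v$, all composed in $\overline{\cA}$; the subsequent manipulations are then routine rewrites using \eqref{eq:DefineLhd} and \eqref{eq:MateOf1lhdg}, naturality of $\varphi$ and $\eta$, and associativity of $-\circ_\cA-$.

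The main obstacle, and the two places where the hypotheses are essential, come next. The hypothesis \ref{kappa:3} that $\rho$ is unitary lets me replace $(\rho^\dagger)^{-1}$ by $\rho$ and cancel it against a $\rho^{-1}$ produced during the rewriting; this is the $\rho^{-1}$-versus-$(\rho^\dagger)^{-1}$ cancellation flagged in the remark following the definition of dagger $\sV$-category, and it is what collapses the unitor data on the two sides. Separately, the newly imposed hypothesis that $(\cV,\kappa^\cV)$ is itself a dagger $\sV$-category is what lets me control the self-enrichment pieces $\kappa_{v\to 1}$, $\varphi_v$, and the $\ev/\coev$ maps: it is precisely the input needed to recognize the $\kappa^\cV$-governed factors as the correct mate data rather than some uncontrolled isomorphism, exactly as \ref{kappa:4} for $\cV$ was used in Example \ref{ex:DaggerSelfEnrichment} and throughout Lemma \ref{lem:eta-kappa-inv}.

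Finally, after these cancellations I expect the simplified mate of $\alpha^\dagger$ to coincide with the mate of $\alpha^{-1}$ under the same adjunction. The latter is computed directly from the defining mate of $\alpha$ in Construction \ref{const:CatToMod} together with the mate identities \ref{mate:L-composite}--\ref{mate:R-composite} and the characterization of $a\lhd -$ as a left $\sV$-adjoint from Fact \ref{fact:MateOfLa}. Since the mate correspondence is a bijection, matching mates yields $\alpha^\dagger=\alpha^{-1}$, so $\alpha$ is unitary. I anticipate the bookkeeping in the second step---keeping straight which composites live in $\cA$ versus $\overline{\cA}$, and threading the $\varphi$ and $\nu$ coherence isomorphisms through the diagram---to be the genuinely delicate part, with the conceptual content entirely concentrated in the two hypothesis-driven cancellations described above.
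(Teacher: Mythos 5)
Your proposal is correct and matches the paper's proof essentially step for step: both take the mate of $\alpha_{a,u,v}^\dagger$ under the adjunction $\sV(v \to \cA(a\lhd u \to a\lhd uv))$, expand the resulting $\eta\circ\kappa^{-1}$ via Lemma \ref{lem:eta-kappa-inv}, rewrite with \eqref{eq:DefineLhd}, compute inside the underlying category $\sA$ using naturality and associativity of $\alpha$ together with the triangle identity, cancel $\rho^{-1}$ against $(\rho^\dagger)^{-1}$ by \ref{kappa:3}, and conclude by recognizing the mate of $\alpha^{-1}_{a,u,v}$. The one imprecision is your account of how the hypothesis on $(\cV,\kappa^\cV)$ enters: in the paper it is needed because, after invoking Fact \ref{fact:MateOfLa} to recognize the mate of $\alpha^{-1}_{a,uv,v^*}$ and using $\sV$-functoriality of $a\lhd -$, Lemma \ref{lem:eta-kappa-inv} is applied a \emph{second} time with $\cV$ itself playing the role of the dagger $\sV$-category, after which the final appeal to \ref{kappa:4} concerns $\cA$ (daggerness of the functor $a\lhd -$), not $\cV$ --- contrary to your suggestion, Lemma \ref{lem:eta-kappa-inv} itself nowhere uses \ref{kappa:4} for $\cV$.
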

\begin{proof}
  The mate of $\alpha_{a, u, v}^\dagger$, after applying \ref{kappa:2}, is given by
  \begin{align*}
    \begin{tikzpicture}[smallstring, baseline=55]
      \node (top) at (1,5) {$\cA(a \lhd u \to a \lhd uv)$};
      \node (bot) at (0,0) {$v$};
      \node[box] (eta) at (0,1) {$\eta_{a,v}$};
      \node[box] (kappainv) at (0,2) {$\kappa_{a \lhd u \to a \lhd u \lhd v}^{-1}$};
      \node[box] (alpha) at (2,1) {$\overline{\alpha_{a,u,v}}$};
      \node[box] (circ) at (1,3.1) {$- \circ -$};
      \node[box] (kappa) at (1,4) {$\kappa_{a \lhd u \to a \lhd u v}$};
      \draw (bot) to (eta);
      \draw (eta) to (kappainv);
      \draw (kappainv) to[in=-90,out=90] (circ.225);
      \draw (alpha) to[in=-90,out=90] (circ.-45);
      \draw (circ) to (kappa);
      \draw (kappa) to (top);
    \end{tikzpicture}
    \underset{\text{(Lem.~\ref{lem:eta-kappa-inv})}}{=}
    \begin{tikzpicture}[smallstring, baseline=40]
      \node (top) at (1,5) {$\cA(a \lhd u \to a \lhd uv)$};
      \node (bot) at (3,-4) {$v$};
      \node[box] (kappa) at (1,4) {$\kappa_{a \lhd u \to a \lhd u v}$};
      \node[box] (circ4) at (1,3) {$- \circ_{\overline{\cA}} -$};
      \node[box] (alpha2) at (2,2) {$\overline{\alpha_{a,u,v}}$};
      \node[box] (circ3) at (0,2) {$- \circ_{\overline{\cA}} -$};
      \node[box] (rho) at (-1,1) {$\overline{(\rho_{a \lhd u}^{\dagger})^{-1}}$};
      \node[box] (circ2) at (1,1) {$- \circ_{\overline{\cA}} -$};
      \node[box] (ev) at (0,0) {$\overline{\id_a \lhd \id_u \ev_v}$};
      \node[box] (circ1) at (2,0) {$- \circ_{\overline{\cA}} -$};
      \node[box] (alpha1) at (1,-1.2) {$\overline{\alpha_{a \lhd u, v, {v^*}}^{-1}}$};
      \node[box] (eta) at (3,-1.2) {$\overline{\eta_{a \lhd u \lhd v , {v^*}}}$};
      \node[box] (kappa0) at (3,-2.1) {$\overline{\kappa_{v \to 1}}$};
      \node[box] (phi) at (3,-3) {$\varphi_v$};
      \draw (kappa) to (top);
      \draw (circ4) to (kappa);
      \draw (alpha2) to[in=-90,out=90] (circ4.-45);
      \draw (circ3) to[in=-90,out=90] (circ4.225);
      \draw (rho) to[in=-90,out=90] (circ3.225);
      \draw (circ2) to[in=-90,out=90] (circ3.-45);
      \draw (ev) to[in=-90,out=90] (circ2.225);
      \draw (circ1) to[in=-90,out=90] (circ2.-45);
      \draw (alpha1) to[in=-90,out=90] (circ1.225);
      \draw (eta) to[in=-90,out=90] (circ1.-45);
      \draw (kappa0) to (eta);
      \draw (phi) to (kappa0);
      \draw (bot) to (phi);
    \end{tikzpicture}
    \underset{\eqref{eq:DefineLhd}}{=}
    \begin{tikzpicture}[smallstring, baseline=20]
      \node (top) at (0,4) {$\cA(a \lhd u \to a \lhd uv)$};
      \node (bot) at (4,-5) {$v$};
      \node[box] (kappa) at (0,3) {$\kappa_{a \lhd u \to a \lhd u v}$};
      \node[box] (circ3) at (0,2) {$- \circ_{\overline{\cA}} -$};
      \node[box] (rho) at (-1,1) {$\overline{(\rho_{a \lhd u}^{\dagger})^{-1}}$};
      \node[box, blue] (circ2) at (1,1) {$- \circ_{\overline{\cA}} -$};
      \node[box, blue] (ev) at (0,0) {$\overline{\id_a \lhd \id_u \ev_v}$};
      \node[box, blue] (circ1) at (2,0) {$- \circ_{\overline{\cA}} -$};
      \node[box, blue] (alpha1) at (1,-1) {$\overline{\alpha_{a \lhd u, v, {v^*}}^{-1}}$};
      \node[box, blue] (circ) at (3,-1) {$- \circ_{\overline{\cA}} -$};
      \node[box, blue] (alpha) at (2,-2) {$\overline{\alpha_{a, u, v} \lhd \id_{{v^*}}}$};
      \node[box] (eta) at (4,-2) {$\overline{\eta_{a \lhd uv, {v^*}}}$};
      \node[box] (kappa0) at (4,-3) {$\overline{\kappa_{v \to 1}}$};
      \node[box] (phi) at (4,-4) {$\varphi_v$};
      \draw (kappa) to (top);
      \draw (circ3) to[in=-90,out=90] (kappa);
      \draw (rho) to[in=-90,out=90] (circ3.225);
      \draw (circ2) to[in=-90,out=90] (circ3.-45);
      \draw[blue] (ev) to[in=-90,out=90] (circ2.225);
      \draw[blue] (circ1) to[in=-90,out=90] (circ2.-45);
      \draw[blue] (alpha1) to[in=-90,out=90] (circ1.225);
      \draw[blue] (circ) to[in=-90,out=90] (circ1.-45);
      \draw[blue] (alpha) to[in=-90,out=90] (circ.225);
      \draw (eta) to[in=-90,out=90] (circ.-45);
      \draw (kappa0) to (eta);
      \draw (phi) to (kappa0);
      \draw (bot) to (phi);
    \end{tikzpicture}
  \end{align*}
  Observe that the blue part of the above diagram is again the bar of a morphism in $\sA$,
  so we can work in $\sA$:
  \begin{align*}
    (\alpha_{a, u, v} \lhd \id_{{v^*}}) &\circ \alpha_{a \lhd u, v, {v^*}}^{-1} \circ (\id_{a \lhd u} \lhd \ev_v)
    &&\\
    &= \alpha_{a, uv, {v^*}}^{-1} \circ \alpha_{a, u, v {v^*}} \circ (\id_{a \lhd u} \ev_v)
    && \text{$\alpha$ associative}\\
    &= \alpha_{a, uv, {v^*}}^{-1} \circ (\id_a \lhd \id_u \ev_v) \circ \rho_{a \lhd u}^{-1}.
    && \text{triangle identity}
  \end{align*}
  Replacing this back into the main calculation and using our assumption \ref{kappa:3} that $\rho$ is unitary,
  we recognize the mate of $\alpha_{a, u v, v^*}^{-1}$, which can be computed from Fact \ref{fact:MateOfLa}, and use that $(a \lhd -)$ is a $\sV$-functor:
  \begin{align*}
    \begin{tikzpicture}[smallstring, baseline=-20]
      \node (top) at (2,2) {$\cA(a \lhd u \to a \lhd uv)$};
      \node (bot) at (4,-5) {$v$};
      \node[box] (kappa) at (2,1) {$\kappa_{a \lhd u \to a \lhd u v}$};
      \node[box, blue] (circ1) at (2,0) {$- \circ_{\overline{\cA}} -$};
      \node[box, blue] (ev) at (1,-1) {$\overline{\id_a \lhd \id_u \ev_v}$};
      \node[box, blue] (circ) at (3,-1) {$- \circ_{\overline{\cA}} -$};
      \node[box, blue] (alpha) at (2,-2.2) {$\overline{\alpha_{a, uv, {v^*}}^{-1}}$};
      \node[box] (eta) at (4,-2.2) {$\overline{\eta_{a \lhd uv, {v^*}}}$};
      \node[box] (kappa0) at (4,-3.1) {$\overline{\kappa_{v \to 1}}$};
      \node[box] (phi) at (4,-4) {$\varphi_v$};
      \draw (kappa) to (top);
      \draw (circ1) to (kappa);
      \draw[blue] (ev) to[in=-90,out=90] (circ1.225);
      \draw[blue] (circ) to[in=-90,out=90] (circ1.-45);
      \draw[blue] (alpha) to[in=-90,out=90] (circ.225);
      \draw (eta) to[in=-90,out=90] (circ.-45);
      \draw (kappa0) to (eta);
      \draw (phi) to (kappa0);
      \draw (bot) to (phi);
    \end{tikzpicture}
    =
    \begin{tikzpicture}[smallstring, baseline=-30]
      \node (top) at (1.9,2) {$\cA(a \lhd u \to a \lhd uv)$};
      \node (bot) at (3,-5) {$v$};
      \node[box] (kappa) at (1.9,1.1) {$\kappa_{a \lhd u \to a \lhd u v}$};
      \node[box] (circ) at (1.9,0.2) {$- \circ_{\overline{\cA}} -$};
      \node[box] (ev) at (0.8,-3) {$\overline{\id_u \ev_v}$};
      \node[box] (a-tri) at (0.8,-2) {$\overline{(a \lhd -)_{u v v^* \to u}}$};
      \node[box] (L) at (3,-1) {$\overline{(a \lhd -)_{uv \to uvv^*}}$};
      \node[box] (eta) at (3,-2) {$\overline{\eta_{u v, v^*}^{\cV}}$};
      \node[box] (kappa0) at (3,-3) {$\overline{\kappa_{v \to 1}}$};
      \node[box] (phi) at (3,-4) {$\varphi_v$};
      \draw (kappa) to (top);
      \draw (circ) to (kappa);
      \draw (ev) to (a-tri);
      \draw (a-tri) to[in=-90,out=90] (circ.225);
      \draw (L) to[in=-90,out=90] (circ.-45);
      \draw (eta) to[in=-90,out=90] (L);
      \draw (kappa0) to (eta);
      \draw (phi) to (kappa0);
      \draw (bot) to (phi);
    \end{tikzpicture}
    =
    \begin{tikzpicture}[smallstring, baseline=-15]
      \node (top) at (1.9,3) {$\cA(a \lhd u \to a \lhd uv)$};
      \node (bot) at (3,-4) {$v$};
      \node[box] (kappa) at (1.9,2) {$\kappa_{a \lhd u \to a \lhd u v}$};
      \node[box] (a-tri) at (1.9,1.1) {$\overline{(a \lhd -)_{u v \to u}}$};
      \node[box] (circ) at (1.9,0.2) {$- \circ_{\overline{\cA}} -$};
      \node[box] (ev) at (0.8,-1) {$\overline{\id_u \ev_v}$};
      \node[box] (eta) at (3,-1) {$\overline{\eta_{u v, v^*}^{\cV}}$};
      \node[box] (kappa0) at (3,-2) {$\overline{\kappa_{v \to 1}}$};
      \node[box] (phi) at (3,-3) {$\varphi_v$};
      \draw (kappa) to (top);
      \draw (a-tri) to (kappa);
      \draw (circ) to (a-tri);
      \draw (ev) to[in=-90,out=90] (circ.225);
      \draw (eta) to[in=-90,out=90] (circ.-45);
      \draw (kappa0) to (eta);
      \draw (phi) to (kappa0);
      \draw (bot) to (phi);
    \end{tikzpicture}
    \underset{\text{(Lem.~\ref{lem:eta-kappa-inv})}}{=}
    \begin{tikzpicture}[smallstring, baseline=-30]
      \node (top) at (0,1) {$\cA(a \lhd u \to a \lhd uv)$};
      \node (bot) at (0,-4) {$v$};
      \node[box] (kappa) at (0,0) {$\kappa_{a \lhd u \to a \lhd u v}$};
      \node[box] (L) at (0,-1) {$\overline{(a \lhd -)_{uv \to u}}$};
      \node[box] (eta) at (0,-3) {$\eta_{u,v}^{\cV}$};
      \node[box] (kappa0) at (0,-2) {$(\kappa_{u \to uv}^{\cV})^{-1}$};
      \draw (kappa) to (top);
      \draw (L) to (kappa);
      \draw (kappa0) to (L);
      \draw (eta) to (kappa0);
      \draw (bot) to (eta);
    \end{tikzpicture},
  \end{align*}
  which, after applying \ref{kappa:4}, simplifies to exactly the mate of $\alpha_{a, u, v}^{-1}$.
\end{proof}

In Construction \ref{const:DaggerFromInvolutiveFunctor},
we saw that a weak dagger $\sV$-functor $\cF: \cA\to \cB$ gives a dagger functor $\sF: \sA \to \sB$.
In \cite[\S4.1]{1809.09782}, it was shown how to equip $\sF$ with the structure of a lax module functor
by defining a modulator $\omega_{a, u}$ to be the mate of
$\eta_{a,u} \circ \cF_{a\to a\lhd u}$ under Adjunction \ref{eq:CatToModAdjunction}.
In fact, when $\cA$ and $\cB$ are both dagger $\sV$-categories, $\omega_{a,u}$ is unitary by Proposition \ref{prop:modulator-from-V-functor} below.
In this case we define $\Phi(\cF) := \sF$.

\begin{prop}
  \label{prop:modulator-from-V-functor}
  Let $\cF : \cA \to \cB$ be a weak dagger $\sV$-functor between dagger $\sV$-categories.
  Then the underlying dagger functor $\sF: \sA \to \sB$ equipped with the modulator $\omega$ defined above is a dagger module functor, i.e., each $\omega_{a, u}$ is unitary.
\end{prop}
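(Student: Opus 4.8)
The plan is to reduce the claim to an identity that can be checked by comparing mates under Adjunction \ref{eq:CatToModAdjunction}, exactly as in the proofs of Propositions \ref{prop:daglhd1}--\ref{prop:daglhd3}. The module-functor structure of \cite[\S4.1]{1809.09782} already makes each $\omega_{a,u}\in\sB(\sF(a)\lhd u\to\sF(a\lhd u))$ an isomorphism in our tensored setting, so it suffices to prove that $\omega_{a,u}$ is unitary, i.e. $\omega_{a,u}^\dag=\omega_{a,u}^{-1}$. Using the reversed composition convention, both $\omega_{a,u}\circ\omega_{a,u}^\dag$ and $\id_{\sF(a)\lhd u}$ lie in $\sB(\sF(a)\lhd u\to\sF(a)\lhd u)$, and the source $\sF(a)\lhd u$ is in the image of the left adjoint $\sF(a)\lhd-$; hence both are mateable under Adjunction \ref{eq:CatToModAdjunction}, and the mate of $\id_{\sF(a)\lhd u}$ is the unit $\eta_{\sF(a),u}$. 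Thus it is enough to show $\mate(\omega_{a,u}\circ\omega_{a,u}^\dag)=\eta_{\sF(a),u}$, which (with invertibility of $\omega_{a,u}$) yields the claim.

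First I would compute $\mate(\omega_{a,u}\circ\omega_{a,u}^\dag)$. By naturality of the adjunction in the codomain together with the defining property $\mate(\omega_{a,u})=\eta_{a,u}\circ\cF_{a\to a\lhd u}$, this mate equals $(\eta_{a,u}\circ\cF_{a\to a\lhd u})\circ\cB(\sF(a)\to\omega_{a,u}^\dag)$, i.e. the $1_\sV$-graded morphism $\eta_{a,u}\circ\cF_{a\to a\lhd u}$ post-composed in $\sB$ with $\omega_{a,u}^\dag$. I would then expand $\omega_{a,u}^\dag$ using Construction \ref{const:DaggerFromInvolutive}, writing the dagger as $\kappa^\cB\circ\overline{(\,\cdot\,)}\circ r$, so that a factor $\overline{\omega_{a,u}}$ and a factor $\kappa^\cB_{\sF(a\lhd u)\to\sF(a)\lhd u}$ appear explicitly in the diagram.

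The heart of the computation is then to unwind these $\kappa$'s. I would use that $\cF$ is a dagger $\sV$-functor (Construction \ref{const:DaggerFromInvolutiveFunctor}) to trade the composite $\kappa^\cB\circ\overline{\cF}$ for $\cF\circ\kappa^\cA$, so that the copies of $\cF$ coming from $\mate(\omega_{a,u})$ and from inside $\overline{\omega_{a,u}}$ can be merged by functoriality of $\cF$; and I would use Lemma \ref{lem:eta-kappa-inv} to resolve the resulting cluster of $\eta$ sitting adjacent to a $\kappa^{-1}$ into the standard $(\rho^\dag)^{-1}$, $\ev$, $\alpha^{-1}$, $\eta$ pattern. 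Invoking Proposition \ref{prop:daglhd2} (that $-\lhd-$ is a dagger functor) and axiom \ref{kappa:3} (that each $\rho_a$ is unitary) then lets the $\rho^\dag$ cancel and the bars and $\kappa$'s telescope — precisely the cancellation performed in Propositions \ref{prop:daglhd1} and \ref{prop:daglhd3} — leaving exactly $\eta_{\sF(a),u}$. Comparing mates gives $\omega_{a,u}\circ\omega_{a,u}^\dag=\id$, and hence $\omega_{a,u}^\dag=\omega_{a,u}^{-1}$.

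I expect the main obstacle to be exactly this $\kappa$- and conjugation-bookkeeping: as in Propositions \ref{prop:daglhd1}--\ref{prop:daglhd3}, the difficulty is not any single algebraic identity but arranging the diagram so that Lemma \ref{lem:eta-kappa-inv} becomes literally applicable — that is, so that an $\eta$ sits directly above a $\kappa^{-1}$ of the correct hom-object — while tracking the reversed composition order and which strands have been conjugated. Once the diagram is in that normal form, the remaining cancellations are forced by \ref{kappa:1}, \ref{kappa:3}, and naturality of $\eta$, so the only real work is getting there.
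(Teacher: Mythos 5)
Your proposal is correct and follows essentially the same route as the paper's proof: reduce to a one-sided inverse using invertibility of $\omega_{a,u}$, compute the mate of $\omega_{a,u}\circ\omega_{a,u}^\dag$, expand the dagger via $\kappa$ and trade $\kappa^\cB\circ\overline{\cF}$ for $\cF\circ\kappa^\cA$ using the dagger condition on $\cF$, apply Lemma \ref{lem:eta-kappa-inv}, and telescope via \ref{kappa:3} down to $\eta_{\sF(a),u}$, the mate of the identity. Two minor bookkeeping corrections: invertibility of the lax modulator comes from rigidity of the enriching category via \cite[Lem.~2.10]{MR3934626}, not from tensoredness, and the central cancellation in the paper uses the modulator's own naturality, associativity, and unitality axioms (mirroring that same rigidity argument) together with \ref{kappa:3}, rather than Proposition \ref{prop:daglhd2} or \ref{kappa:1}.
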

\begin{proof}
  When $\sU$ is a unitary monoidal category, $\sU$ is rigid,
  so a lax module functor is automatically a strong module functor by \cite[Lem.~2.10]{MR3934626}.
  Thus $\omega_{a, u}$ is invertible, so it suffices to check that $\omega_{a,u}^\dagger$ is its inverse on only one side.
  The mate of $\omega_{a, u} \circ \omega_{a, u}^\dagger$ is
  \begin{align*}
    \begin{tikzpicture}[smallstring, baseline=25]
      \node (bot) at (0,0) {$u$};
      \node[box] (eta) at (0,1) {$\eta_{a, u}^{\cA}$};
      \node[box] (F) at (0,2) {$\cF_{a \to a \lhd u}$};
      \node[box] (nu) at (2.5,1) {$\overline{\omega_{a, u}}$};
      \node[box] (kappa) at (2.5,2) {$\kappa_{\sF(a \lhd u) \to \sF(a) \lhd u}$};
      \node[box] (circ) at (1.25,3.1) {$- \circ -$};
      \node (top) at (1.25,4) {$\cB(\sF(a) \to \sF(a) \lhd u)$};
      \draw (bot) to (eta);
      \draw (eta) to (F);
      \draw (nu) to (kappa);
      \draw (F) to[in=-90,out=90] (circ.225);
      \draw (kappa) to[in=-90,out=90] (circ.-45);
      \draw (circ) to (top);
    \end{tikzpicture}
    &\underset{\text{\ref{kappa:2}}}{=}
    \begin{tikzpicture}[smallstring, baseline=45]
      \node (bot) at (0,0) {$u$};
      \node[box] (eta) at (0,1) {$\eta_{a, u}^{\cA}$};
      \node[box] (F) at (0,2) {$\cF_{a \to a \lhd u}$};
      \node[box] (kappainv) at (0,3) {$\kappa_{\sF(a) \to \sF(a \lhd u)}^{-1}$};
      \node[box] (nu) at (2.5,3) {$\overline{\omega_{a, u}}$};
      \node[box] (circ) at (1.25,4.2) {$- \circ -$};
      \node[box] (kappa) at (1.25,5.1) {$\kappa_{\sF(a) \to \sF(a) \lhd u}$};
      \node (top) at (1.25,6) {$\cB(\sF(a) \to \sF(a) \lhd u)$};
      \draw (bot) to (eta);
      \draw (eta) to (F);
      \draw (F) to (kappainv);
      \draw (kappainv) to[in=-90,out=90] (circ.225);
      \draw (nu) to[in=-90,out=90] (circ.-45);
      \draw (circ) to (kappa);
      \draw (kappa) to (top);
    \end{tikzpicture}
    =
    \begin{tikzpicture}[smallstring, baseline=45]
      \node (bot) at (0,0) {$u$};
      \node[box] (eta) at (0,1) {$\eta_{a, u}^{\cA}$};
      \node[box] (kappainv) at (0,2) {$\kappa_{a \to a \lhd u}^{-1}$};
      \node[box] (F) at (0,3) {$\overline{\cF_{a \lhd u \to a}}$};
      \node[box] (nu) at (2,3) {$\overline{\omega_{a, u}}$};
      \node[box] (circ) at (1,4.1) {$- \circ_{\overline{\cB}} -$};
      \node[box] (kappa) at (1,5.1) {$\kappa_{\sF(a) \to \sF(a) \lhd u}$};
      \node (top) at (1,6.1) {$\cB(\sF(a) \to \sF(a) \lhd u)$};
      \draw (bot) to (eta);
      \draw (eta) to (kappainv);
      \draw (kappainv) to (F);
      \draw (F) to[in=-90,out=90] (circ.225);
      \draw (nu) to[in=-90,out=90] (circ.-45);
      \draw (circ) to (kappa);
      \draw (kappa) to (top);
    \end{tikzpicture}
    \underset{\text{(Lem.~\ref{lem:eta-kappa-inv})}}{=}
    \begin{tikzpicture}[smallstring, baseline=25]
      \node (bot) at (3,-4) {$u$};
      \node[box] (phi) at (3,-3) {$\varphi_u$};
      \node[box] (kappa1) at (3,-2.2) {$\overline{\kappa_{u \to 1}}$};
      \node[box] (eta) at (3,-1.2) {$\overline{\eta_{a \lhd u, u^*}^{\cA}}$};
      \node[box] (alpha) at (1,-1.2) {$\overline{\alpha_{a, u, u^*}^{-1}}$};
      \node[box] (circ3) at (2,0) {$- \circ_{\overline{\cA}} -$};
      \node[box] (ev) at (0,0) {$\overline{\id_a \lhd \ev_u}$};
      \node[box] (circ2) at (1,1) {$- \circ_{\overline{\cA}} -$};
      \node[box] (rho) at (-1,1) {$\overline{(\rho_a^\dagger)^{-1}}$};
      \node[box] (circ1) at (0,2.1) {$- \circ_{\overline{\cA}} -$};
      \node[box] (F) at (0,3) {$\overline{\cF_{a \lhd u \to a}}$};
      \node[box] (nu) at (2.5,3) {$\overline{\omega_{a, u}}$};
      \node[box] (circ) at (1.25,4.1) {$- \circ_{\overline{\cB}} -$};
      \node[box] (kappa) at (1.25,5.1) {$\kappa_{\sF(a) \to \sF(a) \lhd u}$};
      \node (top) at (1.25,6.1) {$\cB(\sF(a) \to \sF(a) \lhd u)$};
      \draw (bot) to (phi);
      \draw (phi) to (kappa1);
      \draw (kappa1) to (eta);
      \draw (eta) to[in=-90,out=90] (circ3.-45);
      \draw (alpha) to[in=-90,out=90] (circ3.225);
      \draw (circ3) to[in=-90,out=90] (circ2.-45);
      \draw (ev) to[in=-90,out=90] (circ2.225);
      \draw (circ2) to[in=-90,out=90] (circ1.-45);
      \draw (rho) to[in=-90,out=90] (circ1.225);
      \draw (circ1) to (F);
      \draw (F) to[in=-90,out=90] (circ.225);
      \draw (nu) to[in=-90,out=90] (circ.-45);
      \draw (circ) to (kappa);
      \draw (kappa) to (top);
    \end{tikzpicture}
    \displaybreak[1]\\
    &=
    \begin{tikzpicture}[smallstring, baseline=10]
      \node (bot) at (2.25,-5) {$u$};
      \node[box] (phi) at (2.25,-4) {$\varphi_u$};
      \node[box] (kappa1) at (2.25,-3.1) {$\overline{\kappa_{u \to 1}}$};
      \node[box] (eta) at (2.25,-2.2) {$\overline{\eta_{a \lhd u, u^*}^{\cA}}$};
      \node[box] (F) at (2.25,-1.2) {$\overline{\cF_{a \lhd u \to a \lhd u \lhd u^*}}$};
      \node[box] (nu) at (4.05,-1.2) {$\overline{\omega_{a, u}}$};
      \node[box] (circ4) at (3.15,-0.1) {$- \circ_{\overline{\cB}} -$};
      \node[box] (alpha) at (0.85,-0.2) {$\overline{\sF(\alpha_{a, u, u^*}^{-1})}$};
      \node[box] (circ3) at (2,1) {$- \circ_{\overline{\cB}} -$};
      \node[box] (ev) at (0,1) {$\overline{\sF(\id_a \lhd \ev_u)}$};
      \node[box] (circ2) at (1,2) {$- \circ_{\overline{\cB}} -$};
      \node[box] (rho) at (-1,2) {$\overline{\sF((\rho_a^\dagger)^{-1})}$};
      \node[box] (circ1) at (0,3.1) {$- \circ_{\overline{\cB}} -$};
      \node[box] (kappa) at (0,4) {$\kappa_{\sF(a) \to \sF(a) \lhd u}$};
      \node (top) at (0,5) {$\cB(\sF(a) \to \sF(a) \lhd u)$};
      \draw (bot) to (phi);
      \draw (phi) to (kappa1);
      \draw (kappa1) to (eta);
      \draw (eta) to (F);
      \draw (F) to[in=-90,out=90] (circ4.225);
      \draw (nu) to[in=-90,out=90] (circ4.-45);
      \draw (circ4) to[in=-90,out=90] (circ3.-45);
      \draw (alpha) to[in=-90,out=90] (circ3.225);
      \draw (circ3) to[in=-90,out=90] (circ2.-45);
      \draw (ev) to[in=-90,out=90] (circ2.225);
      \draw (circ2) to[in=-90,out=90] (circ1.-45);
      \draw (rho) to[in=-90,out=90] (circ1.225);
      \draw (circ1) to[in=-90,out=90] (kappa);
      \draw (kappa) to (top);
    \end{tikzpicture}
    =
    \begin{tikzpicture}[smallstring, baseline=0]
      \node (bot) at (4.5,-4) {$u$};
      \node[box] (phi) at (4.5,-3.3) {$\varphi_u$};
      \node[box] (kappa1) at (4.5,-2.4) {$\overline{\kappa_{u \to 1}}$};
      \node[box] (eta) at (4.5,-1.5) {$\overline{\eta_{\sF(a) \lhd u, u^*}^{\cB}}$};
      \node[box, blue] (nu1) at (0.8,-2.5) {$\overline{\omega_{a \lhd u, u^*}}$};
      \node[box, blue] (nu2) at (2.8,-2.5) {$\overline{\omega_{a,u} \lhd \id_{u^*}}$};
      \node[box, blue] (circ5) at (1.8,-1.2) {$- \circ_{\overline{\cB}} -$};
      \node[box, blue] (circ4) at (3.15,-0.1) {$- \circ_{\overline{\cB}} -$};
      \node[box, blue] (alpha) at (0.85,-0.2) {$\overline{\sF(\alpha_{a, u, u^*}^{-1})}$};
      \node[box, blue] (circ3) at (2,1) {$- \circ_{\overline{\cB}} -$};
      \node[box, blue] (ev) at (0,1) {$\overline{\sF(\id_a \lhd \ev_u)}$};
      \node[box, blue] (circ2) at (1,2) {$- \circ_{\overline{\cB}} -$};
      \node[box, blue] (rho) at (-1,2) {$\overline{\sF((\rho_a^\dagger)^{-1})}$};
      \node[box, blue] (circ1) at (0,3.1) {$- \circ_{\overline{\cB}} -$};
      \node[box] (kappa) at (0,4) {$\kappa_{\sF(a) \to \sF(a) \lhd u}$};
      \node (top) at (0,5) {$\cB(\sF(a) \to \sF(a) \lhd u)$};
      \draw (bot) to (phi);
      \draw (phi) to (kappa1);
      \draw (kappa1) to (eta);
      \draw (eta) to[in=-90,out=90] (circ4.-45);
      \draw[blue] (nu2) to[in=-90,out=90] (circ5.-45);
      \draw[blue] (nu1) to[in=-90,out=90] (circ5.225);
      \draw[blue] (circ5) to[in=-90,out=90] (circ4.225);
      \draw[blue] (circ4) to[in=-90,out=90] (circ3.-45);
      \draw[blue] (alpha) to[in=-90,out=90] (circ3.225);
      \draw[blue] (circ3) to[in=-90,out=90] (circ2.-45);
      \draw[blue] (ev) to[in=-90,out=90] (circ2.225);
      \draw[blue] (circ2) to[in=-90,out=90] (circ1.-45);
      \draw[blue] (rho) to[in=-90,out=90] (circ1.225);
      \draw (circ1) to[in=-90,out=90] (kappa);
      \draw (kappa) to (top);
    \end{tikzpicture}\, ,
  \end{align*}
  where we noticed that
  \begin{align*}
    \begin{tikzpicture}[smallstring, baseline=40]
      \node (bot) at (0,0) {$u^*$};
      \node[box] (eta) at (0,1) {$\eta_{a \lhd u, u^*}^{\cA}$};
      \node[box] (F) at (0,2) {$\sF_{a \lhd u \to a \lhd u \lhd u^*}$};
      \node[box] (nu) at (-2,2) {$\omega_{a, u}$};
      \node[box] (circ) at (-1,3) {$- \circ -$};
      \node (top) at (-1,4) {$\cB(\sF(a) \lhd u \to \sF(a \lhd u \lhd u^*))$};
      \draw (bot) to (eta);
      \draw (eta) to (F);
      \draw (F) to[in=-90,out=90] (circ.-45);
      \draw (nu) to[in=-90,out=90] (circ.225);
      \draw (circ) to (top);
    \end{tikzpicture}
    =
    \begin{tikzpicture}[smallstring, baseline=40]
      \node (bot) at (-1,0) {$u^*$};
      \node[box] (circ0) at (0,2) {$- \circ -$};
      \node[box] (eta) at (-1,1) {$\eta_{\sF(a \lhd u), u^*}^{\cB}$};
      \node[box] (nu2) at (1,1) {$\omega_{a \lhd u, u^*}$};
      \node[box] (nu) at (-2,2) {$\omega_{a, u}$};
      \node[box] (circ) at (-1,3) {$- \circ -$};
      \node (top) at (-1,4) {$\cB(\sF(a) \lhd u \to \sF(a \lhd u \lhd u^*))$};
      \draw (bot) to (eta);
      \draw (eta) to[in=-90,out=90] (circ0.225);
      \draw (nu2) to[in=-90,out=90] (circ0.-45);
      \draw (circ0) to[in=-90,out=90] (circ.-45);
      \draw (nu) to[in=-90,out=90] (circ.225);
      \draw (circ) to (top);
    \end{tikzpicture}
    \underset{\eqref{eq:DefineLhd}}{=}
    \begin{tikzpicture}[smallstring, baseline=40]
      \node (bot) at (-2.5,0) {$u^*$};
      \node[box] (eta) at (-2.5,1.9) {$\eta_{\sF(a) \lhd u, u^*}^{\cB}$};
      \node[box] (circ) at (-1.25,3.2) {$- \circ -$};
      \node[box] (circ1) at (0,2) {$- \circ -$};
      \node[box] (nu1) at (-1,1) {$\omega_{a, u} \lhd \id_{u^*}$};
      \node[box] (nu2) at (1,1) {$\omega_{a \lhd u, u^*}$};
      \node (top) at (-1.25,4) {$\cB(\sF(a) \lhd u \to \sF(a \lhd u \lhd u^*))$};
      \draw (bot) to (eta);
      \draw (eta) to[in=-90,out=90] (circ.225);
      \draw (nu1) to[in=-90,out=90] (circ1.225);
      \draw (nu2) to[in=-90,out=90] (circ1.-45);
      \draw (circ1) to[in=-90,out=90] (circ.-45);
      \draw (circ) to[in=-90,out=90] (top);
    \end{tikzpicture}\, .
  \end{align*}
  Next, the blue portion is the bar of the following morphism in $\cB$:
  \begin{align*}
    (\omega_{a,u} \lhd \id_{u^*}) \circ &\omega_{a \lhd u, u^*} \circ \sF(\alpha_{a, u, u^*}^{-1} \circ (\id_a \lhd \ev_u) \circ (\rho_a^\dagger)^{-1})\\
    =& \alpha_{\sF(a), u, u^*}^{-1} \circ \omega_{a, u u^*} \circ \sF((\id_a \lhd \ev_u) \circ (\rho_a^\dagger)^{-1})
    && \text{$\omega$ associative}\\
    =& \alpha_{\sF(a), u, u^*}^{-1} \circ (\id_{\sF(a)} \lhd \ev_u) \circ \omega_{a, 1_{\sB}} \circ \sF((\rho_a^\dagger)^{-1})
    && \text{$\omega$ natural}\\
    =& \alpha_{\sF(a), u, u^*}^{-1} \circ (\id_{\sF(a)} \lhd \ev_u) \circ \omega_{a, 1_{\sB}} \circ \sF(\rho_a)
    &&\text{\ref{kappa:3}}\\
    =& \alpha_{\sF(a), u, u^*}^{-1} \circ (\id_{\sF(a)} \lhd \ev_u) \circ \rho_{\sF(a)}
    && \text{$\omega$ unital}\\
    =& \alpha_{\sF(a), u, u^*}^{-1} \circ (\id_{\sF(a)} \lhd \ev_u) \circ (\rho_{\sF(a)}^\dagger)^{-1}
    && \text{\ref{kappa:3}}
  \end{align*}
  We remark that the above manipulation of morphisms is similar to the proof that a lax module functor between modules for a rigid monoidal category is automatically strong \cite[Lem.~2.10]{MR3934626}.
  Substituting the bar of this expression back into the large diagram, we have
  \begin{align*}
    \begin{tikzpicture}[smallstring, baseline=25]
      \node (bot) at (3.15,-3) {$u$};
      \node[box] (phi) at (3.15,-2.1) {$\varphi_u$};
      \node[box] (kappa1) at (3.15,-1.2) {$\overline{\kappa_{u \to 1}}$};
      \node[box] (eta) at (3.15,-0.2) {$\overline{\eta_{\sF(a) \lhd u, u^*}^{\cB}}$};
      \node[box, blue] (alpha) at (0.85,-0.2) {$\overline{\alpha_{\sF(a), u, u^*}^{-1}}$};
      \node[box, blue] (circ3) at (2,1) {$- \circ_{\overline{\cB}} -$};
      \node[box, blue] (ev) at (0,1) {$\overline{\id_{\sF(a)} \lhd \ev_u}$};
      \node[box, blue] (circ2) at (1,2) {$- \circ_{\overline{\cB}} -$};
      \node[box, blue] (rho) at (-1,2) {$\overline{(\rho_{\sF(a)}^\dagger)^{-1}}$};
      \node[box, blue] (circ1) at (0,3.2) {$- \circ_{\overline{\cB}} -$};
      \node[box] (kappa) at (0,4.1) {$\kappa_{\sF(a) \to \sF(a) \lhd u}$};
      \node (top) at (0,5) {$\cB(\sF(a) \to \sF(a) \lhd u)$};
      \draw (bot) to (phi);
      \draw (phi) to (kappa1);
      \draw (kappa1) to (eta);
      \draw (eta) to[in=-90,out=90] (circ3.-45);
      \draw[blue] (alpha) to[in=-90,out=90] (circ3.225);
      \draw[blue] (circ3) to[in=-90,out=90] (circ2.-45);
      \draw[blue] (ev) to[in=-90,out=90] (circ2.225);
      \draw[blue] (circ2) to[in=-90,out=90] (circ1.-45);
      \draw[blue] (rho) to[in=-90,out=90] (circ1.225);
      \draw (circ1) to[in=-90,out=90] (kappa);
      \draw (kappa) to (top);
    \end{tikzpicture}
    \underset{\text{(Lem.~\ref{lem:eta-kappa-inv})}}{=}
    \begin{tikzpicture}[smallstring, baseline=40]
      \node (bot) at (0,0) {$u$};
      \node[box] (eta) at (0,1) {$\eta_{\sF(a), u}$};
      \node[box] (kappainv) at (0,2) {$\kappa_{\sF(a) \to \sF(a) \lhd u}^{-1}$};
      \node[box] (kappa) at (0,3) {$\kappa_{\sF(a) \to \sF(a) \lhd u}$};
      \node (top) at (0,4) {$\cB(\sF(a) \to \sF(a) \lhd u)$};
      \draw (bot) to (eta);
      \draw (eta) to (kappainv);
      \draw (kappainv) to (kappa);
      \draw (kappa) to (top);
    \end{tikzpicture}\, ,
  \end{align*}
  which is the mate of $\id_{\sF(a) \lhd u}$, and thus $\omega_{a, u}$ is unitary.
\end{proof}

%%%%%%%%%%%%%%%%%%%%%%%%%%%%%%%%%%%%%%%%%%%%
\begin{construction}
  \label{const:cat to mod 2-cells}

  Given a $\sV$-natural transformation $\theta : \cF \Rightarrow \cG$,
  we get a natural transformation $\Theta : \sF \Rightarrow \sG$ on the underlying functors by taking
  $\Theta_a := \theta_a \in \sB(\sF(a) \to \sG(a)) = \sV(1_{\sV} \to \cB(\cF(a) \to \cG(a)))$
  \cite[\S1.3]{MR2177301}.
  We check that $\Theta$ is a module natural transformation by taking the mate of
  $(\Theta_a \lhd \id_u) \circ \omega_{a, u}^\sG$:
  \begin{align*}
    \begin{tikzpicture}[smallstring, baseline=45]
      \node (bot) at (0,0) {$u$};
      \node[box] (eta) at (0,1) {$\eta_{\sF(a), u}$};
      \node[box] (theta) at (2,1) {$\theta_a \lhd \id_u$};
      \node[box] (circ1) at (1,2) {$- \circ -$};
      \node[box] (nu) at (2.5,2) {$\omega_{a, u}^\sG$};
      \node[box] (circ2) at (1.75,3) {$- \circ -$};
      \node (top) at (1.75,4) {$\sG(a \lhd u)$};
      \draw (bot) to (eta);
      \draw (eta) to[in=-90,out=90] (circ1.225);
      \draw (theta) to[in=-90,out=90] (circ1.-45);
      \draw (circ1) to[in=-90,out=90] (circ2.225);
      \draw (nu) to[in=-90,out=90] (circ2.-45);
      \draw (circ2) to (top);
    \end{tikzpicture}
    \underset{\eqref{eq:DefineLhd}}{=}
    \begin{tikzpicture}[smallstring, baseline=45]
      \node (bot) at (2,0) {$u$};
      \node[box] (theta) at (0,1) {$\theta_a$};
      \node[box] (eta) at (2,1) {$\eta_{\sG(a), u}$};
      \node[box] (circ1) at (1,2) {$- \circ -$};
      \node[box] (nu) at (2.5,2) {$\omega_{a, u}^\sG$};
      \node[box] (circ2) at (1.75,3) {$- \circ -$};
      \node (top) at (1.75,4) {$\sG(a \lhd u)$};
      \draw (bot) to (eta);
      \draw (eta) to[in=-90,out=90] (circ1.-45);
      \draw (theta) to[in=-90,out=90] (circ1.225);
      \draw (circ1) to[in=-90,out=90] (circ2.225);
      \draw (nu) to[in=-90,out=90] (circ2.-45);
      \draw (circ2) to (top);
    \end{tikzpicture}
    =
    \begin{tikzpicture}[smallstring, baseline=45]
      \node (bot) at (2,0) {$u$};
      \node[box] (eta) at (2,1) {$\eta_{\sG(a), u}$};
      \node[box] (G) at (2,2) {$\sG_{a \to a \lhd u}$};
      \node[box] (theta) at (0,2) {$\theta_a$};
      \node[box] (circ) at (1,3) {$- \circ -$};
      \node (top) at (1,4) {$\sG(a \lhd u)$};
      \draw (bot) to (eta);
      \draw (eta) to (G);
      \draw (G) to[in=-90,out=90] (circ.-45);
      \draw (theta) to[in=-90,out=90] (circ.225);
      \draw (circ) to (top);
    \end{tikzpicture}
    =
    \begin{tikzpicture}[smallstring, baseline=45]
      \node (bot) at (0,0) {$u$};
      \node[box] (theta) at (2,2) {$\theta_{a \lhd u}$};
      \node[box] (eta) at (0,1) {$\eta_{\sG(a), u}$};
      \node[box] (F) at (0,2) {$\sF_{a \to a \lhd u}$};
      \node[box] (circ) at (1,3) {$- \circ -$};
      \node (top) at (1,4) {$\sG(a \lhd u)$};
      \draw (bot) to (eta);
      \draw (eta) to (F);
      \draw (F) to[in=-90,out=90] (circ.225);
      \draw (theta) to[in=-90,out=90] (circ.-45);
      \draw (circ) to (top);
    \end{tikzpicture}\, ,
  \end{align*}
  which is the mate of $\omega_{a, u}^\sF \circ \Theta_{a \lhd u}$.
  The last two equalities used the definition of $\omega_{a, u}^\sG$ and $\sV$-naturality of $\theta$, respectively.
  Thus we can now define $\Phi$ on 2-cells via $\Phi(\theta) := \Theta$.
\end{construction}

\begin{construction}
  \label{const:2-functor}
  Let $\sV$ be a rigid involutive monoidal category equipped with an enriched dagger structure $\kappa^\sV$.
  Define $\Phi: \dagvcat \to \dagvmod$ on 0-cells as in the start of the subsection,
  on 1-cells as in Proposition \ref{prop:modulator-from-V-functor},
  and on 2-cells as in Construction \ref{const:cat to mod 2-cells}.
  To check that $\Phi$ is a (strict) 2-functor, all steps are straightforward except that composition of 1-cells is preserved.

  The composite modulator $\omega_{a, v}^{\Phi(\cF \circ \cG)}$ is the mate of
  $\eta_{a, v} \circ (\cF_{a \to a \lhd v} \circ \cG_{\sF(a) \to \sF(a \lhd v)})$
  under Adjunction \ref{eq:ModToCatAdjunction}, which is given by
  \begin{align*}
    \begin{tikzpicture}[smallstring, baseline=45]
      \node(a) at (0,0) {$\sG(\sF(a))$};
      \node (v) at (2.5,0) {$v$};
      \node (av) at (0,4) {$\sG(\sF(a \lhd v))$};
      \draw[rounded corners] (1.5,0.5) rectangle (3.5,1.25) node[midway] {$\eta_{a, v}$};
      \draw[rounded corners] (1.5,1.5) rectangle (3.5,2.25) node[midway] {$\cF_{a \to a \lhd v}$};
      \modulebox{0,2.5}{3.5}{0.75}{$\mate(\cG_{\sF(a) \to \sF(a \lhd v)})$};
      \draw[blue] (a) to (av);
      \draw (v) to (v |- 0,0.5);
      \draw (v |- 0,1.25) to (v |- 0,1.5);
      \draw (v |- 0,2.25) to (v |- 0,2.5);
    \end{tikzpicture}
    \underset{\text{Lem.~\ref{lem:mate of V-functor}}}{=}
    \hspace{-20pt}
    \begin{tikzpicture}[smallstring, baseline=45]
      \node(a) at (0,0) {$\sG(\sF(a))$};
      \node (v) at (2.5,0) {$v$};
      \node (av) at (0,5) {$\sG(\sF(a \lhd v))$};
      \draw[rounded corners] (1.5,0.5) rectangle (3.5,1.25) node[midway] {$\eta_{a, v}$};
      \draw[rounded corners] (1.5,1.5) rectangle (3.5,2.25) node[midway] {$\cF_{a \to a \lhd v}$};
      \modulebox{0,2.5}{3.5}{0.75}{$\omega^\sG_{\sF(a) , \cB(\sF(a) \to \sF(a \lhd v))}$};
      \modulebox{0,3.5}{3.5}{0.75}{$\sG(\epsilon_{\sF(a) \to \sF(a \lhd v)})$};
      \draw[blue] (a) to (av);
      \draw (v) to (v |- 0,0.5);
      \draw (v |- 0,1.25) to (v |- 0,1.5);
      \draw (v |- 0,2.25) to (v |- 0,2.5);
      \draw (0.1,3.25) to (0.1,3.5);
    \end{tikzpicture}
    =
    \hspace{-13pt}
    \begin{tikzpicture}[smallstring, baseline=45]
      \node(a) at (0,0) {$\sG(\sF(a))$};
      \node (v) at (2.5,0) {$v$};
      \node (av) at (0,5) {$\sG(\sF(a \lhd v))$};
      \modulebox{0,0.5}{3.5}{0.75}{$\omega^\sG_{\sF(a), v}$};
      \modulebox{0,1.5}{3.5}{0.75}{$\sG(\id_{\sF(a)} \lhd \eta_{a, v})$};
      \modulebox{0,2.5}{3.5}{0.75}{$\sG(\id_{\sF(a)} \lhd \cF_{a \to a \lhd v})$};
      \modulebox{0,3.5}{3.5}{0.75}{$\sG(\epsilon_{\sF(a) \to \sF(a \lhd v)})$};
      \draw[blue] (a) to (av);
      \draw (v) to (v |- 0,0.5);
      \draw (0.1,1.25) to (0.1,1.5);
      \draw (0.1,2.25) to (0.1,2.5);
      \draw (0.1,3.25) to (0.1,3.5);
    \end{tikzpicture}
    \underset{\text{Lem.~\ref{lem:mate of V-functor}}}{=}
    \hspace{-20pt}
    \begin{tikzpicture}[smallstring, baseline=45]
      \node(a) at (0,0) {$\sG(\sF(a))$};
      \node (v) at (2.5,0) {$v$};
      \node (av) at (0,5) {$\sG(\sF(a \lhd v))$};
      \modulebox{0,0.5}{3.5}{0.75}{$\omega^\sG_{\sF(a), v}$};
      \modulebox{0,1.5}{3.5}{0.75}{$\sG(\sF(\id_a) \lhd \eta_{a, v})$};
      \modulebox{0,2.5}{3.5}{0.75}{$\sG(\omega^\sF_{a, \cA(a \to a \lhd v)})$};
      \modulebox{0,3.5}{3.5}{0.75}{$\sG(\sF(\epsilon_{a \to a \lhd v}))$};
      \draw[blue] (a) to (av);
      \draw (v) to (v |- 0,0.5);
      \draw (0.1,1.25) to (0.1,1.5);
      \draw (0.1,2.25) to (0.1,2.5);
      \draw (0.1,3.25) to (0.1,3.5);
    \end{tikzpicture}\, ,
  \end{align*}
  where we made use of Lemma \ref{lem:mate of V-functor} below.
  After using $\sV$-module-naturality of $\omega^\sF$ and that
  $(\id_a \lhd \eta_{a, v}) \circ \epsilon_{a \to a \lhd v} = \id_{a \lhd v}$,
  this is exactly equal to $\omega^{\Phi(\cF \circ \cG)} := \omega^\sG \circ \sG(\omega^\sF)$.
  Note that this proof is similar to (and much simpler than) the proof of \cite[Prop.~5.8]{2104.07747}.
\end{construction}

\begin{lem}
  \label{lem:mate of V-functor}
  Given a $\sV$-functor $\cF : \cA \to \cB$, the mate of $\cF_{a \to b}$ under Adjunction \ref{eq:CatToModAdjunction} is
  \begin{equation}
    \begin{tikzpicture}[smallstring,baseline=30]
      \node (bot) at (0,0) {$\sF(a)$};
      \node (top) at (0,3) {$\sF(b)$};
      \node (ab) at (1.5,0) {$\cA(a \to b)$};
      \modulebox{0,0.5}{2}{0.75}{$\omega_{a,\cA(a \to b)}$};
      \modulebox{0,1.5}{2}{0.75}{$\sF(\epsilon_{a \to b})$};
      \draw[blue] (bot) to (top);
      \draw (ab) to (1.5,0.5);
      \draw (0.1,1.25) to (0.1,1.5);
    \end{tikzpicture}
  \end{equation}
  with $\omega$ as defined in the discussion before Proposition \ref{prop:modulator-from-V-functor}.
\end{lem}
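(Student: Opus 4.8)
The plan is to exploit bijectivity of the mate correspondence: rather than compute the mate of $\cF_{a\to b}$ head-on, I would take the \emph{reverse} mate of the proposed right-hand side $R:=\omega_{a,\cA(a\to b)}\circ\sF(\epsilon_{a\to b})$ and check that it returns $\cF_{a\to b}$. The relevant instance of Adjunction~\eqref{eq:CatToModAdjunction} here is the one attached to $\cB$, namely $\sB(\sF(a)\lhd u\to\sF(b))\cong\sV(u\to\cB(\cF(a)\to\cF(b)))$ with $u:=\cA(a\to b)$; under it $R\in\sB(\sF(a)\lhd u\to\sF(b))$ mates to an element of $\sV(u\to\cB(\cF(a)\to\cF(b)))$, which is exactly the home of $\cF_{a\to b}$, so this is a well-posed target.

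First I would split $R$ using naturality of this adjunction in its target variable (the same naturality underlying \ref{mate:L-composite}--\ref{mate:R-composite}): since $R$ is $\omega_{a,u}$ followed by $\sF(\epsilon_{a\to b})$, its reverse mate is $\mate^{-1}(\omega_{a,u})$ postcomposed (via the representable $\sB(\cF(a)\to-)$) with $\sF(\epsilon_{a\to b})$. By the definition of the modulator recalled before Proposition~\ref{prop:modulator-from-V-functor}, $\mate^{-1}(\omega_{a,u})=\eta_{a,u}\circ\cF_{a\to a\lhd u}$. The core manoeuvre is then an interchange: the map $\cF_{a\to a\lhd u}$ followed by postcomposition (in $\cB$) with $\sF(\epsilon_{a\to b})=\epsilon_{a\to b}\circ\cF_{a\lhd u\to b}$ equals postcomposition (in $\cA$) with the $1_\sV$-graded morphism $\epsilon_{a\to b}$ followed by $\cF_{a\to b}$. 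This is precisely the $\sV$-functoriality axiom for $\cF$ (its compatibility with $-\circ_\cA-$ and $-\circ_\cB-$), together with the formula $\sF(g)=g\circ\cF_{\bullet\to\bullet}$ for the underlying functor on graded morphisms. After this rewrite, the reverse mate of $R$ becomes $\eta_{a,u}$ postcomposed in $\cA$ with $\epsilon_{a\to b}$, all followed by $\cF_{a\to b}$.

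Finally I would recognize that $\eta_{a,u}$ postcomposed in $\cA$ with $\epsilon_{a\to b}$ is exactly $\mate^{-1}(\epsilon_{a\to b})$ for the $\cA$-adjunction; since $\epsilon_{a\to b}$ is by construction the mate of $\id_{\cA(a\to b)}$, this collapses to $\id_{\cA(a\to b)}=\id_u$, leaving $\id_u\circ\cF_{a\to b}=\cF_{a\to b}$ and proving the lemma. The one genuinely delicate step is the interchange in the middle paragraph: one must track the paper's reverse-composition convention and keep the hom-level action $\cF_{\bullet\to\bullet}$ cleanly separated from the module-level postcomposition, so that the $\sV$-functoriality axiom is invoked in the correct tensor slot. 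Everything else is a formal consequence of the mate identities and the defining properties of $\eta$, $\epsilon$, and $\omega$.
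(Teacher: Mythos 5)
Your proposal is correct and is essentially the paper's own proof: the paper likewise verifies the identity by taking the mate of the displayed composite $\omega_{a,\cA(a \to b)} \circ \sF(\epsilon_{a \to b})$ and invoking functoriality of $\cF$. The steps you make explicit---unpacking $\mate(\omega_{a,u}) = \eta_{a,u} \circ \cF_{a \to a \lhd u}$ from the definition of the modulator, and collapsing $\eta_{a,u}$ against $\epsilon_{a \to b}$ via the mate identities and $\mate(\epsilon_{a \to b}) = \id_{\cA(a \to b)}$---are exactly the bookkeeping the paper leaves implicit in its one-line argument.
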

\begin{proof}
  The result follows immediately from taking the mate of the given expression and using functoriality of $\cF$.
\end{proof}

%%%%%%%%%%%%%%%%%%%%%%%%%%%%%%%%%%%%%%%%%%%%
%%%%%%%%%%%%%%%%%%%%%%%%%%%%%%%%%%%%%%%%%%%%
%%%%%%%%%%%%%%%%%%%%%%%%%%%%%%%%%%%%%%%%%%%%
\section{Equivalence}
\label{sec:equivalence} 

In this section, we prove that when our enriching involutive monoidal category $\sV$ is replaced with a unitary monoidal category $\sU$, the 2-functor $\Phi : \dagucat \to \dagumod$ is a 2-equivalence.
As in Example \ref{ex:DaggerSelfEnrichment}, we denote the dagger self-enrichment by $\cU$.
Since $\sU$ is equipped with a unitary dual functor $*$ and the induced involutive structure $\overline{\,\cdot\,}$, we may identity $\overline{u}$ with $u^*$ and $\kappa_{u \to 1} = \id_{\overline{u}}$.

To show $\Phi$ is a 2-equivalence, we need constructions in the reverse direction to show that $\Phi$ is essentially surjective on 0-cells, essentially surjective on 1-cells, and fully faithful on 2-cells.
We first give constructions in the reverse direction to $\Phi$, and then use them to check the corresponding properties for $\Phi$.

\subsection{The reverse constructions}
Let $\sU^\natural$ denote the involutive rigid linear monoidal category obtained from $\sU$ by forgetting the dagger structure $\dag$, but remembering the involutive structure $\overline{\,\cdot\,}$.

\begin{construction}
  \label{constr:U-mod-to-dagger-U-cat}
  Let $\sA$ be a 0-cell in $\dagumod$, i.e., a tensored dagger $\sU$-module category.
  Recall from \cite{1809.09782} that we get a $\sU^{\natural}$-enriched category $\cA$ with the same objects as $\sA$ and hom objects $\cA(a\to b)$ determined up to unique isomorphism by the Yoneda Lemma via the adjunction 
  \begin{equation}
    \label{eq:ModToCatAdjunction}
    \sA(a\lhd u \to b)
    \cong
    \sU^{\natural}(u \to \cA(a\to b)).
  \end{equation}
  Since $\cA$ is enriched in $\sU^{\natural}$, the hom objects $\cA(a\to b)$ are only determined up to isomorphism, and not unitary isomorphism in $\sU$.

  As explained in Fact \ref{fact:MateOfLa} above, since $\sA$ is a strong right $\sU^{\natural}$-module, we get a $\sU^{\natural}$-functor
  $a\lhd - : \cU \to \cA$ by defining $(a\lhd -)_{u \to v}$ as the mate of
  $\alpha_{a, u, \cU(u \to v)}^{-1} \circ (\id_a \lhd \epsilon_{u \to v}^{\cU})$
  under Adjunction \eqref{eq:CatToModAdjunction}.

  To get a dagger $\sU$-category from $\cA$, we define
  $\kappa^\cA_{a\to b}: \overline{\cA(b\to a)} \to \cA(a\to b)$ via
  \begin{equation}
    \label{eq:MateOfKappa}
    \mate (\kappa^\cA_{a \to b}) :=
    \begin{tikzpicture}[smallstring, baseline=45]
      \node (a) at (0,0) {$a$};
      \node (ba) at (2.5,0) {$\overline{\cA(b \to a)}$};
      \node (b) at (0,5.5) {$b$};
      \modulebox{0,0.75}{1}{0.75}{$\varepsilon_{b \to a}^\dagger$}
      \modulebox{0,2}{3}{1}{$\alpha_{a, \cA(b \to a), \overline{\cA(b \to a)}}^{-1}$}
      \modulebox{0,4}{3}{0.5}{$\rho_b$}
      \draw[blue] (a) to (b);
      \draw (ba) to (2.5,2);
      \draw (0.75,1.5) to (0.75,2);
      \draw (1.5,3) to[in=90,out=90] node[above] (cap) {} (2.5,3);
      \draw[dotted] (cap |- cap) to (cap |- 0,4);
    \end{tikzpicture}
    \,.
  \end{equation}
  In the remainder of this subsection, we will suppress the unitary coherators $\rho,\alpha$ whenever possible.
\end{construction}

\begin{prop}
  The pair $(\cA, \kappa^{\cA})$ from Construction \ref{constr:U-mod-to-dagger-U-cat} is a dagger $\sU$-category.
\end{prop}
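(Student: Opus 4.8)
The plan is to verify directly the four defining conditions of a dagger $\sU$-category for $(\cA,\kappa^\cA)$: that it is a weak dagger $\sU$-category (axioms \ref{kappa:1} and \ref{kappa:2}), that it is tensored, and that it satisfies \ref{kappa:3} and \ref{kappa:4}. Two of these are nearly free. Tensoredness is immediate, since $\sA$ is a tensored dagger $\sU$-module, so each $a\lhd - : \sU \to \sA$ has a right adjoint, and as recalled in Construction \ref{constr:U-mod-to-dagger-U-cat} (via Fact \ref{fact:MateOfLa}) this promotes each $a\lhd -$ to a left $\sU$-adjoint of $\cA(a\to -)$. Axiom \ref{kappa:3} is likewise essentially immediate: the map $\rho_a\in\sA(a\lhd 1\to a)$ appearing in \eqref{eq:MateOfKappa} is exactly the unitor of the dagger $\sU$-module $\sA$, which is unitary by hypothesis. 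Finally, once \ref{kappa:1} is established, invertibility of each $\kappa^\cA_{a\to b}$ is automatic by Remark \ref{rem:KappaInvertible}, so I may treat the morphism defined by \eqref{eq:MateOfKappa} as an a priori morphism and only check the axioms.

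The engine for the remaining verifications is the mate calculus under the defining adjunction \eqref{eq:ModToCatAdjunction}: to prove an equality of morphisms into a hom object it suffices, by the Yoneda Lemma, to check equality of mates in $\sA$, where I may use the dagger $\dag$, the module associators and unitors, and ordinary categorical reasoning. First I would record the mate of $\kappa^\cA$ from \eqref{eq:MateOfKappa} together with the mate of the composition $-\circ_\cA-$ (Remark \ref{rem:mate of circ}). For \ref{kappa:1}, I would take the mate of $\varphi_{\cA(a\to b)}\circ\overline{\kappa^\cA_{a\to b}}\circ\kappa^\cA_{b\to a}$ and translate each $\kappa$ into the module picture; the two counits $\epsilon_{\bullet}$ and their daggers $\epsilon_{\bullet}^\dagger$ should contract along the snake equations of the module duality, while the factor $\varphi$ and the associators reassemble into the identity. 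Here the involution $\dag\circ\dag=\id$ and the compatibility of the induced $\varphi,\nu$ of the unitary dual functor are exactly what make the two sides agree.

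For \ref{kappa:2}, the compatibility of $\kappa^\cA$ with composition, I would again pass to mates in $\sA$. Expanding the left-hand side $(\kappa_{b\to a}\kappa_{c\to b})\circ(-\circ_\cA-)$ and the right-hand side $(-\circ_{\overline\cA}-)\circ\kappa_{c\to a}$ through \eqref{eq:MateOfKappa} and the conjugate-category composition of Definition \ref{def:ConjugateVCat}, I expect both to reduce to one and the same morphism built from $\epsilon$, $\epsilon^\dagger$, and a single module associator, the equality then following from the pentagon for $\alpha$ together with the order reversal $(f\circ g)^\dag=g^\dag\circ f^\dag$. This diagram chase, with its bookkeeping of associators and of the contravariance of $\dag$, is where the bulk of the technical work lies, and it is the step most likely to require care; I regard it as the main obstacle.

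Finally, for \ref{kappa:4} I would show each $\sU$-functor $a\lhd - : \cU \to \cA$ is dagger, i.e. that the square defining a dagger $\sU$-functor commutes, which amounts to comparing $(a\lhd-)$ against $\kappa^\cU$ and $\kappa^\cA$. Using the description of $(a\lhd-)_{u\to v}$ as the mate of $\alpha^{-1}\circ(\id_a\lhd\epsilon^\cU)$ from Construction \ref{constr:U-mod-to-dagger-U-cat} and the explicit self-enrichment dagger $\kappa^\cU=\nu^{-1}\circ(\id\,\varphi^{-1})$ from Example \ref{ex:InvolutiveSelfEnrichment}, this becomes a computation relating the duality of $\sU$ to the module duality captured by the counits $\epsilon$; it closely mirrors the calculation of Example \ref{ex:DaggerSelfEnrichment}, with $\cU$ replaced by the module $\sA$, and uses \ref{kappa:3} to cancel the appearances of $\rho$ and $\rho^\dagger$. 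Throughout, the guiding principle is that the single formula \eqref{eq:MateOfKappa} is engineered so that the induced dagger on $\cA^\sU$ recovers the original $\dag$ on $\sA$ (Construction \ref{const:DaggerFromInvolutive}); once that compatibility is in hand, each of \ref{kappa:1}, \ref{kappa:2}, and \ref{kappa:4} reduces to a statement about $\dag$, the module coherences, and the snake equations, verified by the mate calculus.
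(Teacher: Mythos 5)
Your proposal is correct and follows essentially the same route as the paper: \ref{kappa:3} and tensoredness are automatic from the dagger module structure, and \ref{kappa:1}, \ref{kappa:2}, \ref{kappa:4} are each verified by passing to mates under Adjunction \eqref{eq:ModToCatAdjunction}, expanding $\kappa^\cA$ via \eqref{eq:MateOfKappa}, and cancelling with $\epsilon^\dagger$, the unitary coherators, and (for \ref{kappa:4}) the explicit $\kappa^\cU$, exactly as in the paper's Lemmas \ref{lem:kappa-1}, \ref{lem:kappa-2}, and \ref{lem:kappa-4}. Your only mislocation of difficulty is minor: in the paper the \ref{kappa:2} check is the shortest of the three (the unitary coherators are suppressed), while the genuinely lengthy computation is \ref{kappa:4}.
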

\begin{proof}
  To show that $(\cA, \kappa)$ is a dagger $\sU$-category, we need to check \ref{kappa:1}-\ref{kappa:4}.
  Note that since we assumed that the unitors $\rho$ were unitary, \ref{kappa:3} is automatic.
  The remaining three requirements are checked in Lemmas \ref{lem:kappa-1}, \ref{lem:kappa-2}, and \ref{lem:kappa-4} below.
\end{proof}
\begin{lem}\label{lem:kappa-1}
  The $\kappa^\cA$ defined in \eqref{eq:MateOfKappa} satisfies \ref{kappa:1}.
\end{lem}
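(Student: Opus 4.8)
The plan is to verify \ref{kappa:1} by passing to mates under the defining adjunction \eqref{eq:ModToCatAdjunction} and reducing the whole statement to a string-diagram computation in the dagger module category $\sA$, in the same house style as Propositions \ref{prop:daglhd1}--\ref{prop:daglhd2}. The axiom asserts the equality of two morphisms in $\sU$ with common domain and codomain $\cA(a\to b)$: the composite appearing on the left of \ref{kappa:1} and $\id_{\cA(a\to b)}$. Applying the adjunction $\sU(u\to \cA(a\to b))\cong \sA(a\lhd u\to b)$ with $u=\cA(a\to b)$, these two morphisms are equal if and only if their mates in $\sA(a\lhd \cA(a\to b)\to b)$ are equal. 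Since $\epsilon_{a\to b}$ is by definition the mate of $\id_{\cA(a\to b)}$, the goal becomes showing that the mate of the left-hand side of \ref{kappa:1} is exactly $\epsilon_{a\to b}$.

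First I would expand the left-hand side into module-categorical data. Using the canonical involutive structure on $\sU$, for which $\overline{u}=u^*$ and $\overline{f}=(f^\dagger)^*$, together with the explicit formula $\varphi_u=(\id_u\,\ev_{\overline u}^\dagger)\circ(\ev_u\,\id_{\overline{\overline u}})$, I would rewrite the outer $\varphi_{\cA(a\to b)}$ and the conjugated $\kappa$ purely in terms of the dagger on $\sA$ and the duality maps $\ev,\coev$ on the hom objects. I would then substitute the defining mate formula \eqref{eq:MateOfKappa} for each of the two $\kappa$'s. Transporting the bars and the enriched composition down to $\sA$ is done with the mate identities \ref{mate:L-composite}--\ref{mate:R-composite} and Remark \ref{rem:mate of circ}, producing an expression in $\sA$ built only from the counits $\epsilon_{a\to b},\epsilon_{b\to a}$ and their daggers, the evaluation caps $\ev_{\cA(b\to a)}$, and the unitary coherators $\alpha,\rho$.

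The heart of the argument is then a diagram simplification in $\sA$, which I expect to run in three moves: (i) apply the snake/zig-zag equations to cancel the duality caps contributed by the two $\kappa$'s against the cap and cup coming from $\varphi$, collapsing the doubled $\cA(b\to a)$-strands; (ii) use that $\dagger$ is an involution on $\sA$ to cancel the outer dagger against the $\epsilon^\dagger_{b\to a}$ sitting inside one copy of $\kappa$, so that a $(\epsilon^\dagger_{b\to a})^{\dagger}$ becomes $\epsilon_{b\to a}$ again; and (iii) absorb every remaining coherator using unitarity of $\rho$ (which is \ref{kappa:3}, automatic here because $\sA$ is a \emph{dagger} $\sU$-module category so its unitors are unitary by hypothesis) together with unitarity and naturality of $\alpha$. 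Lemma \ref{lem:eta-kappa-inv} and Fact \ref{fact:MateOfLa} are the tools I would reach for to recognize the surviving partial composites. After these cancellations the whole expression collapses to $\epsilon_{a\to b}$, which is precisely the mate of $\id_{\cA(a\to b)}$, establishing \ref{kappa:1}.

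The main obstacle is the bookkeeping in the second and third steps: faithfully translating the contravariant-looking conjugation $\overline{\,\cdot\,}=(\dagger)^{*}$ and the double-dual coherence $\varphi$ into the module category, and then aligning the three separate duality caps (two from the $\kappa$'s and one from $\varphi$) so that the zig-zag cancellations and the single use of $\dagger^2=\id$ fire in the correct order. Everything else---naturality of $\epsilon$, $\eta$, and $\varphi$, and unitarity of $\alpha,\rho$---is routine. Morally the identity is just the statement that squaring the dagger on $\sA$ returns the identity, repackaged at the level of hom objects; the only real work is making that repackaging precise through the mate correspondence.
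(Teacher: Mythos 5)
Your plan is correct and is essentially the paper's own proof: pass to mates under Adjunction \eqref{eq:ModToCatAdjunction}, substitute the defining formula \eqref{eq:MateOfKappa} for each $\kappa$, exploit $\overline{f}=(f^\dagger)^*$ to slide boxes around the duality caps, recognize $\varphi$ from its explicit $\ev$/$\ev^\dagger$ formula, and close using the dagger structure on $\sA$ (the paper organizes this slightly differently, showing the mate of $\overline{\kappa}\circ\kappa$ is, after two applications of $\dagger$ in $\sA$, the mate of $\varphi^\dagger$, rather than directly matching $\epsilon_{a\to b}$, but these are equivalent bookkeepings). One caution: drop the appeals to Remark \ref{rem:mate of circ} (no enriched composition occurs in \ref{kappa:1}) and especially to Lemma \ref{lem:eta-kappa-inv}, whose hypothesis is that $\cA$ is already a dagger $\sU$-category and whose use here would therefore be circular; the paper's computation needs neither.
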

\begin{proof}
  We take mates:
  \begin{align*}
    \mate \left(
    % [inline block 0: 31 envs, 26178 chars -> data_tex | \begin{tikzpicture}[smallstring, baseline=25]       \node (bot) at (0,0) {$\overline{\overline{\cA(a \to b)}}$};...]

  \end{align*}
  where we used that $f \circ \kappa^{-1} = \overline{f^\dagger}$, that $\alpha$ is unitary, and that $- \lhd -$ is a dagger functor. 
  Next, as in the previous proofs of \ref{eq:daglhd1}, \ref{eq:daglhd2}, and \ref{eq:daglhd3},
  we apply Lemma \ref{lem:eta-kappa-inv} to replace the bottom $\eta \circ \kappa^{-1}$, and we apply the bar of \eqref{eq:DefineLhd} to move the blue morphisms on the top right to the left hand side, which yields the bar of a morphism in the underlying category $\sA$ sandwiched between a $\varphi\circ \overline{\eta}$ 
  and a $\kappa$.
  This morphism in $\sA$ is given by 
  {\tiny{
      \begin{align*}
        (\rho_b^{-1} &\lhd \id_{\overline{\overline{u}}})
        \circ
        (\id_b \lhd \ev_u^\dagger \lhd \id_{\overline{\overline{u}}})
        \circ
        (\alpha_{b, u, \overline{u}} \lhd \id_{\overline{\overline{u}}})
        \circ
        (\epsilon_{b \to a} \lhd \id_{\overline{u}} \lhd \id_{\overline{\overline{u}}})
        \circ
        \alpha_{a, \overline{u}, \overline{\overline{u}}}^{-1}
        \circ
        (\id_a \lhd \ev_{\overline{u}})
        \circ
        \rho_a\\
        &= (\rho_b^{-1} \lhd \id_{\overline{\overline{u}}})
        \circ
        (\id_b \lhd \ev_u^\dagger \lhd \id_{\overline{\overline{u}}})
        \circ
        (\alpha_{b, u, \overline{u}} \lhd \id_{\overline{\overline{u}}})
        \circ
        \alpha_{b \lhd u, \overline{u}, \overline{\overline{u}}}^{-1}
        \circ
        (\epsilon_{b \to a} \lhd \ev_{\overline{u}})
        \circ
        \rho_a
        &&\text{($\alpha$ natural)}\\
        &= (\rho_b^{-1} \lhd \id_{\overline{\overline{u}}})
        \circ
        (\id_b \lhd \ev_u^\dagger \lhd \id_{\overline{\overline{u}}})
        \circ
        (\alpha_{b, u \overline{u}, \overline{\overline{u}}}^{-1})
        \circ
        \alpha_{b, u, \overline{u} \, \overline{\overline{u}}}
        \circ
        (\epsilon_{b \to a} \lhd \ev_{\overline{u}})
        \circ
        \rho_a
        &&\text{($\alpha$ associative)}\\
        &= (\rho_b^{-1} \lhd \id_{\overline{\overline{u}}})
        \circ
        (\alpha_{b, 1_\sV, \overline{\overline{u}}}^{-1})
        \circ
        (\id_b \lhd \ev_u^\dagger \lhd \id_{\overline{\overline{u}}})
        \circ
        (\id_b \lhd \id_u \lhd \ev_u)
        \circ
        \alpha_{b, u,1_\sV}
        \circ
        (\epsilon_{b \to a} \lhd \id_{1_{\sV}})
        \circ
        \rho_a
        &&\text{($\alpha$ natural)}\\
        &= (\rho_b^{-1} \lhd \id_{\overline{\overline{u}}})
        \circ
        (\alpha_{b, 1_\sV, \overline{\overline{u}}}^{-1})
        \circ
        (\id_b \lhd \varphi_u^{-1})
        \circ
        \rho_{b \lhd u}^{-1}
        \circ
        (\epsilon_{b \to a} \lhd \id_{1_{\sV}})
        \circ
        \rho_a
        &&\text{(definition of $\varphi$)}\\
        &= (\rho_b^{-1} \lhd \id_{\overline{\overline{u}}})
        \circ
        (\rho_b \lhd \id_{\overline{\overline{u}}})
        \circ
        (\id_b \lhd \varphi_u^{-1})
        \circ
        \rho_{b \lhd u}^{-1}
        \circ
        (\epsilon_{b \to a} \lhd \id_{1_{\sV}})
        \circ
        \rho_a
        &&\text{(triangle identity)}\\
        &= (\rho_b^{-1} \lhd \id_{\overline{\overline{u}}})
        \circ
        (\rho_b \lhd \id_{\overline{\overline{u}}})
        \circ
        (\id_b \lhd \varphi_u^{-1})
        \circ
        \epsilon_{b \to a}
        \circ
        \rho_a^{-1}
        \circ
        \rho_a
        &&\text{($\rho$ natural)}\\
        &= (\id_b \lhd \varphi_u^{-1})
        \circ
        \epsilon_{b \to a}
        &&\text{(simplify)}
      \end{align*}
  }}
  Note that the steps above depend on unitarity of $\rho$. 
  Substituting the bar of this morphism back between the $\varphi\circ \overline{\eta}$ and $\kappa$ above, we obtain
  \begin{align*}
    \begin{tikzpicture}[smallstring, baseline=40]
      \node (top) at (-2,5) {$u$};
      \node (bot) at (0,-1) {$\overline{u}$};
      \node[box] (phi) at (0,-0.1) {$\varphi_{u}$};
      \node[box] (eta) at (0,0.8) {$\overline{\eta_{a, \overline{\overline{u}}}}$};
      \node[box] (phi2) at (-2,0.8) {$\overline{\id_a \lhd \varphi_{u}^{-1}}$};
      \node[box] (epsilon) at (-3,2) {$\overline{\varepsilon_{b \to a}}$};
      \node[box] (kappa) at (-2,4) {$\kappa_{a \to b}$};
      \node[box] (circ1) at (-1,2) {$- \circ_{\overline{\cA}} -$};
      \node[box] (circ2) at (-2,3) {$- \circ_{\overline{\cA}} -$};
      \draw (bot) to (phi);
      \draw (phi) to (eta);
      \draw (eta) to[in=-90,out=90] (circ1.-45);
      \draw (phi2) to[in=-90,out=90] (circ1.225);
      \draw (circ1) to[in=-90,out=90] (circ2.-45);
      \draw (epsilon) to[in=-90,out=90] (circ2.225);
      \draw (circ2) to (kappa);
      \draw (kappa) to (top);
    \end{tikzpicture}
    \underset{\eqref{eq:MateOf1lhdg}}{=}
    \begin{tikzpicture}[smallstring, baseline=40]
      \node (top) at (-1,4) {$u$};
      \node (bot) at (0,0) {$\overline{u}$};
      \node[box] (eta) at (0,1) {$\overline{\eta_{a, u}}$};
      \node[box] (epsilon) at (-2,1) {$\overline{\varepsilon_{b \to a}}$};
      \node[box] (kappa) at (-1,3) {$\kappa_{a \to b}$};
      \node[box] (circ) at (-1,2) {$- \circ_{\overline{\cA}} -$};
      \draw (bot) to (eta);
      \draw (eta) to[in=-90,out=90] (circ.-45);
      \draw (epsilon) to[in=-90,out=90] (circ.225);
      \draw (circ) to (kappa);
      \draw (kappa) to (top);
    \end{tikzpicture}
  \end{align*}
  which we see is $\kappa_{a \to b}$, after recognizing the mate of an identity.
\end{proof}

\begin{rem}
  A modified version of Lemma \ref{lem:kappa-mate} actually holds in the slightly more general setting of $\sV$ a rigid involutive monoidal category where the self-enrichment $\cV$ is equipped with an enriched dagger structure $\kappa$ and
  $$
  \tikzmath[smallstring]{
    \node[box] (coev1) at (1,0) {$\quad \ev_{u^*}^\dag \quad$};
    \node[box] (f) at (1.5,1) {$\kappa_{u^*\to 1}$};
    \node (t2) at (1.5,2) {$1_\sV$};
    \node (t1) at (.5,2) {$u^*$};
    \draw (1.5,0.35) --node[right]{$\scriptstyle u^{**}$} (f);
    \draw[dotted] (f) to (t2);
    \draw (0.5,0.35) to (t1);
  }
  =
  \tikzmath[smallstring]{
    \node (u) at (1,2) {$1_\sV$};
    \node (v) at (0,2) {$u^*$};
    \node[box] (f) at (1,1) {$\kappa_{1\to u}$};
    \draw[dotted] (f) to (u);
    \draw (f) to[in=-90,out=-90] (v |- f.south);
    \draw (v |- f.south) to (v);
  }\,.
  $$
\end{rem}

\begin{lem}
  The $\sU$-functor $\cF$ from \eqref{eqn:U-functor-construction} above is dagger.
\end{lem}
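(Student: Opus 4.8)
The plan is to verify the defining commuting square of a dagger $\sU$-functor directly. Unwinding the square in our reverse-composition convention (and using that the $\kappa$'s are invertible by Remark \ref{rem:KappaInvertible}), what must be shown is the identity
$$
\kappa^\cA_{a\to b}\circ\cF_{a\to b}
=
\overline{\cF_{b\to a}}\circ\kappa^\cB_{\cF(a)\to\cF(b)}
\in
\sU\!\left(\overline{\cA(b\to a)}\to\cB(\cF(a)\to\cF(b))\right),
$$
exactly the relation that was invoked in Construction \ref{const:DaggerFromInvolutiveFunctor}. Following the pattern of Lemmas \ref{lem:kappa-1}, \ref{lem:kappa-2}, and \ref{lem:kappa-4}, I would check this by passing to mates under Adjunction \eqref{eq:CatToModAdjunction}; both sides then become morphisms $\cF(a)\lhd\overline{\cA(b\to a)}\to\cF(b)$ in $\sB$, and the whole computation takes place there.

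For the right-hand side, since $\epsilon^\cA_{a\to b}$ is the mate of $\id_{\cA(a\to b)}$, the mate-of-composite rule gives $\mate(\kappa^\cA_{a\to b})=(\id_a\lhd\kappa^\cA_{a\to b})\circ\epsilon^\cA_{a\to b}$. Combining this with the defining formula \eqref{eqn:U-functor-construction} for $\cF_{a\to b}$ as a mate, together with naturality of the modulator $\omega$ in its $\sU$-variable, I expect to obtain
$$
\mate\!\left(\kappa^\cA_{a\to b}\circ\cF_{a\to b}\right)
=
\omega_{a,\overline{\cA(b\to a)}}\circ\sF\!\left(\mate(\kappa^\cA_{a\to b})\right),
$$
into which I substitute the explicit formula of Lemma \ref{lem:kappa-mate}. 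Dually, the mate of the left-hand side is $(\id_{\cF(a)}\lhd\overline{\cF_{b\to a}})\circ\mate(\kappa^\cB_{\cF(a)\to\cF(b)})$, and I substitute Lemma \ref{lem:kappa-mate} applied in $\cB$.

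At this stage both sides are expressed through $\rho$, $\alpha^{-1}$, an evaluation cap, and a daggered counit, namely $(\epsilon^\cA_{b\to a})^\dagger$ on the right and $(\epsilon^\cB_{\cF(b)\to\cF(a)})^\dagger$ on the left. The remaining task is to reconcile them, and here I would use that $(\sF,\omega)$ is a \emph{dagger} module functor: $\sF$ commutes with $\dagger$, so $\sF\!\left((\epsilon^\cA_{b\to a})^\dagger\right)=\sF(\epsilon^\cA_{b\to a})^\dagger$, while $\omega$ and $\rho$ are unitary, so daggering replaces them by their inverses. Feeding in the identification $\mate(\cF_{b\to a})=\omega_{b,\cA(b\to a)}\circ\sF(\epsilon^\cA_{b\to a})$ from \eqref{eqn:U-functor-construction}, together with the unital and associativity coherences of $\omega$, should convert $\omega\circ\sF\big((\epsilon^\cA_{b\to a})^\dagger\big)$ into $(\id_{\cF(a)}\lhd\overline{\cF_{b\to a}})\circ(\epsilon^\cB_{\cF(b)\to\cF(a)})^\dagger$, matching the two expressions and completing the proof.

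The main obstacle is precisely this final reconciliation: it is the reverse-direction analogue of Proposition \ref{prop:modulator-from-V-functor}, where unitarity of $\omega$ was \emph{deduced} from $\cF$ being dagger, whereas here unitarity of $\omega$ and the dagger property of $\sF$ must be \emph{used} to force $\cF$ to be dagger. The delicate point is bookkeeping the appearance of the conjugate component $\overline{\cF_{b\to a}}$ rather than $\cF_{b\to a}$ itself, which is where the interplay between $\dagger$ and $\overline{\,\cdot\,}$ in the bi-involutive structure of $\sU$ enters; I expect this to require the same manipulations as in Lemma \ref{lem:kappa-mate}, namely repeated use of unitarity of $\rho$ alongside naturality and associativity of $\alpha$, so that the triangle and pentagon coherences absorb the discrepancy between $\sF(\epsilon^\cA)^\dagger$ and $(\epsilon^\cB)^\dagger$.
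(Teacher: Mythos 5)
Your proposal is correct and follows essentially the same route as the paper's own proof: pass to mates, rewrite $\mate(\kappa^\cA_{a\to b}\circ\cF_{a\to b})$ as $\omega_{a,\overline{\cA(b\to a)}}\circ\sF(\mate(\kappa^\cA_{a\to b}))$ via naturality of $\omega$, substitute the explicit formula of Lemma \ref{lem:kappa-mate} on both sides, and reconcile using dagger-ness of $\sF$, unitarity of $\omega$, $\alpha$, $\rho$, the unitality/associativity coherences of $\omega$, and the identification $\mate(\cF_{b\to a})=\omega_{b,\cA(b\to a)}\circ\sF(\epsilon_{b\to a})$ from \eqref{eqn:U-functor-construction}. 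The reconciliation steps you outline, including the bi-involutive bookkeeping that converts $\cF_{b\to a}^\dagger$ on one leg of the evaluation cap into $\overline{\cF_{b\to a}}$ on the other, are exactly the manipulations carried out in the paper's diagrammatic computation.
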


\begin{proof}
  The mate of $\kappa_{a \to b}^\cA \circ \cF_{a \to b}$ is
  \begin{align*}
    \begin{tikzpicture}[smallstring, baseline=40]
      \node (b) at (0,4.5) {$\sF(b)$};
      \node (a) at (0,0) {$\sF(a)$};
      \node (ba) at (1.5,0) {$\overline{\cA(b \to a)}$};
      \draw[rounded corners] (1,0.75) rectangle (2,1.5) node[midway] {$\kappa_{a \to b}^\cA$};
      \modulebox{0,2}{2}{0.75}{$\omega_{a, \cA(a \to b)}$};
      \modulebox{0,3}{2}{0.75}{$\sF(\epsilon_{a \to b})$};
      \draw[blue] (a) to (b);
      \draw (ba) to (1.5,0.75);
      \draw (1.5,1.5) to (1.5,2);
      \draw (1.5,2.75) to (1.5,3);
    \end{tikzpicture}
    =
    \begin{tikzpicture}[smallstring, baseline=40]
      \node (b) at (0,4.5) {$\sF(b)$};
      \node (a) at (0,0) {$\sF(a)$};
      \node (ba) at (1.5,0) {$\overline{\cA(b \to a)}$};
      \modulebox{0,1}{2.5}{0.75}{$\omega_{a, \overline{\cA(b \to a)}}$};
      \modulebox{0,2}{2.5}{0.75}{$\sF(\id_a \lhd \kappa_{a \to b})$};
      \modulebox{0,3}{2.5}{0.75}{$\sF(\epsilon_{a \to b})$};
      \draw[blue] (a) to (b);
      \draw (ba) to (1.5,1);
      \draw (1.5,1.75) to (1.5,2);
      \draw (1.5,2.75) to (1.5,3);
    \end{tikzpicture}
    \underset{\eqref{eq:MateOfKappa}}{=}
    \begin{tikzpicture}[smallstring, baseline=55]
      \node (b) at (0,6.5) {$\sF(b)$};
      \node (a) at (0,0) {$\sF(a)$};
      \node (ba) at (1.5,0) {$\overline{\cA(b \to a)}$};
      \draw[double] (1.5,3.75) to (1.5,4);
      \modulebox{0,1}{3.5}{0.75}{$\omega_{a, \overline{\cA(b \to a)}}$};
      \modulebox{0,2}{3.5}{0.75}{$\sF(\epsilon_{b \to a}^\dagger \lhd \id_{\overline{\cA(b \to a)}})$};
      \modulebox{0,3}{3.5}{0.75}{$\sF(\alpha_{b, \cA(b \to a), \overline{\cA(b \to a)}}^\dagger)$};
      \modulebox{0,4}{3.5}{0.75}{$\sF(\id_b \lhd \ev_{\cA(b \to a)})$};
      \modulebox{0,5}{1.5}{0.75}{$\sF(\rho_b)$};
      \draw[blue] (a) to (b);
      \draw (ba) to (1.5,1);
      \draw (1.5,1.75) to (1.5,2);
      \draw (1,2.75) to (1,3);
      \draw (2,2.75) to (2,3);
      \draw[dotted] (0.75,4.75) to (0.75,5);
    \end{tikzpicture}
    =
    \begin{tikzpicture}[smallstring, baseline=55]
      \node (b) at (0,6.5) {$\sF(b)$};
      \node (a) at (0,0) {$\sF(a)$};
      \node (ba) at (2.5,0) {$\overline{\cA(b \to a)}$};
      \draw[double] (1.5,3.75) to (1.5,4);
      \modulebox{0,1}{2}{0.75}{$\sF(\epsilon_{b \to a}^\dagger)$};
      \modulebox{0,2}{3.5}{0.75}{$\omega_{b \lhd u, \overline{\cA(b \to a)}}$};
      \modulebox{0,3}{3.5}{0.75}{$\sF(\alpha_{b, \cA(b \to a), \overline{\cA(b \to a)}}^\dagger)$};
      \modulebox{0,4}{3.5}{0.75}{$\sF(\id_b \lhd \ev_{\cA(b \to a)})$};
      \modulebox{0,5}{1.5}{0.75}{$\sF(\rho_b)$};
      \draw[blue] (a) to (b);
      \draw (ba) to (2.5,2);
      \draw (1.5,1.75) to (1.5,2);
      \draw (1.5,2.75) to (1.5,3);
      \draw (2.5,2.75) to (2.5,3);
      \draw[dotted] (0.75,4.75) to (0.75,5);
    \end{tikzpicture}
    =
    \begin{tikzpicture}[smallstring, baseline=75]
      \node (b) at (0,7.5) {$\sF(b)$};
      \node (a) at (0,0) {$\sF(a)$};
      \node (ba) at (2.5,0) {$\overline{\cA(b \to a)}$};
      \draw[double] (1.5,3.75) to (1.5,4);
      \draw[double] (0.75,4.75) to (0.75,5);
      \modulebox{0,1}{2}{0.75}{$\sF(\epsilon_{b \to a}^\dagger)$};
      \modulebox{0,2}{2}{0.75}{$\omega_{b, \cA(b \to a)}^\dagger$};
      \modulebox{0,3}{3.5}{0.75}{$\alpha_{\sF(b), \cA(b \to a), \overline{\cA(b \to a)}}^\dagger$};
      \modulebox{0,4}{3.5}{0.75}{$\omega_{b, \cA(b \to a) \overline{\cA(b \to a)}}$};
      \modulebox{0,5}{3.5}{0.75}{$\sF(\id_b \lhd \ev_{\cA(b \to a)})$};
      \modulebox{0,6}{1.5}{0.75}{$\sF(\rho_b)$};
      \draw[blue] (a) to (b);
      \draw (1.5,1.75) to (1.5,2);
      \draw (1.5,2.75) to (1.5,3);
      \draw (ba) to (2.5,3);
      \draw[dotted] (0.75,5.75) to (0.75,6);
    \end{tikzpicture}
    \displaybreak[1]\\
    =
    \begin{tikzpicture}[smallstring, baseline=75]
      \node (b) at (0,7.5) {$\sF(b)$};
      \node (a) at (0,0) {$\sF(a)$};
      \node (ba) at (2.5,0) {$\overline{\cA(b \to a)}$};
      \modulebox{0,1}{2}{0.75}{$\sF(\epsilon_{b \to a}^\dagger)$};
      \modulebox{0,2}{2}{0.75}{$\omega_{b, \cA(b \to a)}^\dagger$};
      \modulebox{0,3}{3.5}{0.75}{$\alpha_{\sF(b), \cA(b \to a), \overline{\cA(b \to a)}}^\dagger$};
      \modulebox{0,5}{2.25}{0.75}{$\omega_{b, 1_{\sV}}$};
      \modulebox{0,6}{1.5}{0.75}{$\sF(\rho_b)$};
      \draw[blue] (a) to (b);
      \draw (1.5,1.75) to (1.5,2);
      \draw (1.5,2.75) to (1.5,3);
      \draw (ba) to (2.5,3);
      \draw (1.5,3.75)to[in=90,out=90] node[above] (cap) {}  (2.5,3.75);
      \draw[dotted] (cap |- cap) to[in=-90,out=90] (0.75,5);
      \draw[dotted] (0.75,5.75) to (0.75,6);
    \end{tikzpicture}
    =
    \begin{tikzpicture}[smallstring, baseline=60]
      \node (b) at (0,6.25) {$\sF(b)$};
      \node (a) at (0,0) {$\sF(a)$};
      \node (ba) at (2.5,0) {$\overline{\cA(b \to a)}$};
      \modulebox{0,1}{2}{0.75}{$\sF(\epsilon_{b \to a}^\dagger)$};
      \modulebox{0,2}{2}{0.75}{$\omega_{b, \cA(b \to a)}^\dagger$};
      \modulebox{0,3}{3.5}{0.75}{$\alpha_{\sF(b), \cA(b \to a), \overline{\cA(b \to a)}}^\dagger$};
      \modulebox{0,5}{1.5}{0.75}{$\rho_{\sF(b)}$};
      \draw[blue] (a) to (b);
      \draw (1.5,1.75) to (1.5,2);
      \draw (1.5,2.75) to (1.5,3);
      \draw (ba) to (2.5,3);
      \draw (1.5,3.75)to[in=90,out=90] node[above] (cap) {}  (2.5,3.75);
      \draw[dotted] (cap |- cap) to[in=-90,out=90] (0.75,5);
    \end{tikzpicture}
    \underset{\eqref{eqn:U-functor-construction}}{=}
    \begin{tikzpicture}[smallstring, baseline=60]
      \node (b) at (0,6.25) {$\sF(b)$};
      \node (a) at (0,0) {$\sF(a)$};
      \node (ba) at (2.5,0) {$\overline{\cA(b \to a)}$};
      \modulebox{0,1}{2}{0.75}{$\epsilon_{\sF(b) \to \sF(a)}^\dagger$};
      \modulebox{0,2}{2}{0.75}{$\cF_{a \to b}^\dagger$};
      \modulebox{0,3}{3.5}{0.75}{$\alpha_{\sF(b), \cA(b \to a), \overline{\cA(b \to a)}}^\dagger$};
      \modulebox{0,5}{1.5}{0.75}{$\rho_{\sF(b)}$};
      \draw[blue] (a) to (b);
      \draw (1.5,1.75) to (1.5,2);
      \draw (1.5,2.75) to (1.5,3);
      \draw (ba) to (2.5,3);
      \draw (1.5,3.75)to[in=90,out=90] node[above] (cap) {}  (2.5,3.75);
      \draw[dotted] (cap |- cap) to[in=-90,out=90] (0.75,5);
    \end{tikzpicture}
    =
    \begin{tikzpicture}[smallstring, baseline=40]
      \node (b) at (0,6) {$\sF(b)$};
      \node (a) at (0,0) {$\sF(a)$};
      \node (ba) at (3,0) {$\overline{\cA(b \to a)}$};
      \modulebox{0,1}{2}{0.75}{$\epsilon_{\sF(b) \to \sF(a)}^\dagger$};
      \draw[rounded corners] (2.5,1) rectangle (3.5,1.75) node[midway] {$\overline{\cF_{a \to b}}$};
      \modulebox{0,2}{3.5}{0.75}{$\alpha_{\sF(b), \cA(b \to a), \overline{\cA(b \to a)}}^\dagger$};
      \modulebox{0,4.5}{1.5}{0.75}{$\rho_{\sF(b)}$};
      \draw[blue] (a) to (b);
      \draw (ba) to (3,1);
      \draw (3,1.75) to (3,2);
      \draw (1.125,1.75) to (1.125,2);
      \draw (1.125,2.75) to[in=90,out=90] node[above] (cap) {}  (3,2.75);
      \draw[dotted] (cap |- cap) to[in=-90,out=90] (0.75,4.5);
    \end{tikzpicture}\, ,
  \end{align*}
  which is the mate of $\overline{\sF_{b \to a}} \circ \kappa_{\sF(a) \to \sF(b)}^\cB$ by Lemma \ref{lem:kappa-mate}.
\end{proof}
%%%%%%%%%%%%%%%%%%%%%%%%%%%%%%%%%%%%%%%%%%%%

\begin{construction}
  Given 1-cells $\cF, \cG : \cA \to \cB$ in $\dagucat$ and a module natural transformation $\Theta : \Phi(\cF) \Rightarrow \Phi(\cG)$, we construct a $\sU$-natural transformation $\theta$ by defining
  $\Theta_a := \theta_a \in \sU(1_{\sU} \to \cB(\cF(a) \to \cG(a))) = \sB(\sF(a) \to \sG(a))$. 
  The same proof as for the naturality half of \cite[Prop 6.11]{2104.07747} applies to show that $\theta$ is a $\sU$-natural transformation.
\end{construction}

%%%%%%%%%%%%%%%%%%%%%%%%%%%%%%%%%%%%%%%%%%%%
\subsection{Essentially surjective on 0-cells}
\label{sec:equivalence-dagger-U-mods}

Starting with a dagger $\sU$-module category $(\sA, \lhd)$, we can construct a dagger $\sU$-category $(\cA, \kappa)$ as in the previous subsection.
We write $(\sA', \blacktriangleleft)$ for $\Phi(\cA)$.
The following identity appears repeatedly throughout our calculations, so we record it here:

\begin{rem}
  \label{rem:circ-mate-calculations}
  Suppose that $\cA$ is a weakly tensored $\sV$-category with $\sA$ its underlying $\sV$-module category from Construction \ref{constr:U-mod-to-dagger-U-cat}.
  For $f \in \sV(u \to \cA(a \to b))$ and $g \in \sV(v \to \cA(b \to c))$, it follows from naturality and Remark \ref{rem:mate of circ} that as morphisms in $\sA$,
  \begin{equation}
  \label{eq:circ-mate-calculations}
    \begin{tikzpicture}[smallstring, baseline=40]
      \node (bot) at (0,0) {$a$};
      \node (top) at (0,4) {$c$};
      \node (u) at (1,0) {$u$};
      \node (v) at (2,0) {$v$};
      \draw[rounded corners] (0.75,0.5) rectangle (1.25,1) node[midway] {$f$};
      \draw[rounded corners] (1.75,0.5) rectangle (2.25,1) node[midway] {$g$};
      \draw[rounded corners] (0.75,1.5) rectangle (2.25,2) node[midway] {$- \circ_\cA -$};
      \modulebox{0,2.5}{2.5}{0.5}{$\epsilon_{a \to c}$}
      \draw[blue] (bot) to (top);
      \draw (u) to (1,0.5);
      \draw (v) to (2,0.5);
      \draw (1,1) to (1,1.5);
      \draw (2,1) to (2,1.5);
      \draw (1.5,2) to (1.5,2.5);
    \end{tikzpicture}
    =
    \begin{tikzpicture}[smallstring, baseline=40]
      \node (bot) at (0,0) {$a$};
      \node (top) at (0,4) {$c$};
      \node (u) at (1,0) {$u$};
      \node (v) at (2,0) {$v$};
      \modulebox{0,0.5}{2.5}{0.5}{$\alpha_{a, u, v}$}
      \draw[rounded corners] (0.75,1.25) rectangle (1.25,1.75) node[midway] {$f$};
      \draw[rounded corners] (1.75,1.25) rectangle (2.25,1.75) node[midway] {$g$};
      \modulebox{0,2}{1.5}{0.5}{$\epsilon_{a \to b}$}
      \modulebox{0,2.75}{2.5}{0.5}{$\epsilon_{b \to c}$}
      \draw[blue] (bot) to (top);
      \draw (u) to (1,0.5);
      \draw (1,1) to (1,1.25);
      \draw (v) to (2,0.5);
      \draw (2,1) to (2,1.25);
      \draw (1,1.75) to (1,2);
      \draw (2,1.75) to (2,2.75);
    \end{tikzpicture}\, .
  \end{equation}
  If instead we have a strongly unital lax $\sV$-module $\sA$,
  with $\cA$ the $\sV$-category from Construction \ref{const:ModToCat},
  then the above equation still holds in $\sA$, this time by the definition of $- \circ_\cA -$ instead of Remark \ref{rem:mate of circ}.
  We also remind the reader that $\alpha_{a, 1_{\sV}, v} = \rho_a^{-1} \lhd \id_v$ and $\alpha_{a, u, 1_{\sV}} = \rho_{a \lhd u}^{-1}$.
\end{rem}

\begin{prop}
  \label{prop:equivalence-of-dagger-U-mods}
  The dagger $\sU$-module categories $\sA$ and $\sA'$ are isomorphic,
  i.e., there are strong dagger module functors $(\sG, \omega') : \sA \to \sA'$ and $(\sH, \omega): \sA' \to \sA$ that witness an isomorphism of dagger module categories.
\end{prop}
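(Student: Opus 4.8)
The plan is to exhibit the comparison functors explicitly and then check only the dagger compatibility, importing the underlying isomorphism from the non-dagger theory. Both $\sG$ and $\sH$ are the identity on objects. On morphisms they implement the isomorphism $\sA(a\to b)\cong \cA^\sU(a\to b)=\sU(1_\sU\to\cA(a\to b))$ coming from the unitor $\rho$ together with Adjunctions \eqref{eq:ModToCatAdjunction} and \eqref{eq:CatToModAdjunction}, so that $\sG(f)$ is the mate of $\rho_a\circ f$; on the tensored objects $a\lhd u$ (in $\sA$) versus $a\blacktriangleleft u$ (in $\sA'$) they use the canonical comparison isomorphism, since both objects represent the functor $b\mapsto \sU(u\to \cA(a\to b))$ and the ordinary Yoneda Lemma supplies a unique natural isomorphism, exactly as in the non-dagger Theorem \ref{thm:StrongVMod}. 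I would take the modulators $\omega'_{a,u}\colon \sG(a)\blacktriangleleft u\to \sG(a\lhd u)$ and $\omega_{a,u}$ to be these comparison isomorphisms, defined as mates of the units and counits of the two adjunctions; this is the point at which I would insert the corrected formula for the modulator promised in the footnote (cf.\ Appendix \ref{app:modulators}), since the definition of the modulator in \cite[\S3]{1809.09782} is erroneous.

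First I would invoke the non-dagger theory. By Theorem \ref{thm:StrongVMod} and \cite[\S4]{1809.09782}, Constructions \ref{const:CatToMod} and \ref{const:ModToCat} are mutually inverse on underlying $\sU$-module categories, so $\sG$ and $\sH$ are mutually inverse isomorphisms of the underlying $\sU$-modules with invertible modulators. It therefore remains only to upgrade this to a dagger module isomorphism, which amounts to two checks: (i) $\sG$ (equivalently $\sH$) is a dagger functor, and (ii) each modulator is unitary.

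For (i) the key is that $\kappa^\cA$ was defined in \eqref{eq:MateOfKappa} precisely so that the dagger it induces on $\cA^\sU$ via Construction \ref{const:DaggerFromInvolutive} recovers the given dagger on $\sA$. Concretely, for $f\in\sA(a\to b)$ I would compute the mate of the $\sA'$-dagger $\sG(f)^\dagger$ using Lemma \ref{lem:kappa-mate} for the mate of $\kappa_{b\to a}$ and the composite-mate identity of Remark \ref{rem:circ-mate-calculations}; the $\epsilon^\dagger$ and $\rho$ appearing in \eqref{eq:MateOfKappa} then collapse and the result is the mate of $\sG(f^\dagger)$. For (ii), since $\sU$ is rigid a strong module functor has an invertible modulator, so it suffices to check $\omega'{}^\dagger=\omega'{}^{-1}$ on one side; I would run a mate calculation parallel to the proof of Proposition \ref{prop:modulator-from-V-functor}, using unitarity of the unitors \ref{kappa:3}, the identity \eqref{eq:DefineLhd}, and the fact that $-\lhd-$ is a dagger functor to cancel $\rho^{-1}$ against $(\rho^\dagger)^{-1}$.

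The main obstacle is (ii) together with pinning down the corrected modulator: the naturality-of-mates bookkeeping from Remark \ref{rem:circ-mate-calculations} and Remark \ref{rem:mate of circ} must be combined with the explicit form of $\kappa^\cA$ in \eqref{eq:MateOfKappa}, and it is exactly here that the coherence error of \cite[\S3]{1809.09782} has to be corrected so that the modulators come out genuinely unitary rather than merely invertible. Once dagger functoriality and unitarity of the modulators are established, $(\sG,\omega')$ and $(\sH,\omega)$ are mutually inverse strong dagger module functors, yielding the asserted isomorphism of dagger $\sU$-module categories.
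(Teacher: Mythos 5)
Your architecture matches the paper's: $\sG,\sH$ identity on objects with $\sG(g)=\mate(\rho_a\circ g)$, modulators $\omega,\omega'$ taken to be the unit and counit of the composite of Adjunctions \eqref{eq:CatToModAdjunction} and \eqref{eq:ModToCatAdjunction} (this is exactly the paper's corrected definition, replacing the erroneous identity modulators of \cite[\S3.3]{1809.09782}), and unitarity attacked by a mate calculation patterned on Proposition \ref{prop:modulator-from-V-functor}. Your explicit check (i) that $\sG$ is a dagger functor via \eqref{eq:MateOfKappa} is also sound. However, there is a genuine gap in your reduction step. You claim that by Theorem \ref{thm:StrongVMod} the underlying non-dagger statement — $\sG,\sH$ mutually inverse \emph{module} functors with invertible modulators — can be imported, so that ``it remains only'' to check dagger functoriality and unitarity. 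This is not available: the object-level modulator construction in the cited literature is precisely what is wrong (the paper's footnote says the fix \emph{is} this proposition together with Appendix \ref{app:modulators}), so appealing to it here is circular. For your corrected $\omega,\omega'$ you must prove from scratch that they are modulators at all — naturality (Lemma \ref{lem:modulator-natural}), associativity (Lemma \ref{lem:modulator-associative}), unitality, and that $\omega^{-1}=\sH(\omega')$ (Lemma \ref{lem:modulator inverse}) — which constitutes a substantive portion of the paper's argument. Your closing paragraph frames the correction as only affecting whether the modulators are ``genuinely unitary rather than merely invertible,'' but it in fact affects the well-definedness of the module-functor structure itself.

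A second, smaller technical obstruction: the unitarity computation is not a verbatim rerun of Proposition \ref{prop:modulator-from-V-functor}. The identities \eqref{eq:DefineLhd} and Lemma \ref{lem:eta-kappa-inv} hold in $\sA'$, the underlying category of $\cA$, with the unit $\eta'$ — not with the unit $\eta$ of Adjunction \eqref{eq:ModToCatAdjunction} in $\sA$. The paper's Lemma \ref{lem:modulator unitary} therefore has to establish an $\eta$-analog of Lemma \ref{lem:eta-kappa-inv} with $\sG$ inserted, whose proof uses Remark \ref{rem:eta-eta'} and the $\sU$-naturality of $\omega'$ (Lemma \ref{lem:omega-V-natural}), before the Proposition-\ref{prop:modulator-from-V-functor}-style cancellation of $\rho^{-1}$ against $(\rho^\dagger)^{-1}$ can proceed. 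Your plan to ``run a mate calculation parallel'' is the right instinct, but without this $\eta$ versus $\eta'$ adjustment the calculation does not close.
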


\begin{proof}
  Define functors $\sG: \sA\to \sA'$ and $\sH:\sA'\to \sA$ to be the identity on objects.
  For a morphism $g\in \sA(a\to b) $, we define $\sG(g)$ as the mate of $\rho^\sA_a \circ g$ under Adjunction \eqref{eq:ModToCatAdjunction}.
  To see $\sG$ is a functor, we take the mate of
  $\sG(g_1)\circ  \sG(g_2)$ for $g_1: a\to b$ and $g_2: b\to c$:
  $$
  \mate\left(
  \tikzmath[smallstring]{
    \node (top) at (.75,3) {$\cA(a \to c)$};
    \node[box] (circ) at (.75,2) {$-\circ_\cA-$};
    \node[box] (eta) at (0,1) {$\sG(g_1)$};
    \node[box] (rho) at (1.5,1) {$\sG(g_2)$};
    \draw (eta) to[in=-90,out=90] node[left]{$\scriptstyle \cA(a\to b)$} (circ.-135);
    \draw (rho) to[in=-90,out=90] node[right]{$\scriptstyle \cA(b \to c)$} (circ.-45);
    \draw (circ) to (top);
  }
  \right)
  =
  \begin{tikzpicture}[smallstring, baseline=30]
    \node(a) at (0,0) {$a$};
    \node (c) at (0,3.5) {$c$};
    \modulebox{0,0.5}{3.5}{0.5}{$\alpha_{a, 1_{\sV}, 1_{\sV}}$}
    \modulebox{0,1.5}{3.5}{0.5}{$\mate (\sG(g_1)) \lhd \id_{1_{\sV}}$}
    \modulebox{0,2.5}{2.5}{0.5}{$\mate (\sG(g_2))$}
    \draw[blue] (a) to (c);
    \draw[dotted] (1.5,0) to (1.5,0.5);
    \draw[dotted] (1,1) to (1,1.5);
    \draw[dotted] (2,1) to (2,1.5);
    \draw[dotted] (1,2) to (1,2.5);
  \end{tikzpicture}
  = 
  \begin{tikzpicture}[smallstring, baseline=45]
    \node(a) at (0,0) {$a$};
    \node (c) at (0,5.5) {$c$};
    \modulebox{0,0.5}{2}{0.5}{$\alpha_{a, 1_{\sV}, 1_{\sV}}$}
    \modulebox{0,1.5}{2}{0.5}{$\rho_a \lhd \id_{1_{\sV}}$}
    \modulebox{0,2.5}{2}{0.5}{$g_1 \lhd \id_{1_{\sV}}$}
    \modulebox{0,3.5}{2}{0.5}{$\rho_b$}
    \modulebox{0,4.5}{0.5}{0.5}{$g_2$}
    \draw[blue] (a) to (c);
    \draw[dotted] (1,0) to (1,0.5);
    \draw[dotted] (0.5,1) to (0.5,1.5);
    \draw[dotted] (1.5,1) to (1.5,1.5);
    \draw[dotted] (1,2) to (1,2.5);
    \draw[dotted] (1,3) to (1,3.5);
  \end{tikzpicture}
  =
  \begin{tikzpicture}[smallstring, baseline=45]
    \node(a) at (0,0) {$a$};
    \node (c) at (0,5.5) {$c$};
    \modulebox{0,0.5}{2}{0.5}{$\alpha_{a, 1_{\sV}, 1_{\sV}}$}
    \modulebox{0,1.5}{2}{0.5}{$\rho_a \lhd \id_{1_{\sV}}$}
    \modulebox{0,2.5}{2}{0.5}{$\rho_a$}
    \modulebox{0,3.5}{0.5}{0.5}{$g_1$}
    \modulebox{0,4.5}{0.5}{0.5}{$g_2$}
    \draw[blue] (a) to (c);
    \draw[dotted] (1,0) to (1,0.5);
    \draw[dotted] (0.5,1) to (0.5,1.5);
    \draw[dotted] (1.5,1) to (1.5,1.5);
    \draw[dotted] (1,2) to (1,2.5);
  \end{tikzpicture}
  =
  \begin{tikzpicture}[smallstring, baseline=30]
    \node(a) at (0,0) {$a$};
    \node (c) at (0,3.5) {$c$};
    \modulebox{0,0.5}{1.5}{0.5}{$\rho_a$}
    \modulebox{0,1.5}{0.5}{0.5}{$g_1$}
    \modulebox{0,2.5}{0.5}{0.5}{$g_2$}
    \draw[blue] (a) to (c);
    \draw[dotted] (1,0) to (1,0.5);
  \end{tikzpicture}\, ,
  $$
  which is exactly the mate of $\sG(g_1 \circ g_2)$.
  That $\sG(\id_a)=\id_a$ is straightforward.

  For a morphism $h\in \sA'(a\to b)= \sV(1\to \cA(a\to b))$,
  we define
  $$
  \sH(h):= (\rho^\sA_a)^{-1} \circ (\id_a \lhd h) \circ \epsilon_{a \to b}.
  $$
  To see $\sH$ is a functor, for $h_1: a\to b$ and $h_2:b\to c$, we see
  $$
  \sH(h_1)\circ \sH(h_2)
  =
  \begin{tikzpicture}[smallstring, baseline=60]
    \node (a) at (0,0) {$a$};
    \node (c) at (0,6) {$c$};
    \modulebox{0,0.5}{0.75}{0.75}{$\rho_a^{-1}$};
    \draw[rounded corners] (0.25,1.5) rectangle (0.75,2) node[midway] {$h_1$};
    \modulebox{0,2.25}{1}{0.5}{$\epsilon_{a \to b}$};
    \modulebox{0,3}{0.75}{0.75}{$\rho_b^{-1}$};
    \draw[rounded corners] (0.25,4) rectangle (0.75,4.5) node[midway] {$h_2$};
    \modulebox{0,4.75}{1}{0.5}{$\epsilon_{b \to c}$};
    \draw[blue] (a) to (c);
    \draw[dotted] (0.5,1.25) to (0.5,1.5);
    \draw (0.5,2) to (0.5,2.25);
    \draw[dotted] (0.5,3.75) to (0.5,4);
    \draw (0.5,4.5) to (0.5,4.75);
  \end{tikzpicture}
  =
  \begin{tikzpicture}[smallstring, baseline=60]
    \node (a) at (0,0) {$a$};
    \node (c) at (0,6) {$c$};
    \modulebox{0,0.5}{1}{0.75}{$\rho_a^{-1}$};
    \modulebox{0,1.5}{2}{0.75}{$\rho_{a \lhd 1_{\sV}}^{-1}$};
    \draw[rounded corners] (0.25,2.5) rectangle (0.75,3) node[midway] {$h_1$};
    \modulebox{0,3.25}{1}{0.5}{$\epsilon_{a \to b}$};
    \draw[rounded corners] (1.25,4) rectangle (1.75,4.5) node[midway] {$h_2$};
    \modulebox{0,4.75}{2}{0.5}{$\epsilon_{b \to c}$};
    \draw[blue] (a) to (c);
    \draw[dotted] (0.5,1.25) to (0.5,1.5);
    \draw[dotted] (0.5,2.25) to (0.5,2.5);
    \draw (0.5,3) to (0.5,3.25);
    \draw[dotted] (1.5,2.25) to (1.5,4);
    \draw (1.5,4.5) to (1.5,4.75);
  \end{tikzpicture}
  \underset{\text{\eqref{eq:circ-mate-calculations}}}{=}
  \begin{tikzpicture}[smallstring, baseline=35]
    \node (a) at (0,0) {$a$};
    \node (c) at (0,3.5) {$c$};
    \modulebox{0,0.5}{2}{0.75}{$\rho_a^{-1}$};
    \draw[rounded corners] (0.25,1.5) rectangle (1.75,2) node[midway] {$h_1 \circ h_2$};
    \modulebox{0,2.25}{2}{0.5}{$\epsilon_{a \to c}$};
    \draw[blue] (a) to (c);
    \draw[dotted] (1,1.25) to (1,1.5);
    \draw (1,2) to (1,2.25);
  \end{tikzpicture}
  =
  \sH(h_1 \circ h_2).
  $$
  Again, the proof that $\sH(\id_a)=\id_a$ is straightforward.
  Noting that $\sH(h)$ is $\rho_a^{-1}$ composed with the mate of $h$, it follows immediately that $\sG(\sH(h)) = h$ and $\sH(\sG(g)) = g$. Therefore $\sG, \sH$ witness an isomorphism of categories.

  Lastly, we must construct unitary modulators for $\sG$ and $\sH$ that are inverse to each other.
  In \cite[\S3.3]{1809.09782} it was stated that these modulators could be taken to be the identity;
  this is not true because the module actions need not agree.
  Define $\omega_{a, v} \in \sA( \sH(a) \lhd v \to \sH(a \blacktriangleleft v) ) = \sA(a \lhd v \to a \blacktriangleleft v)$ and $\omega'_{a, v} \in \sA'(a \blacktriangleleft v \to a \lhd v)$ via
  \begin{align*}
    \omega_{a, v} :=
    \begin{tikzpicture}[smallstring,baseline=25]
      \node (a) at (0,0) {$a$};
      \node (tri) at (0.625,0) {$\lhd$};
      \node (v) at (1.25,0) {$v$};
      \node (av) at (0,3) {$a \blacktriangleleft v$};
      \draw[rounded corners] (0.5,0.5) rectangle (2,1.25) node[midway] {$\eta_{a, v}'$};
      \modulebox{0,1.5}{2}{0.75}{$\epsilon_{a \to a \blacktriangleleft v}$}
      \draw[blue] (a) to (av);
      \draw (v) to (1.25,0.5);
      \draw (1.25,1.25) to (1.25,1.5);
    \end{tikzpicture}
    \qquad
    \text{and}
    \qquad
    \omega'_{a, v} :=
    \begin{tikzpicture}[smallstring,baseline=25]
      \node (a) at (0,0) {$a$};
      \node (tri) at (0.625,0) {$\blacktriangleleft$};
      \node (v) at (1.25,0) {$v$};
      \node (av) at (0,3) {$a \lhd v$};
      \draw[rounded corners] (0.5,0.5) rectangle (2,1.25) node[midway] {$\eta_{a, v}$};
      \modulebox{0,1.5}{2}{0.75}{$\epsilon_{a \to a \blacktriangleleft v}'$}
      \draw[blue] (a) to (av);
      \draw (v) to (1.25,0.5);
      \draw (1.25,1.25) to (1.25,1.5);
    \end{tikzpicture}\, ,
  \end{align*}
  where $\eta'_{a, v}$ and $\epsilon'_{a \to b}$ are the unit and counit, respectively, of Adjunction \ref{eq:CatToModAdjunction} for $\sA'$.
  Note that $\omega_{a, v}$ and $\omega'_{a, v}$ are the unit and counit, respectively, of the adjunction formed by composing Adjunction \ref{eq:CatToModAdjunction} and Adjunction \eqref{eq:ModToCatAdjunction}:
  \begin{align*}
    \sA(\sH(a \blacktriangleleft v) \to a \lhd v) \cong \sA'(a \blacktriangleleft v \to \sG(a \lhd v)).
  \end{align*}

  As the proofs that $\omega$ and $\omega'$ are modulators for $\sH$ and $\sG$, respectively, are fairly tedious, we defer them to Appendix \ref{app:modulators}.
  We also prove in the appendix that $\omega'$ is a $\sU$-natural transformation between $(a \lhd -)$ and $(a \blacktriangleleft -)$.
  These proofs (Lemmas \ref{lem:modulator-natural}, \ref{lem:modulator-associative}, and \ref{lem:omega-V-natural}) and
  the subsequent Remark \ref{rem:eta-eta'}
  are used in the proof that the modulator $\omega$ is unitary, which is shown in Lemma \ref{lem:modulator unitary} below.
  \end{proof}

  \begin{lem}
    \label{lem:modulator unitary}
    The modulator $\omega$ is unitary.
  \end{lem}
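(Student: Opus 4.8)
The plan is to avoid a head-on computation by reducing the statement to the unitarity of the partner modulator $\omega'$, for which Proposition \ref{prop:daglhd1} is tailor-made. The key observation is that the two module actions in play both arise as left $\sU$-adjoints of one and the same $\sU$-representable functor $\cA(a\to -):\cA\to\cU$: the original action $\lhd$ on $\sA$ does so through the adjunction \eqref{eq:ModToCatAdjunction} used to build $\cA$ from $\sA$, while the action $\blacktriangleleft$ on $\sA'=\Phi(\cA)$ does so through Construction \ref{const:CatToMod}. Since $\sA$ is a dagger $\sU$-module category with unitary unitors and $\sA'$ is a dagger $\sU$-module category by Propositions \ref{prop:daglhd1}, \ref{prop:daglhd2}, and \ref{prop:daglhd3}, both of these left $\sU$-adjoints satisfy \ref{kappa:3} and \ref{kappa:4}.

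First I would show that $\omega'$ is unitary. By Lemma \ref{lem:omega-V-natural}, $\omega'$ is a $\sU$-natural isomorphism between the two left $\sU$-adjoints $a\blacktriangleleft -$ and $a\lhd -$ of $\cA(a\to -)$. By the ordinary Yoneda Lemma, the $\sU$-natural isomorphism of two left $\sU$-adjoints of a fixed $\sU$-functor is unique, so $\omega'$ coincides with the isomorphism $\psi$ of Proposition \ref{prop:daglhd1} (taking $\cL^a=a\blacktriangleleft -$). That proposition then yields that $\omega'$ is unitary with respect to the dagger carried by $\sA'$, namely the dagger on the underlying category induced by $\kappa^\cA$ via Construction \ref{const:DaggerFromInvolutive}.

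Finally I would transfer this unitarity to $\omega$. Since $\omega$ and $\omega'$ are the unit and counit of the adjunction $\sA(\sH(a\blacktriangleleft v)\to a\lhd v)\cong\sA'(a\blacktriangleleft v\to\sG(a\lhd v))$, whose two sides are exchanged by the mutually inverse, identity-on-objects functors $\sG$ and $\sH$, the triangle identities force $\sH(\omega'_{a,v})=\omega_{a,v}^{-1}$ in $\sA$ (remembering that composition is written in reverse order). As $\sH$ is a dagger functor --- indeed $\kappa^\cA$ was manufactured from the dagger of $\sA$ in \eqref{eq:MateOfKappa}, so the $\kappa^\cA$-induced dagger on $\sA'$ and the given dagger on $\sA$ are intertwined by $\sG,\sH$ --- it carries the unitary $\omega'_{a,v}$ to a unitary, so $\omega_{a,v}^{-1}$, and hence $\omega_{a,v}$, is unitary. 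The main obstacle I anticipate is exactly this last transfer: one must confirm cleanly that $\sH$ (equivalently $\sG$) is dagger, i.e.\ that the two daggers genuinely match under the $\rho$-twisted identification, and then match the direction conventions. Should the conceptual identification of $\omega'$ with $\psi$ prove awkward, the fallback --- and evidently the route the appendix sets up --- is to compute the mate of $\omega_{a,v}\circ\omega_{a,v}^\dagger$ directly, in the style of Proposition \ref{prop:modulator-from-V-functor}, using \ref{kappa:1}, the $\sU$-naturality and associativity of $\omega'$ (Lemmas \ref{lem:modulator-natural} and \ref{lem:modulator-associative}), and the relation between $\eta$ and $\eta'$ recorded in Remark \ref{rem:eta-eta'}, and recognize the result as the mate of $\id_{a\lhd v}$.
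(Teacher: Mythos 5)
Your proposal is correct in substance, but it takes a genuinely different main route from the paper. The paper's own proof stays computational: since the mate of $\omega_{a,v}$ is $\eta'_{a,v}$, it re-runs the calculation of Proposition \ref{prop:modulator-from-V-functor} with $\cF=\id$, the only new ingredient being an analog of Lemma \ref{lem:eta-kappa-inv} with $\eta$ in place of $\eta'$ (since $\sA'$, not $\sA$, is the underlying category of $\cA$), which it proves using Lemmas \ref{lem:modulator-natural}, \ref{lem:modulator-associative}, \ref{lem:omega-V-natural} and Remark \ref{rem:eta-eta'} --- this is precisely your ``fallback,'' which you identified accurately. Your primary route instead factors the statement through Proposition \ref{prop:daglhd1}: identify $\omega'$ with the comparison isomorphism $\psi$ between the two enriched left adjoints $a\blacktriangleleft-$ and $a\lhd-$ of $\cA(a\to-)$, conclude $\omega'$ is unitary in $\sA'$, then transfer along $\sH$ using $\sH(\omega'_{a,v})=\omega_{a,v}^{-1}$ (Lemma \ref{lem:modulator inverse}) and the fact that $\sH$ is dagger. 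This is a legitimate and arguably cleaner argument: the hypotheses of Proposition \ref{prop:daglhd1} do hold here ((\ref{kappa:4}) for $\lhd$ is exactly Lemma \ref{lem:kappa-4}, (\ref{kappa:3}) follows from unitarity of $\rho^\sA$ plus the dagger compatibility of $\sG$), and the $\sG/\sH$-dagger compatibility you flag as the main obstacle is no extra debt relative to the paper, whose proof also invokes ``$\sG$ is a dagger functor'' without separate verification (it is built into the definition \eqref{eq:MateOfKappa} of $\kappa^\cA$). What your route buys is a conceptual explanation --- $\omega$ is unitary because any two dagger-compatible enriched left adjoints are unitarily isomorphic --- at the cost of checking that the abstract comparison $\psi$ really is $\omega'$; what the paper's route buys is self-containedness within the string-diagram calculus already set up in the appendix.

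One justification in your argument is wrong as stated, though harmlessly so: you invoke ``the $\sU$-natural isomorphism of two left $\sU$-adjoints of a fixed $\sU$-functor is unique,'' but natural isomorphisms between two left adjoints are \emph{not} unique in general (one can twist by a nontrivial automorphism); only the canonical one compatible with the adjunction data is. Fortunately you do not need uniqueness: unfolding definitions, $\psi^a_v$ is the mate of $\id_{a\lhd v}$ through Adjunction \eqref{eq:GetPsi}, i.e.\ $(\id_a\blacktriangleleft\eta_{a,v})\circ\epsilon'_{a\to a\lhd v}$, which is literally the defining formula for $\omega'_{a,v}$ in the proof of Proposition \ref{prop:equivalence-of-dagger-U-mods}; indeed Remark \ref{rem:eta-eta'} is exactly identity \eqref{eq:TwoUnitIdentity} for this $\psi=\omega'$. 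Replace the Yoneda appeal by this direct comparison of mates and your argument closes.
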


  \begin{proof}
    Since the mate of $\omega_{a, u}$ under Adjunction \ref{eq:CatToModAdjunction} is $\eta_{a, v}'$,
    we're able to closely follow many of the steps in the proof of Proposition \ref{prop:modulator-from-V-functor},
    with each $\cF_{a \to b}$ being the identity.
    We are unable to directly apply the lemma, because we only have \eqref{eq:DefineLhd} and Lemma \ref{lem:eta-kappa-inv} using $\eta'$ and not $\eta$,
    since $\sA$ is not the underlying category of $\cA$ ($\sA'$ is the underlying category of $\cA$).

    Working with $\omega_{a, u}'$ instead of $\omega_{a, u}$ allows us to only directly use \eqref{eq:DefineLhd} in $\sA'$,
    and then there is just one application of Lemma \ref{lem:eta-kappa-inv} in $\sA$.
    We claim the following as an analog of Lemma \ref{lem:eta-kappa-inv} using $\eta$ instead of $\eta'$, with which we complete the proof:
    \begin{align*}
      \begin{tikzpicture}[smallstring, baseline=45]
        \node (bot) at (0,1) {$u$};
        \node[box] (eta) at (0,2) {$\eta_{a, u}$};
        \node[box] (kappainv) at (0,3) {$\kappa_{a \to a \lhd u}^{-1}$};
        \node (top) at (0,4) {$\overline{\cA(a \lhd u \to a)}$};
        \draw (bot) to (eta);
        \draw (eta) to (kappainv);
        \draw (kappainv) to (top);
      \end{tikzpicture}
      =
      \begin{tikzpicture}[smallstring, baseline=0]
        \node (bot) at (3,-4) {$u$};
        \node[box] (phi) at (3,-3) {$\varphi_u$};
        \node[box] (kappa1) at (3,-2) {$\overline{\kappa_{u \to 1}}$};
        \node[box] (eta) at (3,-1) {$\overline{\eta_{a \lhd u, u^*}}$};
        \node[box] (alpha) at (1,-1.1) {$\overline{\sG(\alpha_{a, u, u^*}^{-1})}$};
        \node[box] (circ3) at (2,0) {$- \circ_{\overline{\cA}} -$};
        \node[box] (ev) at (0,0) {$\overline{\sG(\id_a \lhd \ev_u)}$};
        \node[box] (circ2) at (1,1) {$- \circ_{\overline{\cA}} -$};
        \node[box] (rho) at (-1,1) {$\overline{\sG(\rho_a^\dagger)^{-1}}$};
        \node[box] (circ1) at (0,2.1) {$- \circ_{\overline{\cA}} -$};
        \node (top) at (0,3.1) {$\cA(a \to a \lhd u)$};
        \draw (bot) to (phi);
        \draw (phi) to (kappa1);
        \draw (kappa1) to (eta);
        \draw (eta) to[in=-90,out=90] (circ3.-45);
        \draw (alpha) to[in=-90,out=90] (circ3.225);
        \draw (circ3) to[in=-90,out=90] (circ2.-45);
        \draw (ev) to[in=-90,out=90] (circ2.225);
        \draw (circ2) to[in=-90,out=90] (circ1.-45);
        \draw (rho) to[in=-90,out=90] (circ1.225);
        \draw (circ1) to (top);
      \end{tikzpicture}
    \end{align*}
    Using this at the first application of Lemma \ref{lem:eta-kappa-inv} in the proof of Proposition \ref{prop:modulator-from-V-functor},
    the rest of the proof follows exactly the same,
    with $\sG$ in place of $\sF$ and the remaining uses of \eqref{eq:DefineLhd} and Lemma \ref{lem:eta-kappa-inv} taking place in $\sA'$.

    Finally, to prove the above identity, we take the same steps as in the proof of Lemma \ref{lem:eta-kappa-inv}, so it suffices to prove the equivalent identity
    \begin{align*}
      \begin{tikzpicture}[smallstring, baseline=40]
        \node (bot) at (0,0) {$\overline{v}$};
        \node (top) at (1,4) {$\cA(a \lhd v \to a \lhd 1)$};
        \node[box] (eta) at (0,1) {$\overline{\eta_{a, v}}$};
        \node[box] (kappa) at (0,2) {$\kappa_{a \lhd v \to a}$};
        \node[box] (rho) at (2,2) {$\sG(\rho_a^\dagger)$};
        \node[box] (circ) at (1,3) {$- \circ -$};
        \draw (bot) to (eta);
        \draw (eta) to (kappa);
        \draw (kappa) to[in=-90,out=90] (circ.225);
        \draw (rho) to[in=-90,out=90] (circ.-45);
        \draw (circ) to (top);
      \end{tikzpicture}
      =
      \begin{tikzpicture}[smallstring, baseline=30]
        \node (bot) at (0,-1) {$\overline{v}$};
        \node (top) at (2,4) {$\cA(a \lhd v \to a \lhd 1)$};
        \node[box] (kappa) at (0,0) {$\kappa_{v \to 1}^{\cV}$};
        \node[box] (eta) at (0,0.9) {$\eta_{a \lhd v , {v^*}}$};
        \node[box] (alpha) at (2,0.9) {$\sG(\alpha_{a , v , {v^*}}^{-1})$};
        \node[box] (circ1) at (1,2) {$- \circ -$};
        \node[box] (ev) at (3,2) {$\sG(\id_a \lhd \ev_v)$};
        \node[box] (circ2) at (2,3) {$- \circ -$};
        \draw (bot) to (kappa);
        \draw (kappa) to (eta);
        \draw (eta) to[in=-90,out=90] (circ1.225);
        \draw (alpha) to[in=-90,out=90] (circ1.-45);
        \draw (circ1) to[in=-90,out=90] (circ2.225);
        \draw (ev) to[in=-90,out=90] (circ2.-45);
        \draw (circ2) to (top);
      \end{tikzpicture}\, .
    \end{align*}
    Starting with the left hand side, using that $\sG$ is a dagger functor, and applying \ref{kappa:2} gives
    \begin{align*}
      \begin{tikzpicture}[smallstring, baseline=45]
        \node (bot) at (0.3,0) {\small $\overline{v}$};
        \node (top) at (1,4) {$\cA(a \lhd v \to a \lhd 1)$};
        \node[box] (eta) at (0.3,1) {$\overline{\eta_{a, v}}$};
        \node[box] (rho) at (1.7,1) {$\overline{\sG(\rho_a)}$};
        \node[box] (circ) at (1,2) {$- \circ_{\overline{\cA}} -$};
        \node[box] (kappa) at (1,3) {$\kappa_{a \lhd v \to a \lhd 1}$};
        \draw (bot) to (eta);
        \draw (eta) to[in=-90,out=90] (circ.225);
        \draw (rho) to[in=-90,out=90] (circ.-45);
        \draw (circ) to (kappa);
        \draw (kappa) to (top);
      \end{tikzpicture}
      =
      \hspace{-3pt}
      \begin{tikzpicture}[smallstring, baseline=35]
        \node (bot) at (0.3,-1) {\small $\overline{v}$};
        \node (top) at (1,4) {$\cA(a \lhd v \to a \lhd 1)$};
        \node[box] (eta) at (0.3,1) {$\overline{\eta_{a, v}}$};
        \node[box] (rho) at (1.2,0) {$\overline{\rho_a'}$};
        \node[box] (omega) at (2.2,0) {$\overline{{\omega'}_{a, 1}^{-1}}$};
        \node[box] (circ0) at (1.7,1) {$- \circ_{\overline{\cA}} -$};
        \node[box] (circ) at (1,2) {$- \circ_{\overline{\cA}} -$};
        \node[box] (kappa) at (1,3) {$\kappa_{a \lhd v \to a \lhd 1}$};
        \draw (bot) to (eta);
        \draw (eta) to[in=-90,out=90] (circ.225);
        \draw (rho) to[in=-90,out=90] (circ0.225);
        \draw (omega) to[in=-90,out=90] (circ0.-45);
        \draw (circ0) to[in=-90,out=90] (circ.-45);
        \draw (circ) to (kappa);
        \draw (kappa) to (top);
      \end{tikzpicture}
      \underset{\text{(Rem.~\ref{rem:eta-eta'})}}{=}
      \begin{tikzpicture}[smallstring, baseline=35]
        \node (bot) at (0.5,-1) {\small $\overline{v}$};
        \node (top) at (1,4) {$\cA(a \lhd v \to a \lhd 1)$};
        \node[box] (omega1) at (-0.5,0) {$\overline{{\omega'}_{a, v}}$};
        \node[box] (eta) at (0.5,0) {$\overline{\eta_{a, v}'}$};
        \node[box] (circ1) at (0,1) {$- \circ_{\overline{\cA}} -$};
        \node[box] (rho) at (1.5,0) {$\overline{\rho_a'}$};
        \node[box] (omega2) at (2.5,0) {$\overline{{\omega'}_{a, 1}^{-1}}$};
        \node[box] (circ0) at (2,1) {$- \circ_{\overline{\cA}} -$};
        \node[box] (circ) at (1,2) {$- \circ_{\overline{\cA}} -$};
        \node[box] (kappa) at (1,3) {$\kappa_{a \lhd v \to a \lhd 1}$};
        \draw (bot) to (eta);
        \draw (omega1) to[in=-90,out=90] (circ1.225);
        \draw (eta) to[in=-90,out=90] (circ1.-45);
        \draw (circ1) to[in=-90,out=90] (circ.225);
        \draw (rho) to[in=-90,out=90] (circ0.225);
        \draw (omega2) to[in=-90,out=90] (circ0.-45);
        \draw (circ0) to[in=-90,out=90] (circ.-45);
        \draw (circ) to (kappa);
        \draw (kappa) to (top);
      \end{tikzpicture}
      \hspace{-4pt}
      \underset{\eqref{eq:La - 1->v}}{=}
      \begin{tikzpicture}[smallstring, baseline=35]
        \node (bot) at (1,-1) {\small $\overline{v}$};
        \node (top) at (1,4) {$\cA(a \lhd v \to a \lhd 1)$};
        \node[box] (omega1) at (-1,-0.1) {$\overline{{\omega'}_{a, v}}$};
        \node[box] (tri) at (1,-0.1) {$\overline{(a \blacktriangleleft -)_{1 \to v}}$};
        \node[box] (circ1) at (0,1) {$- \circ_{\overline{\cA}} -$};
        \node[box] (omega2) at (2,0.9) {$\overline{{\omega'}_{a, 1}^{-1}}$};
        \node[box] (circ) at (1,2) {$- \circ_{\overline{\cA}} -$};
        \node[box] (kappa) at (1,3) {$\kappa_{a \lhd v \to a \lhd 1}$};
        \draw (bot) to (tri);
        \draw (omega1) to[in=-90,out=90] (circ1.225);
        \draw (tri) to[in=-90,out=90] (circ1.-45);
        \draw (circ1) to[in=-90,out=90] (circ.225);
        \draw (omega2) to[in=-90,out=90] (circ.-45);
        \draw (circ) to (kappa);
        \draw (kappa) to (top);
      \end{tikzpicture}\, ,
    \end{align*}
    which, by Lemma \ref{lem:omega-V-natural}, is equal to $\overline{(a \lhd -)_{1 \to v}} \circ \kappa_{a \lhd v \to a \lhd 1}$. Meanwhile, starting with the right hand side gives
    \begin{align*}
      \begin{tikzpicture}[smallstring, baseline=35]
        \node (bot) at (0,-1) {$\overline{v}$};
        \node (top) at (2,4) {$\cA(a \lhd v \to a \lhd 1)$};
        \node[box] (kappa) at (0,0) {$\kappa_{v \to 1}^{\cV}$};
        \node[box] (eta) at (0,0.9) {$\eta_{a \lhd v , {v^*}}$};
        \node[box] (alpha) at (2,0.9) {$\sG(\alpha_{a , v , {v^*}}^{-1})$};
        \node[box] (circ1) at (1,2) {$- \circ -$};
        \node[box] (ev) at (3,2) {$\sG(\id_a \lhd \ev_v)$};
        \node[box] (circ2) at (2,3) {$- \circ -$};
        \draw (bot) to (kappa);
        \draw (kappa) to (eta);
        \draw (eta) to[in=-90,out=90] (circ1.225);
        \draw (alpha) to[in=-90,out=90] (circ1.-45);
        \draw (circ1) to[in=-90,out=90] (circ2.225);
        \draw (ev) to[in=-90,out=90] (circ2.-45);
        \draw (circ2) to (top);
      \end{tikzpicture}
      &\underset{\text{(Lem.~\ref{lem:modulator-associative})}}{=}
      \begin{tikzpicture}[smallstring, baseline=10]
        \node (bot) at (-0.5,-3) {$\overline{v}$};
        \node[box] (kappa) at (-0.5,0) {$\kappa_{v \to 1}^{\cV}$};
        \node[box] (eta) at (-0.5,0.9) {$\eta_{a \lhd v , {v^*}}$};
        \node[box] (circ1) at (1,2) {$- \circ -$};
        \node[box] (circ2) at (2,1) {$- \circ -$};
        \node[box] (omega1) at (1,0) {${\omega'}_{a \lhd v, v^*}^{-1}$};
        \node[box] (circ3) at (3,0) {$- \circ -$};
        \node[box] (omega2) at (2,-1) {${\omega'}_{a, v}^{-1} \blacktriangleleft \id_{v^*}$};
        \node[box] (circ4) at (4,-1) {$- \circ -$};
        \node[box] (alpha) at (3,-2) {${\alpha_{a, v, v^*}'}^{-1}$};
        \node[box] (omega3) at (5,-2) {${\omega'}_{a, v v^*}$};
        \node[box] (ev) at (3,2) {$\sG(\id_a \lhd \ev_v)$};
        \node[box] (circ) at (2,3) {$- \circ -$};
        \node (top) at (2,4) {$\cA(a \lhd v \to a \lhd 1)$};
        \draw (bot) to (kappa);
        \draw (kappa) to (eta);
        \draw (eta) to[in=-90,out=90] (circ1.225);
        \draw (omega1) to[in=-90,out=90] (circ2.225);
        \draw (circ2) to[in=-90,out=90] (circ1.-45);
        \draw (omega2) to[in=-90,out=90] (circ3.225);
        \draw (circ3) to[in=-90,out=90] (circ2.-45);
        \draw (alpha) to[in=-90,out=90] (circ4.225);
        \draw (omega3) to[in=-90,out=90] (circ4.-45);
        \draw (circ4) to[in=-90,out=90] (circ3.-45);
        \draw (circ1) to[in=-90,out=90] (circ.225);
        \draw (ev) to[in=-90,out=90] (circ.-45);
        \draw (circ) to (top);
      \end{tikzpicture}
      \displaybreak[1]\\
      \underset{\text{(Lem.~\ref{rem:eta-eta'})}}{=}
      \begin{tikzpicture}[smallstring, baseline=0]
        \node (bot) at (0,-3) {$\overline{v}$};
        \node[box] (kappa) at (0,-2) {$\kappa_{v \to 1}^{\cV}$};
        \node[box] (eta) at (0,-1) {$\eta_{a \lhd v , v^*}'$};
        \node[box] (circ2) at (2,1) {$- \circ -$};
        \node[box] (circ3) at (3,0) {$- \circ -$};
        \node[box] (omega2) at (2,-1) {${\omega'}_{a, v}^{-1} \blacktriangleleft \id_{v^*}$};
        \node[box] (circ4) at (4,-1) {$- \circ -$};
        \node[box] (alpha) at (3,-2) {${\alpha_{a, v, v^*}'}^{-1}$};
        \node[box] (omega3) at (5,-2) {${\omega'}_{a, v v^*}$};
        \node[box] (ev) at (4,1) {$\sG(\id_a \lhd \ev_v)$};
        \node[box] (circ) at (3,2) {$- \circ -$};
        \node (top) at (3,3) {$\cA(a \lhd v \to a \lhd 1)$};
        \draw (bot) to (kappa);
        \draw (kappa) to (eta);
        \draw (eta) to[in=-90,out=90] (circ2.225);
        \draw (omega2) to[in=-90,out=90] (circ3.225);
        \draw (circ3) to[in=-90,out=90] (circ2.-45);
        \draw (alpha) to[in=-90,out=90] (circ4.225);
        \draw (omega3) to[in=-90,out=90] (circ4.-45);
        \draw (circ4) to[in=-90,out=90] (circ3.-45);
        \draw (ev) to[in=-90,out=90] (circ.-45);
        \draw (circ2) to[in=-90,out=90] (circ.225);
        \draw (circ) to (top);
      \end{tikzpicture}
      &\underset{\text{(Lem.~\ref{lem:modulator-natural})}}{=}
      \begin{tikzpicture}[smallstring, baseline=0]
        \node (bot) at (1,-3) {$\overline{v}$};
        \node[box] (kappa) at (1,-2) {$\kappa_{v \to 1}^{\cV}$};
        \node[box] (eta) at (1,-1) {$\eta_{a \lhd v , v^*}'$};
        \node[box] (circ2) at (4,1.1) {$- \circ -$};
        \node[box] (circ3) at (5,0.1) {$- \circ -$};
        \node[box] (omega2) at (3,0) {${\omega'}_{a, v}^{-1} \blacktriangleleft \id_{v^*}$};
        \node[box] (circ4) at (6,-1) {$- \circ -$};
        \node[box] (alpha) at (4,-1) {${\alpha_{a, v, v^*}'}^{-1}$};
        \node[box] (ev) at (5,-2) {$\sG(\id_a) \blacktriangleleft \ev_v$};
        \node[box] (omega3) at (7,-2) {${\omega'}_{a, 1_{\sA}}$};
        \node[box] (circ) at (3,2.1) {$- \circ -$};
        \node (top) at (3,3) {$\cA(a \lhd v \to a \lhd 1)$};
        \draw (bot) to (kappa);
        \draw (kappa) to (eta);
        \draw (eta) to[in=-90,out=90] (circ.225);
        \draw (omega2) to[in=-90,out=90] (circ2.225);
        \draw (circ3) to[in=-90,out=90] (circ2.-45);
        \draw (alpha) to[in=-90,out=90] (circ3.225);
        \draw (omega3) to[in=-90,out=90] (circ4.-45);
        \draw (ev) to[in=-90,out=90] (circ4.225);
        \draw (circ4) to[in=-90,out=90] (circ3.-45);
        \draw (circ2) to[in=-90,out=90] (circ.-45);
        \draw (circ) to (top);
      \end{tikzpicture}
      \displaybreak[1]\\
      \underset{\eqref{eq:DefineLhd}}{=}
      \begin{tikzpicture}[smallstring, baseline=0]
        \node[box] (omega2) at (2.3,1) {${\omega'}_{a, v}^{-1}$};
        \node (bot) at (2.3,-3) {$\overline{v}$};
        \node[box] (kappa) at (2.3,-2) {$\kappa_{v \to 1}^{\cV}$};
        \node[box] (eta) at (2.3,-1) {$\eta_{a \blacktriangleleft v , v^*}'$};
        \node[box] (circ2) at (4,1) {$- \circ -$};
        \node[box] (circ3) at (5,0) {$- \circ -$};
        \node[box] (circ4) at (6,-1) {$- \circ -$};
        \node[box] (alpha) at (4,-1) {${\alpha_{a, v, v^*}'}^{-1}$};
        \node[box] (ev) at (5,-2) {$\sG(\id_a) \blacktriangleleft \ev_v$};
        \node[box] (omega3) at (7,-2) {${\omega'}_{a, 1_{\sA}}$};
        \node[box] (circ) at (3,2) {$- \circ -$};
        \node (top) at (3,3) {$\cA(a \lhd v \to a \lhd 1)$};
        \draw (bot) to (kappa);
        \draw (kappa) to (eta);
        \draw (eta) to[in=-90,out=90] (circ2.225);
        \draw (omega2) to[in=-90,out=90] (circ.225);
        \draw (circ3) to[in=-90,out=90] (circ2.-45);
        \draw (alpha) to[in=-90,out=90] (circ3.225);
        \draw (omega3) to[in=-90,out=90] (circ4.-45);
        \draw (ev) to[in=-90,out=90] (circ4.225);
        \draw (circ4) to[in=-90,out=90] (circ3.-45);
        \draw (circ2) to[in=-90,out=90] (circ.-45);
        \draw (circ) to (top);
      \end{tikzpicture}
      &\underset{\text{(Fact ~\ref{fact:MateOfLa})}}{=}
      \begin{tikzpicture}[smallstring, baseline=10]
        \node[box] (omega2) at (2.4,1) {${\omega'}_{a, v}^{-1}$};
        \node (bot) at (3,-2) {$\overline{v}$};
        \node[box] (kappa) at (3,-1) {$\kappa_{v \to 1}^{\cV}$};
        \node[box] (tri) at (3,0) {$(a \blacktriangleleft -)_{v \to 1}$};
        \node[box] (circ2) at (4,1) {$- \circ -$};
        \node[box] (omega3) at (5,0) {${\omega'}_{a, 1_{\sA}}$};
        \node[box] (circ) at (3.2,2) {$- \circ -$};
        \node (top) at (3.2,3) {$\cA(a \lhd v \to a \lhd 1)$};
        \draw (bot) to (kappa);
        \draw (kappa) to (tri);
        \draw (tri) to[in=-90,out=90] (circ2.225);
        \draw (omega2) to[in=-90,out=90] (circ.225);
        \draw (omega3) to[in=-90,out=90] (circ2.-45);
        \draw (circ2) to[in=-90,out=90] (circ.-45);
        \draw (circ) to (top);
      \end{tikzpicture}\, .
    \end{align*}
    Proposition \ref{prop:modulator-from-V-functor} tells us that this is equal to
    $\kappa_{v \to 1}^{\cV} \circ (a \lhd -)_{v \to 1}$;
    the result follows because $(a \lhd - )$ is dagger.
  \end{proof}

  %%%%%%%%%%%%%%%%%%%%%%%%%%%%%%%%%%%%%%%%%%%%%%%%%%%%%%%%%%%%%%%%%%%%%%%%%%%%%%%%%%%%%
  \subsection{2-Equivalence}

  The following lemma is used to show essential surjectivity of $\Phi$ on 1-cells.
  \begin{lem}
    \label{lem:identity mate}
    Let $\cA$ be a weakly tensored $\sV$-category with underlying category $\sA$, and $f \in \sV(1_\sV \to \cA(a \to b)) = \sA(a \to b)$. Then $f$ is equal to
    \begin{align*}
      \begin{tikzpicture}[smallstring, baseline=30]
        \node (bot) at (0,0) {$a$};
        \node (top) at (0,4) {$b$};
        \modulebox{0,0.5}{2}{0.75}{$\rho_a^{-1}$}
        \draw[rounded corners] (0.75,1.5) rectangle (1.75,2.25) node[midway] {$f$};
        \modulebox{0,2.5}{2}{0.75}{$\epsilon_{a \to b}$};
        \draw[blue] (bot) to (top);
        \draw[dotted] (1.25,1.25) to (1.25,1.5);
        \draw (1.25,2.25) to (1.25,2.5);
      \end{tikzpicture}
      \,.
    \end{align*}
  \end{lem}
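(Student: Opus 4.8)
The plan is to recognize the displayed string diagram as the composite $\rho_a^{-1}\circ\mate(f)$ computed in the underlying module category $\sA$, where $\mate(f)\in\sA(a\lhd 1_\sV\to b)$ is the mate of $f\in\sV(1_\sV\to\cA(a\to b))$ under Adjunction \eqref{eq:CatToModAdjunction}. Reading the picture from bottom to top in the paper's reverse-composition convention, the blue strand carries the module object: the $\rho_a^{-1}$ box emits the dotted $1_\sV$-strand that feeds $f$, whose output $\cA(a\to b)$ is then absorbed by the counit $\epsilon_{a\to b}$. Applying \ref{mate:L-composite} to the factorization $f=f\circ\id_{\cA(a\to b)}$, together with $\epsilon_{a\to b}=\mate(\id_{\cA(a\to b)})$, identifies $\mate(f)=(\id_a\lhd f)\circ\epsilon_{a\to b}$, so the entire diagram is exactly $\rho_a^{-1}\circ\mate(f)$. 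Hence the lemma reduces to the claim that $f=\rho_a^{-1}\circ\mate(f)$ as morphisms $a\to b$ in $\sA$.

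To prove this, I would simply take mates twice. Because Adjunction \eqref{eq:CatToModAdjunction} is a bijection, we have $f=\mate(\mate(f))$ when $f$ is regarded as an element of $\sV(1_\sV\to\cA(a\to b))=\sA(a\to b)$. Writing $\mate(f)=\id_{a\lhd 1_\sV}\circ\mate(f)$ and applying \ref{mate:R-composite} with $\eta_{a,1_\sV}=\mate(\id_{a\lhd 1_\sV})$ yields $\mate(\mate(f))=\eta_{a,1_\sV}\circ\cA(a\to\mate(f))$, where $\cA(a\to -)$ is the representable functor. Unfolding its definition gives $\cA(a\to\mate(f))=(\id_{\cA(a\to a\lhd 1_\sV)}\,\mate(f))\circ(-\circ_\cA -)$.

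Finally I would match this against composition in $\sA$. By Construction \ref{const:CatToMod} the unitor satisfies $\rho_a^{-1}=\eta_{a,1_\sV}\in\sV(1_\sV\to\cA(a\to a\lhd 1_\sV))=\sA(a\to a\lhd 1_\sV)$, and composition of $1_\sV$-graded morphisms in $\sA$ is given by tensoring and then applying $-\circ_\cA -$. Using bifunctoriality of the tensor product (and the unit coherence $1_\sV\otimes 1_\sV\cong 1_\sV$) to rewrite $\eta_{a,1_\sV}\circ(\id_{\cA(a\to a\lhd 1_\sV)}\,\mate(f))$ as $\eta_{a,1_\sV}\,\mate(f)$, the expression $\eta_{a,1_\sV}\circ\cA(a\to\mate(f))$ becomes $(\rho_a^{-1}\,\mate(f))\circ(-\circ_\cA -)$, which is precisely $\rho_a^{-1}\circ\mate(f)$ in $\sA$, i.e.\ the displayed diagram.

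The argument is entirely formal; the only thing requiring care is the bookkeeping under the reverse-composition convention — in particular pointing the two mate-composition rules \ref{mate:L-composite} and \ref{mate:R-composite} in the correct direction and keeping track of the instances $\rho_a^{-1}=\eta_{a,1_\sV}$ and $\epsilon_{a\to b}=\mate(\id_{\cA(a\to b)})$. I do not anticipate any genuine obstacle beyond this, so I would confine the write-up to the two mate computations above without drawing additional diagrams.
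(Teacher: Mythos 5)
Your proposal is correct and follows essentially the same route as the paper: the paper's proof also consists of the two formal mate computations via \ref{mate:L-composite} and \ref{mate:R-composite} (showing $f$ is the mate of both $\rho_a \circ f$ and $(\id_a \lhd f)\circ \epsilon_{a\to b}$) together with invertibility of $\rho_a$. Your variant of double-mating and unwinding $\cA(a\to \mate(f))$ through the representable functor is just a slightly longer bookkeeping of the same identities, with the identification $\rho_a^{-1}=\eta_{a,1_\sV}$ correctly drawn from Construction \ref{const:CatToMod}.
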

  \begin{proof}
    By \ref{mate:L-composite} and \ref{mate:R-composite},
    $f \in \sV(1_\sV \to \cA(a \to b)) $ is the mate of both
    $\rho_a \circ f$ and $(\id_a \lhd f) \circ \epsilon_{a \to b}$ under the adjunction
    $\sA(a \lhd 1 \to b) \cong \sV(1_\sV \to \cA(a \to b)) = \sA(a \to b)$.
    The result follows by invertibility of $\rho_a$.
  \end{proof}

  \begin{thm}
    \label{thm:2-equivalence}
    The 2-functor $\Phi$ from Construction \ref{const:2-functor} is a 2-equivalence.
  \end{thm}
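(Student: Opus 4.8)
The plan is to verify the three standard conditions for a strict $2$-functor to be a $2$-equivalence: that $\Phi$ is essentially surjective on $0$-cells, and that for every pair of $0$-cells $\cA,\cB$ the induced functor $\Phi_{\cA,\cB}\colon\dagucat(\cA\to\cB)\to\dagumod(\Phi(\cA)\to\Phi(\cB))$ is an equivalence of categories; the latter I will obtain by showing $\Phi_{\cA,\cB}$ is essentially surjective on $1$-cells and fully faithful on $2$-cells. Nearly all of the genuinely hard content has already been assembled in the reverse constructions of this section, so the theorem itself is largely a matter of checking that these constructions are mutually inverse to $\Phi$ up to the appropriate isomorphisms.

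For essential surjectivity on $0$-cells, I would start with an arbitrary tensored dagger $\sU$-module category $\sA$, apply Construction~\ref{constr:U-mod-to-dagger-U-cat} to produce a dagger $\sU$-category $(\cA,\kappa^{\cA})$, and then invoke Proposition~\ref{prop:equivalence-of-dagger-U-mods}, which exhibits mutually inverse strong dagger module functors between $\sA$ and $\Phi(\cA)$. Thus every $0$-cell of $\dagumod$ is (strictly) isomorphic to one in the image of $\Phi$. Next, for essential surjectivity on $1$-cells, I fix dagger $\sU$-categories $\cA,\cB$ and an arbitrary dagger module functor $(\sF,\omega)\colon\Phi(\cA)\to\Phi(\cB)$. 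Construction~\ref{const:mod-to-cat-1-cell} produces a $\sU$-functor $\cF\colon\cA\to\cB$ with $\cF(a):=\sF(a)$, and the lemma immediately following that construction shows $\cF$ is dagger. It then remains to check $\Phi(\cF)=(\sF,\omega)$: the underlying functor of $\cF$ is $\sF$ by construction, and the modulator of $\Phi(\cF)$, defined as the mate of $\eta_{a,u}\circ\cF_{a\to a\lhd u}$, equals $\omega_{a,u}$ by Lemma~\ref{lem:mate of V-functor} together with the defining formula \eqref{eqn:U-functor-construction} for $\cF_{a\to b}$; its unitarity is guaranteed by Proposition~\ref{prop:modulator-from-V-functor}. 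Hence $\Phi_{\cA,\cB}$ is even (strictly) surjective on $1$-cells.

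For fully faithfulness on $2$-cells, I use that a $\sU$-natural transformation $\theta\colon\cF\Rightarrow\cG$ and a module natural transformation $\Theta\colon\Phi(\cF)\Rightarrow\Phi(\cG)$ carry the \emph{same} component data under the identification $\sB(\sF(a)\to\sG(a))=\sU(1_{\sU}\to\cB(\cF(a)\to\cG(a)))$. Construction~\ref{const:cat to mod 2-cells} sends $\theta$ to the $\Theta$ with $\Theta_a:=\theta_a$, and the construction preceding Section~\ref{sec:equivalence-dagger-U-mods} sends $\Theta$ back to the $\theta$ with $\theta_a:=\Theta_a$; these are manifestly inverse bijections on underlying families, and the $\sU$-naturality square for $\theta$ corresponds to the module-naturality square for $\Theta$ under mates, one direction being exactly the calculation in Construction~\ref{const:cat to mod 2-cells}. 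Therefore $\Phi_{\cA,\cB}$ is a bijection on morphism sets, in particular fully faithful. Combining the three items, each $\Phi_{\cA,\cB}$ is an equivalence (indeed an isomorphism of categories, which is why the equivalence is \emph{locally isomorphic}) and $\Phi$ is essentially surjective on $0$-cells, so $\Phi$ is a $2$-equivalence.

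The main obstacle I anticipate is not any single hard computation — those have been discharged in Propositions~\ref{prop:equivalence-of-dagger-U-mods} and \ref{prop:modulator-from-V-functor} and in Lemma~\ref{lem:modulator unitary} — but rather the careful bookkeeping needed to confirm that $\Phi$ and the reverse constructions cancel on the nose: namely that the underlying functor of the reconstructed $\cF$ is literally $\sF$ and that its modulator is literally $\omega$, rather than merely isomorphic to them. This is where Lemma~\ref{lem:mate of V-functor} does the essential work, and where one must keep track of the several mate adjunctions relating the enriched homs of $\cA,\cB$ to the module actions of $\Phi(\cA),\Phi(\cB)$.
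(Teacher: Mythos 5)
Your proposal is correct and follows essentially the same route as the paper's proof: essential surjectivity on 0-cells via Proposition~\ref{prop:equivalence-of-dagger-U-mods}, strict surjectivity on 1-cells by verifying $\Phi(\cF)=(\sF,\omega)$ using Construction~\ref{const:mod-to-cat-1-cell}, and full faithfulness on 2-cells from the identification of component data. The one slight imprecision is that the equality of underlying functors on morphisms, $\sF'(f)=\sF(f)$, does not hold ``by construction'' but is checked via Lemma~\ref{lem:identity mate} (applied twice), while Lemma~\ref{lem:mate of V-functor} together with \eqref{eqn:U-functor-construction}, module-naturality of $\omega^\sF$, and the zig-zag identity handles the equality of modulators exactly as you indicate.
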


  \begin{proof}
    That $\Phi$ is essentially surjective on 0-cells is exactly the content of Proposition \ref{prop:equivalence-of-dagger-U-mods}.
    
    Next, take two 0-cells $\cA, \cB \in \dagucat$ and a 1-cell $(\sF, \omega^\sF) \in \vmod(\Phi(\cA) \to \Phi(\cB))$.
    We have a 1-cell $\cF$ in $\dagucat(\cA \to \cB)$ from Construction \ref{const:mod-to-cat-1-cell} and write $(\sF', \omega^{\sF'}) := \Phi(\cF)$;
    we show that $(\sF, \omega^\sF)$ and $(\sF', \omega^{\sF'})$ are equal.
    Note that this is very similar to the proof of \cite[Prop.~6.9]{2104.07747}, but the presentation here is simplified and more precise.
    By definition, $\sF'(a) = \sF(a)$ for all objects $a \in \Phi(\cA)$.
    On morphisms $f \in \Phi(\cA)(a \to b)$, $\sF'(f)$ is defined as $f \circ \cF_{a \to b}$, which by Lemma \ref{lem:identity mate} is equal to
    \begin{align*}
      \begin{tikzpicture}[smallstring,baseline=45]
        \node (a) at (0,0) {$\sF(a)$};
        \node (av) at (0,5) {$\sF(b)$};
        \modulebox{0,0.5}{2}{0.75}{$\rho_{\sF(a)}^{-1}$}
        \draw[rounded corners] (0.5,1.5) rectangle (2,2.25) node[midway] {$f$};
        \draw[rounded corners] (0.5,2.5) rectangle (2,3.25) node[midway] {$\cF_{a \to b}$};
        \modulebox{0,3.5}{2}{0.75}{$\epsilon_{\sF(a) \to \sF(b)}$}
        \draw[blue] (a) to (av);
        \draw[dotted] (1.25,1.25) to (1.25,1.5);
        \draw (1.25,2.25) to (1.25,2.5);
        \draw (1.25,3.25) to (1.25,3.5);
      \end{tikzpicture}
      =
      \begin{tikzpicture}[smallstring,baseline=45]
        \node (a) at (0,0) {$\sF(a)$};
        \node (av) at (0,5) {$\sF(b)$};
        \modulebox{0,0.5}{2}{0.75}{$\rho_{\sF(a)}^{-1}$}
        \draw[rounded corners] (0.5,1.5) rectangle (2,2.25) node[midway] {$f$};
        \modulebox{0,2.5}{2}{0.75}{$\omega_{a, \cA(a \to b)}^\sF$}
        \modulebox{0,3.5}{2}{0.75}{$\sF(\epsilon_{a \to b})$}
        \draw[blue] (a) to (av);
        \draw[dotted] (1.25,1.25) to (1.25,1.5);
        \draw (1.25,2.25) to (1.25,2.5);
        \draw (0.1,3.25) to (0.1,3.5);
      \end{tikzpicture}
      =
      \begin{tikzpicture}[smallstring,baseline=45]
        \node (a) at (0,0) {$\sF(a)$};
        \node (av) at (0,5) {$\sF(b)$};
        \modulebox{0,0.5}{2}{0.75}{$\rho_{\sF(a)}^{-1}$}
        \modulebox{0,1.5}{2}{0.75}{$\omega_{a, 1_\sA}^\sF$}
        \modulebox{0,2.5}{2}{0.75}{$\sF(\id_a \lhd f)$}
        \modulebox{0,3.5}{2}{0.75}{$\sF(\epsilon_{a \to b})$}
        \draw[blue] (a) to (av);
        \draw[dotted] (1.25,1.25) to (1.25,1.5);
        \draw (0.1,2.25) to (0.1,2.5);
        \draw (0.1,3.25) to (0.1,3.5);
      \end{tikzpicture}
      =
      \begin{tikzpicture}[smallstring,baseline=40]
        \node (a) at (0,0) {$\sF(a)$};
        \node (av) at (0,4) {$\sF(b)$};
        \modulebox{0,0.5}{2}{0.75}{$\sF(\rho_{a}^{-1})$}
        \modulebox{0,1.5}{2}{0.75}{$\sF(\id_a \lhd f)$}
        \modulebox{0,2.5}{2}{0.75}{$\sF(\epsilon_{a \to b})$}
        \draw[blue] (a) to (av);
        \draw[dotted] (1.25,1.25) to (1.25,1.5);
        \draw (0.1,2.25) to (0.1,2.5);
      \end{tikzpicture}\, ,
    \end{align*}
    and another use of Lemma \ref{lem:identity mate} shows this is equal to $\sF(f)$.
    Finally, to check the modulators are equal: $\omega_{a \to b}^{\sF'}$ is defined as the mate of $\eta_{a, v} \circ \cF_{a \to a \lhd v}$ under Adjunction \ref{eq:CatToModAdjunction}, which is equal to
    \begin{align*}
      \begin{tikzpicture}[smallstring,baseline=30]
        \node (a) at (0,0) {$\sF(a)$};
        \node (v) at (1.5,0) {$v$};
        \node (tri) at (0.75,0) {$\lhd$};
        \node (av) at (0,3) {$\sF(a \lhd v)$};
        \draw[rounded corners] (0.75,0.5) rectangle (2.25,1.25) node[midway] {$\eta_{a, v}$};
        \modulebox{0,1.5}{3}{0.75}{$\mate(\cF_{a \to a \lhd v})$}
        \draw[blue] (a) to (av);
        \draw (v) to (v |- 0,0.5);
        \draw (v |- 0,1.25) to (v |- 0,1.5);
      \end{tikzpicture}
      =
      \begin{tikzpicture}[smallstring,baseline=40]
        \node (a) at (0,0) {$\sF(a)$};
        \node (v) at (1.5,0) {$v$};
        \node (tri) at (0.75,0) {$\lhd$};
        \node (av) at (0,4) {$\sF(a \lhd v)$};
        \draw[rounded corners] (0.75,0.5) rectangle (2.25,1.25) node[midway] {$\eta_{a, v}$};
        \modulebox{0,1.5}{2.25}{0.75}{$\omega_{a, \cA(a \to a \lhd v)}^\sF$}
        \modulebox{0,2.5}{2.25}{0.75}{$\sF(\epsilon_{a \to a \lhd v})$}
        \draw[blue] (a) to (av);
        \draw (v) to (v |- 0,0.5);
        \draw (v |- 0,1.25) to (v |- 0,1.5);
        \draw (0.1,2.25) to (0.1,2.5);
      \end{tikzpicture}
      =
      \begin{tikzpicture}[smallstring,baseline=40]
        \node (a) at (0,0) {$\sF(a)$};
        \node (v) at (1.5,0) {$v$};
        \node (tri) at (0.75,0) {$\lhd$};
        \node (av) at (0,4) {$\sF(a \lhd v)$};
        \modulebox{0,0.5}{2.25}{0.75}{$\omega_{a, v}^\sF$}
        \modulebox{0,1.5}{2.25}{0.75}{$\sF(\id_a \lhd \eta_{a, v})$}
        \modulebox{0,2.5}{2.25}{0.75}{$\sF(\epsilon_{a \to a \lhd v})$}
        \draw[blue] (a) to (av);
        \draw (v) to (v |- 0,0.5);
        \draw (0.1,1.25) to (0.1,1.5);
        \draw (0.1,2.25) to (0.1,2.5);
      \end{tikzpicture}
      =
      \omega_{a, v}^\sF.
    \end{align*}

    Lastly, on 2-cells $\Phi$ takes a $\sU$-natural transformation
    $\theta \in \cU(1_{\sU} \to \cB(\cF(a) \to \sG(a)))$ to itself,
    viewed as a natural transformation in $\sB(\sF(a) \to \sG(a))$,
    so $\Phi$ is fully faithful on 2-cells.
  \end{proof}

  %%%%%%%%%%%%%%%%%%%%%%%%%%%%%%%%%%%%%%%%%%%%%%%
  %%%%%%%%%%%%%%%%%%%%%%%%%%%%%%%%%%%%%%%%%%%%%%%
  %%%%%%%%%%%%%%%%%%%%%%%%%%%%%%%%%%%%%%%%%%%%%%%
  \section{Characterization of unitary braided-enriched monoidal categories}

  In this section, we extend Theorem \ref{thm:EnrichedDaggerEquivalence} to the unitary braided-enriched setting to prove Theorem \ref{thm:EnrichedMonoidalDaggerEquivalence}.
  We first recall definitions from \cite{MR3961709} in the non-unitary setting.

  %%%%%%%%%%%%%%%%%%%%%%%%%%%%%%%%%%%%%%%%%%%%%%%
  \subsection{Characterization of braided-enriched monoidal categories}
  Suppose $\sV$ is a braided monoidal category.
  As before, we suppress tensor symbols, associators, and unitors whenever possible.

  %%%%%%%%%%%%%%%%%%%%%%%%%%%%%%%%%%%%%%%%%%%%%%%
  \subsubsection{Braided-enriched monoidal categories}

  \begin{defn}[\cite{MR3961709}]
    A \emph{(strict) $\sV$-monoidal category} is a $\sV$-category $\cA$ along with the additional structure of
    \begin{itemize}
    \item A tensor unit $1_\cA$,
    \item A tensor product on objects, that is, for all $a , b \in \cA$, an object $ab \in \cA$, and
    \item A distinguished tensor product morphism $- \otimes_\cA - \in \sV(\cA(a \to c) \cA(b \to d) \to \cA(ab \to cd))$ for all $a , b , c,d \in \cA$.
    \end{itemize}
    The above data must satisfy, for all $a , b , c , d , e , f \in \cA$:
    \begin{itemize}
    \item Tensor unit: $1_\cA a = a = a 1_\cA$,
    \item Object associativity: $(ab)c = a(bc)$,
    \item Unitality: $(j_{1_\cA} \id_{\cA(a \to b)}) \circ (- \otimes_\cA -) = \id_{\cA(a \to b)} = (\id_{\cA(a \to b)} j_{1_\cA}) \circ (- \otimes_\cA -)$,
    \item Associativity: $((- \otimes_\cA -) \id_{\cA(a \to b)}) \circ (- \otimes_\cA -) = \id_{\cA(a \to b)} = (\id_{\cA(a \to b)} (- \otimes_\cA -)) \circ (- \otimes_\cA -)$,
    \item Braided interchange:
      \begin{equation}
        \label{eq:BraidedInterchange}
        \begin{tikzpicture}[baseline=50, smallstring]
          \node (a-b) at (0,0) {$\mathcal A(a \to b)$};
          \node (d-e) at (2,0) {$\mathcal A(d \to e)$};
          \node (b-c) at (4,0) {$\mathcal A(b \to c)$};
          \node (e-f) at (6,0) {$\mathcal A(e \to f)$};
          \node[box] (t1) at (1,2) {$ -\otimes- $};
          \draw (a-b) to[in=-90,out=90] (t1.-135);
          \draw (d-e) to[in=-90,out=90] (t1.-45);
          \node[box] (t2) at (5,2) {$ -\otimes- $};
          \draw (b-c) to[in=-90,out=90] (t2.-135);
          \draw (e-f) to[in=-90,out=90] (t2.-45);
          \node[box] (c) at (3,4) {$ - \circ -  $};
          \draw (t1) to[in=-90,out=90] node[left=13pt] {$\mathcal A(ad\to be)$} (c.-135);
          \draw (t2) to[in=-90,out=90] node[right=13pt] {$\mathcal A(be\to cf)$} (c.-45);
          \node (r) at (3,5.5) {$\mathcal A(ad \to cf)$};
          \draw (c) -- (r);
        \end{tikzpicture}
        =
        \begin{tikzpicture}[baseline=50, smallstring]
          \node (a-b) at (0,0) {$\mathcal A(a \to b)$};
          \node (d-e) at (2,0) {$\mathcal A(d \to e)$};
          \node (b-c) at (4,0) {$\mathcal A(b \to c)$};
          \node (e-f) at (6,0) {$\mathcal A(e \to f)$};
          \node[box] (t1) at (1,2) {$ -\circ- $};
          \node[box] (t2) at (5,2) {$ -\circ- $};
          \draw (a-b) to[in=-90,out=90] (t1.-135);
          \draw (b-c) to[in=-90,out=90] (t1.-45);
          \draw[knot] (d-e) to[in=-90,out=90] (t2.-135);
          \draw (e-f) to[in=-90,out=90] (t2.-45);
          \node[box] (c) at (3,4) {$ - \otimes -  $};
          \draw (t1) to[in=-90,out=90] node[left=13pt] {$\mathcal A(a\to c)$} (c.-135);
          \draw (t2) to[in=-90,out=90] node[right=13pt] {$\mathcal A(d\to f)$} (c.-45);
          \node (r) at (3,5.5) {$\mathcal A(ad \to cf)$};
          \draw (c) -- (r);
      \end{tikzpicture}
      \,.
    \end{equation}
  \end{itemize}
\end{defn}

\begin{ex}[{\cite[\S2.3]{MR3961709}}]
  \label{ex:MonoidalSelfEnrichment}
  Suppose $\sV$ is a rigid monoidal category.
  The self-enrichment $\cV$ from Example \ref{ex:SelfEnrichment} has a $\cV$-monoidal structure given as follows, where we identify $w^*u^*=(uw)^*$:
  $$
  \tikzmath[smallstring]{
    \node (top) at (.75,3) {$\cV(uw\to vx)$};
    \node[box] (circ) at (.75,2) {$-\otimes_\cV-$};
    \node (eta) at (-.1,1) {$\cV(u\to v)$};
    \node (rho) at (1.6,1) {$\cV(w\to x)$};
    \draw (eta) to[in=-90,out=90] (circ.-135);
    \draw (rho) to[in=-90,out=90] (circ.-45);
    \draw (circ) to (top);
  }
  :=
  \tikzmath[smallstring]{
    \node (b1) at (0,0) {$u^*$};
    \node (b2) at (1,0) {$v$};
    \node (b3) at (2,0) {$w^*$};
    \node (b4) at (3,0) {$x$};
    \node (t1) at (0,2) {$w^*$};
    \node (t2) at (1,2) {$u^*$};
    \node (t3) at (2,2) {$v$};
    \node (t4) at (3,2) {$x$};
    \draw (b1) to (t2);
    \draw (b2) to (t3);
    \draw[knot] (b3) to (t1);
    \draw (b4) to (t4);
  }\,.
  $$
\end{ex}

\begin{defn}
  Given $\sV$-monoidal categories $\cA, \cB$, a \emph{$\sV$-monoidal functor} $\cF: \cA \to \cB$ is a $\sV$-functor $\cF$ along with a family of natural isomorphisms $\mu_{a , b} \in \sV(1_\sV \to \cB(\cF(a) \cF(b) \to \cF(ab)))$ such that
  \begin{align*}
    \begin{tikzpicture}[baseline=40, smallstring]
      \node (ac) at (2,0) {\small $\mathcal A ( a \to c )$};
      \node (bd) at (4,0) {\small $\mathcal A ( b \to d )$};
      \node (top) at (2,4) {\small $\mathcal B ( \cF ( a ) \cF ( b ) \to \cF ( c d ) )$};
      \node[box] (otimes) at (3,1) {$- \otimes_{\mathcal A} -$};
      \node[box] (r) at (3,2) {$\cF_{a b \to c d}$};
      \node[box] (rho) at (1,2) {$\mu_{a , b}$};
      \node[box] (circ) at (2,3) {$- \circ_{\mathcal B} -$};
      \draw (ac) to[in=-90,out=90] (otimes.225);
      \draw (bd) to[in=-90,out=90] (otimes.-45);
      \draw (otimes) to[in=-90,out=90] (r);
      \draw (r) to[in=-90,out=90] (circ.-45);
      \draw (rho) to[in=-90,out=90] (circ.225);
      \draw (circ) to[in=-90,out=90] (top);
    \end{tikzpicture}
    \hspace{-5pt}
    =
    \hspace{-5pt}
    \begin{tikzpicture}[baseline=40, smallstring]
      \node (ac) at (0,0) {\small $\mathcal A ( a \to c )$};
      \node (bd) at (2,0) {\small $\mathcal A ( b \to d )$};
      \node (top) at (2,4) {\small $\mathcal B ( \cF ( a ) \cF ( b ) \to \cF ( c d ) )$};
      \node[box] (rac) at (0,1) {$\cF_{a \to c}$};
      \node[box] (rbd) at (2,1) {$\cF_{b \to d}$};
      \node[box] (otimes) at (1,2) {$- \otimes_{\mathcal B} -$};
      \node[box] (rho) at (3,2) {$\mu_{c , d}$};
      \node[box] (circ) at (2,3) {$- \circ_{\mathcal B} -$};
      \draw (ac) to[in=-90,out=90] (rac);
      \draw (bd) to[in=-90,out=90] (rbd);
      \draw (rac) to[in=-90,out=90] (otimes.225);
      \draw (rbd) to[in=-90,out=90] (otimes.-45);
      \draw (otimes) to[in=-90,out=90] (circ.225);
      \draw (rho) to[in=-90,out=90] (circ.-45);
      \draw (circ) to[in=-90,out=90] (top);
    \end{tikzpicture}
    \text{\quad and \quad}
    \begin{tikzpicture}[baseline=25, smallstring]
      \node (top) at (2,3) {\small $\mathcal B ( \cF ( a ) \cF ( b ) \cF ( c ) \to \cF ( a b c ) )$};
      \node[box] (j) at (0,0) {$j_{\cF ( a )}^{\mathcal B}$};
      \node[box] (rho1) at (2,0) {$\mu_{b , c}$};
      \node[box] (otimes) at (1,1) {$- \otimes_{\mathcal B} -$};
      \node[box] (rho2) at (3,1) {$\mu_{a , b c}$};
      \node[box] (circ) at (2,2) {$- \circ_{\mathcal B} -$};
      \draw (j) to[in=-90,out=90] (otimes.225);
      \draw (rho1) to[in=-90,out=90] (otimes.-45);
      \draw (otimes) to[in=-90,out=90] (circ.225);
      \draw (rho2) to[in=-90,out=90] (circ.-45);
      \draw (circ) to[in=-90,out=90] (top);
    \end{tikzpicture}
    =
    \begin{tikzpicture}[baseline=25, smallstring]
      \node (top) at (2,3) {\small $\mathcal B ( \cF ( a ) \cF ( b ) \cF ( c ) \to \cF ( a b c ) )$};
      \node[box] (j) at (2,0) {$j_{\cF ( c )}^{\mathcal B}$};
      \node[box] (rho1) at (0,0) {$\mu_{a , b}$};
      \node[box] (otimes) at (1,1) {$- \otimes_{\mathcal B} -$};
      \node[box] (rho2) at (3,1) {$\mu_{a b , c}$};
      \node[box] (circ) at (2,2) {$- \circ_{\mathcal B} -$};
      \draw (j) to[in=-90,out=90] (otimes.-45);
      \draw (rho1) to[in=-90,out=90] (otimes.225);
      \draw (otimes) to[in=-90,out=90] (circ.225);
      \draw (rho2) to[in=-90,out=90] (circ.-45);
      \draw (circ) to[in=-90,out=90] (top);
    \end{tikzpicture}\, .
  \end{align*}
  For convenience, we further assume that all $\sV$-monoidal functors are \emph{strictly unital}, i.e., $\cF(1_\cA) = 1_\cB$, $j_{1_\cA}^\cA \circ \cF_{1_\cA \to 1_\cA} = j_{1_\cB}^\cB$, and $\mu_{1_\cA, a} = \id_a = \mu_{a, 1_\cA}$. 
  Given two $\sV$-monoidal functors $\cF: \cA\to \cB$ and $\cG: \cB\to \cC$,
  the composite laxitor $[\mu_{\cF(a), \cF(b)}^{\cG} (\mu_{a, b}^{\cF} \circ \cG_{\cF(a) \cF(b) \to \cF(ab)})] \circ (- \circ_\cC -)$ gives the composite $\cF \circ \cG$ of $\sV$-monoidal functors the structure of a $\sV$-monoidal functor.
  
  A \emph{$\sV$-monoidal natural transformation} is a $\sV$-natural transformation $\theta: \cF \Rightarrow \cG$ such that
  \begin{align*}
    \begin{tikzpicture}[baseline=15, smallstring]
      \node (top) at (1,2) {$\mathcal B ( \cF ( a ) \cF ( b ) \to \cG ( a b ) )$};
      \node[box] (rho) at (0,0) {$\mu_{a , b}^F$};
      \node[box] (theta) at (2,0) {$\theta_{a b}$};
      \node[box] (circ) at (1,1) {$- \circ_{\mathcal B} -$};
      \draw (rho) to[in=-90,out=90] (circ.225);
      \draw (theta) to[in=-90,out=90] (circ.-45);
      \draw (circ) to[in=-90,out=90] (top);
    \end{tikzpicture}
    =
    \begin{tikzpicture}[baseline=30, smallstring]
      \node(top) at (2,3) {$\mathcal B ( \cF ( a ) \cF ( b ) \to \cG ( a b ) )$};
      \node[box] (eta) at (0,0) {$\theta_a$};
      \node[box] (etb) at (2,0) {$\theta_b$};
      \node[box] (tens) at (1,1) {$- \otimes_{\mathcal B} -$};
      \node[box] (sigma) at (3,1) {$\mu_{a , b}^G$};
      \node[box] (circ) at (2,2) {$- \circ_{\mathcal B} -$};
      \draw (eta) to[in=-90,out=90] (tens.225);
      \draw (etb) to[in=-90,out=90] (tens.-45);
      \draw (tens) to[in=-90,out=90] (circ.225);
      \draw (sigma) to[in=-90,out=90] (circ.-45);
      \draw (circ) to[in=-90,out=90] (top);
    \end{tikzpicture}.
  \end{align*}
\end{defn}

\begin{defn}
  Given a $\sV$-monoidal category $\cA$, the \emph{underlying monoidal category} $\sA = \cA^\sV$ is the underlying category equipped with the same tensor product on objects, and the tensor product $f \otimes g := (f g) \circ (- \otimes_{\cA} -)$ on morphisms. 
  We call a $\sV$-monoidal category $\cA$ \emph{rigid} if the underlying monoidal category $\sA$ is rigid.

  Given a $\sV$-monoidal functor $\cF : \cA \to \cB$, we obtain the \emph{underlying monoidal functor} $\sF = \cF^\sV$ of $\cF$ by equipping the underlying functor with the tensorator $\mu_{a,b}^\sV := \mu_{a,b} \in \sV(1_{\sV} \to \cB(\cF(ab) \to \cF(a) \cF(b))) = \sB(\cF(ab) \to \cF(a) \cF(b))$. 
  Under this definition, $\sF$ is an ordinary monoidal functor.

  Similarly, a $\sV$-monoidal natural transformation $\theta: \cF\Rightarrow \cG$ gives an ordinary monoidal natural transformation $\sF\Rightarrow \sG$.
\end{defn}

%%%%%%%%%%%%%%%%%%%%%%%%%%%%%%%%%%%%%%%%%%%%%%%
\subsubsection{Module monoidal categories}

The definitions in this section are originally from \cite[\S3.2]{MR3578212} under the name of module tensor category.

\begin{defn}
  A \emph{$\sV$-module monoidal category} is a pair $(\sA, \sF_\sA^Z)$ consisting of a monoidal category $\sA$ and a braided monoidal functor $(\sF_\sA^Z, \mu^\sA): \sV \to Z(\sA)$. 
  We denote by $\sF_\sA:= \sF_\sA^Z \circ \Forget_Z$ where $\Forget_Z: Z(\sA)\to \sA$ is the forgetful functor.
  We call a $\sV$-module monoidal category $(\sA, \sF_\sA^Z)$:
  \begin{itemize}
  \item \emph{rigid} if $\sA$ is rigid, and
    %  \item \emph{weakly tensored} if there is a right adjoint to $\cF := \cF^Z \circ \Forget_Z$, where $\Forget_Z : Z(A) \to A$ is the forgetful functor,
  \item \emph{tensored} if $\sF_\sA:\sV \to \sA$ admits a right adjoint.
  \end{itemize}
\end{defn}

\begin{defn}
  \label{defn:vmodmon 1-cells}
  A 1-morphism between $\sV$-module monoidal categories is a monoidal functor $(\sH, \mu^\sH) : \sA \to \sB$ equipped with an \emph{action coherence} natural transformation $h : \sF_\sB \Rightarrow \sF_\sA \circ \sH$ that is compatible with $\mu^\sA$, $\mu^\sB$, and the half-braidings $e_{a, \sF(v)}$.
  That is, we require
  \begin{align*}
    \begin{tikzpicture}[baseline=40, smallstring]
      \node (top) at (1,4) {$\sH ( \sF_A ( v ) a )$};
      \node (ra) at (0.25,0) {$\sH ( a )$};
      \node (v) at (1.75,0) {$\sF_B ( v )$};
      \node[box] (r) at (0.25,2) {$h_v$};
      \node[box] (rho) at (1,3) {$\mu^\sH_{\sF_A ( v ) , a}$};
      \node[box] (e) at (1,1) {$e_{\sH ( a ) , \sF_B ( v )}$};
      \draw (v) to[in=-90,out=90] (e.-45);
      \draw (ra) to[in=-90,out=90] (e.225);
      \draw (e.135) to[in=-90,out=90] (r);
      \draw (e.45) to (e.-45 |- rho.-45);
      \draw (r) to[in=-90,out=90] (rho.225);
      \draw (rho) to (top);
    \end{tikzpicture}
    =
    \begin{tikzpicture}[baseline=40, smallstring]
      \node (top) at (1,4) {$\sH ( \sF_A ( v ) a )$};
      \node (ra) at (0.25,0) {$\sH ( a )$};
      \node (v) at (1.75,0) {$\sF_B ( v )$};
      \node[box] (r) at (1.75,1) {$h_v$};
      \node[box] (rho) at (1,2) {$\mu^\sH_{a , \sF_A ( v )}$};
      \node[box] (e) at (1,3) {$\sH ( e_{a , \sF_A ( v )} )$};
      \draw (v) to[in=-90,out=90] (r);
      \draw (ra) to[in=-90,out=90] (rho.225);
      \draw (r) to[in=-90,out=90] (rho.-45);
      \draw (rho) to (e);
      \draw (e) to (top);
    \end{tikzpicture}
    \hspace{3em}
    \text{ and }
    \hspace{3em}
    \begin{tikzpicture}[baseline=40, smallstring]
      \node (top) at (0,3) {$\sH ( \sF_A ( u ) \sF_A ( v ) )$};
      \node (bot) at (0,0) {$\sF_B ( u v )$};
      \node[box] (r) at (0,1) {$h_{uv}$};
      \node[box] (mu) at (0,2) {$\sH ( \mu_{u , v}^{A} )$};
      \draw (bot) to (r);
      \draw (r) to (mu);
      \draw (mu) to (top);
    \end{tikzpicture}
    =
    \begin{tikzpicture}[baseline=45, smallstring]
      \node (top) at (1,4) {$\sH ( \sF_A ( u ) \sF_A ( v ) )$};
      \node (bot) at (1,0) {$\sF_B ( u v )$};
      \node[box] (mu) at (1,1) {$\,\,\mu_{u , v}^{B}\,\,$};
      \node[box] (r1) at (0.25,2) {$h_u$};
      \node[box] (r2) at (1.75,2) {$h_v$};
      \node[box] (rho) at (1,3) {$\mu^\sH_{\sF_A ( u ) , \sF_A ( v )}$};
      \draw (bot) to (mu);
      \draw (mu.135) to[in=-90,out=90] (r1);
      \draw (mu.45) to[in=-90,out=90] (r2);
      \draw (r1) to[in=-90,out=90] (rho.225);
      \draw (r2) to[in=-90,out=90] (rho.-45);
      \draw (rho) to (top);
    \end{tikzpicture}
    \,.
  \end{align*}
  We define 2-morphisms between such 1-morphisms these as monoidal natural transformations $\Theta: (\sH, \mu^\sH, h) \Rightarrow (\sH', \mu^{\sH'}, h')$ that are compatible with the action coherence transformations, that is, $h_v \circ \Theta_{\sF_A (v)} = h_v'$.
\end{defn}

With these definitions, \cite{2104.07747, 2104.03121} show the following result.
The details on the level of objects appear in \cite{MR3961709}, with \cite{1809.09782} providing some simplifications.

\begin{thm}[{\cite{MR3961709,1809.09782,2104.07747, 2104.03121}}]
  Let $\sV$ be a closed braided monoidal category.
  There is an equivalence of 2-categories
  \[
 \left\{\,
  \parbox{3.3cm}{\rm Tensored rigid $\sV$-monoidal categories}
  \,\right\}
  \,\,\cong\,\,
  \left\{\,
  \parbox{4.4cm}{\rm Tensored rigid $\sV$-module monoidal categories}
  \,\right\}.
  \]
\end{thm}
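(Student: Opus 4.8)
The plan is to build directly on Theorem \ref{thm:StrongVMod}, which already supplies an equivalence between tensored $\sV$-categories and tensored $\sV$-modules, and to upgrade that equivalence by matching the additional monoidal data on each side. Throughout I write $\sF := \sF^Z \circ \Forget_Z$ and work with the module action $a \lhd v := a \otimes \sF(v)$ on the module side. The governing principle is that the monoidal structure of a $\sV$-monoidal category $\cA$ on the one hand, and the central functor $\sF^Z: \sV \to Z(\sA)$ on the other, are two encodings of the same coherence data, with the braiding of $\sV$ mediating between them.

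First I would build the forward assignment, sending a tensored rigid $\sV$-monoidal category $\cA$ to a tensored rigid $\sV$-module monoidal category. The underlying monoidal category $\sA = \cA^\sV$ is defined in the excerpt and is rigid by hypothesis. Set $\sF(v) := 1_\cA \lhd v$ for $\lhd$ the module action from Construction \ref{const:CatToMod}; the laxitor $\mu^\sA_{u,v}: \sF(u)\sF(v) \to \sF(uv)$ is assembled from the tensor product morphism $-\otimes_\cA-$ of $\cA$ together with the module associator. The decisive step is to lift $\sF$ to the Drinfeld center by producing a half-braiding $e_{a, \sF(v)}: a \otimes \sF(v) \to \sF(v) \otimes a$. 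This is exactly where the braiding on $\sV$ enters: the braided interchange relation \eqref{eq:BraidedInterchange} is precisely the coherence forcing $e$ to be natural in $a$, compatible with $-\otimes_\cA-$, and to satisfy the two hexagon identities of $Z(\sA)$. Reading \eqref{eq:BraidedInterchange} with $a = \sF(u)$ then shows that $\sF^Z$ is a \emph{braided} monoidal functor, not merely a central one.

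Second I would construct the reverse assignment. Given a tensored rigid $\sV$-module monoidal category $(\sA, \sF^Z)$, Construction \ref{const:ModToCat} applied to the module $a \lhd v := a \otimes \sF(v)$ yields the underlying $\sV$-category $\cA$; the required right adjoint to $a \lhd -$ exists because $\sF$ admits a right adjoint (tensored hypothesis) and $a \otimes -$ admits one by rigidity, so their composite does too. The tensor product on objects is that of $\sA$, with $1_\cA := 1_\sA$, and the tensor product morphism $-\otimes_\cA-$ is defined by combining $\mu^\sA$, the half-braiding $e$, and the monoidal structure of $\sA$. One then verifies object associativity, unitality, associativity, and the braided interchange axiom, the last being a direct consequence of the half-braiding's naturality and hexagon axioms together with naturality of the braiding of $\sV$. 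These two constructions agree on underlying data and invert each other up to the canonical $\sV$-equivalence already furnished by Theorem \ref{thm:StrongVMod}. Promoting the assignments to 1-cells and 2-cells is then a matter of dictionary: a $\sV$-monoidal functor $\cF$ with laxitor $\mu$ gives the monoidal functor $\sH := \cF^\sV$ together with an action coherence $h: \sF_\sB \Rightarrow \sF_\sA \circ \sH$ read off from $\mu$ at the units, and the two compatibility conditions of Definition \ref{defn:vmodmon 1-cells} correspond precisely to the two laxitor axioms; $\sV$-monoidal natural transformations and compatible monoidal natural transformations match identically under the standard identification of hom-objects.

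The main obstacle is the translation across the braided interchange relation \eqref{eq:BraidedInterchange}: proving that the half-braiding built from $-\otimes_\cA-$ genuinely lands in $Z(\sA)$ and is braided-monoidal, and conversely that the tensor product morphism built from a central functor satisfies braided interchange. Both directions turn on carefully tracking how the braiding of $\sV$ threads through the two interchange pictures, and this bookkeeping—essentially the content of \cite{MR3961709}—is the crux of the argument. By contrast, the remaining coherences (associativity, unitality, functoriality, and 2-functoriality) are formal once the central structure is in hand.
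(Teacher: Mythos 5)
Your proposal is correct and takes essentially the same route as the proof the paper cites and recapitulates in its dagger constructions: $\sF(v) := 1_\cA \lhd v$ with tensorator $\mu_{u,v}$ the mate of $(\eta_u\,\eta_v)\circ(-\otimes_\cA-)$, half-braidings $e_{a,\sF(v)}$ the mates of $(\eta_v\, j_a)\circ(-\otimes_\cA-)$ whose centrality and braidedness are exactly the content of braided interchange \eqref{eq:BraidedInterchange}, and conversely $-\otimes_\cA-$ the mate of the composite of $\mu^{\sF_\sA}$, the half-braiding, and the counits $\epsilon$, with 1-cells and 2-cells matched by the dictionary of Definition \ref{defn:vmodmon 1-cells}. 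The one point worth keeping in mind is that invertibility of the half-braidings genuinely requires rigidity rather than mere closedness (as the footnote to Construction \ref{const:dagger-U-mon-cat-to-dagger-mtc} warns), which your hypotheses cover.
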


%Theorem \ref{thm:EnrichedMonoidalEquivalence} extends to a 2-equivalence of 2-categories.
%
%\begin{thm}[{\cite{2104.07747, 2104.03121}}]
%  Let $\sV$ be a closed braided monoidal category.
%  There is an equivalence of 2-categories
%  \[
% \left\{\,
%  \parbox{3.6cm}{\rm Tensored rigid $\sV$-monoidal categories}
%  \,\right\}
%  \,\,\cong\,\,
%  \left\{\,
%  \parbox{4.4cm}{\rm Tensored rigid $\sV$-module monoidal categories}
%  \,\right\}.
%  \]
%\end{thm}

In \cite{2104.07747}, the 2-equivalence is also carried to the setting of $G$-graded $\sV$-categories and $G$-extensions of $\sV$-categories which were studied in \cite{1910.03178},
while in \cite{2104.03121} changes in the enriching category were considered.

%%%%%%%%%%%%%%%%%%%%%%%%%%%%%%%%%%%%%%%%%%%%%%%
\subsection{Dagger braided-enriched monoidal categories and dagger module monoidal categories}
\label{sec:BraidedEnrichedDaggerStuff}

We now require that our enriching category $\sV$ be both braided and involutive. 
Naturally, this leads to a compatibility condition between the braiding $\beta$ and the involutive structure $\nu$.

\begin{defn}
  A braided involutive category $\sV$ is a braided monoidal category equipped with an involutive structure $(\overline{\cdot},\nu, \varphi,r)$ as in Definition \ref{defn:involutive linear category} such that
  \begin{align}\label{eqn:braided involutive}
    \begin{tikzpicture}[smallstring, baseline=25]
      \node (a) at (0.5,0) {$\overline{u}$};
      \node (b) at (1.5,0) {$\overline{v}$};
      \node (top) at (1,3) {$\overline{uv}$};
      \node[box] (beta) at (1,1) {$\beta_{\overline{u},\overline{v}}$};
      \node[box] (nu) at (1,2) {$\nu_{v,u}$};
      \draw (a) to[in=-90,out=90] (beta.245);
      \draw (b) to[in=-90,out=90] (beta.-65);
      \draw (beta) to (nu);
      \draw (nu) to (top);
    \end{tikzpicture}
    =
    \begin{tikzpicture}[smallstring, baseline=25]
      \node (a) at (0.5,0) {$\overline{u}$};
      \node (b) at (1.5,0) {$\overline{v}$};
      \node (top) at (1,3) {$\overline{uv}$};
      \node[box] (nu) at (1,1) {$\nu_{u,v}$};
      \node[box] (beta) at (1,2) {$\overline{\beta_{u,v}^{-1}}$};
      \draw (a) to[in=-90,out=90] (nu.245);
      \draw (b) to[in=-90,out=90] (nu.-65);
      \draw (nu) to (beta);
      \draw (beta) to (top);
    \end{tikzpicture}
  \,.
  \end{align}
\end{defn}
\begin{defn}[Conjugate of a monoidal $\sV$-category]
  Given a monoidal $\sV$-category $\cA$, we can equip the $\sV$-category $\overline{\cA}$ with a monoidal structure by defining
  \begin{align*}
    - \otimes_{\overline{\cA}} - := \beta_{\overline{\cA(c \to a)}, \overline{\cA(d \to b)}}^{-1} \circ \nu_{\cA(d \to b), \cA(c \to a)} \circ (\overline{- \otimes -}) \in \overline{\cA}(a \to c) \overline{\cA} (b \to d) \to \overline{\cA} (ab \to cd)
  \end{align*}
  with the same tensor unit, $1_{\overline{\cA}} := 1_\cA$. It is straightforward to check that this gives $\overline{\cA}$ the structure of a $\sV$-monoidal category.
\end{defn}

\begin{defn} \label{def:involutive monoidal}
  An \emph{weak dagger $\sV$-monoidal category} is a $\sV$-monoidal category $\cA$ with an involutive structure $\kappa$ as in Definition \ref{defn:involutive U-cat} such that $\kappa$ is compatible with $- \otimes -$, i.e., $(- \otimes_{\overline{\cA}} -) \circ \kappa = (\kappa \kappa) \circ (- \otimes_\cA -)$:
  \begin{enumerate}[label=($\kappa$\arabic*)]
    \setcounter{enumi}{4}
  \item
    \label{kappa:5}
    $
    \begin{tikzpicture}[smallstring, baseline=35]
      \node (top) at (1,3) {$\cA(ab \to cd)$};
      \node (ac) at (0,0) {$\overline{\cA(c \to a)}$};
      \node (bd) at (2,0) {$\overline{\cA(d \to b)}$};
      \node[box] (kappa) at (0,1) {$\kappa_{a \to c}$};
      \node[box] (kappa2) at (2,1) {$\kappa_{b \to d}$};
      \node[box] (tens) at (1,2) {$- \otimes -$};
      \draw (ac) to (kappa);
      \draw (bd) to (kappa2);
      \draw (kappa) to[in=-90,out=90] (tens.225);
      \draw (kappa2) to[in=-90,out=90] (tens.-45);
      \draw (tens) to (top);
    \end{tikzpicture}
    =
    \begin{tikzpicture}[smallstring, baseline=60]
      \node (top) at (1,5) {$\cA(ab \to cd)$};
      \node (ac) at (0,0) {$\overline{\cA(c \to a)}$};
      \node (bd) at (2,0) {$\overline{\cA(d \to b)}$};
      \node[box] (nu) at (1,2) {$\nu_{\cA(d \to b), \cA(c \to a)}$};
      \node[box] (tens) at (1,3) {$\overline{- \otimes -}$};
      \node[box] (kappa) at (1,4) {$\kappa_{ab \to cd}$};
      \draw (ac) to[in=-90,out=90] (nu.-30);
      \draw[knot] (bd) to[in=-90,out=90] (nu.210);
      \draw (nu) to (tens);
      \draw (tens) to (kappa);
      \draw (kappa) to (top);
    \end{tikzpicture}\, .
    $
  \end{enumerate}
  We call $\cA$ a \emph{dagger $\cV$-monoidal category} if in addition $\cA$ satisfies \ref{kappa:3} and \ref{kappa:4}, so that $\cA$ satisfies all of \ref{kappa:1}--\ref{kappa:5}.
\end{defn} 

\begin{rem}
  One could view such a $\kappa : \overline{\cA} \to \cA$, when equipped with the identity tensorator $\mu_{a, b}^{\overline{\cA}} := j_{ab}$, as a $\sV$-monoidal functor that is the identity on objects.
  As in Remark \ref{rem:principle of equivalence}, such a structure violates the principle of equivalence.
\end{rem}

\begin{ex}
  Let $\sU$ be a braided unitary monoidal category, i.e., a unitary monoidal category equipped with a unitary braiding.
  The dagger self-enrichment $(\cU,\kappa)$ from Example \ref{ex:InvolutiveSelfEnrichment} equipped with the $\sU$-monoidal structure from Example \ref{ex:MonoidalSelfEnrichment} satisfies \ref{kappa:5}.
\end{ex}

\begin{defn}
\label{def:DaggerVMonoidalFunctor}
A $\sV$-monoidal functor $(\cG,\mu^\cG):\cA\to \cB$ between dagger $\sV$-monoidal categories is called a \emph{dagger $\sV$-monoidal functor} if $\cG$ is a dagger $\sV$-functor and $\overline{\mu^\cG_{a,b}}\circ \kappa_{\cG(ab)\to \cG(a)\to \cG(b)} = (\mu^\cG_{a,b})^{-1}$.
\end{defn}

\begin{defn}
  Let $\sV$ be a rigid involutive monoidal category.
  We define a 2-category $\dagvmoncat$ with:
  \begin{itemize}
  \item 0-cells dagger $\sV$-monoidal categories,
  \item 1-cells dagger $\sV$-monoidal functors with their usual composition, and
  \item 2-cells $\sV$-monoidal natural transformations with their usual horizontal and vertical compositions.
  \end{itemize}
\end{defn}

\begin{defn}
  A \emph{dagger $\sV$-module monoidal category} is a pair $(\sA , \sF_\sA^Z)$ where
  $\sA$ is a dagger tensor category and 
  $\sF^Z_\sA: \sV \to Z^\dag(\sA)$ is a braided monoidal dagger functor 
  which equips $\sA$ with the structure of a tensored $\sV$-module monoidal category.

  Here, $Z^\dag(A)$ denotes the \emph{dagger Drinfeld center} of $\sA$ whose objects are pairs $(a,e_a)$ where $a\in \sA$ and $e_a=\{e_{a,b}: b\otimes a \to a\otimes b\}_{b\in \sA}$ is a unitary half-braiding for $a$ \cite[Def.~6.1]{MR1966525}. 

  We call $(\sA , \sF_\sA^Z)$ rigid/tensored if the underlying $\sV$-module monoidal category is rigid/tensored respectively.
  %Finally, we call $(\sA , \sF_\sA^Z)$ a $\sV$-\emph{module unitary monoidal category} if $\sA$ is a unitary monoidal category.
\end{defn}

\begin{defn}
  We define a 2-category $\dagvmodmon$ with:
  \begin{itemize}
  \item 0-cells tensored dagger $\sV$-module monoidal categories,
  \item 1-cells dagger $\sV$-module monoidal functors with the usual composition,
    i.e., 1-morphisms $(\sH,\mu^\sH)$ as in Definition \ref{defn:vmodmon 1-cells} such that ($\sH,\mu^\sH):\sA\to \sB$ is a dagger monoidal functor and $h:\sF_\sB\Rightarrow \sF_\sA \circ \sH$ is a unitary natural isomorphism, and
  \item 2-cells monoidal natural transformations as in Definition \ref{defn:vmodmon 1-cells} with the usual horizontal and vertical compositions.
  \end{itemize}
\end{defn}

%%%%%%%%%%%%%%%%%%%%%%%%%%%%%%%%%%%%%%%%%%%%%%%
\subsection{2-functor \texorpdfstring{$\dagvmoncat$}{DagVMonCat} to \texorpdfstring{$\dagvmodmon$}{DagVModMon}}

For this subsection, we assume $\sV$ is a rigid braided involutive monoidal category, and the self-enrichment $\cV$ is equipped with a daggger structure $\kappa^\cV$.

\begin{construction}
  \label{const:dagger-U-mon-cat-to-dagger-mtc}
Suppose we have a rigid dagger $\sV$-monoidal category $\cA$ such that $a \mapsto \cA(1 \to a)$ admits a left adjoint, denoted by $\sF : \sV \to \sA$. 
We denote the unit of this adjunction by $\eta_v\in \sV(v\to \cA(1\to \sF(v)))$.
Then $\sA$ is endowed with a right $\sV$-module structure given by $a \lhd u := a \sF(u)$.
Moreover, for all $a\in \sA$ and $u\in \sV$, we have an adjunction \cite[(4.1)]{MR3961709}:
\begin{equation}\label{adj:mon-main}
  \sA(a \sF(v) \to b) \cong \sV(v \to \cA(a \to b)).
\end{equation}
We denote the unit of this adjunction by $\eta_{a,v}\in \sV(v\to \cA(a\to a\sF(v)))$.
In \cite[Lem.~4.4]{MR3961709}, it was shown how to equip $\sF$ with a tensorator $\mu_{u , v} \in \sA(\sF(uv) \to \sF(u) \sF(v))$ defined as the mate of $(\eta_u \eta_v) \circ (- \otimes -)$ under Adjunction \ref{adj:mon-main} to obtain an oplax monoidal functor.
This tensorator is an isomorphism for all $u,v\in \sV$ if and only if $\cA(1\to -)$ admits a right $\sV$-adjoint $\cF: \cV\to \cA$ such that $\sF$ is the underlying functor of $\cF$ \cite[Cor.~7.3]{1809.09782}.
We assume this in the sequel.

By \cite[\S5.1]{MR3961709}, $\sF$ can be lifted to a braided monoidal functor $\sF^Z : \sV \to Z(\sA)$ by defining
the half braiding $e_{a, \sF(v)} \in \sA(a \sF(v) \to \sF(v) a)$ as the mate of $(\eta_v j_a) \circ (- \otimes -)$ under Adjunction \ref{adj:mon-main},
and each $e_{a, \sF(v)}$ is invertible.\footnote{
  In \cite{1809.09782}, it was incorrectly claimed that for a closed $\sV$-monoidal category $\cA$, the half-braidings $e_{a, \sF(v)}$ are invertible.
  The proof there says that this result is similar to \cite[Lem.~5.2]{MR3961709} which uses rigidity in an essential way.
  This is similar to how a lax module functor between modules for a rigid monoidal category is automatically strong \cite[Lem.~2.10]{MR3934626}.
  The $\sV$-monoidal results of \cite{1809.09782} can be amended by one of the two options below.
  (1) 
  Only use (oplax) monoidal functors to $Z^{\rm lax}(\sA)$, the lax center of the underlying monoidal category whose half-braidings are not required to be invertible.
  (2)
  Use rigid $\sV$-monoidal categories instead of closed $\sV$-monoidal categories.
  In this article, we stay in the rigid setting, thus avoiding this issue.
}
\end{construction}
Several proofs were omitted in \cite{MR3961709}:
\begin{itemize}
\item
  The proof that for every $f\in \sV(u\to v)$, $\sF(f)\in Z(\sA)(\sF(u)\to \sF(v))$ was omitted.
  This proof appears in \cite[Rem.~5.1]{2104.07747}.
\item
  The proof that the tensorator of $\sF^Z$ actually lives in $Z(\sA)$ was omitted.
  This proof is supplied in the next lemma below.
\end{itemize}

\begin{lem}
  The tensorator $\mu_{u,v}: \sF(u)\sF(v)\to \sF(uv)$ is a morphism in $Z(\sA)$.
  In diagrams,
  \begin{equation}
    \label{lem:reverse hexagon}
    \begin{tikzpicture}[smallstring, baseline=35]
      \node (a-bot) at (0,0) {$a$};
      \node (v-bot) at (1.5,0) {$\sF(u v)$};
      \node (a-top) at (2,4) {$a$};
      \node (v-top) at (0,4) {$\sF(u)$};
      \node (vbar-top) at (1,4) {$\sF(v)$};
      \node[box] (e1) at (0.5,2) {$e_{a, \sF(u)}$};
      \node[box] (e2) at (1.5,3) {$e_{a, \sF(v)}$};
      \node[box] (mu) at (1.5,1) {$\, \, \mu_{u, v} \, \,$};
      \draw (a-bot) to[in=-90,out=90] (e1.225);
      \draw[double] (v-bot) to[in=-90,out=90] (mu);
      \draw (mu.135) to[in=-90,out=90] (e1.-45);
      \draw (mu.45) to (mu.45 |- e2.-45);
      \draw (e1.135) to[in=-90,out=90] (v-top);
      \draw (e1.45) to[in=-90,out=90] (e2.225);
      \draw (e2.135) to[in=-90,out=90] (vbar-top);
      \draw (e2.45) to[in=-90,out=90] (a-top);
    \end{tikzpicture}
    =
    \begin{tikzpicture}[smallstring, baseline=35]
      \node (a-bot) at (0,0) {$a$};
      \node (v-bot) at (2,0) {$\sF(u v)$};
      \node (a-top) at (2,4) {$a$};
      \node (v-top) at (0,4) {$\sF(u)$};
      \node (vbar-top) at (1,4) {$\sF(v)$};
      \node[box] (e) at (1,1.5) {$e_{a, \sF(u v)}$};
      \node[box] (mu2) at (0.5,2.75) {$\mu_{u, v}$};
      \draw (a-bot) to[in=-90,out=90] (e.225);
      \draw [double] (v-bot) to[in=-90,out=90] (e.-45);
      \draw[double] (e.135) to[in=-90,out=90] (mu2);
      \draw (mu2.135) to[in=-90,out=90] (v-top);
      \draw (mu2.45) to[in=-90,out=90] (vbar-top);
      \draw (e.45) to[in=-90,out=90] (a-top);
    \end{tikzpicture}
    \,.
  \end{equation}
\end{lem}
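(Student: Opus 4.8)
The plan is to verify the claimed identity \eqref{lem:reverse hexagon} by passing to mates under Adjunction \eqref{adj:mon-main} with base object $a$ and $\sV$-object $uv$, so that both sides become elements of $\sV(uv \to \cA(a \to \sF(u)\sF(v)a))$; it then suffices to show these two mates coincide. The only substantive input will be the braided interchange relation \eqref{eq:BraidedInterchange}, with everything else being naturality of the adjunction together with associativity and unitality of $-\circ_\cA-$ and $-\otimes_\cA-$.

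First I would compute the mate of the right-hand side. Writing the right-hand side as $e_{a,\sF(uv)}$ followed by $\mu_{u,v}\otimes\id_a$, naturality of Adjunction \eqref{adj:mon-main} in the target object lets me move the postcomposition by $\mu_{u,v}\otimes\id_a$ inside the mate, so that it acts on $\mate(e_{a,\sF(uv)})$. By Construction \ref{const:dagger-U-mon-cat-to-dagger-mtc}, $\mate(e_{a,\sF(uv)}) = (\eta_{uv}\, j_a)\circ(-\otimes_\cA-)$ and $\mu_{u,v}$ is itself the mate of $(\eta_u\eta_v)\circ(-\otimes_\cA-)$; combining these with the interchange between $-\otimes_\cA-$ and $-\circ_\cA-$ rewrites the mate of the right-hand side purely in terms of $\eta_u$, $\eta_v$, $j_a$, and iterated tensor morphisms, with a single braiding of $a$ past the pair $\sF(u)\sF(v)$.

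Next I would compute the mate of the left-hand side, which is the composite $\id_a\otimes\mu_{u,v}$, then $e_{a,\sF(u)}\otimes\id_{\sF(v)}$, then $\id_{\sF(u)}\otimes e_{a,\sF(v)}$. Using \ref{mate:L-composite} and \ref{mate:R-composite} to distribute the mate over this composite and substituting the defining formulas for $\mu_{u,v}$ and for the two half-braidings $e_{a,\sF(u)}$ and $e_{a,\sF(v)}$, this reduces to an expression again built from $\eta_u,\eta_v,j_a$ and tensor morphisms, but now carrying the two separate braidings coming from passing $a$ first past $\sF(u)$ and then past $\sF(v)$. The braided interchange relation \eqref{eq:BraidedInterchange} is exactly the move that reconciles these successive braidings with the single braiding appearing on the right-hand side; after applying \eqref{eq:BraidedInterchange} and cleaning up with associativity of $-\otimes_\cA-$ and naturality, the two mates agree.

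The main obstacle is bookkeeping: keeping track of where the braiding $\beta$ enters as one takes mates of the half-braidings and of $\mu$, and organizing the repeated uses of associativity of $-\otimes_\cA-$ and $-\circ_\cA-$ so that both mates reach the same normal form. Conceptually the statement is forced, being precisely the compatibility needed for $\sF^Z$ to land in $Z(\sA)$, and the braided interchange relation is the one genuinely braided ingredient; the risk is purely in threading the coherences correctly rather than in any conceptual difficulty.
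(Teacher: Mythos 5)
Your proposal is correct and follows essentially the same route as the paper: both pass to mates under Adjunction \eqref{adj:mon-main}, substitute the defining mates of $\mu_{u,v}$ and the half-braidings $e_{a,\sF(u)}$, $e_{a,\sF(v)}$, $e_{a,\sF(uv)}$, and match the two sides using the braided interchange relation \eqref{eq:BraidedInterchange} together with naturality of the adjunction (the paper packages the sliding of $\eta$'s past half-braidings as citations to \cite[Cor.~4.7]{MR3961709}, which is exactly your ``defining formulas'' step). The only cosmetic difference is that the paper transforms the mate of the left-hand side step by step into the mate of the right-hand side, whereas you normalize both mates to a common form.
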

\begin{proof}
  On the left hand side, taking mates gives:
  \begin{align*}
    &\begin{tikzpicture}[smallstring, baseline=60]
       \node (v) at (0,0) {$u$};
       \node (v-bar) at (1,0) {$v$};
       \node (top) at (3.0625,6.5) {$\cA(a \to \sF(u) \sF(v) a)$};
       \node[box] (j1) at (-0.5,2) {$j_a$};
       \node[box] (eta1) at (0,1) {$\eta_u$};
       \node[box] (eta2) at (1,1) {$\eta_{v}$};
       \node[box] (e1) at (2,2) {$e_{a, \sF(u)}$};
       \node[box] (j2) at (3.5,2) {$j_{\sF(v)}$};
       \node[box] (j3) at (4,3) {$j_{\sF(u)}$};
       \node[box] (e2) at (5.5,3) {$e_{a, \sF(v)}$};
       \node[box] (otimes0) at (0.5,2) {$- \otimes -$};
       \node[box] (otimes1) at (0,3) {$- \otimes -$};
       \node[box] (otimes2) at (2.75,3) {$- \otimes -$};
       \node[box] (otimes3) at (4.75,4.25) {$- \otimes -$};
       \node[box] (circ1) at (1.375,4.25) {$- \circ -$};
       \node[box] (circ2) at (3.0625,5.5) {$- \circ -$};
       \draw (v) to (eta1);
       \draw (v-bar) to (eta2);
       \draw (j1) to[in=-90,out=90] (otimes1.225);
       \draw (eta1) to[in=-90,out=90] (otimes0.225);
       \draw (eta2) to[in=-90,out=90] (otimes0.-45);
       \draw (otimes0) to[in=-90,out=90] (otimes1.-45);
       \draw (e1) to[in=-90,out=90] (otimes2.225);
       \draw (j2) to[in=-90,out=90] (otimes2.-45);
       \draw (j3) to[in=-90,out=90] (otimes3.225);
       \draw (e2) to[in=-90,out=90] (otimes3.-45);
       \draw (otimes1) to[in=-90,out=90] (circ1.225);
       \draw (otimes2) to[in=-90,out=90] (circ1.-45);
       \draw (circ1) to[in=-90,out=90] (circ2.225);
       \draw (otimes3) to[in=-90,out=90] (circ2.-45);
       \draw (circ2) to (top);
     \end{tikzpicture}
    \underset{\eqref{eq:BraidedInterchange}}{=}
    \begin{tikzpicture}[smallstring, baseline=40]
      \node (v) at (0.25,-1) {$u$};
      \node (v-bar) at (2.35,-1) {$v$};
      \node (top) at (2.25,5) {$\cA(a \to \sF(u) \sF(v) a)$};
      \node[box] (j1) at (-0.75,0) {$j_a$};
      \node[box] (eta1) at (0.25,0) {$\eta_u$};
      \node[box] (e1) at (1.25,1) {$e_{a, \sF(u)}$};
      \node[box] (eta2) at (2.35,1) {$\eta_{v}$};
      \node[box] (j2) at (2.75,2) {$j_{\sF(u)}$};
      \node[box] (e2) at (4.25,2) {$e_{a, \sF(v)}$};
      \node[box] (otimes0) at (-0.25,1) {$- \otimes -$};
      \node[box] (circ1) at (0.5,2) {$- \circ -$};
      \node[box] (otimes1) at (1,3) {$- \otimes -$};
      \node[box] (otimes2) at (3.5,3) {$- \otimes -$};
      \node[box] (circ2) at (2.25,4.1) {$- \circ -$};
      \draw (v) to (eta1);
      \draw (j1) to[in=-90,out=90] (otimes0.225);
      \draw (eta1) to[in=-90,out=90] (otimes0.-45);
      \draw (otimes0) to[in=-90,out=90] (circ1.225);
      \draw (e1) to[in=-90,out=90] (circ1.-45);
      \draw (v-bar) to (eta2);
      \draw (circ1) to[in=-90,out=90] (otimes1.225);
      \draw (eta2) to[in=-90,out=90] (otimes1.-45);
      \draw (otimes1) to[in=-90,out=90] (circ2.225);
      \draw (j2) to[in=-90,out=90] (otimes2.225);
      \draw (e2) to[in=-90,out=90] (otimes2.-45);
      \draw (otimes2) to[in=-90,out=90] (circ2.-45);
      \draw (circ2) to (top);
    \end{tikzpicture}
    \underset{\text{\cite[Cor.~4.7]{MR3961709}}}{=}
    \begin{tikzpicture}[smallstring, baseline=45]
      \node (v) at (0.1,0) {$u$};
      \node (v-bar) at (2,0) {$v$};
      \node (top) at (2.15,5) {$\cA(a \to \sF(u) \sF(v) a)$};
      \node[box] (j1) at (0.9,1) {$j_a$};
      \node[box] (eta1) at (0.1,1) {$\eta_u$};
      \node[box] (eta2) at (2,1) {$\eta_{v}$};
      \node[box] (j2) at (2.55,2) {$j_{\sF(u)}$};
      \node[box] (e2) at (3.85,2) {$e_{a, \sF(v)}$};
      \node[box] (otimes0) at (0.5,2) {$- \otimes -$};
      \node[box] (otimes1) at (1,3) {$- \otimes -$};
      \node[box] (otimes2) at (3.2,3) {$- \otimes -$};
      \node[box] (circ) at (2.15,4.1) {$- \circ -$};
      \draw (v) to (eta1);
      \draw (j1) to[in=-90,out=90] (otimes0.-45);
      \draw (eta1) to[in=-90,out=90] (otimes0.225);
      \draw (otimes0) to[in=-90,out=90] (otimes1.225);
      \draw (v-bar) to (eta2);
      \draw (otimes0) to[in=-90,out=90] (otimes1.225);
      \draw (eta2) to[in=-90,out=90] (otimes1.-45);
      \draw (otimes1) to[in=-90,out=90] (circ.225);
      \draw (j2) to[in=-90,out=90] (otimes2.225);
      \draw (e2) to[in=-90,out=90] (otimes2.-45);
      \draw (otimes2) to[in=-90,out=90] (circ.-45);
      \draw (circ) to (top);
    \end{tikzpicture}
    \displaybreak[1]\\
    &\underset{\eqref{eq:BraidedInterchange}}{=}
    \begin{tikzpicture}[smallstring, baseline=60]
      \node (v) at (0.5,0) {$u$};
      \node (v-bar) at (3.25,0) {$v$};
      \node (top) at (2.25,5) {$\cA(a \to \sF(u) \sF(v) a)$};
      \node[box] (eta1) at (0.5,2) {$\eta_u$};
      \node[box] (j1) at (1.5,2) {$j_{\sF(u)}$};
      \node[box] (j2) at (2.25,1) {$j_a$};
      \node[box] (eta2) at (3.25,1) {$\eta_{v}$};
      \node[box] (e2) at (4.25,2) {$e_{a, \sF(v)}$};
      \node[box] (otimes1) at (2.75,2) {$- \otimes -$};
      \node[box] (circ1) at (1,3) {$- \circ -$};
      \node[box] (circ2) at (3.5,3) {$- \circ -$};
      \node[box] (otimes2) at (2.25,4.1) {$- \otimes -$};
      \draw (v) to (eta1);
      \draw (eta1) to[in=-90,out=90] (circ1.225);
      \draw (j1) to[in=-90,out=90] (circ1.-45);
      \draw (v-bar) to (eta2);
      \draw (eta2) to[in=-90,out=90] (otimes1.-45);
      \draw (circ1) to[in=-90,out=90] (otimes2.225);
      \draw (otimes1) to[in=-90,out=90] (circ2.225);
      \draw (j2) to[in=-90,out=90] (otimes1.225);
      \draw (e2) to[in=-90,out=90] (circ2.-45);
      \draw (circ2) to[in=-90,out=90] (otimes2.-45);
      \draw (otimes2) to (top);
    \end{tikzpicture}
    \underset{\text{\cite[Cor.~4.7]{MR3961709}}}{=}
    \begin{tikzpicture}[smallstring, baseline=40]
      \node (v) at (0,0) {$u$};
      \node (v-bar) at (1,0) {$v$};
      \node (top) at (0.75,4) {$\cA(a \to \sF(u) \sF(v) a)$};
      \node[box] (eta1) at (0,1) {$\eta_u$};
      \node[box] (eta2) at (1,1) {$\eta_{v}$};
      \node[box] (j) at (2,1) {$j_a$};
      \node[box] (otimes1) at (1.5,2) {$- \otimes -$};
      \node[box] (otimes2) at (0.75,3) {$- \otimes -$};
      \draw (v) to (eta1);
      \draw (v-bar) to (eta2);
      \draw (eta2) to[in=-90,out=90] (otimes1.225);
      \draw (j) to[in=-90,out=90] (otimes1.-45);
      \draw (eta1) to[in=-90,out=90] (otimes2.225);
      \draw (otimes1) to[in=-90,out=90] (otimes2.-45);
      \draw (otimes2) to (top);
    \end{tikzpicture}
    =
    \begin{tikzpicture}[smallstring, baseline=35]
      \node (bot) at (0,0) {$u v$};
      \node (top) at (1,4) {$\cA(a \to \sF(u) \sF(v) a)$};
      \node[box] (eta) at (0,1) {$\eta_{v v}$};
      \node[box] (mu) at (1,1) {$\mu_{u v}$};
      \node[box] (circ) at (0.5,2) {$- \circ -$};
      \node[box] (j) at (1.5,2) {$j_a$};
      \node[box] (otimes) at (1,3) {$- \otimes -$};
      \draw (bot) to (eta);
      \draw (eta) to[in=-90,out=90] (circ.225);
      \draw (mu) to[in=-90,out=90] (circ.-45);
      \draw (circ) to[in=-90,out=90] (otimes.225);
      \draw (j) to[in=-90,out=90] (otimes.-45);
      \draw (otimes) to (top);
    \end{tikzpicture}
    \underset{\eqref{eq:BraidedInterchange}}{=}
    \begin{tikzpicture}[smallstring, baseline=35]
      \node (bot) at (0,0) {$u v$};
      \node (top) at (1.5,4) {$\cA(a \to \sF(u) \sF(v) a)$};
      \node[box] (eta) at (0,1) {$\eta_{u v}$};
      \node[box] (j1) at (1,1) {$j_a$};
      \node[box] (otimes1) at (0.5,2) {$- \otimes -$};
      \node[box] (mu) at (2,1) {$\mu_{u, v}$};
      \node[box] (j2) at (3,1) {$j_a$};
      \node[box] (otimes2) at (2.5,2) {$- \otimes -$};
      \node[box] (circ) at (1.5,3) {$- \circ -$};
      \draw (bot) to (eta);
      \draw (eta) to[in=-90,out=90] (otimes1.225);
      \draw (j1) to[in=-90,out=90] (otimes1.-45);
      \draw (otimes1) to[in=-90,out=90] (circ.225);
      \draw (mu) to[in=-90,out=90] (otimes2.225);
      \draw (j2) to[in=-90,out=90] (otimes2.-45);
      \draw (otimes2) to[in=-90,out=90] (circ.-45);
      \draw (circ) to (top);
    \end{tikzpicture}
  \end{align*}
  which is exactly the mate of the right hand side.
\end{proof}

The goal of the remainder of this section is to construct a tensored dagger $\sV$-module monoidal category $\sA$ from a dagger $\sV$-monoidal category $\cA$, extending the constructions of \cite{MR3961709,1809.09782} to the dagger setting.
(This can also be seen as extending Construction \ref{const:DaggerFromInvolutive}
for dagger enriched categories to the dagger $\sV$-monoidal setting.)
This extension amounts to proving:
\begin{enumerate}[label=(Z$^*$\arabic*)]
\item
  \label{Z:DaggerFunctor}
  the left adjoint $\sF$ is a dagger monoidal functor, and
\item
  \label{Z:HalfBraidingsUnitary}
  the half-braidings $e_{a,\sF(v)}$ are unitary.
\end{enumerate}
To prove these, we further assume that $(\cV, \kappa^\cV)$ is a dagger $\sV$-category so that \ref{eq:daglhd3} holds for $\sA$ as a right $\sV$-module by Proposition \ref{prop:daglhd3}.

\begin{lem}[\ref{Z:DaggerFunctor}]
  If $\cA$ is a dagger $\cV$-monoidal category,
  then the left adjoint $\sF$ is a dagger monoidal functor.
\end{lem}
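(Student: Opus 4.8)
The statement to prove is the ordinary notion of dagger monoidal functor applied to $\sF\colon\sV\to\sA$, so there are two things to check: that $\sF$ is a dagger functor, and that its tensorator $\mu$ is unitary. The first part is essentially free. Since $a\lhd u=a\sF(u)$, the functor $\sF$ is exactly $1_\cA\lhd-$, so for $g\in\sV(u\to v)$ we have $\sF(g)=\id_{1_\cA}\lhd g$. Because $\cA$ is a dagger $\sV$-monoidal category it satisfies \ref{kappa:3} and \ref{kappa:4}, so Proposition \ref{prop:daglhd2} applies and tells us $-\lhd-$ is a dagger functor; specializing to $a=1_\cA$ gives $\sF(g)^\dagger=(\id_{1_\cA}\lhd g)^\dagger=\id_{1_\cA}\lhd g^\dagger=\sF(g^\dagger)$. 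Thus it remains only to prove that $\mu_{u,v}\in\sA(\sF(uv)\to\sF(u)\sF(v))$ is unitary.

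By the standing assumption that $\cA(1\to-)$ admits a right $\sV$-adjoint (so that, as in Construction \ref{const:dagger-U-mon-cat-to-dagger-mtc}, the oplaxitor is invertible), $\mu_{u,v}$ is already known to be an isomorphism. Hence it suffices to show $\mu_{u,v}^\dagger=\mu_{u,v}^{-1}$, and for this I would check $\mu_{u,v}\circ\mu_{u,v}^\dagger=\id_{\sF(uv)}$. The plan is to follow the template of Propositions \ref{prop:daglhd1}--\ref{prop:daglhd3} and Lemma \ref{lem:kappa-mate}: take the mate of $\mu_{u,v}\circ\mu_{u,v}^\dagger$ under Adjunction \ref{adj:mon-main} (using $1_\cA\sF(uv)=\sF(uv)$, so the mate lives in $\sV(uv\to\cA(1_\cA\to\sF(uv)))$, where $\id_{\sF(uv)}$ corresponds to the unit $\eta_{uv}$). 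Using that $\mu_{u,v}$ is the mate of $(\eta_u\eta_v)\circ(-\otimes_\cA-)$ from \cite[Lem.~4.4]{MR3961709}, together with Construction \ref{const:DaggerFromInvolutive} to rewrite the dagger as $\kappa\circ\overline{(\,\cdot\,)}$, I would peel off the resulting $\eta\circ\kappa^{-1}$ with Lemma \ref{lem:eta-kappa-inv} and move the barred factors across using the bar of \eqref{eq:DefineLhd}, exactly as in the earlier proofs.

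The essential new ingredient is the passage of $\kappa$ through the $\sV$-monoidal tensor product morphism $-\otimes_\cA-$ sitting inside $\mu_{u,v}$; this is precisely what \ref{kappa:5} supplies, relating $(\kappa\,\kappa)\circ(-\otimes_\cA-)$ to $\nu\circ\overline{-\otimes-}\circ\kappa$ up to the braiding $\beta$ that swaps the two tensor factors. The braiding and the coherence $\nu$ introduced this way are then absorbed using the braided-involutive compatibility \eqref{eqn:braided involutive}. After these rewrites a sub-expression appears as the bar of a morphism in the underlying category $\sA$; I would compute it there using unitarity of the oplaxitor $\alpha$ (Proposition \ref{prop:daglhd3}) and of $\rho$ (\ref{kappa:3}), in the same cancelling roles they played in the proof of Lemma \ref{lem:kappa-mate}, after which the whole diagram should collapse to $\eta_{uv}$, the mate of $\id_{\sF(uv)}$.

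The hard part will be the bookkeeping of the braidings and $\nu$'s generated by each application of \ref{kappa:5}: unlike the purely compositional manipulations using \ref{kappa:2}, the tensor axiom \ref{kappa:5} is braided, so one must carefully route the strands and invoke \eqref{eqn:braided involutive} to cancel the braidings, all while keeping $\alpha$ and $\rho$ in exactly the positions where their unitarity can be used. Once the diagrammatics are organized so that the braidings cancel, the remaining simplification is routine.
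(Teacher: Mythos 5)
Your first half coincides with the paper: the paper also proves that $\sF$ is a dagger functor by noting $\sF = 1_\cA \lhd -$ (since $a \lhd u = a\sF(u)$) and specializing Proposition \ref{prop:daglhd2} to $a = 1_\cA$.

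For unitarity of the tensorator, however, you have a genuine gap, and it is a missing idea rather than a wrong direction. The paper observes that $\mu_{u,v} = \alpha_{1_\cA, u, v}$ --- the tensorator of $\sF$ \emph{is} the oplaxitor of the induced right $\sV$-module structure evaluated at the unit object (\cite[Rem.~6.13]{MR3270794}) --- so its unitarity is exactly the case $a = 1_\cA$ of \ref{eq:daglhd3}, which was already established in Proposition \ref{prop:daglhd3} under the standing hypothesis that $(\cV,\kappa^\cV)$ is a dagger $\sV$-category. This makes the proof one line and, notably, shows that \ref{kappa:5} and the braided-involutive compatibility \eqref{eqn:braided involutive} play no role in this lemma at all (in the paper they are needed later, for the unitarity of the half-braidings \ref{Z:HalfBraidingsUnitary} and for \ref{kappa:5} of the reverse construction). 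Your proposal instead launches a fresh mate computation through \ref{kappa:5}, and you explicitly defer the decisive step --- ``the bookkeeping of the braidings and $\nu$'s'' --- without carrying it out, so as written the unitarity of $\mu$ is a program, not a proof. The plan is plausibly completable, since the statement is true and the template of Lemma \ref{lem:eta-kappa-inv} plus \eqref{eq:DefineLhd} is the engine behind all such arguments in the paper, but the honest assessment is that you re-derive by hand (with unverified braid cancellations) a special case of a proposition you already cite: once you notice that $\mu$ is $\alpha$ at the unit, Proposition \ref{prop:daglhd3} finishes the argument with no diagrams.
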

\begin{proof}
  Since $\cA$ is a dagger $\sV$-category, and the right $\sV$-action on $\sA$ is given by $a\lhd u = a\sF(u)$, the result follows by taking $a=1_\sV$ in \ref{eq:daglhd2}.
  Since $\mu_{u,v} = \alpha_{1_{\cA}, u, v}$ by \cite[Rem.~6.13]{MR3270794}, unitarity of $\mu$ follows from unitarity of $\alpha$ from \ref{eq:daglhd3}.
\end{proof}

\begin{prop}[\ref{Z:HalfBraidingsUnitary}]
  Suppose $\cA$ is a dagger $\cV$-monoidal category.
  For all $a\in \sA$ and $v\in \sV$, $e_{a, \sF(v)}$ is unitary.
\end{prop}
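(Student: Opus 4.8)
The plan is to exploit that the half-braidings $e_{a,\sF(v)}$ are already known to be invertible (Construction \ref{const:dagger-U-mon-cat-to-dagger-mtc}), so it suffices to prove the single equation $e_{a,\sF(v)}\circ e_{a,\sF(v)}^\dagger = \id_{a\sF(v)}$, with composition in the paper's reverse order (first $e_{a,\sF(v)}$, then its dagger, returning to $a\sF(v)$). Since $a\sF(v)=a\lhd v$ is the source object of Adjunction \ref{adj:mon-main}, I would verify this identity by taking mates: the mate of $\id_{a\sF(v)}$ is exactly the unit $\eta_{a,v}$, so the goal becomes to show that $\mate(e_{a,\sF(v)}\circ e_{a,\sF(v)}^\dagger)=\eta_{a,v}$ in $\sV(v\to\cA(a\to a\sF(v)))$. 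This mirrors the strategy of Proposition \ref{prop:modulator-from-V-functor}, where invertibility reduces a unitarity claim to a one-sided mate computation.

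First I would rewrite the dagger. By Construction \ref{const:DaggerFromInvolutive}, the $1$-graded morphism $e_{a,\sF(v)}^\dagger$ equals $r\circ\overline{e_{a,\sF(v)}}\circ\kappa_{\sF(v)a\to a\sF(v)}$, so that the defining $\kappa$ appears explicitly. Using the mate-composition rule \ref{mate:R-composite} (equivalently Remark \ref{rem:mate of circ}) together with the explicit value $\mate(e_{a,\sF(v)})=(\eta_v j_a)\circ(-\otimes_\cA-)$ from Construction \ref{const:dagger-U-mon-cat-to-dagger-mtc}, I would assemble $\mate(e_{a,\sF(v)}\circ e_{a,\sF(v)}^\dagger)$ as $\mate(e_{a,\sF(v)})$ postcomposed via $-\circ_\cA-$ with the $1$-graded map $e_{a,\sF(v)}^\dagger$. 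This expresses everything in terms of the composition and tensor-product structure of $\cA$, the unit insertions $\eta_v,j_a$, and a single $\kappa_{\sF(v)a\to a\sF(v)}$.

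The heart of the argument is to move this $\kappa$ across the tensor product used to build $e_{a,\sF(v)}$. For this I would invoke \ref{kappa:5}, the compatibility of $\kappa$ with $-\otimes_\cA-$, which rewrites the combination in terms of the conjugate tensor product $-\otimes_{\overline{\cA}}-$; by definition the latter carries the anti-monoidal coherence $\nu$ and, crucially, the \emph{inverse} braiding $\beta^{-1}$. I would then apply the braided-involutive coherence \eqref{eqn:braided involutive} to reconcile the $\nu$--$\beta$ data, after which the braiding introduced by conjugation is precisely the one inverting the swap encoded by $e_{a,\sF(v)}$. Finally, using \ref{kappa:1} to cancel $\kappa$ against $\overline{\kappa}$ up to $\varphi$, Lemma \ref{lem:eta-kappa-inv} to simplify the residual $\eta\circ\kappa^{-1}$ pattern, and that $\sF$ is a dagger monoidal functor with unitary tensorator $\mu$ (established in \ref{Z:DaggerFunctor}, which itself uses \ref{kappa:3}), the expression should collapse to $\eta_{a,v}$.

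I expect the main obstacle to be the braiding bookkeeping in the middle step. The subtle point is that the $\beta^{-1}$ produced by $-\otimes_{\overline{\cA}}-$ must combine with \eqref{eqn:braided involutive} to reconstruct exactly the identity rather than leaving a residual braiding; getting the hexagon-type coherences and the naturality squares to cancel in the correct order is the delicate part. A secondary difficulty is tracking the unit data $\eta_v$ and $j_a$ defining the half-braiding through the conjugation functor and Adjunction \ref{adj:mon-main}, and confirming that unitarity of $\rho$ (\ref{kappa:3}) and of $\mu$ enter exactly where the dagger of $\sF$ is applied. Once these cancellations are verified diagrammatically, recognizing the final diagram as $\eta_{a,v}=\mate(\id_{a\sF(v)})$ is routine.
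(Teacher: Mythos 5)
Your overall strategy is sound and, if executed, would succeed: since $e_{a,\sF(v)}$ is invertible, a one-sided identity $e_{a,\sF(v)}\circ e_{a,\sF(v)}^\dagger = \id_{a\sF(v)}$ suffices, and checking it by mates under Adjunction \ref{adj:mon-main} using \ref{kappa:1}, \ref{kappa:5}, and Lemma \ref{lem:eta-kappa-inv} is exactly the kind of computation that works in this framework. However, the paper takes a genuinely shorter route that you mirror but do not quite find: it observes that $e_{a,\sF(v)}$ \emph{is} a modulator, so that Proposition \ref{prop:modulator-from-V-functor} applies verbatim rather than serving as a template to be redone. Concretely, the paper defines the $\sV$-endofunctor $\cG^a(b) := ba$ with $\cG^a_{b \to c} := (\id_{\cA(b\to c)}\, j_a)\circ(-\otimes_\cA-)$; then the mate of $e_{a,\sF(v)}$, namely $(\eta_v\, j_a)\circ(-\otimes_\cA-)$, is precisely $\eta_{1_\cA,v}\circ \cG^a_{1_\cA\to \sF(v)}$, which is the defining form of the modulator $\omega_{1_\cA,v}$ for $\cG^a$. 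Functoriality of $\cG^a$ is a one-line consequence of the braided interchange \eqref{eq:BraidedInterchange}, and dagger-ness of $\cG^a$ is a one-line consequence of \ref{kappa:5} (the crossing in \ref{kappa:5} becomes trivial by naturality of the braiding, since the $j_a$-strand originates at $1_\sV$). This reduction dissolves what you correctly identify as your main obstacle: all the ``braiding bookkeeping'' is confined to those two short verifications, and the heavy $\kappa$-manipulations (Lemma \ref{lem:eta-kappa-inv}, \ref{kappa:2}, \ref{kappa:3}, unitality/associativity of the modulator) are already done once and for all in the proof of Proposition \ref{prop:modulator-from-V-functor}. Two ingredient-level corrections to your plan: the coherence \eqref{eqn:braided involutive} is not actually needed here (it belongs to the ambient braided-involutive axioms, but no step of the paper's argument invokes it for this proposition), and neither is unitarity of the tensorator $\mu$ from \ref{Z:DaggerFunctor} --- the proposition is proved independently of \ref{Z:DaggerFunctor}, with \ref{kappa:3} entering only through the citation of Proposition \ref{prop:modulator-from-V-functor}. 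So your inline computation would likely go through, but at the cost of re-deriving a proved general result and with a middle step (the $\nu$--$\beta$ cancellation) whose mechanism you have not pinned down; recognizing $e_{a,\sF(v)} = \omega_{1_\cA, v}$ for $\cG^a = -\otimes a$ is the missing idea that makes the proof two lines instead of two pages.
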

\begin{proof}
  By Proposition \ref{prop:modulator-from-V-functor}, it suffices to find a dagger $\sV$ functor $\cG^a : \cA \to \cA$ for each object $a$ in $\sA$ such that $e_{a, \sF(v)}$ is the mate of $\eta_v \circ \cG_{1_{\cA} \to \sF(v)}^a$
  under the adjunction
  \begin{align*}
    \sA(a \sF(v) \to \sF(v) a) \cong \sV(v \to \cA(a \to \sF(v)a)).
  \end{align*}
  Define $\cG^a(b) := b a$ and
  \begin{align*}
    \cG_{b \to c}^a :=
    \begin{tikzpicture}[smallstring, baseline=20]
      \node(bot) at (0,0) {$\cA(b \to c)$};
      \node[box] (j) at (2,1) {$j_a$};
      \node[box] (otimes) at (1,2) {$- \otimes -$};
      \node (top) at (1,3) {$\cA(b a \to c a)$};
      \draw (bot) to[in=-90,out=90] (otimes.225);
      \draw (j) to[in=-90,out=90] (otimes.-45);
      \draw (otimes) to[in=-90,out=90] (top);
    \end{tikzpicture}\, .
  \end{align*}
  Clearly the mate of $e_{a, \sF(v)}$ is given by $\eta_{1_{\cA}, v} \circ \cG_{1_{\cA} \to 1_{\cA} \lhd u}^a$. That $\cG^a$ is a $\sV$-functor follows immediately from the braided interchange relation, and that $\cG^a$ is dagger follows immediately from \ref{kappa:5}.
\end{proof}

\begin{construction}
  \label{const:mod-to-mon-cat-1-cells}
  Let $(\cG, \mu^{\cG}) : \cA \to \cB$ be a dagger $\sV$-monoidal functor.
  Then as in \cite[\S5.2]{2104.07747} we get a morphism
  $(\sG, \mu^{\sG}, g) : (\sA, \sF_\sA) \to (\sB, \sF_\sB)$
  of the underlying $\sV$-module monoidal categories.
 First, we take $\sG$ to be the underlying functor of $\cG$; it follows as in Construction \ref{const:DaggerFromInvolutiveFunctor} that $\sG$ is a dagger functor.
 Second, we take $\mu^{\sG}$ to be componentwise equal to $\mu^{\cG}$ under
  \begin{align*}
    \sB(\sG(a) \sG(b) \to \sG(a b)) = \sV(1_{\sV} \to \cB(\cG(a) \cG(b) \to \cG(a b))).
  \end{align*}
  Observe that the dagger condition for a dagger $\sV$-monoidal functor $(\cG,\mu^\cG)$ is exactly unitarity of $\mu^{\sG}$.
  Finally, we take $g_v$ to be the mate of $\eta_v \circ \cG_{1_{\cA} \to \sF(v)}$ under
  \begin{align*}
    \sF_\sB(v) \to \cG(\sF_\sA(v))
    \cong
    \sV(v \to \cB(1_{\cB} \to \cG(\sF_\sA(v)))).
  \end{align*}
  This $g_v$ is unitary by Proposition \ref{prop:modulator-from-V-functor};
  indeed, defining the right $\sV$-module structure on $\sA,\sB$ by $a \lhd_\sA v := a\sF_\sA(v)$ and $b \lhd_\sB v := b\sF_\sB(v)$ respectively, $g_v$ is exactly equal to the modulator $\omega_{1_\sA, v}$ from Proposition \ref{prop:modulator-from-V-functor}.
  Thus $(\sG, \mu^{\sG}, g)$ is a 1-morphism of dagger $\sV$-module monoidal categories.
\end{construction}

\begin{construction}
  \label{const:mod-to-mon-cat-2-cells}
  Given dagger $\sV$-monoidal functors
  $(\cG, \mu^{\cG}), (\cH, \mu^{\cH}) : \cA \to \cB$
  and a $\sV$-monoidal natural transformation $\theta : \cG \to \cH$,
  we construct a monoidal natural transformation $\Theta : \sG \to \sH$ via
  \begin{align*}
    \Theta_a = \theta_a \in \sB(\sG(a) \to \sH(a)) = \sV(1_\sV \to \cB(\sG(a) \to \sH(a))).
  \end{align*}
\end{construction}

\begin{construction}
  \label{const:mon 2-functor}
  Using Constructions \ref{const:dagger-U-mon-cat-to-dagger-mtc}, \ref{const:mod-to-mon-cat-1-cells}, and \ref{const:mod-to-mon-cat-2-cells}, we can construct a map $\Phi_\otimes : \dagvmoncat \to \dagvmodmon$.
  That $\Phi_\otimes$ is a 2-functor is completely analogous to fact that $\Phi$ is a 2-functor from Construction \ref{const:2-functor}.
\end{construction}

%%%%%%%%%%%%%%%%%%%%%%%%%%%%%%%%%%%%%%%%%%%%%%%
\subsection{Equivalence}

For the remainder of the section, $\sU$ is a braided unitary monoidal category.%, and $(\cU,\kappa^\cU)$ is its dagger self-enrichment.

\begin{construction}
  \label{const:unitary-mtc-to-dagger-mon-U-cat}
  We now construct a rigid dagger $\sU$-monoidal category $\cA$ from a dagger $\sU$-module monoidal category $(\sA, \sF_\sA^Z)$. 
  We first define a dagger $\sU$-category $\cA$ as in Construction \ref{constr:U-mod-to-dagger-U-cat} above.
  We further add the tensor product morphism from \cite[\S6.3]{MR3961709}, which equips $\cA$ with the structure of a rigid tensored $\cU$-monoidal category; $- \otimes_{\cA} -$ is defined as the mate under Adjunction \ref{adj:mon-main} of 
  \begin{align*}
    % [inline block 1: 23 envs, 22777 chars -> data_tex | \begin{tikzpicture}[smallstring, baseline=35]       \node (a) at (0,0) {$a$};...]

  \end{align*}
  which simplifies to exactly the mate of $(\kappa_{a \to c} \kappa_{b \to d}) \circ (- \otimes -)$.
  The last equality above uses the well-known identity
  \[
  e_{a,\sF(u)}^\dag
  \underset{\text{\ref{Z:HalfBraidingsUnitary}}}{=}
  e_{a,\sF(u)}^{-1}
  =
  \tikzmath[smallstring]{
    \node[box] (e) at (1.5,2) {$e_{a, \sF({u^*})}$};
    \node[box] (delta1) at (e.135 |- 0,3) {$\delta_u$};
    \node[box] (delta2) at (e.-45  |- 0,1) {$\delta_u^{-1}$};
    \node (v-bot) at (0,0) {$\sF(u)$};
    \node (a-bot) at (e.225 |- 0,0) {$a$};
    \node (v-top) at (3,4) {$\sF(u)$};
    \node (a-top) at (e.45 |- 0,4) {$a$};
    \draw (v-bot) to (v-bot |- delta1.90);
    \draw (v-bot |- delta1.90) to[in=90,out=90] (delta1);
    \draw (a-bot) to (e.225);
    \draw (e.135) to[in=-90,out=90] (delta1);
    \draw (e.45) to[in=-90,out=90] (a-top);
    \draw (delta2) to[in=-90,out=90] (e.-45);
    \draw (v-top) to (v-top |- delta2.-90);
    \draw (v-top |- delta2.-90) to[in=-90,out=-90] (delta2);
  }\,.
  \qedhere
  \]
\end{proof}

\begin{construction}
  \label{const:dagumod-to-dagumon-1-cells}
  Given a morphism $(\sG, \mu^\sG, g) : (\sA, \sF_\sA) \to (\sB, \sF_\sB)$ of dagger $\sU$-module monoidal categories, we get a dagger $\sU$-functor $\cG$ as in Construction \ref{const:mod-to-cat-1-cell}. 
  This $(\cG, \mu^\cG)$ was shown to be a $\sU$-monoidal functor in \cite[Prop.~6.8]{2104.07747} using the same tensorator $\mu^\cG := \mu^\sG$.
  Finally, it is clear $(\cG, \mu^\cG)$ is dagger as $\mu^\sG$ is unitary.
\end{construction}

\begin{thm}
  The 2-functor $\Phi_\otimes$ from Construction \ref{const:mon 2-functor} is a 2-equivalence.
\end{thm}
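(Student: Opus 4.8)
The plan is to prove that $\Phi_\otimes$ is a 2-equivalence by exactly mirroring the structure of the proof of Theorem \ref{thm:2-equivalence}, now in the monoidal setting. That is, I would verify that $\Phi_\otimes$ is essentially surjective on 0-cells, essentially surjective on 1-cells, and fully faithful on 2-cells, reducing each check to the enriched (non-monoidal) version already established together with the compatibility of the reverse constructions with the monoidal structures.

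First I would address essential surjectivity on 0-cells. Given a tensored dagger $\sU$-module monoidal category $(\sA, \sF_\sA^Z)$, Construction \ref{const:unitary-mtc-to-dagger-mon-U-cat} produces a tensored rigid dagger $\sU$-monoidal category $\cA$, and the preceding proposition verifies \ref{kappa:5}, so $\cA$ is a valid 0-cell of $\dagumoncat$. I would then invoke Proposition \ref{prop:equivalence-of-dagger-U-mods} to obtain an isomorphism $\sA\cong \Phi(\cA)$ of the underlying dagger $\sU$-module categories via $(\sG,\omega')$ and $(\sH,\omega)$. The additional work here is to check that these functors are \emph{module monoidal} functors: one must confirm that $\sG,\sH$ are monoidal functors intertwining the two braided central functors $\sF_\sA^Z$ and $\sF_{\Phi(\cA)}^Z$, i.e., that the action coherence data $h$ of Definition \ref{defn:vmodmon 1-cells} is unitary and compatible with the tensorators. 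Since the tensor product $-\otimes_\cA-$ on $\cA$ was built from $\mu^{\sF_\sA}$ and the half-braidings, and since Construction \ref{const:dagger-U-mon-cat-to-dagger-mtc} recovers $\sF^Z$ from $\cA$, the identity-on-objects functors $\sG,\sH$ should intertwine these structures by the same mate calculations used for the module action; the coherence of $h$ follows from unitarity of $\mu$ (established via \ref{eq:daglhd3}) together with naturality of $\omega'$ from Lemma \ref{lem:omega-V-natural}.

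Next, for essential surjectivity on 1-cells, I would take a 1-morphism $(\sF,\mu^\sF,g)$ of dagger $\sU$-module monoidal categories and use Construction \ref{const:dagumod-to-dagumon-1-cells} to produce a dagger $\sU$-monoidal functor $(\cF,\mu^\cF)$, with $\Phi_\otimes(\cF)=(\sF',\mu^{\sF'},g')$. The proof of Theorem \ref{thm:2-equivalence} already shows $\sF'=\sF$ as dagger module functors and $\omega^{\sF'}=\omega^\sF$; I would observe that the monoidal comparison $\mu^{\sF'}$ equals $\mu^\sF$ by definition (Construction \ref{const:mod-to-mon-cat-1-cells} takes $\mu^\sG$ componentwise equal to $\mu^\cG$), and that $g'=g$ because $g$ is exactly the modulator $\omega_{1_\sA,v}$, which is preserved by $\Phi$. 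This is essentially a bookkeeping step reading off that the monoidal data added in the reverse construction is literally the data we started with. Finally, full faithfulness on 2-cells is immediate: a $\sU$-monoidal natural transformation $\theta$ and its image $\Phi_\otimes(\theta)=\Theta$ have $\Theta_a=\theta_a$, and the monoidal compatibility condition of Definition \ref{defn:vmodmon 1-cells} for $\Theta$ is literally the $\sV$-monoidal naturality condition for $\theta$ read through the adjunction, so $\Phi_\otimes$ is a bijection on 2-cells.

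The main obstacle I anticipate is verifying that the isomorphism $(\sG,\omega')\colon \sA\to\Phi(\cA)$ from Proposition \ref{prop:equivalence-of-dagger-U-mods} upgrades to an isomorphism of \emph{module monoidal} categories—specifically, producing the unitary action-coherence isomorphism $h$ and checking the two hexagon-type compatibilities of Definition \ref{defn:vmodmon 1-cells}. Because the module action $a\lhd v=a\sF(v)$ and the monoidal structure on $\cA$ are entangled through the tensorator $\mu$ and the half-braidings $e_{a,\sF(v)}$, this requires combining Lemma \ref{lem:nu-delta-mu}, unitarity of $\mu$ and of the $e_{a,\sF(v)}$ (the proposition labeled \ref{Z:HalfBraidingsUnitary}), and the modulator lemmas of Appendix \ref{app:modulators}. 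The remaining steps are largely transcriptions of the non-monoidal arguments and of \cite[\S6]{2104.07747}, so the genuinely new content is confined to this compatibility verification.
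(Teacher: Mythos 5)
Your proposal is correct and follows the same skeleton as the paper's proof: essential surjectivity on 0-cells via Construction \ref{const:unitary-mtc-to-dagger-mon-U-cat} plus Proposition \ref{prop:equivalence-of-dagger-U-mods}, essential surjectivity on 1-cells via Construction \ref{const:dagumod-to-dagumon-1-cells} with the observation that $\mu^{\sF'}=\mu^{\sF}$ and $g'=g$ (the paper cites \cite[Prop.~6.9]{2104.07747} for this), and full faithfulness on 2-cells by noting the 2-cells coincide with the non-dagger ones (the paper cites \cite[Prop.~6.12]{2104.07747}). The one place where you genuinely diverge is the step you flag as your ``main obstacle'': upgrading the module isomorphism $(\sG,\omega')\colon \sA\to\sA'$ to an isomorphism of dagger $\sU$-module \emph{monoidal} categories. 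You propose to construct a unitary action-coherence isomorphism $h$ and verify the two compatibilities of Definition \ref{defn:vmodmon 1-cells} by hand, invoking Lemma \ref{lem:nu-delta-mu}, unitarity of $\mu$ and of the half-braidings, and the appendix modulator lemmas. The paper sidesteps all of this: by \cite[Thm.~7.3]{MR3961709}, $\sG$ equipped with the \emph{identity} tensorator and identity action-coherence morphism is already an isomorphism of (non-dagger) $\sU$-module monoidal categories, so the only new dagger content is that identities are unitary, and $(\sG,\id,\id)$ does the job. The subtlety you should internalize here is that the nontrivial modulator $\omega'$ lives at the level of the module \emph{action}, which in $\dagumodmon$ is not independent data --- a 1-morphism there consists only of $(\sH,\mu^\sH,h)$ --- so the coherence data you were worried about can be taken trivial. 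Your direct verification would presumably go through, but it re-proves a known non-unitary statement; the paper's route buys a one-line reduction, while yours buys self-containedness at the cost of the heaviest computation in the argument.
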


\begin{proof}
  As before, we need to show that $\Phi_\otimes$ is essentially surjective on 0-cells and 1-cells, and fully faithful on 2-cells.
  For 0-cells, given a 0-cell $(\sA, \sF_\sA) \in \dagumodmon$, we construct a 0-cell $\cA \in \dagumoncat$ via Construction \ref{const:unitary-mtc-to-dagger-mon-U-cat}.
  It suffices to show that $(\sA', \sF_{\sA'}) := \Phi_\otimes(\cA)$ is isomorphic to $(\sA, \sF_\sA)$ as dagger $\sU$-module monoidal categories.
  By the proof of Proposition \ref{prop:equivalence-of-dagger-U-mods}, we have an isomorphism of dagger categories $(\sG, \omega') : \sA \to \sA'$,
  and in \cite[Thm.~7.3]{MR3961709} it was shown that $\sG$ equipped with an identity tensorator and action coherence morphism is an isomorphism of $\sU$-module monoidal categories.
  Since identities are unitary, it follows that $(\sG, \id, \id)$
  is isomorphism of dagger $\sU$-module monoidal categories between $\Phi_\otimes(\cA)$ and $(\sA, \sF_\sA)$.

  Suppose $\cA, \cB \in \dagumoncat$ are 0-cells, and $(\sG, \mu^\sG,g) \in \dagumodmon(\Phi_\otimes(\cA) \to \Phi_\otimes(\cB))$ is a 1-cell.
  We get a 1-cell $(\cG, \mu^\cG) \in \dagumoncat(\cA \to \cB)$ via Construction \ref{const:dagumod-to-dagumon-1-cells}.
  It is shown in \cite[Prop.~6.9]{2104.07747} that $\Phi_\otimes(\cG, \mu^\cG)$ and $(\sG, \mu^\sG, g)$ are equal as $\sU$-module monoidal functors.
  Being a dagger $\sU$-module monoidal functor is a property of a $\sU$-module monoidal functor, so they are also equal as dagger $\sU$-module monoidal functors.

  The 2-cells in $\dagumoncat$ and $\dagumodmon$ are identical to those in $\vmoncat$ and $\vmodtens$, respectively, so proving that $\Phi_\otimes$ is fully faithful on 2-cells is identical to the proof of Theorem \cite[Prop.~6.12]{2104.07747}.
\end{proof}

%%%%%%%%%%%%%%%%%%%%%%%%%%%%%%%%%%%%%%%%%%%%%%%%%%
\appendix

\section{Modulator construction}
\label{app:modulators}

In this appendix, we construct the previously omitted modulators from \cite[\S3.3]{1809.09782} for the isomorphisms in Proposition \ref{prop:equivalence-of-dagger-U-mods}.
Note that the proofs are similar to those of \cite[Lemma 4.1]{1809.09782},
but there is no clear way to define a $\sV$-functor that allows us to use that result directly.

As shown in Lemma \ref{lem:modulator inverse} below, $\omega_{a, v}^{-1} = \sH(\omega_{a, v}')$.
Thus if $\omega$ is a modulator for $\sH$, then $\omega'$ is a modulator for $\sG$,
and the composites $(\sH, \omega) \circ (\sG, \omega')$ and $(\sG, \omega') \circ (\sH, \omega)$ are identity module functors.
The proof that $\omega$ is unital is straightforward;
proofs for naturality and associativity appear in Lemmas \ref{lem:modulator-natural} and \ref{lem:modulator-associative} below.

\begin{lem}
  \label{lem:modulator inverse}
  The modulator $\omega_{a, v}$ has inverse $\sH(\omega_{a, v}')$.
\end{lem}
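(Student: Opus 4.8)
The plan is to avoid a direct diagrammatic computation and instead exploit the observation, recorded just before the statement in Proposition \ref{prop:equivalence-of-dagger-U-mods}, that $\omega_{a,v}$ and $\omega'_{a,v}$ are respectively the unit and counit of the composite adjunction
\[
\Psi_{v,b} : \sA(a\lhd v \to b) \underset{\eqref{eq:ModToCatAdjunction}}{\cong} \sU(v \to \cA(a\to b)) \underset{\eqref{eq:CatToModAdjunction}}{\cong} \sA'(a\blacktriangleleft v \to b),
\]
natural in $b$. Here $\sG,\sH$ are the mutually inverse identity-on-objects isomorphisms of the underlying categories constructed in Proposition \ref{prop:equivalence-of-dagger-U-mods}. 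First I would record that $\omega'_{a,v} = \Psi(\id_{a\lhd v})$ and $\omega_{a,v} = \Psi^{-1}(\id_{a\blacktriangleleft v})$, which is immediate from unwinding the two component adjunctions and comparing with the defining diagrams.

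The heart of the argument is to promote these into the global formulas
\[
\Psi(f) = \omega'_{a,v}\circ \sG(f) \quad\text{for } f\in\sA(a\lhd v\to b), \qquad \Psi^{-1}(g) = \omega_{a,v}\circ \sH(g) \quad\text{for } g\in\sA'(a\blacktriangleleft v\to b).
\]
These follow from the mate-composition identities \ref{mate:L-composite} and \ref{mate:R-composite} applied to each of the two component adjunctions, together with the fact that $\sG$ and $\sH$ are, by their construction in Proposition \ref{prop:equivalence-of-dagger-U-mods}, exactly the bare mates twisted by the unitors $\rho$; the point is that this $\rho$-twist is precisely the unitor that the composite $\Psi$ introduces, so the two cancel and leave the stated formulas. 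Checking these identities is the main obstacle, as it requires carefully tracking the reversed-composition mate conventions across the two re-decorated adjunctions \eqref{eq:ModToCatAdjunction} and \eqref{eq:CatToModAdjunction} and confirming that the unitor factors hidden inside $\sG$ and $\sH$ match.

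Granting the two formulas, the conclusion is purely formal. Since $\Psi^{-1}\circ\Psi = \id$, for every $f$ we get $\omega_{a,v}\circ\sH\big(\omega'_{a,v}\circ\sG(f)\big) = f$; using functoriality of $\sH$ (in the reversed convention) and $\sH\sG = \id_{\sA}$ this becomes $\omega_{a,v}\circ\sH(\omega'_{a,v})\circ f = f$, and specializing to $f=\id_{a\lhd v}$ yields $\omega_{a,v}\circ\sH(\omega'_{a,v}) = \id_{a\lhd v}$. Symmetrically, $\Psi\circ\Psi^{-1} = \id$ gives $\omega'_{a,v}\circ\sG(\omega_{a,v}) = \id_{a\blacktriangleleft v}$, and applying $\sG$ to the desired relation $\sH(\omega'_{a,v})\circ\omega_{a,v} = \id_{a\blacktriangleleft v}$ shows it is equivalent to this identity because $\sG$ is an isomorphism with $\sG\sH = \id_{\sA'}$. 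Hence $\sH(\omega'_{a,v})$ is a two-sided inverse of $\omega_{a,v}$.

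As a fallback, should the explicit formulas prove awkward to verify in the string-diagram calculus, one may instead invoke the triangle (zig-zag) identities of the composite adjunction in the form stated in the discussion of mates, namely $\sL(\eta_x)\circ\epsilon_{\sL(x)} = \id$ and $\eta_{\sR(b)}\circ\sR(\epsilon_b) = \id$, after identifying the left and right functors of $\Psi$ with the translations $\sH$ and $\sG$; the same cancellation of unitors is what makes this identification legitimate, so the essential difficulty is unchanged.
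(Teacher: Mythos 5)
Your proposal is correct, and it takes a genuinely different route from the paper's. The paper proves the lemma by a direct string-diagram computation: it expands $\omega_{a,v}\circ\sH(\omega'_{a,v})$ from the definitions, applies \eqref{eq:circ-mate-calculations} and \eqref{eq:MateOf1lhdg} repeatedly, and simplifies to $\id_{a\lhd v}$. You instead promote the parenthetical remark made just before the lemma in the proof of Proposition~\ref{prop:equivalence-of-dagger-U-mods} --- that $\omega_{a,v}$ and $\omega'_{a,v}$ are the unit and counit of the composite of Adjunctions \eqref{eq:ModToCatAdjunction} and \eqref{eq:CatToModAdjunction} --- into the Yoneda-style formulas $\Psi(f)=\omega'_{a,v}\circ\sG(f)$ and $\Psi^{-1}(g)=\omega_{a,v}\circ\sH(g)$, after which invertibility is purely formal; I checked your typing and your use of the reversed-composition convention, and the formal part goes through. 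You also correctly locate the real content in verifying those formulas, though it is slightly more than the ``$\rho$-twist cancellation'' you describe: naturality of $\Psi$ in the target $b$ amounts to showing that the $\sU$-morphism $\cA(a\to b)\to\cA(a\to c)$ induced by $f\in\sA(b\to c)$ through Adjunction \eqref{eq:ModToCatAdjunction} coincides with the enriched representable action $(\id_{\cA(a\to b)}\,\sG(f))\circ(-\circ_\cA-)$ of $\sG(f)$, which is one application of Remark~\ref{rem:circ-mate-calculations} together with naturality of $\rho$ and a triangle identity --- i.e.\ exactly the engine the paper's direct computation runs several times, repackaged once. What your route buys: both composites come at once, since $\omega_{a,v}\circ\sH(\omega'_{a,v})=\id_{a\lhd v}$ and, applying the faithful $\sG$ to the other relation, $\sH(\omega'_{a,v})\circ\omega_{a,v}=\id_{a\blacktriangleleft v}$ follows from $\omega'_{a,v}\circ\sG(\omega_{a,v})=\id$, whereas the paper's displayed computation exhibits only one composite and leaves two-sidedness implicit; what the paper's route buys is that it stays entirely inside the graphical calculus already set up, with no naturality-intertwining lemma to isolate. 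One caution on your fallback paragraph: the composite isomorphism is only given on hom-sets out of objects of the form $a\blacktriangleleft v$, so invoking a global adjunction between $\sH$ and $\sG$ with its zig-zag identities is a slight overreach; your main hom-set-level argument, which uses only naturality in $b$ for fixed $a$ and $v$, is the correct formulation and suffices.
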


\begin{proof}
  Setting $u = \cA(a \to a \lhd v)$ for readability, $\omega_{a, v} \circ \sH(\omega_{a, v}')$ is given by
  \begin{align*}
    &% [inline block 2: 27 envs, 28856 chars -> data_tex | \begin{tikzpicture}[smallstring,baseline=65]        \node (a) at (0,0) {$a$};...]
\, .
    \end{align*}
    Next, it follows from \ref{mate:L-composite} that the mate of
    $g \in \sA(a \lhd u \lhd \cV(u \to v) \to a \lhd v) \cong \sA'(a \lhd u \lhd \cV(u \to v) \to a \lhd v)$
    is
    $\omega'_{a, u} \circ \sG(g)$,
    so the last diagram here is the mate of
    $\alpha_{a, u, \cV(u \to v)}^{-1} \circ (\id_a \lhd \ev_u \id_v)$.
    Since this is exactly the mate of $(a \lhd -)_{u \to v}$ under Adjunction \ref{eq:CatToModAdjunction}, we have proven the lemma.
  \end{proof}

  \begin{rem}
    \label{rem:eta-eta'}
    When we consider $\omega_{a, v}'$ as an element of $\sV(1_{\sV} \to \cA(a \blacktriangleleft v \to a \lhd v)) = \sA'(a \blacktriangleleft v \to a \lhd v)$,
    it is a direct consequence of \eqref{eq:MateOf1lhdg} that $(\eta_{a, v}'  \omega_{a, v}') \circ (- \circ_\cA -) = \eta_{a, v}$.
  \end{rem}

%%%%%%%%%%%%%%%%%%%%%%%%%%%%%%%%%%%%%%%%%%%%%%%%%
\bibliographystyle{alpha}
                  {\footnotesize{
                      %\bibliography{../bibliography}
                      \bibliography{../bibliography/bibliography}
                  }}

\end{document}